\pdfoutput=1
\RequirePackage{ifpdf}
\ifpdf 
\documentclass[pdftex]{sigma}
\else
\documentclass{sigma}
\fi

\usepackage[v2,all]{xy}
\usepackage{mathtools,mathrsfs}
\usepackage{tikz-cd}
\usepackage{mathabx}

\numberwithin{equation}{section}

\newtheorem{Theorem}{Theorem}[section]
\newtheorem*{Theorem*}{Theorem}
\newtheorem*{TheoremMain1}{Theorem~\ref{mainthm}}
\newtheorem*{TheoremMain2}{Theorem~\ref{mainthm2}}
\newtheorem*{TheoremMain3}{Theorem~\ref{thm:mainpathlike}}
\newtheorem{Corollary}[Theorem]{Corollary}
\newtheorem{Lemma}[Theorem]{Lemma}
\newtheorem{Proposition}[Theorem]{Proposition}
 { \theoremstyle{definition}
\newtheorem{Definition}[Theorem]{Definition}

\newtheorem{Example}[Theorem]{Example}
\newtheorem{Remark}[Theorem]{Remark}
\newtheorem{Notation}[Theorem]{Notation} }

\newcommand{\C}{\catname{C}}
\newcommand{\F}{\catname{F}}

\newcommand{\im}{{\mathrm {im}}\,}
\newcommand{\Id}{{\rm id}}

\def\FF{{\mathbb F}}
\def\GG{{\mathbb G}}

\def\text#1{{\rm {\rm #1}}}

\def\max#1{|#1|_{\rm max}}
\def\Set{\dul{\rm Set}}

\def\bq{\mathbf {q}}


\usetikzlibrary{cd,fit}

\usepackage{bold}
 \def\unit{\Eins}
 \def\gh{\mathbbnew{\Gamma}}

\def\C{\mathcal C}

\def\O{\mathcal{O}}

\def\Z{\mathbb{Z}}

\def\Set{\mathcal{S}et}

\def\V{\mathcal V}

\def\E{\mathcal{E}}
\newcommand{\leftsub}[2]{{\vphantom{#2}}_{#1}{#2}}
\newcommand{\bisub}[3]{{\vphantom{#2}}_{#1}{#2}_{#3}}
\def\stimest{\bisub{s}{\times}{t}}

\makeatother
\newcommand{\EZDIAG}[5]{\xymatrix
@C+=2.5cm{*+[r]{#1}
\ar@(u,l)_(0.62){\displaystyle #5}[]
\ar@<1ex>^-{#3}[r]&\ar@<1ex>^-{#4}[l]#2}}

\def\id{{\mathrm{id}}}

\def\SS{\mathbb{S}}
\def\odo{\otimes \dots\otimes}

\def\kdk{,\dots,}

\def\eps{\epsilon}

\def\B{\mathscr{B}}

\def\C{\mathscr {C}}

\def\F{\mathcal{F}}
\def\FF{\mathfrak F}
\def\asts{{\mathcal V}}

\def\clusters{{\mathcal F}}
\def\N{\mathbb{N}}


\def\eps{\epsilon}

\def\CalC{{\mathcal C}}

\def\Set{{\mathcal S}et}

\def\F{\mathcal F}
\def\FF{\mathfrak F}

\def\GG{\mathfrak G}

\def\C{\CalC}
\def\Z{{\mathbb Z}}
\def\N{{\mathbb N}}

\def\FinSet{\mathcal{F}{\rm in}\mathcal{S}{\rm et}}

\def\s{\sigma}

\def\t{\tau}
\def\g{\gamma}

\def\O{{\mathcal O}}

\def\SS{{\mathbb S}}

\def\odo{\otimes \cdots \otimes}

\def\deg{\mathrm{deg}}

\newcommand\ccirc[2]{\, \leftsub{#1}{\circ}_{#2}}
\def\scirct{\ccirc{s}{t}}

\newcommand\mge[2]{\, \leftsub{#1}{\boxminus}_{#2}}

\newcommand{\Obj}{\operatorname{Obj}}
\newcommand{\Hom}{\operatorname{Hom}}
\newcommand{\Iso}{\operatorname{Iso}}

\newcommand{\Mor}{\operatorname{Mor}}

\def\V{\asts}

\def\asts{{\mathcal V}}
\def\F{\clusters}
\def\clusters{{\mathcal F}}

\def\D{\mathcal D}

\def\d{\delta}


\def\Biso{B^{\rm iso}}

\def\Diso{\Delta^{\rm iso}}

\def\sds{(\sigma \Da \sigma')}

\def\ot{\otimes}

\def\Da{\!\Downarrow\!}







\tikzstyle{none}=[]
\tikzstyle{Box}=[fill=white, draw=black, shape=rectangle]
\tikzstyle{Dotted}=[fill=white, draw=black, shape=rectangle, dashed]
\tikzstyle{Gap}=[fill=white, shape=circle]
\tikzstyle{White}=[fill=white, draw=black, shape=circle]
\tikzstyle{Black}=[fill=black, draw=black, shape=circle]
\tikzstyle{Gray}=[fill={rgb,255: red,191; green,191; blue,191}, draw=black, shape=circle]
\tikzstyle{myRed}=[fill=red, draw=black, shape=circle]
\tikzstyle{myBlue}=[fill=blue, draw=black, shape=circle]
\tikzstyle{myGreen}=[fill=green, draw=black, shape=circle]
\tikzstyle{Cyan}=[fill=cyan, draw=black, shape=circle]
\tikzstyle{Yellow}=[fill=yellow, draw=black, shape=circle]
\tikzstyle{Magenta}=[fill=magenta, draw=black, shape=circle]

\tikzstyle{Arrow}=[->]
\tikzstyle{Mapsto}=[{|->}]
\tikzstyle{Dash}=[-, dashed]
\tikzstyle{DashArrow}=[->, dashed]
\tikzstyle{Double}=[<->]
\tikzstyle{Red}=[-, draw=red]
\tikzstyle{Blue}=[-, draw=blue]
\tikzstyle{Green}=[-, draw=green]
\tikzstyle{RedArrow}=[draw=red, ->]
\tikzstyle{BlueArrow}=[draw=blue, ->]




\def\SGL{\operatorname{sgpl}}
\def\GLE{\operatorname{gpl}}
\def\D{\Delta}
\def\bD{\overline{\Delta}}
\def\bDn{\overline{\Delta}^{[n]}}

\def\bot{\boxtimes}
\def\ve{\vec{e}}

\newcommand{\Free}[1]{R^{(#1)}}
\newcommand{\finite}[1]{\check #1^{\circ}}

\newcommand{\elt}[2]{\, #1 \underset{\displaystyle #2}{\boxvoid}}

\def\gx{\elt{g}{x}}
\def\hy{\elt{h}{y}}
\def\kz{\elt{k}{z}}

\def\dkg{D(k[G])}
\def\Sp{^{(1)}}
\def\Spp{^{(2)}}
\def\Sppp{^{(3)}}

\def\FQT{F^{QT}}
\def\FBQ{F^{Q_{\bullet\bullet}}}
\def\FN{F^N}
\def\FQ{F^Q}

\begin{document}
\allowdisplaybreaks

\newcommand{\arXivNumber}{2104.08895}

\renewcommand{\thefootnote}{}

\renewcommand{\PaperNumber}{053}

\FirstPageHeading

\ShortArticleName{Pathlike Co/Bialgebras and their Antipodes}

\ArticleName{Pathlike Co/Bialgebras and their Antipodes\\ with Applications to Bi- and Hopf Algebras\\ Appearing in Topology, Number Theory and Physics\footnote{This paper is a~contribution to the Special Issue on Algebraic Structures in Perturbative Quantum Field Theory in honor of Dirk Kreimer for his 60th birthday. The~full collection is available at \href{https://www.emis.de/journals/SIGMA/Kreimer.html}{https://www.emis.de/journals/SIGMA/Kreimer.html}}}

\Author{Ralph M.~KAUFMANN~$^{\rm ab}$ and Yang MO~$^{\rm a}$}

\AuthorNameForHeading{R.M.~Kaufmann and Y.~Mo}

\Address{$^{\rm a)}$~Department of Mathematics, Purdue University, West Lafayette, IN, USA}
\EmailD{\href{mailto:rkaufman@purdue.edu}{rkaufman@purdue.edu}, \href{mailto:mo12@purdue.edu}{mo12@purdue.edu}}
\URLaddressD{\url{https://www.math.purdue.edu/~rkaufman/}}

\Address{$^{\rm b)}$~Department of Physics and Astronomy, Purdue University, West Lafayette, IN, USA}

\ArticleDates{Received April 18, 2021, in final form June 29, 2022; Published online July 11, 2022}

\Abstract{We develop an algebraic theory of colored, semigrouplike-flavored and pathlike co-, bi- and Hopf algebras. This is the right framework in which to discuss antipodes for bialgebras naturally appearing in combinatorics, topology, number theory and physics. In~particular, we can precisely give conditions for the invertibility of characters that is needed for renormalization in the formulation of Connes and Kreimer. These are met in the relevant examples. In~order to construct antipodes, we discuss formal localization constructions and quantum deformations. These allow to define and explain the appearance of Brown style coactions. Using previous results, we can interpret all the relevant coalgebras as stemming from a categorical construction, tie the bialgebra structures to Feynman categories, and apply the developed theory in this setting.}

\Keywords{Feynman category; bialgebra; Hopf algebra; antipodes; renomalization; characters; combinatorial coalgebra; graphs; trees; Rota--Baxter; colored structures}

\Classification{16T05; 18M85; 81T15; 81R50}

\begin{flushright}
\begin{minipage}{63mm}
 \it Dedicated to Professor Dirk Kreimer\\ on the occasion of his $60$th birthday
\end{minipage}
\end{flushright}

\renewcommand{\thefootnote}{\arabic{footnote}}
\setcounter{footnote}{0}

\section{Introduction}
Although the use of Hopf algebras has a long history, the seminal paper \cite{CK} led to a turbocharged development for their use which has penetrated deeply into mathematical physics, number theory and also topology, their original realm~-- see \cite{HopfHistory} for the early history. The important realization in \cite{CK,CK1,CK2} was that the renormalization procedure in quantum field theory can be based on a character theory for Hopf algebras via the so-called Birkhoff factorization and convolution inverses.
The relevant Hopf algebras are those of trees with a coproduct given by a sum over so-called admissible cuts~-- with the factors of the coproduct being the left-over stump and the collection of cut-off branches~-- and Hopf algebras of graphs in which the factors of the summands of the coproduct are given by a subgraph and the quotient graph. A planar version of the tree formalism was pioneered in \cite{foissyCR1}, see also \cite{MKW}. The appearance in number theory of this type of coproduct goes back to \cite{Gont}. It was developed further and applied with great success in \cite{BrownICM}. In~all these cases, the product structure is free and the coproduct is the carrier of information. The group of characters was previously studied in \cite{Butcher}.
A precursor to the Hopf algebraic considerations can be found in \cite{baues}. In~\cite{HopfPart1,HopfPart2}, we gave the details to prove the results announced in \cite{feynman} that all these structures stem from a categorical framework, where the coproduct corresponds to deconcatenation of morphisms. Such coproduct structures can be traced back to \cite{Moebiusguy} and appear in combinatorics \cite{JR,Schmitt}. Dual algebra structures can be found in \cite{Rota} and \cite{Duer}.\footnote{We thank a referee for pointing out this reference.} In~\cite{feynman}, we developed a theory of so-called Feynman categories, which are essentially a special type of monoidal category and could connect the product structure to the monoidal structure on the level of morphisms. Special cases are related to operads, the simplicial category and categories whose morphisms are classified by graphs. In~\cite{HopfPart2}, we could show that the monoidal product in these categories is compatible with the deconcatenation co-product thus yielding bialgebras. This generalizes those of Baues and Goncharov \cite{baues,Gont} which are simplicial in nature,
 the Connes--Kreimer tree Hopf algebras, which have operadic origin~\cite{HopfPart1}, and the Connes--Kreimer Hopf algebras of graphs as well as the core Hopf algebra \cite{Kreimercore}, which are graphical and more categorical in nature.

The co- and bialgebras which have a categorical interpretation also include path coalgebras and incidence coalgebras, as for instance considered in \cite{JR,Schmitt}, see Section~\ref{sec:keyex}. There are two versions of the story, one is symmetric and yields cocommutative bialgebras and the second is non-symmetric and in general yields to non-cocommutative bialgebras, such as those from planar structures. In~all the examples one passes from the bialgebras to a connected quotient to obtain an antipode by invoking Quillen's formalism~\cite{Quillen}.

In this paper, we address the question of obstructions to constructing antipodes on the bialgebra level.
The questions that we can now answer is:
\begin{quote}
What structures precisely exist already on the bialgebra level, and what is their role in the inversion of characters and the construction of antipodes?
\end{quote}
To this end, we give an {\em algebraic characterization} of a classes of bialgebras amenable to such considerations. These are {\em colored}, {\em sg-flavored} (sg stands for semigrouplike), and {\em pathlike} coalgebras. They capture and generalize the essential features of categorical coalgebras, whose paradigmatic example are path algebras for quivers. This allows us to specify effective criteria for the
existence of antipodes and convolution invertibility of characters. We also work over an arbitrary commutative ground ring. The origins for the theory of sg-flavored coalgebras can be traced back to \cite{TaftWilson} and with hindsight, several of the quotient construction are foreshadowed in \cite{Schmitt} for the special case of incidence bialgebras.\footnote{We thank D.~Kreimer for pointing this out.}
The existence of antipodes is usually often established via some sort of connectivity of a filtration \cite{Quillen,Takeuchi}. The problem is that the bialgebras in question are usually not connected for the standard filtrations, which is why quotients are taken.

Concretely, after reviewing some basic structures in the generality we need in Section~\ref{basicpar}, providing a short list of paradigmatic examples to illustrate and motivate the constructions Section~\ref{sec:key}, and touching upon the complications from isomorphisms, we introduce the first key notion, that of colored coalgebras in Section~\ref{sec:colored}. This allows us to generalize Quillen's connected coalgebras and conilpotent coalgebras to many colors. In~the categorical setting, the colors are the objects and algebraically speaking they correspond to grouplike elements. The main structural result is Theorem~\ref{thm:colorstructure}.
The basic examples like path algebras are of this type, and appear as the dual coalgebras $C[M]$ to colored monoids $M$.
Proposition~\ref{prop:catcolor} classifies their properties and gives criteria for color connectedness. Dualities in a general setting are tricky; we provide some technical details and the dual notion of colored algebras in Appendix~\ref{sec:duals}.

To construct antipodes and convolution inverses, we introduce the fundamental concept of a~QT-filtration (Quillen--Takeuchi) in Section~\ref{takeuchipar}, which generalizes the commonly used filtrations. Such a filtration is called a sequence if it is exhaustive. This is what is needed to make
Takeuchi's argument work and results in:

\begin{TheoremMain1} 
Consider a~coalgebra $C$ with counit $\eps$ and an algebra $A$ with unit $\eta$.
	Given a~QT-sequence $\{F_i\}^{\infty}_{i=0}$ on $C$, such that ${\rm Ext}^1_R(C/F_0C,A)=0$,
	for any element $f$ of $\Hom(C, A)$, $f$~is $\star$-invertible if and only if
	the restriction $f|_{F_{0}C}$ is $\star$-invertible in $\Hom(F_{0}C, A)$.
\end{TheoremMain1}
	
A special type of QT-filtration, which we call the bivariate Quillen filtration (see Section~\ref{sg-flavoredpar}), arises by considering skew primitive elements. These are the essential actors for so-called sg-flavored coalgebras and are key to understanding the structures of the bialgebras before taking quotients. In~the applications a slightly stronger condition is satisfied for the level $0$ part of the filtration, which leads to the central notion of pathlike co/bialgebras, cf.\ Section~\ref{sec:pathlike}.	

\begin{TheoremMain2} 
Let $C$ be pathlike coalgebra and $A$ algebra with $\mathop{\rm Ext}\big(C/\FBQ_0C,A\big)=0$. An element $f \in \Hom(C,A)$ has an $\star$-inverse in the convolution algebra $\Hom(C,A)$ if and only if for every grouplike element $g$, $f(g)$ has an inverse as ring element in $A$.

In particular, in this situation, a character is $\star$-invertible if and only if it is grouplike inver\-tible.
	
Furthermore, a pathlike bialgebra $B$ is a Hopf algebra if and only if the set of grouplike elements form a group.
\end{TheoremMain2}

Color connected coalgebras are pathlike and hence these general results apply to them.
	
These results are then directly applicable to renormalization via Rota--Baxter algebras and Birkhoff decompositions, which are briefly reviewed in Section~\ref{sec:renom}, with more details in Appendix~\ref{rbapp}. A closer inspection of
this framework yields two paths of actions. The first is to realize that for invertibility of specific characaters the bialgebras need not necessarily be Hopf algebras.
Thus limiting the characters by imposing certain restrictions on them or the target algebra will ensure their invertibility in the pathlike case. The second avenue is to formally invert the grouplike elements. Combining the two approaches yields several universal constructions, through which characters with special properties factor, cf.\ Section~\ref{renompar}. This for instance
naturally leads to {\em quantum deformations} of the algebras. These results generalize and explain
similar constructions in~\cite{HopfPart1}.
They allow us to construct
 Brown style coactions \cite{BrownICM}, see Proposition \ref{prop:bcoaction}.
 Applying this to pathlike bialgebras, viz.\ bialgebras whose coalgebras are pathlike yields:
\begin{TheoremMain3} 
Let $B$ be a pathlike bialgebra:
Every grouplike invertible character has a $\star$-inverse and vice-versa.

The quotient bialgebra $B/I_N$ of Proposition~{\rm \ref{normalizedprop}} is connected and hence Hopf.

In particular, every grouplike normalized character $\phi\in \Hom(B,A)$, when factored through to~$\bar\phi\in \Hom(B/I_N,A)$, has an inverse computed by $\bar \phi^{-1}=\bar\phi\circ S$.

The bialgebra $B_\bq/I$ is Hopf and the $\star$ inverse of a grouplike central character $\phi\in \Hom(B,A)$, when lifted and factored through $\overline{\hat\phi}\in \Hom(B_\bq/I,A)$, has an inverse computed by $\overline{\hat\phi}^{-1}=\overline{\hat\phi}\circ S$.

Let $S=\GLE(B)$, viz.\ the grouplike elements, then the left localization $BS^{-1}$ is a Hopf algebra. Moreover if $S$ is satisfies the Ore condition and is cancellable, there is an injection $B\to BS^{-1}$ and $B_q/I=BS^{-1}/J$, where $J$ is the ideal generated by $(bg-gb)$ for all $b\in B$, $g\in \GLE(B)$.
\end{TheoremMain3}

This framework is applicable to bialgebras arising from Feynman categories, as we show in Section~\ref{catpar}, resulting in the main structural Theorem \ref{thm:mainns} in the non-symmetric case and Theorem~\ref{thm:isofey} in the symmetric case.
We concretely apply this to the classes of Feynman categories, set based or simplicial, (co)operadic and graphical corresponding to the examples mentioned in the beginning. To be more self-contained, these are given in a new set-theoretical presentation in an Appendix \ref{graphapp}.

We end in Section~\ref{sec:conclusion} with a conclusion and an outlook, which in particular discusses strategies going beyond the pathlike setting by using double categories and Drinfel'd doubles, cf.~Section~\ref{sec:drin}.

\section{Colored algebras and coalgebras}
Before delving into the constructions, we review the general notions, recall the key examples and introduce the notion of colored coalgebras, which formalizes the type of coalgebra obtained from a category or equivalently a partial monoid that is colored.
We end with a short overview of the complications that arise in the presence of isomorphisms. One resolution to this problem is to work with equivalence
classes, which is done in Section~\ref{catpar}.

\subsection{Setup and basic notation}\label{basicpar}

We provide some of the relevant notions and refer to \cite{Cartier,Susan} for more details.
Throughout we work over a commutative unital ground ring $R$. If this is taken to be a field, then it is denoted by $k$. We will denote the free
$R$-module on a set $X$ by $\Free{X}=\bigoplus_{x\in X}R$. Tensors are understood to be over the ground ring, thus $\otimes$ means $\otimes_R$.
For an $R$-module $M$ there is a~canonical isomorphism $M \otimes R\simeq M$ given by the $R$-module structure on $M$, and we will simply implement this isomorphism as an identification-effectively making $R$ a strict monoidal unit.
We will either work in the category of $R$-modules {\em or graded} $R$-modules. In~the second case, we assume that $R$ is concentrated in degree $0$ and the grading is by $\mathbb{N}$ unless otherwise stated.
We~will assume that algebras have a unit and coalgebras have a counit unless otherwise stated.
A~graded coalgebra is connected if $C_0=R$.

\subsubsection{Internal product}

There are several complications when working over a commutative ring. First,
there is no canonical way to identify a tensor product of submodules with a submodule.
This may lead to different subcoalgebra structures.
Second, a submodule with the restriction of the coalgebra structure may not yield a coalgebra.
It may not even be possible to give a coalgebra structure.
To avoid these issues, we will use an internal product denoted by $\boxtimes$ for the underlying $R$-submodules.

Given a coalgebra $(C, \Delta, \eta)$ over $R$ and two submodules $M$, $N$ of $C$, their internal product $M \boxtimes N$ is defined to be the following submodule of $C \otimes C$:
\begin{gather*}
M \boxtimes N:=\{\text{finite~sums~of~}\, x_{(1)} \otimes x_{(2)} \in C \otimes C \,|\, x_{(1)} \in M,\, x_{(2)} \in N\}.
\end{gather*}
For two families of subcomodules $\{N_j\}$, $\{M_i\}$, we will also use the notation $\Delta(x) \in \sum_{i,j} M_i \boxtimes N_j$. This means $\Delta(x)$ can be decomposed as a finite sum of
$x_{(1)} \otimes x_{(2)}$ such that every summand belongs to $M_i \boxtimes N_j$ for some $i$, $j$.
If $M \subset M'$ and $N \subset N'$ as submodules, then $M \boxtimes N \subset M' \boxtimes N'$.
For a submodule $S$, we say that $V$ {\em is a subcoalgebra}, if $\Delta(S) \subset S \boxtimes S$. Note that the counit $\eps|_S\colon S\to R$ satisfies the equations of a counit.

\begin{Example}[\cite{NicholsSweedler}]
Consider the $\mathbb{Z}$-module
$C =\frac{\mathbb{Z}}{8\mathbb{Z}} \oplus \frac{\mathbb{Z}}{2\mathbb{Z}}$. Let $x = (1,0)$
and $z = (0,1)$ and endow $C$ with a coalgebra structure by setting $\Delta(x) = 0$
and $\Delta(z) = 4x \otimes x$ . Since $4x \otimes x$ has order of 2 in $C \otimes C$ the coproduct is well-defined.
Also, $\epsilon(x) = \epsilon(z) = 0$, because they are torsion elements. Let $y = 2x$ and consider $V = \mathbb{Z}y + \mathbb{Z}z \subset C$. As $\Delta(z) = y \otimes y$, $\Delta(V) \subset V \boxtimes V$. But, there is no coalgebra structure on $V$ such that the natural inclusion is a coalgebra morphism. Indeed, the map $\Delta\colon V \to C \otimes C$ has no lifting to $V \otimes V$ because every preimage of $\Delta(z)$ has order 4.
\end{Example}

The complication arises due to torsion elements, in the case that $R=k$ is a field, we can work with the usual monoidal product $\otimes$.

\subsubsection{Semigrouplikes and connected coalgebras }
Given a coalgebra $C$, an element $g$ in $C$ is said to be \textit{semigrouplike} if $\Delta(g) = g \otimes g$. If additionally
$\epsilon(g) = 1$ then $g$ is said to be \textit{grouplike}.
Note that $0$ is always semi-grouplike, but never grouplike.
	The set of {\em non-zero} semigrouplike elements in $C$ is denoted by $\SGL(C)$ and the set
of grouplike elements in $C$ by $\GLE(C)$.
If the coalgebra has a counit $\eps$, then for $g\in \SGL(G)$: $(1-\eps(g))g=0$ that is $1-\eps(g)\in {\rm Ann}_R(g)$. Hence
such a $g$ is a torsion element, if it is not grouplike.
		Let $g$ and $h$ be grouplike elements in $C$, an element $x$ in $C$ is called $(g,h)$-\textit{skew primitive}
if $\Delta(x) = x \otimes g + h \otimes x$. 	When $g = h$, it is called $g$-\textit{primitive element}.

In a coaugmented coalgebra, the coaugmentation $\eta\colon R\to C$ preserves the counit.
Thus, setting $e=\eta(1)$, $\Delta(e)=e \ot e$, there is a splitting $C={\rm Re}\oplus \overline{C}$ and $\overline{C}=\ker(\epsilon)$.
The reduced diagonal $\bD$ is defined by $\bD(x)=\D(x)-e\ot x-x\ot e$. In~a graded coalgebra all grouplike elements are necessarily in degree $0$ as $2i=i$ implies $i=0$. In~a connected graded coalgebra therefore
$e$, which is the generator of $C_0$, is the unique grouplike element.

In a bialgebra, the semigrouplike elements form a monoid: for two semigrouplike elements $g,h\colon \Delta(gh)=gh\ot gh$.
The grouplike elements form a submonoid:
$\eps(gh)=\eps(g)\eps(h)=1$. If $C$ is coaugmented, these structures are unital with unit $e$.

In a Hopf algebra any semigrouplike element $g$ satisfies $S(g)g=gS(g)=\eps(g)e$. In~particular, if an element is grouplike, $S(g)=g^{-1}$ is the
only possible value for an antipode.

\subsubsection{Convolution algebra and characters}

Recall that given a coalgebra $C$ and an algebra $A$, the {\em convolution algebra} is the $R$-module
$\Hom(C, A)$ with multiplication given by $(f \star g) (x) = m \circ (f \otimes g) \circ \Delta (x) = \sum_{x} f(x_{(1)}) g(x_{(2)})$.
If~$A$ is unital with unit $\eta$ and $C$ is counital with counit $\eps$ then $\eta \circ \epsilon$ is a unit for the convolution algebra.

\begin{Lemma}
\label{restrictionlem}
 Given a subcoalgebra $S$,
$\Hom(S,A)$ is a unital subalgebra of the convolution algebra with unit $(\eta\otimes\eps)|_{S}=\eta\circ \eps|_{S}$.
\end{Lemma}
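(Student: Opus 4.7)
The plan is to first endow the submodule $S$ with its own coalgebra structure inherited from $C$, then verify that the convolution product on $\Hom(S,A)$ has the claimed unit, and finally explain the ``subalgebra'' clause via the restriction map.

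First, by definition of the internal product, the hypothesis $\Delta(S)\subset S\boxtimes S$ says that for every $x\in S$ one can choose a decomposition $\Delta(x)=\sum x_{(1)}\ot x_{(2)}$ with each $x_{(i)}\in S$. Hence $\Delta$ restricts to a map $\Delta_S\colon S\to S\ot S$ and, paired with $\eps|_S\colon S\to R$, makes $(S,\Delta_S,\eps|_S)$ into a coalgebra: coassociativity and counitality are equations that already hold in $C^{\ot 3}$ (resp.\ $C\ot C$), so they continue to hold after restriction to the submodule $S$. Consequently $\Hom(S,A)$ carries the convolution product $(f\star g)(x)=\sum f(x_{(1)})g(x_{(2)})$, which is associative by the usual manipulation using coassociativity of $\Delta_S$ and associativity of the product on $A$.

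Second, to identify the unit, I take an arbitrary $f\in \Hom(S,A)$ and $x\in S$, choose a decomposition of $\Delta(x)$ with both factors in $S$, and compute
\[
(f\star (\eta\circ\eps|_S))(x)=\sum f(x_{(1)})\,\eta(\eps(x_{(2)}))=f\bigg(\sum x_{(1)}\eps(x_{(2)})\bigg)=f(x),
\]
using $R$-linearity of $f$, the identification $A\ot R\simeq A$, and the counit axiom $\sum x_{(1)}\eps(x_{(2)})=x$, which already holds in $C$ and hence on $S$. The verification that $(\eta\circ\eps|_S)\star f=f$ is entirely symmetric.

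Third, for the subalgebra clause I would consider the restriction map $r\colon \Hom(C,A)\to \Hom(S,A)$, $f\mapsto f\circ \iota$, along the inclusion $\iota\colon S\hookrightarrow C$. Using again a decomposition of $\Delta(x)$ with factors in $S$, both $r(f\star g)(x)$ and $(r(f)\star r(g))(x)$ evaluate to $\sum f(x_{(1)})g(x_{(2)})$ for $x\in S$, so $r$ is multiplicative; and $r(\eta\circ\eps)=\eta\circ\eps|_S$ is precisely the unit of $\Hom(S,A)$ identified above, so $r$ is unital. The lemma is genuinely routine, and I do not expect a real obstacle; the only subtlety worth flagging is that the whole argument hinges on the internal product $\boxtimes$ rather than the naive set-theoretic intersection with $S\ot S$, which is exactly what avoids the torsion pathology of the Nichols--Sweedler example recalled above.
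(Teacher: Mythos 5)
Your first step contains a genuine error, and it is precisely the one the paper's Nichols--Sweedler example ($C=\mathbb{Z}/8\mathbb{Z}\oplus\mathbb{Z}/2\mathbb{Z}$) is recalled to warn against. From $\Delta(S)\subset S\boxtimes S$ you conclude that ``$\Delta$ restricts to a map $\Delta_S\colon S\to S\otimes S$'' and that $(S,\Delta_S,\eps|_S)$ is a coalgebra. But $S\boxtimes S$ is by definition a submodule of $C\otimes C$, namely the \emph{image} of $S\otimes S\to C\otimes C$; knowing that $\Delta(x)$ can be written as a finite sum of elementary tensors with both factors in $S$ does not produce a lift of $\Delta|_S$ through $S\otimes S$, because $S\otimes S\to C\otimes C$ need not be injective and a coherent choice of preimages may fail to exist. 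In that very example $\Delta(z)=y\otimes y\in V\boxtimes V$, yet $\Delta|_V$ has no lifting to $V\otimes V$ (every preimage of $\Delta(z)$ has order $4$ while $\Delta(z)$ has order $2$). Your closing remark that everything ``hinges on $\boxtimes$'' is correct, but step 1 contradicts it: your route works over a field, or when $S$ is a direct summand of a flat $C$, but not over the general commutative ring $R$ of the lemma.

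Much of the rest survives: the unit computation in step 2 and the multiplicativity of restriction in step 3 use only \emph{some} decomposition of $\Delta(x)$ with factors in $S$ together with the counit axiom holding in $C$, so they stand as written. Associativity of $\star$ on $\Hom(S,A)$ should likewise be deduced from coassociativity of $\Delta$ as an identity in $C^{\otimes 3}$, after noting $(\Delta\otimes\id)\Delta(S)\subset S\boxtimes S\boxtimes S$, rather than from a coalgebra structure on $S$ that need not exist. The point that then still requires an argument --- the one your lift claim was implicitly supplying, and which the paper's own two-line proof also asserts without justification when it says the formula ``yields a function from $S\boxtimes S$'' --- is well-definedness: for $f,g$ defined only on $S$, one must check that $\sum f(s_{(1)})g(s_{(2)})$ does not depend on the chosen decomposition of $\Delta(s)$, i.e., that the bilinear map $(a,b)\mapsto f(a)g(b)$ descends from $S\otimes S$ to the submodule $S\boxtimes S$ of $C\otimes C$. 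This is automatic when $f$ and $g$ are restrictions of maps defined on all of $C$, and holds whenever $S\otimes S\to C\otimes C$ is injective (for instance when $S$ is a direct summand, as in the split pathlike situations the paper actually uses), but it is the real crux of the lemma and should be addressed rather than bypassed by an inadmissible lift.
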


\begin{proof}
One has to check that $\Hom(S,C)$ is closed under composition and that it contains the unit.
Since $\Delta(S)\subset S\boxtimes S$, the formula $(f\star g)(s)=m(f\ot g)\circ \Delta(s)=\sum f(s_{(1)})g(s_{(2)})$
yields a~function from $S\boxtimes S\to C$. The unit restricts appropriately.
\end{proof}

\begin{Lemma} 
	If $C$ is coaugmented and $A$ is augmented, then if $f$ preserves the augmentation or the coaugmentation, so does its convolution inverse.
\end{Lemma}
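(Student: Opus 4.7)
The plan is to treat the two preservation claims separately, since they rely on different structural features of the convolution algebra. In both cases the target to match is the convolution unit $u = \eta_A \circ \eps_C$, which itself manifestly preserves both the coaugmentation ($u(\eta_C(1)) = \eta_A(1) = 1_A$) and the augmentation ($\eps_A\circ u = \eps_A\circ\eta_A\circ\eps_C = \eps_C$, using that $A$ is augmented).

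For the coaugmentation case, let $e=\eta_C(1)$ and let $g$ be the $\star$-inverse of $f$. First I would recall that $e$ is grouplike (as noted in Section~\ref{basicpar}), so $\Delta(e)=e\otimes e$. Evaluating $f\star g$ at $e$ then collapses to an honest product in $A$: $(f\star g)(e) = f(e)\,g(e)$. Combined with $(f\star g)(e)=u(e)=1_A$ and the hypothesis $f(e)=1_A$, this immediately forces $g(e)=1_A$, which is the required preservation of the coaugmentation.

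For the augmentation case, the clean way is to observe that post-composition with an $R$-algebra map induces an $R$-algebra homomorphism of convolution algebras. Concretely, the map
\[
(\eps_A)_{*}\colon \Hom(C,A)\longrightarrow \Hom(C,R),\qquad h\mapsto \eps_A\circ h,
\]
satisfies $(\eps_A)_*(h_1\star h_2) = (\eps_A)_*(h_1)\star (\eps_A)_*(h_2)$ because $\eps_A\circ m_A = m_R\circ(\eps_A\otimes\eps_A)$, and it sends the unit $u$ to $\eps_C$, the unit of $\Hom(C,R)$. Hence it carries $\star$-inverses to $\star$-inverses. If $f$ preserves the augmentation, then $(\eps_A)_*(f)=\eps_C$ is already the unit, so its inverse $(\eps_A)_*(g)$ must also equal $\eps_C$, which is exactly the statement that $g$ preserves the augmentation.

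There is no real obstacle: the only conceptual step is recognising the augmentation case as functoriality of convolution under post-composition by an algebra map, after which both implications are one-line verifications against the convolution unit.
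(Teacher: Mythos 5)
Your proof is correct. The coaugmentation half is exactly the paper's argument: evaluate $f\star g$ at the grouplike element $e=\eta_C(1)$, use $\Delta(e)=e\otimes e$ to get $f(e)g(e)=1_A$, and conclude $g(e)=1_A$. For the augmentation half the paper instead runs an explicit Sweedler-notation chain, $\eps_C(a)=\eps_A\big((f\star g)(a)\big)=\sum\eps_A\big(f(a_{(1)})\big)\eps_A\big(g(a_{(2)})\big)=\sum\eps_C(a_{(1)})\eps_A\big(g(a_{(2)})\big)=\eps_A(g(a))$, using that $\eps_A$ is a unital algebra map and the counit axiom. Your version packages that same computation as the statement that post-composition with the algebra map $\eps_A$ is a unital homomorphism of convolution algebras $\Hom(C,A)\to\Hom(C,R)$, hence sends inverses to inverses and the unit's inverse to itself; this is a mild but genuine gain in clarity, since it isolates the only structural inputs ($\eps_A\circ m_A=m_R\circ(\eps_A\otimes\eps_A)$ and $\eps_A\circ\eta_A=\id_R$) and makes uniqueness of two-sided inverses do the bookkeeping that the paper carries out by hand. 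Both routes are complete; no gap.
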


\begin{proof} Assume $f(1_C) = 1_A$, we need to show $g(1_C) = 1_A$.
	On one hand
\begin{gather*}	
(f \star g) (1_C)= m_A \circ (f \otimes g) \circ \Delta_C (1_C)
= (m_A \circ (f \otimes g))(1_C \otimes 1_C)
=f(1_C)g(1_C) = g(1_C).
\end{gather*}
	On the other hand, $(f \star g) (1_C)= \eta_A\eps_C(1_C)=1_A$.
The compatibility with the counit is established by:
\begin{align}	
\epsilon_C(a) = {}&\epsilon_C(a)1_R = \epsilon_C(a)\epsilon_A(1_A)=\epsilon_A( \epsilon_C(a)1_A) =\epsilon_A( \epsilon_C(a)\eta_A(1_A))=\epsilon_A( \eta_A(\epsilon_C(a)))\nonumber
\\
 ={}& \epsilon_A\bigg(\sum_af(a_{(1)})g(a_{(2)})\bigg)
= \sum_a\epsilon_A(f(a_{(1)}))\epsilon_A(g(a_{(2)}))
=\sum_a\epsilon_C(a_{(1)})\epsilon_A(g(a_{(2)}))\nonumber
\\
&\times\epsilon_A\bigg(\sum_a\epsilon_C(a_{(1)})g(a_{(2)})\bigg) = \epsilon_A\bigg(g\bigg(\sum_a\epsilon_C(a_{(1)})a_{(2)}\bigg)\bigg)
= \epsilon_A(g(a)).
\tag*{\qed}
\end{align}
\renewcommand{\qed}{}
\end{proof}

An important fact is that an antipode $S$ for a Hopf algebra $H$ is a convolution inverse to $\id\in \Hom(H,H)$, where on the left the coalgebra structure
of $H$ is used and on the right, the algebra structure of $H$ is used. As $\id$ preserves the coaugmentation and augmentation, so does~$S$.

If $B$ is a bialgebra over $R$ and $A$ is algebra over $R$, then the set of {\em characters} of $B$ with values in $A$ are the algebra homorphisms
$\Hom_{R\text{-}{\rm alg}}(B,A)$.

\begin{Definition}
We say a character is {\em grouplike invertible}, if for every grouplike element $g\colon \phi(g)\in A^\times$, {\em grouplike central} if for all $g\in \GLE(C)\colon \phi(g)\in Z(A)$, grouplike
scalar if $\phi(g)\in R^\times$ and grouplike normalized if $\phi(g)=1$ for all grouplike $g$.
\end{Definition}

Recall that in a graded setting, the grouplike elements are in degree $0$, and if $A$ is graded and~$\phi$ preserves the grading, then the grouplike elements have to land in $A_0$.
It is quite common that, even if $A$ is not commutative, $A_0$ is. In~this case all the characters preserving the grading will be grouplike central.
 In several applications, the characters are scalar and
take values $\big(\frac{1}{2\pi {\rm i}}\big)^k$, which in turn is also a form of grading, cf.\ Section~\ref{sec:quot}.

\begin{Lemma}
Any $\star$-invertible character is grouplike invertible.
\end{Lemma}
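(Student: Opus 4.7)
The plan is to unpack the definitions directly. Suppose $\phi \in \Hom(B,A)$ is a character with $\star$-inverse $\psi \in \Hom(B,A)$, so that $\phi \star \psi = \psi \star \phi = \eta_A \circ \eps_B$. Given any grouplike element $g \in B$, I want to exhibit an element of $A$ that is a two-sided inverse of $\phi(g)$.

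The key observation is that the grouplike condition $\Delta(g) = g \otimes g$ together with $\eps(g) = 1$ turns the convolution formula into ordinary multiplication in $A$ when evaluated at $g$. Concretely,
\begin{equation*}
\phi(g)\,\psi(g) = m_A \circ (\phi \otimes \psi) \circ \Delta(g) = (\phi \star \psi)(g) = \eta_A(\eps_B(g)) = 1_A,
\end{equation*}
and symmetrically $\psi(g)\,\phi(g) = (\psi \star \phi)(g) = 1_A$. Hence $\psi(g)$ is a two-sided inverse of $\phi(g)$ in $A$, so $\phi(g) \in A^\times$. Since $g$ was an arbitrary grouplike element, $\phi$ is grouplike invertible.

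There is no real obstacle here; the statement is essentially the observation that evaluation of a convolution at a grouplike element collapses to the product in $A$. Note that the argument does not actually use that $\phi$ is a character (i.e., an algebra homomorphism), only that it is an $R$-linear map with a $\star$-inverse, so the lemma in fact holds more generally for any $\star$-invertible element of the convolution algebra $\Hom(B,A)$.
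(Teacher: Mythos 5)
Your proof is correct and is essentially the paper's own one-line argument: evaluating the convolution identity at a grouplike element collapses it to multiplication in $A$, so $\phi^{\star-1}(g)$ is an inverse of $\phi(g)$. Your added remark that the character hypothesis is not actually used (only $\star$-invertibility in $\Hom(B,A)$) is accurate and a mild strengthening, but the core computation is the same.
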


\begin{proof}
This follows from the fact that for any grouplike
 $\phi^{\star -1}\star \phi(g)= \phi^{\star -1}(g)\phi(g)=1_B$.
 \end{proof}

The converse of this is true under a specific conditions on $A$ for pathlike bialgebras, see Theorem \ref{mainthm2} and Section~\ref{renompar}.

\begin{Proposition}
\label{convolutionprop}

 If $A$ is commutative, the characters form an algebra under convolution.

If $B$ is a Hopf algebra, then $\phi\circ S=\phi^{-1}$ is the {\em convolution inverse}.

If $B$ is Hopf and $A$ is commutative, then the characters form a group.
\end{Proposition}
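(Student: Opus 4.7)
For part (1), the plan is to verify directly that the set of algebra homomorphisms $\Hom_{R\text{-}\mathrm{alg}}(B,A)$ is closed under the convolution product and contains the unit $\eta_A\circ\eps_B$. The unit is multiplicative because $\eta_A$ and $\eps_B$ are ring homomorphisms (and $\eps_B$ is such precisely because $B$ is a bialgebra). For two characters $\phi,\psi$, applying $\phi\star\psi$ to a product $bb'$ and using the compatibility $\Delta(bb')=\sum b_{(1)}b'_{(1)}\ot b_{(2)}b'_{(2)}$ together with multiplicativity of $\phi$ and $\psi$ individually yields
\begin{gather*}
(\phi\star\psi)(bb') \;=\; \sum \phi(b_{(1)})\phi(b'_{(1)})\psi(b_{(2)})\psi(b'_{(2)}).
\end{gather*}
The product $(\phi\star\psi)(b)\,(\phi\star\psi)(b')$ differs only by the order of the middle two factors $\phi(b'_{(1)})$ and $\psi(b_{(2)})$, so commutativity of $A$ is exactly what is needed to conclude $\phi\star\psi$ is multiplicative. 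The main subtlety is simply being careful to track which bialgebra identity (associativity of $\Delta$ or multiplicativity of $\eps$) is used where.

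For part (2), assume $B$ is Hopf with antipode $S$. The antipode axiom $m_B\circ(\id\ot S)\circ\Delta=\eta_B\circ\eps_B$ can be pushed through a character $\phi$: applying $\phi$ (which is an algebra map) gives
\begin{gather*}
\sum \phi(b_{(1)})\phi\bigl(S(b_{(2)})\bigr) \;=\; \phi\bigl(\eta_B(\eps_B(b))\bigr) \;=\; \eps_B(b)\cdot 1_A \;=\; (\eta_A\circ\eps_B)(b),
\end{gather*}
which says $\phi\star(\phi\circ S)=\eta_A\eps_B$ in the convolution algebra. The analogous computation with $S\ot\id$ yields the other side, so $\phi\circ S$ is the two-sided $\star$-inverse of $\phi$ in $\Hom(B,A)$. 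Nothing beyond the antipode identity and the fact that $\phi$ respects products and units is required here.

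For part (3), combine (1) and (2). By (1) the characters form a monoid under $\star$ with unit $\eta_A\eps_B$; by (2) every character has a $\star$-inverse $\phi\circ S$ living in $\Hom(B,A)$. The one point that still needs checking, and is the only place that could catch one out, is that the candidate inverse $\phi\circ S$ is itself a character. This uses that $S$ is an anti-algebra homomorphism, i.e., $S(bb')=S(b')S(b)$, so
\begin{gather*}
(\phi\circ S)(bb') \;=\; \phi(S(b'))\phi(S(b)) \;=\; (\phi\circ S)(b')\,(\phi\circ S)(b),
\end{gather*}
and commutativity of $A$ allows one to reorder this back to $(\phi\circ S)(b)(\phi\circ S)(b')$. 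Thus the characters form a group under convolution, completing the proposition. The only obstacle is exactly this appeal to commutativity of $A$, which appears in both (1) and in verifying that inverses stay inside the set of characters; without commutativity one only obtains an anti-character as inverse.
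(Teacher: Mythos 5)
Your proposal is correct and follows essentially the same route as the paper's proof: closure of characters under $\star$ via the bialgebra compatibility of $\Delta$ with products plus commutativity of $A$, the inverse obtained by pushing the antipode axiom through the algebra map $\phi$, and the check that $\phi\circ S$ is again a character using that $S$ is an anti-homomorphism together with commutativity of $A$. No gaps.
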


\begin{proof}
For the first statement:
\begin{align*}
(f\star g)(ab)&=(f\ot g)\bigg(\sum a_{(1)}b_{(1)}\ot a_{(2)} b_{(2)}\bigg)=\sum f(a_{(1)})f(b_{(1)})g(a_{(2)})g(b_{(2)})
\\
&=\sum f(a_{(1)})g(a_{(2)})f(b_{(1)})g(b_{(2)})=(f\star g)(a)(f\star g)(b).
\end{align*}

For the second statement:
\begin{gather*}
((\phi\circ S)\star \phi)(a)=\!\sum_a\phi(S(a_{(1)})) \phi(a_{(2)})=\!\sum_a \phi(a_{(1)}S(a_{(2)}))=\phi(\eta\circ \eps(a))
= (\eta\circ\eps)(a).
\end{gather*}

For the last statement, we need to show that the inverse of a character is a character. Indeed, $\phi^{-1}(ab)=\phi(S(ab))=\phi(S(b)S(a))=\phi(S(b)\phi(S(a))=\phi(S(a))\phi(S(b))$, where the last equation holds since $A$ is commutative.
\end{proof}

\subsection{Key examples of coalgebras and complications}
\label{sec:key}
\subsubsection{Path coalgebra}
\label{sec:keyex}
The paradigmatic example and namesake for the article is the {\em path coalgebra} of a quiver.
Given a quiver $Q$, that is a graph with directed edges, a path is given by a sequence of consecutive directed edges $p=(\ve_1\cdots\ve_n)$.
Consecutive means that the target vertex $t(\ve_i)$ of $\ve_i$ is the source vertex $s(\ve_{i+1})$ of $\ve_{i+1}$.
By definition for each vertex $v$ there is an empty path of length $0$, with source and target $v$. This
the identity at $v$, which is traditionally denoted simply by $v$. Using this notation, let $P(Q)$ be the set of paths of $Q$, then
 $\Free{P(Q)}$ has the coalgebra structure
\begin{gather*}
\Delta(\ve_1\cdots \ve_n)=
{s(\ve_1)}\ot (\ve_1\cdots \ve_n) +
\sum_{i=1}^{n-1} (\ve_1\cdots \ve_i)\ot (\ve_{i+1}\cdots \ve_n)+ (\ve_1\cdots \ve_n)\ot {t(\ve_n)}.
\end{gather*}
The counit is $\eps(v)=1$ and $\eps(\ve_1\cdots \ve_n)=0, n\geq 0$.
There is a grading given by the length of a~path.
The grouplike elements are exactly the length $0$ paths $v$. The paths of length $1$ are exactly the skew primitive, with $(\ve)$ being
$(s(\ve),t(\ve))$-skew primitive. This coalgebra is not connected if there is more than one vertex.

\begin{Remark}
A special important case arises if one considers the quiver for a complete graph, that is one directed edge per pair of vertices. A path is then equivalently given by a sequence of vertices, that is simply a {\em word in vertices}.
 In particular, the complete graph on two vertices $\{0,1\}$, yields the quiver $\lefttorightarrow 0 \leftrightarrows 1\righttoleftarrow$ whose path algebra underlies Goncharov's and Brown's Hopf algebras for polyzetas \cite{BrownICM,Gont}. This is also the fundamental path groupoid for $\mathbb{C}\setminus\{0,1\}$ with tangential base points and is directly linked, cf.\ \cite[Section~1]{HopfPart1}, to Chen iterated integrals~\cite{ChenIterated}. The case with many vertices corresponds to polylogs.
The bi-algebra structure is actually founded in a simplicial structure, see \cite[Section~4]{HopfPart1}, \cite[Section~3.3.1]{HopfPart2}.
\end{Remark}

\subsubsection{Incidence coalgebra}
Another big source of inspiration and examples comes from combinatorics via {\em incidence coal\-ge\-bras}, as studied in \cite{JR,Schmitt}.
This is a coalgebra on the free $R$ module on the set of intervals~$[x,y]$
\begin{gather*}
\D([x,y])
=[x,x]\ot [x,y] +\sum_{z\colon x<z<y}[x,z]\ot [z,y] + [x,y]\ot [y,y].
\end{gather*}
The co-unit evaluates to $1$ on $[x,x]$ and $0$ else.

 \subsubsection{Categorical coalgebra}
These two examples are special cases of coalgebras stemming from a category with finite decomposition,
which will be the main case of interest in the applications.
Let $\C$ be a small category. This means that both the objects $X=\Obj(\C)$ and the morphisms $M=\Mor(\C)$ are sets.
The mapping $x$ to $\id_x$ identifies $X$ with a subset $X\subset M$ and $M=X\amalg \overline{M}$.
The free $R$ modules split accordingly $\Free{M}=\Free{I}\oplus \Free{\overline M}$.

Furthermore, restrict to the case where $\C$ is decomposition finite, this is for each $\phi\in M$, there are only finitely many pairs
$(\phi_0,\phi_1)\in M\times M$ such that $\phi_0\cdot \phi_1=\phi_1\circ\phi_0=\phi$.

The {\em categorical monoid coalgebra} $C[M]$ is defined to be the free $R$-module $\Free{M}$ with coproduct for $\phi\colon x\to y$ given by
\begin{gather*}
\Delta(\phi)=\sum_{(\phi_0,\phi_1)\colon\phi_0\cdot \phi_1=\phi}\phi_0\ot \phi_1=
\id_x \ot \phi + \phi\ot \id_y +\sum_{(\phi_0,\phi_1)\in \overline{M}\times \overline{M}\colon\phi_0\cdot \phi_1=\phi }\phi_0\ot\phi_1
\end{gather*}
and counit $\eps(\id_x)=1$ for the identity maps and $\eps(\phi)=0$ if $\phi$ is not an identity map.
So, let\-ting~$\eps_X$ be the projection onto $\Free{X}$, $\eps$ factors through $\eps_X$.

These coalgebras were analyzed in \cite{HopfPart1,HopfPart2,feynman}, and their history goes back to \cite{Moebiusguy}, see also~\cite{JR}.

\begin{Remark}
Note that there are two equivalent, opposite, ways to write down the composition maps,
which correspond to the two ways to write the compositions
$\Mor \stimest \Mor \to \Mor$ or $\Mor \bisub{t}{\times}{s} \Mor\to \Mor$
\begin{gather}
\Hom(X,Y)\times \Hom(Y,Z)\to \Hom(X,Z), \qquad
(\phi_0,\phi_1)\mapsto \phi_0\cdot \phi_1:= \phi_1\circ \phi_0,
\label{eq:monoidal}
\\
\Hom(Z,Y)\times \Hom(X,Y)\to \Hom(X,Z), \qquad
(\phi_0,\phi_1)\mapsto\phi_0\circ \phi_1.
\label{eq:catogorical}
 \end{gather}
We will call the first monoidal and the second categorical.
The two coproducts are opposites, i.e., $\Delta$ and $\Delta^{\rm op}=\tau\circ \Delta$, where $\tau$ is the flip.
The first version of this for the coproduct is what is used for posets and quivers and fits with the Connes--Kreimer coproduct \cite{CK,HopfPart1,HopfPart2},
where the subgraph is on the left and the cograph is on the right, cf. \eqref{eq:graphcoprod}. The second one is
more natural from a category point of view and corresponds to the tree Hopf algebra, where the stump is on the left and the branches
on the right, cf.\ \eqref{eq:CKcoprod}.

Note that this ambiguity is non-essential for a coalgebra or a bialgebra.
For a Hopf algebra, it might {\it a priori} make a difference,~-- $H^{\rm op,cop}$ is a Hopf algebra, but $H^{\rm cop}$ may not be~--
but if the Hopf algebra comes from a category $H^{\rm cop}$, is also Hopf, cf.\ Proposition \ref{prop:opprop}.
\end{Remark}

\begin{Remark}
Path and incidence coalgebras arise from categories as follows: For $P(Q)$ the objects are the vertices of $Q$ and $M=P(Q)$.
The source $s$ and target map $t$ map a path to its start and end vertices, respectively. The composition is the concatenation of paths, and the identities are the length $0$ paths.
 This is the free category generated by the morphisms corresponding to the edges.

 A poset $(S,\prec)$ defines a category whose objects are $S$ and whose morphisms are defined as follows.
There is one exactly one morphism $\phi_{x\to y}$
between $x$ and $y$ if and only if $x\preceq y$. The category is locally finite when the poset is.
The categorical coproduct is
$\D(\phi_{x\to y})=\sum_{z\in [x,y]} \phi_{x\to y}\ot \phi_{y\to z}
$ or in the usual poset notation, where one identifies $\phi_{x\to y}$ with the interval $[x,y]$. The identities are $\id_x=[x,x]$.

This category is the quotient of a quiver category.
The quiver has vertices $S$ and directed edges given by the
$(x,y)$-skew primitives. These are the elements $x\prec y$, $x\neq y$, where there is no $z\colon x\prec z \prec y$.
The quotient is given by identifying two morphisms, i.e., paths, $p$ and $q$ whenever they have the same source and target.
Categorically speaking this trivializes $\pi_1$ of the category making each component simply connected.
\end{Remark}

Algebraically speaking, a small category is the same as a colored monoid which is a particular type of partial monoid.

\begin{Definition}
A {\em colored monoid} $M$ is a set together with the following data. A set $X$ of colors, two morphisms $s,t\colon M\to X$, and a partial product
$\circ\colon M\stimest M\to M$
which is associative in the sense that $(ab)c$ exists, then so does $a(bc)$ and they coincide.

It is {\em unital} if there is morphism $\id\colon X\to M$, which is a section of both $s$ and $t$
such that $a\circ \id(t(a))=\id(s(a))\circ a=a$. $M=\amalg_{x,y}M_{x,y}$, where
 $M_{x,y}=\{a\,|\,s(a)=x,\,t(a)=y\}$.

A colored monoid is called {\em decomposition or locally finite}, if all the fibers $\circ^{-1}(m)$ for $m\in M$ are finite.

It is graded if there is a degree function $\deg\colon M\to \N_0$ such that $\operatorname{deg}(ab)=\operatorname{deg}(a)+\operatorname{deg}(b)$.
A degree function is proper is $\operatorname{deg}(a)=0$ if and only if $a$ is invertible.
\end{Definition}

The equivalence between colored monoids and categories is given by the identifications $X=\Obj{\C}$ and $M=\Mor(\C)$ with $M_{x,y}=\Hom(x,y)$.

We will need the following result \cite[Lemma 1.11]{HopfPart2}, which rephrased for colored monoids states that:
\begin{Lemma}
 In a decomposition finite colored unital monoid any left or right invertible element is invertible.
 \end{Lemma}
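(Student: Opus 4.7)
The plan is a contrapositive argument against decomposition finiteness: assume $a\in M$ has a one-sided inverse but is not two-sidedly invertible, and manufacture an infinite family of distinct decompositions of an identity morphism.

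By the symmetry of the statement (apply to $M^{\op}$), it suffices to treat the case where there is $b\in M$ with $a\cdot b=\id_{s(a)}$ in the $\cdot$-convention. Set $e:=b\cdot a\in M_{t(a),t(a)}$. Associativity immediately gives
\begin{gather*}
e\cdot e \;=\; b\cdot(a\cdot b)\cdot a \;=\; b\cdot\id_{s(a)}\cdot a \;=\; b\cdot a \;=\; e,
\end{gather*}
so $e$ is an idempotent endomorphism of $t(a)$; failure of two-sided invertibility of $a$ is precisely the statement $e\neq \id_{t(a)}$, which is what I want to refute.

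The substance of the argument lies in the endomorphism case $s(a)=t(a)=x$, inside the ordinary monoid $M_{x,x}$. A one-step induction from $a\cdot b=\id_x$ yields $a^n\cdot b^n=\id_x$ for every $n\geq 1$ (each step collapses an interior $a\cdot b$ to $\id_x$), so each pair $(a^n,b^n)$ is a decomposition of $\id_x$ lying in the fiber $\circ^{-1}(\id_x)$. Distinctness of these pairs reduces to distinctness of the $a^n$. If $a^n=a^m$ for some $n>m$, then iterated left-multiplication by $b$ (each step using $b\cdot a\cdot x=e\cdot x$ followed by $e\cdot a^k=a^k$) telescopes down to $a^{n-m}=\id_x$; combined with $a\cdot b=\id_x$ and uniqueness of two-sided monoid inverses this forces $b\cdot a=\id_x$, that is $e=\id_x$, contradicting our assumption. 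Therefore $\{(a^n,b^n)\}_{n\geq 1}$ is an infinite family in $\circ^{-1}(\id_x)$, directly contradicting decomposition finiteness.

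The main technical obstacle is the genuinely colored case $s(a)\neq t(a)$, since the literal powers $a^n$ do not exist in $M$. My strategy is to transfer the argument into one of the endomorphism sub-monoids $M_{s(a),s(a)}$ or $M_{t(a),t(a)}$ by iterating the alternating composites generated from $a$ and $b$: the failure of $e=\id_{t(a)}$ together with $a\cdot b=\id_{s(a)}$ should feed the endomorphism machinery and produce an infinite family of decompositions of $\id_{s(a)}$ in $M$. The careful combinatorial bookkeeping needed to verify that the resulting decompositions are genuinely distinct elements of the fiber $\circ^{-1}(\id_{s(a)})$ is the delicate step, and is the place where I expect to spend most of the effort.
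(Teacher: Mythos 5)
The paper does not actually prove this lemma; it imports it as Lemma~1.11 of \cite{HopfPart2}, so there is no in-house argument to compare against. Judged on its own terms, your proposal is sound only in the single-colored (endomorphism) case, and even there one step needs repair. From $a\cdot b=\id_x$ you correctly obtain the decompositions $a^n\cdot b^n=\id_x$; but the telescoping identity $e\cdot a^k=a^k$ (with $e=b\cdot a$) is not available: the relations you actually have are $a\cdot e=a$ and $e\cdot b=b$, and $e\cdot a^k=a^k$ would already assert that $e$ acts as a left identity on $a$, which is essentially what you are trying to prove. The fix is to multiply $a^n=a^m$ on the right by $b^m$ instead: $a^{n-m}=a^{n}\cdot b^{m}=a^{m}\cdot b^{m}=\id_x$, whence $a^{n-m-1}$ is a two-sided inverse of $a$. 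With that repair the one-object case is the classical bicyclic-monoid argument and is correct: either all $a^n$ are distinct, so $\circ^{-1}(\id_x)$ is infinite, or $a$ is invertible.

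The genuinely colored case, which you defer to ``delicate bookkeeping,'' is where the proposal actually breaks, and not for bookkeeping reasons. With $a\colon x\to y$, $b\colon y\to x$, $a\cdot b=\id_x$ and $e=b\cdot a$, every alternating word in $a$ and $b$ collapses to one of $\id_x$, $a$, $b$, $e$ (using $a\cdot e=a$, $e\cdot b=b$, $e\cdot e=e$), so no infinite family of two-fold decompositions of any fixed element is ever produced, and there is nothing useful to transfer to $M_{x,x}$ or $M_{y,y}$: in the extremal case $M_{x,x}=\{\id_x\}$ is trivial and $e\in M_{y,y}$ is not one-sidedly invertible there. Concretely, the free-living retraction --- two objects and five morphisms $\id_x$, $\id_y$, $a$, $b$, $e=b\cdot a$ with $a\cdot b=\id_x$ and $e\neq\id_y$ --- has all fibers $\circ^{-1}(m)$ finite in the sense of the paper's definition, yet $a$ is one-sidedly invertible and not invertible. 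So binary decomposition finiteness alone cannot yield the colored case; the hypothesis must be read in the stronger Leroux/M\"obius sense of finitely many factorizations into non-identity morphisms of arbitrary length. Under that reading the whole lemma, colored case included, is immediate and uniform: if $e=b\cdot a\neq\id_y$, then $e=e\cdot e=e\cdot e\cdot e=\cdots$ already gives infinitely many such factorizations of $e$. I would therefore abandon the reduction-to-endomorphisms plan and argue directly through the idempotent $e$, after pinning down which finiteness condition is actually in force.
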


A categorical coalgebra is a dual construction to that of a colored monoid. In~the case of finite $X$ this is straightforward.
If $X$ is not finite there are subtleties which are relegated to Appendix \ref{sec:duals}.
The class of the resulting coalgebras is codified as colored coalgebras.

\subsubsection{Complications from isomorphisms}
\label{isopar}
 A colored unital monoid is a groupoid, if all of its elements are invertible in the colored sense, viz\ there is a morphism ${}^{-1}\colon M\to M$, such that
$\phi^{-1}\circ \phi=\id_{s(\phi)}$ and $\phi\circ \phi^{-1}=\id_{t(\phi)}$.
As~a~category, this means that all morphisms are isomorphisms.
Given a category $\C$, the underlying groupoid $\operatorname{Iso}(\C)$ is defined by the objects of $\C$ with only the isomorphisms.
This groupoid acts from the left and right on morphisms by conjugation $\phi\to \sds(\phi):=\sigma'
\circ\phi\circ \sigma^{-1}$.

\begin{Lemma}
\label{lem:discrete}
The product of $M$ being locally finite necessitates that there are only finitely many objects in each isomorphism class and that all automorphism groups are finite.

An identity morphism $\id_x$ is grouplike in $C[M]$, if and only if $x$ is the only element in its isomorphism class and it has no non-trivial automorphisms.

If there is a skew-primitive morphism $\phi\colon x\to y$, then $x$ and $y$ are the only elements in their isomorphism class and both have no non-trivial automorphisms.
\end{Lemma}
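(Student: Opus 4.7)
The plan is to prove all three parts by exploiting the explicit formula for $\Delta$ on the categorical coalgebra $C[M]$ and the linear independence of basis tensors $m_0\ot m_1$ in $\Free{M}\ot \Free{M}$.

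For part~(1), I would examine the fiber of the partial composition $\circ$ over $\id_x$. A pair $(\phi_0,\phi_1)$ with $\phi_1\circ \phi_0=\id_x$ exhibits $\phi_1$ as a right inverse of $\phi_0$; invoking the preceding lemma (in a locally finite colored unital monoid, any one-sided invertible element is invertible), every such pair has the form $(\phi,\phi^{-1})$ with $\phi\colon x\to z$ an isomorphism. Consequently the fiber contains one pair for each object $z$ in the isomorphism class of $x$ and one for each $\sigma\in \Aut(x)$ (when $z=x$); finiteness of the fiber forces both sets to be finite.

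For part~(2), the same analysis yields
\begin{gather*}
 \Delta(\id_x)=\id_x\ot \id_x+\sum_{\phi\colon x\to z \text{ iso},\;\phi\neq \id_x}\phi\ot \phi^{-1}.
\end{gather*}
The summands are pairwise distinct basis tensors linearly independent from $\id_x\ot \id_x$, so $\Delta(\id_x)=\id_x\ot \id_x$ iff the sum is empty, iff every isomorphism out of $x$ is $\id_x$. Combined with the automatic $\eps(\id_x)=1$, this is precisely the claimed characterisation.

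For part~(3), the strategy has three stages. First, I would pin down the support of any hypothetical grouplike partners $g,h$ in $\Delta(\phi)=\phi\ot g+h\ot \phi$: writing $g=\sum_m\alpha_m m$ and imposing $\Delta(g)=g\ot g$ basis-elementwise forces $\operatorname{supp}(g)$ to lie in the endomorphisms of a single object $z_g$ (elements of the support must be pairwise composable, in both orders, with themselves), with $\alpha_{\id_{z_g}}=1$ by the counit condition. Matching the trivial-decomposition coefficients of $\id_x\ot \phi$ and $\phi\ot \id_y$ on the two sides of the skew-primitive equation then forces $z_h=x$ and $z_g=y$. Second, for any non-identity $\sigma\in \Aut(x)$ the pair $(\sigma,\phi\sigma^{-1})$ is a bona fide decomposition of $\phi$ and contributes $1$ to the coefficient of $\sigma\ot \phi\sigma^{-1}$ on the left; on the right, $\phi\ot g$ only contributes basis tensors with left factor $\phi$ and $h\ot \phi$ only those with right factor $\phi$, so as soon as $\sigma\neq \phi$ and $\phi\sigma^{-1}\neq \phi$ one obtains $0\neq 1$, a contradiction, forcing $\Aut(x)=\{\id_x\}$. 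Third, completely analogous decompositions $(\sigma'\phi,\sigma'^{-1})$ for $\sigma'\in \Aut(y)$ and $(\psi,\phi\psi^{-1})$, $(\psi'\phi,\psi'^{-1})$ for external isomorphisms $\psi\colon x\to x'$, $\psi'\colon y\to y'$ yield the triviality of $\Aut(y)$ and of both isomorphism classes.

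The step I expect to be the main obstacle is the boundary case in which $\phi$ itself is an isomorphism, because then some of the auxiliary decompositions above collapse onto the trivial ones $(\id_x,\phi)$ or $(\phi,\id_y)$ and the identification $\sigma\neq \phi$ fails. Covering this case requires a short separate argument: if $\phi\in \Aut(x)$ and $|\Aut(x)|\geq 3$ one simply picks $\sigma\notin\{\id_x,\phi\}$ and reruns the main argument, while the case $|\Aut(x)|=2$ must be ruled out directly using part~(2) (since $\id_x$ would have to be grouplike while $\Aut(x)$ is non-trivial), possibly invoking a mild hypothesis on $R$ to exclude non-trivial square roots of unity so that no exotic grouplike of the form $\id_x+c\phi$ can serve as a skew-primitive partner.
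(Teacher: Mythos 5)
Your proposal is correct and follows essentially the same route as the paper: both arguments read off, from the deconcatenation formula, the decomposition channels $(\sigma,\sigma^{-1})$ of $\id_x$ and $\big(\sigma,\sigma^{-1}\cdot\phi\big)$, $\big(\phi\cdot\sigma,\sigma^{-1}\big)$ of $\phi$ contributed by isomorphisms $\sigma$, use local finiteness to bound them (part 1), and use grouplikeness resp.\ skew-primitivity to force them to collapse to the trivial channels (parts 2 and 3). You are more careful than the paper in two places, and usefully so. First, you invoke the one-sided-invertibility lemma to identify the \emph{entire} fiber of the composition over $\id_x$ as consisting of pairs $\big(\sigma,\sigma^{-1}\big)$; this is needed for the ``if'' direction of part 2 and is left implicit in the paper. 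Second, for part 3 the paper simply displays the coproduct of a \emph{noninvertible} $\phi$ and asserts that skew-primitivity kills every term except $\phi\ot\id_y+\id_x\ot\phi$, i.e., it tacitly takes the skew-primitive partners to be the identity morphisms and excludes invertible $\phi$, whereas you allow arbitrary grouplike partners $g,h$ and isolate the invertible boundary case. Your caution there is warranted: for the one-object groupoid with automorphism group $\Z/2=\{\id_x,\phi\}$ over a ground ring with $2=0$, the element $g=\id_x+\phi$ is grouplike and $\Delta(\phi)=\id_x\ot\phi+\phi\ot\id_x=\phi\ot g+g\ot\phi$, so $\phi$ is $(g,g)$-skew primitive while $\Aut(x)$ is nontrivial. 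This shows the extra hypothesis you anticipate (partners are identities, or no $2$-torsion in $R$) is genuinely needed under the liberal reading of ``skew-primitive''; it is slack in the lemma's formulation rather than a gap in your argument.
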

\begin{proof}
For an identity $\id_x$ the deconcatenation coproduct
will have a term $\sigma\ot \sigma^{-1}$ of any $\sigma\in \operatorname{Iso}(x,-)$, which proves the first two statements.

For a decomposition finite noninvertible $\phi\colon x\to y$ in the monoidal convention \eqref{eq:monoidal} the coproduct is
 \begin{equation}
\label{isocoprodeq}
\D(\phi)=\sum_{\s_x\in \operatorname{Iso}(X,-), \sigma_y\in \Iso (Y,-) }\big[\s_x\ot \s_x^{-1} \cdot \phi + \phi \cdot \s_y\ot \s_y^{-1}\big]+\cdots,
\end{equation}
where the first summands are always present.
The element $\phi$ being skew-primitive means that all the terms except $\phi\ot \id_y+\id_x\ot \phi$ are not present, which is the third statement.
\end{proof}

 We will call a morphism $\phi$ {\em essentially indecomposable} if the only decompositions into two factors have at least
 one factor which is an isomorphism. For an identity these are the terms $\sigma\ot \sigma^{-1}$,
 and if $\phi$ is not an identity, the corresponding terms of~$\Delta(\phi)$ are the displayed terms in~\eqref{isocoprodeq}.

Taking isomorphism classes will make it possibly to remedy the situation arising from too few grouplikes and skew-primitives
in case the action of $\operatorname{Iso}(\C)$ behaves nicely; which it does for a Feynman category,
see Section~\ref{channelpar}.

If there is only one object in each isomorphism class of objects, i.e., $\C$ is skeletal, then the sum will only be over automorphisms. In~the case of a finite groupoid with just one color, i.e., $M=G$ is a finite group, the deconcatenation coproduct is simply the familiar
\begin{gather*}
\Delta(g)=\sum_{h\in G} gh\ot h^{-1}=\sum_{h\in G} h\ot h^{-1}g.
\end{gather*}

\subsection{Colored co- and bialgebras}\label{sec:colored}
\subsubsection{Colored coalgebras}
The coalgebra on $X$, denoted by $C[X]$, is the coalgebra structure on $\Free{X}$, defined by letting all generators be grouplike, i.e., $\Delta(x)=x\ot x$ and $\eps(x)=1$.
Such a coalgebra, viz.\ a coalgebra freely generated by grouplikes, is often called {\em setlike}.

\begin{Lemma}\label{lem:grading}
A right $($or left$)$ coaction $\rho$ by $C[X]$ on an $R$ module $M$ is equivalent to grading by $X$, that is $M=\bigoplus_{x\in X}M_{x}$. Similarly having a right and a left coaction is the same as a~double grading by $X$, that
is $M=\bigoplus_{(x,y) \in X\times X}M_{x,y}$.
\end{Lemma}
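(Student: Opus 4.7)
The plan is to exploit the fact that $C[X]$ is setlike, so that $C[X]=\bigoplus_{x\in X}Rx$ with every $x$ grouplike, and use this decomposition to translate the coaction axioms into orthogonal idempotent relations that are equivalent to a direct sum decomposition of $M$.

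For the right-coaction case, given $\rho\colon M\to M\otimes C[X]$, every element $\rho(m)$ admits a \emph{unique} expansion $\rho(m)=\sum_{x\in X}\rho_x(m)\otimes x$, with only finitely many nonzero summands. This defines $R$-linear maps $\rho_x\colon M\to M$. First I would unpack counitality $(\id\otimes\eps)\circ\rho=\id$: since $\eps(x)=1$ for every $x\in X$, this gives $\sum_x\rho_x(m)=m$ for all $m$, hence $\sum_x\rho_x=\id_M$. Next, coassociativity $(\id\otimes\Delta)\circ\rho=(\rho\otimes\id)\circ\rho$: the left-hand side equals $\sum_x\rho_x(m)\otimes x\otimes x$ because each $x$ is grouplike, while the right-hand side equals $\sum_{x,y}\rho_y(\rho_x(m))\otimes y\otimes x$. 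Matching coefficients on the basis $\{y\otimes x\}$ of $C[X]\otimes C[X]$ yields $\rho_y\circ\rho_x=\delta_{x,y}\rho_x$. Thus $\{\rho_x\}_{x\in X}$ is a family of pairwise orthogonal idempotents summing to $\id_M$, and setting $M_x:=\rho_x(M)=\{m\in M\mid\rho(m)=m\otimes x\}$ gives $M=\bigoplus_{x\in X}M_x$. Conversely, any such grading determines a coaction by declaring $\rho(m_x):=m_x\otimes x$ for $m_x\in M_x$ and extending $R$-linearly; the coaction axioms are then immediate from the fact that every $x$ is grouplike with $\eps(x)=1$. The left-coaction case is identical up to swapping tensor factors.

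For the bicomodule statement, I would apply the first part twice. A right coaction yields $M=\bigoplus_{y\in X}M^{r}_y$ and a left coaction yields $M=\bigoplus_{x\in X}M^{\ell}_x$, with corresponding idempotents $\rho^r_y$ and $\rho^\ell_x$. The bicomodule compatibility $(\id\otimes\rho^r)\circ\rho^\ell=(\rho^\ell\otimes\id)\circ\rho^r$, evaluated in the basis of $C[X]\otimes M\otimes C[X]$, translates to $\rho^\ell_x\circ\rho^r_y=\rho^r_y\circ\rho^\ell_x$ for all $x,y$. Hence the two families of idempotents commute, their products $\rho^\ell_x\rho^r_y$ form a family of orthogonal idempotents summing to $\id_M$, and setting $M_{x,y}:=\rho^\ell_x\rho^r_y(M)$ gives the double grading $M=\bigoplus_{(x,y)\in X\times X}M_{x,y}$. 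The converse direction is again direct: given a double grading, set $\rho^r(m_{x,y})=m_{x,y}\otimes y$ and $\rho^\ell(m_{x,y})=x\otimes m_{x,y}$.

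The only real substance is the observation that the grouplike nature of all generators of $C[X]$ is precisely what makes coassociativity equivalent to the idempotent relations; everything else is bookkeeping. I do not anticipate a genuine obstacle, though one should be mindful that the ``basis'' arguments above rest on the fact that $C[X]$ is \emph{free} on $X$, so that coefficient comparison in $M\otimes C[X]$ and $M\otimes C[X]\otimes C[X]$ is valid over the commutative ground ring $R$.
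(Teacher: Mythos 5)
Your proof is correct and follows essentially the same route as the paper's: expand $\rho(m)=\sum_x m_x\otimes x$ using freeness of $C[X]$, use the counit to get $\sum_x m_x=m$ and coassociativity (via $\Delta(x)=x\otimes x$) to get the orthogonality $\delta_{x,y}$, yielding the direct sum $M=\bigoplus_x M_x$ with $M_x=\{m\mid\rho(m)=m\otimes x\}$, and then combine the commuting left and right decompositions for the bigrading. Your packaging of the argument in terms of a complete family of orthogonal idempotents $\rho_x$ is only a stylistic elaboration of the paper's terser computation.
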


\begin{proof}Given a coaction, set $M_{x}=\{m\,|\, \rho(m)=m\ot x\}$. Each
 $\rho(m)=\sum_x m_x\ot x$ is a finite sum and thus the module is the sum of the $M_x$. Furthermore $\D(x)=x\ot x$ so that $(m_x)_y=\delta_{x,y}m_x$.
and the sum is direct. Vice-versa setting $\rho(m_x)=m\ot x$ for $m_x\in M_x$ defines a coaction. One can proceed similarly for a left coaction.
The fact that left and right coactions commute proves the last statement.
\end{proof}

 For a coalgebra, given a left comodule $N$, $\lambda\colon N\to N\ot C$ and a right comodule $M$, $\rho\colon M\to C\ot M$
the cotensor product $M \bisub{\lambda}{\square}{\rho} N$ is defined as the coequalizer
\begin{equation*}
\begin{tikzcd}[column sep=large]
	M \bisub{\lambda}{\square}{\rho} N \arrow[r] & M \otimes_R N \arrow[r, "\rho_R \otimes_R \id_N", shift left] \arrow[r, "id_M \otimes_R \rho_L"', shift right] & M \otimes_R C \otimes_R N.
\end{tikzcd}
\end{equation*}
This is generated by elements $c\times c'$ with $\rho(c)=\lambda(c')$ which is dual to the condition that $t(m_1)=s(m_2)$.

\begin{Definition}
A {\em colored coalgebra $C$ with colors $X$} is a coalgebra $C$ together with a bi-co\-module structure of $C$ over $C[X]$, that is
$\lambda\colon C\to C[X]\ot C$, $\rho\colon C\to C[X] \ot C$ such that
 the comultiplication map is a morphism in the category of $C[X]$ bi-comodules and that $\D$ factors through the cotensor product:
\begin{equation*}
\xymatrix{
C\ar[rr]^\D\ar[dr]&&C\ot C\\
&C \bisub{\lambda}{\square}{\rho} C.\ar@{^(->}[ur]&
}
\end{equation*}
When omitting the mention of the colors, we implicitly assume the coalgebra is colored by its semigrouplike elements.
A colored coalgebra with colors $X$ is {\em color counital}, if there is a map of~$C[X]$ bi-comodule co-algebras $\eps_X\colon C\to C[X]$ such that
\begin{gather*}
(\eps_X\ot \id) \D = \lambda \qquad \text{and} \qquad (\id\ot \eps_x)\D=\rho,
\end{gather*}
and color coaugmented if there is a coalgebra map of $C[X]$ bi-comodules $i_X\colon C[X]\to C$ splitting~$\eps_X$.
 A graded color coaugmented coalgebra colored by $X$ is {\em color reduced} if $C_0=C[X]$.
\end{Definition}

By Lemma~\ref{lem:grading} a coalgebra colored by $X$ decomposes as $C=\bigoplus_{(x,y)\in X\times X} C_{x,y}$.
The condition of begin a map of $C[X]$ bi-comodules factoring through the cotensor product means that
\begin{equation*}
\D(C_{x,y})\subset \bigoplus_{z} C_{x,z}\boxtimes C_{z,y}.
\end{equation*}
If $C$ is color coaugmented,
$C=C[X]\oplus \overline{C}$,
where $\overline {C}=\ker(\eps_X)$ with $i_X(x)\in C_{x,x}$.
 Moreover,
$C_{x,x}= Ri_X(x) \oplus \overline {C}_{x,x}$. Setting $\overline C_{x,y}=C_{x,y}$ this means that $\overline{C}=\bigoplus_{(x,y)\in X\times X}\overline C_{x,y}$.
Since $\eps_X$ is a coalgebra map $\eps|_{\overline{ C}}=0$.

\begin{Remark}

If $C$ is co-augmented as a coalgebra, setting $X=\{e\}$, $\lambda (c)=e\ot c$ and $\rho(c)=c\ot e$ defines a coloring with one color $C_{e,e}=C$.
Thus, a color coaugmented coalgebra colored by~$X$ generalizes the notion of a coaugmented coalgebra to many colors corresponding to sets of grouplike elements.
\end{Remark}

\begin{Definition}
For a color coaugmented coalgebra colored by $X$, we define the {\em reduced dia\-go\-nal} to be
$
\bD =\Delta-\tilde\lambda- \tilde\rho
$,
where $\tilde \lambda = (i_X\ot \id)\lambda= i_X\eps_X\ot \Delta$ and $\tilde \rho = (\id\ot i_X)\rho= (\id\ot i_X\eps_X)\D$.
\end{Definition}
Note that $\pi_X=i_X\eps_X$ is the projection to the factor $C[X]$.

\begin{Definition} 	The \textit{Quillen filtration} of a color coaugmented coalgebra $C$ colored by $X$ is defined by:
 \begin{equation} \label{eq:QF}
 \FQ_0C= C[X], \qquad
\FQ_rC = \big\{x \in C \,|\, \bD\in \FQ_{r-1}C \boxtimes \FQ_{r-1}C\big\}.
\end{equation}
If this filtration is exhaustive $C = \bigcup_{i \geq 0}F^Q_iC$, then the coalgebra will be called {\em color connected}.
\end{Definition}
If $k$ is a field and $C$ is colored by $e$, this agrees with original definition of \cite{Quillen}, thus in the case of just one color,
we will say that $C$ is {\em Quillen connected} precisely if it is color connected.
A~graded coalgebra is Quillen connected if the degree zero part is isomorphic to $R$; in the literature~$R$ is often assumed to be a field $k$.

\begin{Theorem}\label{thm:colorstructure}
 Let $C$ be a colored coaugmented coalgebra colored by $X$. Then, for every $c \in \overline{C}_{x,y}$, $\Delta(c)$ can be written as $x \otimes c + c \otimes y + w$ with $w \in \overline{C} \boxtimes \overline{C}$, that is $w= \bD(X)$.

Additionally assuming that $C=\bigoplus_{i \geq 0}C_i$ is graded and color reduced, and thus also $\overline{C}$ is graded, we moreover have that
for $c \in \overline{C}_n$ and $n > 0$, $w \in \bigoplus_{i = 1}^{n - 1} C_i \boxtimes C_{n-i}$.
\end{Theorem}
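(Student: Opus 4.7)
The plan is to isolate the grouplike and non-grouplike components of $\Delta(c)$ via the two projections furnished by the color coaugmentation, then match each component with the color-counit data. Set $\pi_X := i_X \circ \epsilon_X$, an idempotent projecting onto the $C[X]$ summand of the splitting $C = C[X]\oplus\overline{C}$, and write $\bar\pi := \id_C - \pi_X$ for the complementary projection onto $\overline{C}$. Using $\id = \pi_X + \bar\pi$ on both tensor factors, decompose
$$\Delta = (\pi_X\otimes\pi_X)\Delta + (\pi_X\otimes\bar\pi)\Delta + (\bar\pi\otimes\pi_X)\Delta + (\bar\pi\otimes\bar\pi)\Delta.$$

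Next, I would identify each summand when applied to $c\in\overline{C}_{x,y}$. The first vanishes because $\epsilon_X$ is a coalgebra map, giving $(\epsilon_X\otimes\epsilon_X)\Delta = \Delta_{C[X]}\circ\epsilon_X$, and $\epsilon_X(c)=0$ since $\overline{C}=\ker(\epsilon_X)$. The color-counit relation $(\epsilon_X\otimes\id)\Delta=\lambda$ gives $(\pi_X\otimes\id)\Delta = \tilde\lambda$; combined with the vanishing of the first summand, this forces $(\pi_X\otimes\bar\pi)\Delta(c) = \tilde\lambda(c)$. Since $c\in C_{x,y}$, the color grading of Lemma~\ref{lem:grading} yields $\lambda(c)=x\otimes c$, hence $(\pi_X\otimes\bar\pi)\Delta(c) = x\otimes c$ after identifying $i_X(x)\equiv x$. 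Symmetrically $(\bar\pi\otimes\pi_X)\Delta(c)=c\otimes y$. Therefore $\Delta(c)=x\otimes c + c\otimes y + w$ with $w:=(\bar\pi\otimes\bar\pi)\Delta(c) \in \overline{C}\boxtimes\overline{C}$, which is exactly $\bar\Delta(c)$.

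For the graded color-reduced statement, I would combine the above with $C_0=C[X]$: this forces $\pi_X$ to be the identity on $C_0$ and zero on $C_i$ for $i>0$, and $\bar\pi$ to do the reverse. Since $\Delta$ preserves the grading, $\Delta(c)\in\bigoplus_{i+j=n}C_i\boxtimes C_j$ for $c\in C_n$; applying $\bar\pi\otimes\bar\pi$ then annihilates exactly the $i=0$ and $j=0$ summands, leaving $w\in\bigoplus_{i=1}^{n-1}C_i\boxtimes C_{n-i}$. The only subtlety I foresee is the compatibility between the internal product $\boxtimes$ and the projections $\pi_X,\bar\pi$, namely that $\bar\pi\otimes\bar\pi$ really sends $C\boxtimes C$ into $\overline{C}\boxtimes\overline{C}$; this follows directly from the definition of $\boxtimes$ as the $R$-span of elementary tensors with factors in the prescribed submodules, which is preserved under applying idempotent projections factor-wise.
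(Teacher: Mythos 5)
Your proof is correct, but it reaches the conclusion by a different mechanism than the paper. You split $\Delta$ with the idempotents $\pi_X=i_X\circ\epsilon_X$ and $\bar\pi=\id-\pi_X$ and identify the two mixed blocks directly from the color-counit axioms $(\epsilon_X\otimes\id)\Delta=\lambda$ and $(\id\otimes\epsilon_X)\Delta=\rho$, with the pure $C[X]\boxtimes C[X]$ block vanishing because $\epsilon_X$ is a coalgebra map and $\epsilon_X(c)=0$ (one can also see this block vanish without the coalgebra-map property, since $(\id\otimes\pi_X)(\pi_X\otimes\id)\Delta(c)=(\id\otimes\pi_X)(x\otimes c)=x\otimes\pi_X(c)=0$). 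The paper instead expands $\Delta(c)$ in the colored components $\bigoplus_z C_{x,z}\boxtimes C_{z,y}$ coming from the cotensor-product factorization, splits the $z=x$ and $z=y$ terms via $C_{x,x}=Rx\oplus\overline{C}_{x,x}$, and pins down the coefficients of $x\otimes(\,\cdot\,)$ and $(\,\cdot\,)\otimes y$ using the \emph{ordinary} counit constraint $(\epsilon\otimes\id)\Delta=\id$, handling $x\neq y$ and $x=y$ as separate cases. Your route buys a uniform treatment of both cases, dispenses with the Sweedler bookkeeping, and for the first claim does not even invoke the cotensor factorization --- only the color-counit and coaugmentation axioms together with $\lambda(c)=x\otimes c$, $\rho(c)=c\otimes y$; the paper's computation is more explicit about which colored components of $\Delta(c)$ can occur. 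The graded refinement is handled identically in both arguments: $\Delta$ preserves degree, $\overline{C}_0=0$ by color reducedness, so the $i=0$ and $i=n$ summands of $w$ die, and your closing remark that $\bar\pi\otimes\bar\pi$ sends $C\boxtimes C$ into $\overline{C}\boxtimes\overline{C}$ factor-wise is exactly the right point to check given that $\boxtimes$ is an internal span of elementary tensors.
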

\begin{proof}
First, say that $c\in \overline{C}_{x,y}$ with $x\neq y$.
We decompose (omitting the additional Sweedler sum symbols)
\begin{gather*}
 \Delta(c)=\sum_t c_{x,t}\Sp \ot c_{t,y}\Spp=\big(r\Sp x + \bar c_{x,x}\Sp\big)\ot \bar c_{x,y}\Spp
+ \bar c_{x,y}\Sp \ot
 \big(r \Spp y+\bar c_{y,y}\Spp\big)
+\sum_{z \notin \{x,y\}} \bar c_{x,z}\Sp\ot \bar c_{z,y}\Spp,
\end{gather*}
where we used the decomposition $C_{x,x}=Rx\oplus \overline {C}_{xx}$ and $C_{x,y}=\overline{C}_{x,y}$ for $x\neq y$.
Applying the left counit constraint $(\eps\ot \id)\circ \Delta=\id$ to $c$, we see that $r\Sp \bar c_{x,y}\Spp=c $ and the right unit constraint
gives $\bar c_{x,y}\Sp r\Spp=c$.
For $c\in \overline C_{x,x}$
\begin{gather*}
 \Delta(c)=\sum_t c_{x,t}\Sp \ot c_{t,x}\Spp=\big(r\Sp x + \bar c_{x,x}\Sp\big)\ot \big(r\Spp x + \bar c_{x,x}\Spp\big)
 +\sum_{z \neq x} \bar c_{x,z}\Sp\ot \bar c_{z,y}\Spp.
\end{gather*}
From the left unit constraint, we obtain that $c=r \Sp r\Spp x + r \Sp \bar c_{x,x}\Spp$, but as $\eps(c)=0$, $r\Sp r\Spp=0$ and $r\Sp\bar c_{x,x}\Spp=c$.
Similarly from the right unit constraint $\bar c_{x,x}\Sp r\Spp=c$.

In the graded case $\D(C_n)\subset \bigoplus_{i=0}^n C_i \bot C_{n-i}$ and
 $w \in \bigoplus_{i =1}^n\overline{C}_i \bot \overline{C}_{n-i} \subset \bigoplus_{i=0}^n C_i \bot C_{n-i}$, but $\overline{C}_0=0$, so the first and the last summand vanish.
\end{proof}

\begin{Lemma} In a color coaugmented coalgebra colored by $X$
the reduced comultiplication is coassociative, and thus there are unique $n$-th iterates $\bD^{[n]}\colon C\to C^{\ot n}$.
\end{Lemma}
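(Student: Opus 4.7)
The plan is to reformulate the reduced coproduct in terms of the idempotent coalgebra endomorphism $\pi_X := i_X \circ \eps_X$ of $C$ and reduce coassociativity of $\bar\Delta$ to that of $\Delta$. First I would establish the setup: since $i_X$ and $\eps_X$ are coalgebra maps and $\eps_X \circ i_X = \id_{C[X]}$, the composition $\pi_X$ is an idempotent coalgebra endomorphism of $C$. Setting $Q := \id_C - \pi_X$ one gets $\pi_X^2 = \pi_X$, $Q\pi_X = \pi_X Q = 0$, the direct sum decomposition $C = C[X] \oplus \overline{C}$ with $\overline{C} = \ker \pi_X = \operatorname{im} Q$, and most importantly the identity $(\pi_X \otimes \pi_X)\Delta = \Delta\pi_X$.

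Next I would rewrite $\bar\Delta$ through these projections. Expanding the tensor product,
\[
(Q\otimes Q)\Delta \;=\; \Delta - (\pi_X\otimes\id)\Delta - (\id\otimes \pi_X)\Delta + (\pi_X\otimes\pi_X)\Delta \;=\; \bar\Delta + \Delta\pi_X,
\]
so on the summand $\overline{C}$ where $\pi_X$ vanishes one has $\bar\Delta|_{\overline{C}} = (Q\otimes Q)\Delta|_{\overline{C}}$, with image landing in $\overline{C} \boxtimes \overline{C}$ thanks to Theorem~\ref{thm:colorstructure}. On the complementary summand $C[X]$ a direct computation gives $\bar\Delta|_{C[X]} = -\Delta|_{C[X]}$ (for $x\in X$, $\bar\Delta(x) = x\otimes x - x\otimes x - x\otimes x = -x\otimes x$), which is trivially coassociative. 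Since $\bar\Delta$ respects the splitting $C = C[X] \oplus \overline{C}$, it suffices to verify coassociativity on $\overline{C}$.

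The core computation on $\overline{C}$ is then direct. The identity $(Q\otimes Q)\Delta \circ Q = (Q\otimes Q)\Delta$ holds on all of $C$, since $\Delta Q = \Delta - (\pi_X\otimes\pi_X)\Delta$ and composing with $Q\otimes Q$ kills the second term via $Q\pi_X = 0$. For $c \in \overline{C}$ this gives
\[
(\bar\Delta \otimes \id)\bar\Delta(c) \;=\; \bigl((Q\otimes Q)\Delta \otimes \id\bigr)(Q\otimes Q)\Delta(c) \;=\; (Q\otimes Q\otimes Q)(\Delta\otimes\id)\Delta(c),
\]
and a mirror computation produces $(\id\otimes\bar\Delta)\bar\Delta(c) = (Q\otimes Q\otimes Q)(\id\otimes\Delta)\Delta(c)$; coassociativity of $\Delta$ then identifies the two sides. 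Existence and uniqueness of the $n$-fold iterates $\bar\Delta^{[n]} \colon C \to C^{\otimes n}$ follow by the usual induction on $n$: coassociativity makes any parenthesization of the iteration agree.

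The only subtlety is bookkeeping—tracking which tensor slot each projection acts on in the main computation—but everything reduces to the two clean identities $\pi_X^2 = \pi_X$ and $\Delta\pi_X = (\pi_X\otimes\pi_X)\Delta$, so I do not anticipate any real obstacle.
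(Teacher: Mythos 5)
Your argument is correct, and it takes a genuinely different route from the paper. The paper's proof is a direct Sweedler-notation computation: it expands $\big(\overline{\Delta}\otimes\id\big)\overline{\Delta}(c)$ and $\big(\id\otimes\overline{\Delta}\big)\overline{\Delta}(c)$ term by term for $c\in C_{x,y}$ and observes that everything cancels except the single discrepancy $\tilde\rho\big(c^{(1)}\big)\otimes c^{(2)}$ versus $c^{(1)}\otimes\tilde\lambda\big(c^{(2)}\big)$, which is killed by the bicomodule compatibility (equivalently, by coassociativity of $\Delta$ once $\tilde\lambda=(\pi_X\otimes\id)\Delta$ and $\tilde\rho=(\id\otimes\pi_X)\Delta$ are substituted). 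You instead exploit the full strength of the hypothesis that $i_X$ and $\eps_X$ are coalgebra maps, so that $\pi_X=i_X\eps_X$ is an idempotent coalgebra endomorphism with $\Delta\pi_X=(\pi_X\otimes\pi_X)\Delta$; this lets you write $\overline{\Delta}=(Q\otimes Q)\Delta-\Delta\pi_X$ with $Q=\id-\pi_X$, reduce to the summand $\overline{C}=\ker\pi_X=\operatorname{im}Q$ where $\overline{\Delta}=(Q\otimes Q)\Delta$, and then transport coassociativity of $\Delta$ through the absorption identity $(Q\otimes Q)\Delta Q=(Q\otimes Q)\Delta$ to get $(Q\otimes Q\otimes Q)(\Delta\otimes\id)\Delta=(Q\otimes Q\otimes Q)(\id\otimes\Delta)\Delta$ (the case of $C[X]$, where $\overline{\Delta}=-\Delta$, being immediate). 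All the ingredients you invoke are present in the definitions ($C=C[X]\oplus\overline{C}$ is the splitting of the idempotent, and $\overline{\Delta}(\overline{C})\subset\overline{C}\boxtimes\overline{C}$ follows already from $\operatorname{im}Q=\overline{C}$ without appealing to Theorem~\ref{thm:colorstructure}). What your approach buys is a shorter, structural proof that makes clear the reduced coproduct is a conjugate of $\Delta$ by complementary projections, at the cost of using the coalgebra-morphism property of $\pi_X$ rather than only the cotensor/bicomodule compatibility that the paper's calculation isolates as the essential hypothesis.
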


\begin{proof}Because of linearity, it suffices to show it holds for $c \in C_{x,y}$.
Using the following abbreviated Sweedler notation for the comultiplication $\Delta(c) = c\Sp\otimes c\Spp$
and $\D^{(3)}(c)=c\Sp \otimes c\Spp \otimes c\Sppp$, calculating the left and right hand sides yields:
\begin{align*}
\big(\bar{\Delta} \otimes \id_C\big) \circ \bar{\Delta}(c)
={}&\big(\bar{\Delta} \otimes \id_C\big) \circ \bar{\Delta}(c)
= \big(\bar{\Delta} \otimes \id_C\big)\big(\Delta - \tilde\lambda - \tilde\rho\big)(c)
\\
= {}&\big(\big(\Delta - \tilde\lambda - \tilde\rho\big) \otimes \id_C\big) (\Delta(c) - x \otimes c - c \otimes y)
\\
={}&\big(\Delta - \tilde\lambda - \tilde\rho\big)\big(c\Sp\big) \otimes c\Spp - \big(\Delta(x) - \tilde\lambda(x) - \rho(x)\big) \otimes c
\\
&- \big(\Delta(c) - \tilde\lambda(c) - \tilde\rho(c)\big) \otimes y
\\
={}& \Delta\big(c\Sp\big) \otimes c\Spp - x \otimes c\Sp \otimes c\Spp - \tilde\rho\big(c\Sp\big) \otimes c\Spp
\\
&- (x \otimes x - x \otimes x - x \otimes x) \otimes c
- \big(c\Sp \otimes c\Spp - x \otimes c - c \otimes y\big) \otimes y
\\
={}&c\Sp \otimes c\Spp \otimes c\Sppp - x \otimes c\Sp \otimes c\Spp- \tilde\rho\big(c\Sp\big) \otimes c\Spp
\\
&+ x \otimes x \otimes c - c\Sp \otimes c\Spp \otimes y
+ x \otimes c \otimes y + c \otimes y \otimes y,
\\
\big(\id_C \otimes \bar{\Delta}\big) \circ \bar{\Delta}(c)={}&\big(\id_C \otimes \bar{\Delta} \big) \circ \bar{\Delta}(c)
	= (\id_C \otimes \bar{\Delta})\big(\Delta - \tilde\lambda - \tilde\rho\big)(c)
\\
={}& \big(\id_C \otimes \big(\Delta - \tilde\lambda - \tilde\rho\big) \big) (\Delta(c) - x \otimes c - c \otimes y)
\\
={}&c\Sp \otimes \big(\Delta - \tilde\lambda - \tilde\rho\big)\big(c\Spp\big) - x \otimes \big(\Delta(c) - \tilde\lambda(c) - \tilde\rho(c)\big)
\\
&- c \otimes \big(\Delta(y) - \tilde\lambda(y) - \tilde\rho(y)\big)
\\
={}&\big(c\Sp \otimes c\Spp \otimes c\Sppp - c\Sp \otimes \tilde\lambda\big(c\Spp\big) - c\Sp \otimes c\Spp \otimes y\big)
\\
&- x \otimes \big(c\Sp \otimes c\Spp - x \otimes c - c \otimes y\big)
- c \otimes (y \otimes y - y \otimes y - y \otimes y)
\\
={}&c\Sp \otimes c\Spp \otimes c\Sppp - c\Sp \otimes \tilde\lambda\big(c\Spp\big) - c\Sp \otimes c\Spp \otimes y - x \otimes c\Sp \otimes c\Spp
\\
&+ x \otimes x \otimes c + x \otimes c \otimes y + c \otimes y \otimes y.
	\end{align*}
	
These agree if $ \tilde\rho\big(c\Sp\big) \otimes c\Spp = c\Sp \otimes \tilde\lambda\big(c\Spp\big)$, which
readily follows the assumption of being color coaugmented.
\end{proof}

The lemma allows us to introduce the colored conilpotent coradical filtration by
\begin{equation}
\label{eq:conilfilt}
\FN_0C=i(C[X]), \qquad
\FN_rC =i(C[X])\oplus \big\{x \in C \,|\, \bD^{[r+1]}(x) = 0\big\},\qquad
r\geq 1.
\end{equation}
This is the generalization of the conilpotent coradical filtration, see, e.g., \cite{Loday}. \begin{Definition}
We call a color coaugmented co-algebra colored by $C[X]$ {\em conilpotent} if the filtration $\FN C$ is exhaustive.
A color coaugmented coalgebra colored by $X$ is called {\em $\bD$ flat}, if~for all $n$, $\ker\big(\bD^{[n]}\ot \id\big)= \ker\big(\bD^{[n]}\big)\ot C$ and
and $\ker\big(\id\ot \bD^{[n]}\big)=C\ot \ker \big(\bD^{[n]} \big)$.
\end{Definition}

 For a coaugmented coalgebra with one color $e$,
 the filtration coincides with the {\em conilpotent coradical filtration}
and color conilpotence agrees with notion of conilpotence for coaugmented coalgebras, see, e.g., \cite{Loday}.

\begin{Corollary}
\label{cor:gradedcolornil}
A graded color coaugmented color reduced coalgebra colored by $X$ is conilpotent.
\end{Corollary}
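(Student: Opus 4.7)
The plan is to argue by a straightforward degree count using the structural formula from Theorem~\ref{thm:colorstructure} for the reduced coproduct $\bar\Delta$ on a graded color coaugmented colored coalgebra. The color reduced assumption $C_0=i(C[X])$ is equivalent to $\overline C_0=0$, so the theorem says that for $c\in\overline C_n$ with $n\geq 1$ we have $\bar\Delta(c)\in\bigoplus_{i=1}^{n-1}\overline C_i\boxtimes\overline C_{n-i}$, i.e.\ $\bar\Delta$ strictly lowers the degree in each tensor slot while staying inside $\overline C$.

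Next I would iterate: starting from $c\in\overline C_n$, an induction on $n$ (using the preceding lemma that $\bar\Delta$ is coassociative so the iterated $n$-fold coproducts $\bar\Delta^{[n]}$ are well-defined and unambiguous) shows
\begin{equation*}
\bar\Delta^{[r]}(\overline C_n)\subset\bigoplus_{\substack{i_1+\cdots+i_r=n\\ i_j\geq 1}}\overline C_{i_1}\boxtimes\cdots\boxtimes\overline C_{i_r}.
\end{equation*}
When $r=n+1$ the index set is empty (one cannot write $n$ as a sum of $n+1$ positive integers), hence $\bar\Delta^{[n+1]}(c)=0$ for every $c\in\overline C_n$. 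Referring to the definition~\eqref{eq:conilfilt}, this is exactly the condition that $\overline C_n\subset \FN_n C$.

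Finally, since $C=i(C[X])\oplus\bigoplus_{n\geq 1}\overline C_n$ and $i(C[X])=\FN_0 C\subset\FN_n C$, every homogeneous element lies in some $\FN_n C$; by linearity every element of $C$ lies in a finite-degree truncation and hence in some $\FN_r C$, so $C=\bigcup_{r\geq 0}\FN_r C$, which is conilpotence. There is no real obstacle here: the only subtle point is checking that when one iterates $\bar\Delta$, the resulting factors really do remain in $\overline C$ (rather than accidentally producing $C[X]$ components), but this is immediate from color reducedness $\overline C_0=0$ together with the support statement of Theorem~\ref{thm:colorstructure}.
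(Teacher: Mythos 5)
Your proof is correct and is exactly the argument the paper intends: the paper's proof is the single line ``this follows directly from Theorem~\ref{thm:colorstructure},'' and your degree count (color reducedness gives $\overline C_0=0$, so $\bar\Delta$ strictly drops degree in each slot and $\bar\Delta^{[n+1]}$ vanishes on $\overline C_n$, whence $\FN_nC\supset \overline C_n$ and the filtration is exhaustive) is precisely the omitted verification. No differences in approach worth noting.
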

\begin{proof}
This follows directly from Theorem \ref{thm:colorstructure}.
\end{proof}

The notions of color conilpotent and color connected are related.
Color conilpotence is easier to check, but color connectedness is better to argue with.

\begin{Proposition}\label{prop:nilconnected}
A color connected coalgebra is a color conilpotent coalgebra and a $\bD$-flat conilpotent coalgebra
 is color connected.
 \end{Proposition}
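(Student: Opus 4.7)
The plan is to compare the two filtrations level-by-level, so that exhaustiveness of one transfers to the other. For the first implication I would establish $\FQ_r C \subseteq \FN_r C$ for every $r\geq 0$ by induction on $r$, which immediately yields that color connected coalgebras are color conilpotent. The base $r=0$ is trivial since both sides equal $i_X(C[X])$ by definition. For the inductive step, take $x \in \FQ_r C$ and split $x = i_X(x_0)+\bar x$ with $\bar x \in \overline C$; since $\FQ_r C \supseteq \FQ_0 C = i_X(C[X])$, one has $\bar x \in \FQ_r C \cap \overline C$. By Theorem~\ref{thm:colorstructure}, $\bD(\overline C)\subseteq \overline C\boxtimes \overline C$, so projecting the relation $\bD(\bar x)\in \FQ_{r-1}C \boxtimes \FQ_{r-1}C$ through $\pi_{\overline C}\otimes \pi_{\overline C}$ using the splitting $C = i_X(C[X]) \oplus \overline C$ yields $\bD(\bar x)\in (\FQ_{r-1}C\cap \overline C)\boxtimes (\FQ_{r-1}C\cap \overline C)$. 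Coassociativity gives $\bD^{[r+1]} = (\bD^{[r]}\otimes \id)\circ \bD$, and the inductive hypothesis ensures $\FQ_{r-1}C\cap \overline C\subseteq \ker \bD^{[r]}$; applying $\bD^{[r]}\otimes \id$ to any representative of $\bD(\bar x)$ therefore produces $0$, so $\bar x \in \ker \bD^{[r+1]}$ and $x\in \FN_r C$.

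For the reverse direction, under the $\bD$-flatness assumption I would prove $\FN_r C\subseteq \FQ_r C$ by induction, so that a conilpotent coalgebra is color connected. The base $r=0$ is trivial. Let $x\in \FN_r C$ and write $x=i_X(x_0)+z$ with $z\in \overline C$ and $\bD^{[r+1]}(z)=0$. The two forms of coassociativity give
\begin{gather*}
0 = \bD^{[r+1]}(z) = \big(\bD^{[r]}\otimes \id\big)\bD(z) = \big(\id \otimes \bD^{[r]}\big)\bD(z),
\end{gather*}
so $\bD(z)$ lies in $\ker\big(\bD^{[r]}\otimes \id\big)\cap \ker\big(\id \otimes \bD^{[r]}\big)$. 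By the $\bD$-flatness hypothesis these kernels are respectively $\ker\big(\bD^{[r]}\big)\otimes C$ and $C\otimes \ker\big(\bD^{[r]}\big)$, and one checks that their intersection, interpreted inside $C\boxtimes C$, is precisely $\ker\big(\bD^{[r]}\big)\boxtimes \ker\big(\bD^{[r]}\big)$. By the inductive hypothesis $\ker\big(\bD^{[r]}\big)\subseteq \FN_{r-1}C\subseteq \FQ_{r-1}C$, whence $\bD(z)\in \FQ_{r-1}C\boxtimes \FQ_{r-1}C$, giving $z\in \FQ_r C$ and hence $x\in \FQ_r C$.

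The delicate step -- the one where the $\bD$-flatness hypothesis is essential -- is the identification $\ker\big(\bD^{[r]}\otimes \id\big)\cap \ker\big(\id\otimes \bD^{[r]}\big) = \ker\big(\bD^{[r]}\big)\boxtimes \ker\big(\bD^{[r]}\big)$. Over a field this follows from the exactness of the tensor product, but over a general commutative ring $R$ the interaction between submodules and tensor products can fail in the ways illustrated by the earlier $\mathbb Z/8\mathbb Z\oplus \mathbb Z/2\mathbb Z$ example; the flatness hypothesis is introduced precisely to allow the kernel of a tensored map to be computed as a tensor of kernels, after which the intersection collapses as required. With this identification in place both inductions proceed cleanly via coassociativity of $\bD$, and the two exhaustiveness statements become equivalent.
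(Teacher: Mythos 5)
Your proof is correct and follows the paper's argument in all essentials: a level-by-level induction comparing the two filtrations, using Theorem~\ref{thm:colorstructure} and the coassociativity of $\bD$ for the forward direction, and $\bD$-flatness together with the identification of $\ker\big(\bD^{[r]}\ot \id\big)\cap\ker\big(\id\ot\bD^{[r]}\big)$ with $\ker\big(\bD^{[r]}\big)\boxtimes\ker\big(\bD^{[r]}\big)$ for the converse. You are somewhat more explicit than the paper about two points it passes over silently -- the projection of $\bD(\bar x)$ onto $\overline C\boxtimes\overline C$ and the intersection-of-kernels step -- but the route is the same.
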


 \begin{proof} If $C$ is color connected, say $c\in F^Q_n(C)\cap \overline{C}$, by induction there is some $N$ such that $\bD^{[N]}(c)=0$.
 This is clear for $c\in F_0^QC$ and if $c\in F^Q_{n+1} C\cap \overline{C}$ then $\bD(c)\in F^Q_nC\ot F^Q_n C$ so there is some $N$ such
 that $\big(\bD^{[N]}\ot \id\big)\bD(c)=\bD^{[N+1]}(c)=0$.
Vice-versa, assume that $C$ is $\bD$-flat color conilpotent. Again using induction, as $\FN_0 C= F_0^QC$, we can assume that $\FN_{n-1}C\subset F^Q_{n-1}C$. Now, if $c\in \ker\big(\bDn\big)$, then, since $C$ is $\bD$-flat, $\bD(c)\in \ker\big(\bD^{[n-1]}\ot \id\big)\cap \ker\big(\id\ot \bD^{[n-1]}\big)=
 \ker\big(\bD^{[n-1]}\big)\ot C\cap C\ot \ker\big(\bD^{[n-1]}\big)\subset F^Q_{n-1}F\ot F_{n-1}^QC$ and $c\in F^Q_n C$.
 \end{proof}

\begin{Lemma}\label{lem:bdflat}
 A color coaugmented coalgebra $C$ colored by $X$
is $\bD$-flat if $C$ is flat. In~particular, $\bD$ flatness is automatic over a field or if $C$ is free.
\end{Lemma}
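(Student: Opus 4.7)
The plan is a direct appeal to the standard characterization of flatness in terms of preservation of kernels. Recall that an $R$-module $C$ is flat precisely when, for every $R$-linear map $f\colon A\to B$, the natural inclusion
\[
(\ker f)\otimes C \hookrightarrow \ker(f\otimes \id_C)
\]
is an equality (equivalently, tensoring with $C$ preserves the exactness of $0\to \ker f\to A\to B$). I would begin by observing that $\bD^{[n]}\colon C\to C^{\otimes n}$ is an $R$-linear map, and then apply this flatness criterion to it on the right to obtain $\ker\bigl(\bD^{[n]}\otimes \id_C\bigr)=\ker\bigl(\bD^{[n]}\bigr)\otimes C$. The symmetric statement $\ker\bigl(\id_C\otimes \bD^{[n]}\bigr)=C\otimes \ker\bigl(\bD^{[n]}\bigr)$ follows identically by applying flatness on the other factor.

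For the second sentence, I would just quote two standard facts: over a field $k$ every vector space is flat, so the hypothesis is automatic; and every free $R$-module is flat, being a direct sum of copies of $R$ and hence preserving exact sequences under tensoring. These cover the stated special cases.

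The only mild subtlety to flag is that the internal product $\boxtimes$ for arbitrary submodules was introduced precisely because $\otimes$ of submodules may fail to embed as a submodule of $C\otimes C$; however, under the flatness hypothesis the canonical map $\ker\bigl(\bD^{[n]}\bigr)\otimes C\hookrightarrow C\otimes C$ is genuinely injective, so the equalities of kernels above make literal sense as submodules, and no further bookkeeping is needed. The main (and essentially only) step is therefore the invocation of flatness; there is no real obstacle, which is why the lemma is stated as an immediate consequence.
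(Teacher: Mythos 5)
Your proof is correct and follows essentially the same route as the paper: the paper simply derives the ``flat modules preserve kernels'' fact in-line, by tensoring the short exact sequence $0\to\ker\big(\bD^{[n]}\big)\to C\to\operatorname{im}\big(\bD^{[n]}\big)\to 0$ with $C$ and using flatness to keep $\operatorname{im}\big(\bD^{[n]}\big)\otimes C\to C^{\otimes n}\otimes C$ injective, whereas you quote it as a standard characterization. Your remark that flatness also makes $\ker\big(\bD^{[n]}\big)\otimes C\to C\otimes C$ genuinely injective, so the identification of kernels is an equality of submodules, is a worthwhile point that the paper leaves implicit.
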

\begin{proof}
Consider the short exact sequence $0\to \ker\big(\bD^{[n]}\big)\to C\to {\im}\big(\bD^{[n]}\big) \to 0$. By right exactness of $\ot$:
$ \ker\big(\bD^{[n]}\big)\ot C \to C\ot C \to {\im}\big(\bD^{[n]}\big) \ot C\to 0$ is exact. If $C$ is flat, $0\to \im\big(\bD^{[n]}\big) \ot C^{\ot n}\to C\ot C$ is exact and hence
$ \ker\big(\bD^{[n]}\big)\ot C \stackrel{i\ot \id}{\to} C\ot C \stackrel{\bDn \ot \id}\to \C^{\ot n} \ot C$ is exact and $\ker\big(\bDn \ot \id\big)=\ker\big(\bDn\big)\ot C$.
\end{proof}

\begin{Proposition}\label{prop:catcolor}
For a decomposition finite colored monoid $M,$ the categorical coalgebra $C[M]$ is colored by its objects and color coaugmented.

If $M$ only has identities as invertibles, then $C[M]$ is colored by its grouplikes.
If additionally, $C[M]$ is color nilpotent, then it is color connected.

In particular, if $M$ has a proper degree function, is locally finite and has no non-trivial identities, then it is colored by its grouplikes and is color connected.
\end{Proposition}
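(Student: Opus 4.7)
The plan is to establish the three assertions in sequence, each leaning on the previous.

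\emph{First assertion.} I will exhibit the coloring explicitly. Define $\lambda(\phi)=s(\phi)\otimes\phi$ and $\rho(\phi)=\phi\otimes t(\phi)$ on generators; these are counital coactions because $C[X]$ is setlike, they commute, and together exhibit $C[M]$ as a $C[X]$-bicomodule. Take $\eps_X\colon C[M]\to C[X]$ to be the projection to identities ($\id_x\mapsto x$, all other morphisms to $0$) and $i_X\colon x\mapsto\id_x$. The essential point is that $\Delta$ factors through the cotensor product $C[M]\,\square_{C[X]}\,C[M]$: this cotensor is spanned by tensors $\phi_0\otimes\phi_1$ with $t(\phi_0)=s(\phi_1)$, which is exactly the condition for the composite $\phi_0\cdot\phi_1$ to be defined, and every summand of $\Delta(\phi)$ satisfies it by the construction of the categorical coalgebra. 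Decomposition finiteness is what makes $\Delta$ a well-defined map of $R$-modules.

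\emph{Second assertion.} When $M$ has only identities as invertibles, Lemma~\ref{lem:discrete} guarantees that every $\id_x$ is grouplike in $C[M]$; a short analysis of $\Delta(g)=g\otimes g$ on a general linear combination, using that the categorical coproduct only produces a ``diagonal'' summand $\id\otimes\id$ from identity morphisms, shows these exhaust the grouplikes, so the coloring by objects coincides with the coloring by grouplikes. For color connectedness I will apply Proposition~\ref{prop:nilconnected}: color nilpotence together with $\bD$-flatness implies color connectedness. The $\bD$-flatness is automatic from Lemma~\ref{lem:bdflat} since $C[M]=\Free{M}$ is free over $R$.

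\emph{Third assertion.} Under the stated hypotheses it only remains to verify color nilpotence, after which the second assertion delivers the result. Because the degree function is proper and the only invertibles are identities, every non-identity morphism has degree at least $1$. Any summand of $\bD^{[k]}(\phi)$ is a tensor $\phi_1\otimes\cdots\otimes\phi_k$ of non-identity morphisms with $\sum_i\deg(\phi_i)=\deg(\phi)$ by additivity, so for $k>\deg(\phi)$ no such decomposition exists and $\bD^{[k]}(\phi)=0$. Local finiteness ensures that $\Delta$ and its iterates are well-defined as finite sums at every stage, so the filtration~\eqref{eq:conilfilt} is exhaustive.

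The main obstacle I anticipate is the bookkeeping in the first assertion -- checking that $\eps_X$ is simultaneously a coalgebra map and a $C[X]$-bi-comodule map, and that all of the cotensor factorizations are coherent -- together with the verification in the second assertion that the grouplikes are exactly the identities. The latter is a system of coefficient-matching equations extracted from $\Delta(g)=g\otimes g$; the analysis uses crucially that no non-identity invertibles exist, so that no spurious diagonal terms $\sigma\otimes\sigma^{-1}$ arise in the coproduct outside of identities, which is precisely what Lemma~\ref{lem:discrete} controls.
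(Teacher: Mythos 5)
Your proposal is correct and follows essentially the same route as the paper: the coloring is read off from source and target maps, Lemma~\ref{lem:discrete} identifies the identities as grouplikes when there are no non-trivial invertibles, and color connectedness comes from freeness ($\bD$-flatness via Lemma~\ref{lem:bdflat}) plus Proposition~\ref{prop:nilconnected}. The only cosmetic difference is that for the last assertion you verify conilpotence directly from the proper degree function rather than citing Corollary~\ref{cor:gradedcolornil}, but the underlying counting of degrees is the same argument.
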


\begin{proof}
The first statement is clear by construction. If identities are the only invertible elements,
then by Lemma \ref{lem:discrete} they are grouplike, and
these are also the only semigrouplike elements. Hence, $C[M]$ is colored by its grouplikes.
As $C[M]$ is free being color nilpotent implies being color connected by Lemma \ref{lem:bdflat}
 and Proposition \ref{prop:nilconnected}.
The last statement follows from the previous ones and Corollary \ref{cor:gradedcolornil}.
 \end{proof}

\subsubsection{Colored bialgebras and the free bialgebra \textit{\textbf{FB}}[\textit{\textbf{M}}] on a colored monoid}

Generally, we will say that a bialgebra has an attribute like colored, color connected etc.\ if its coalgebra does.

\begin{Lemma}Given a colored connected bialgebra colored by $X$ the ideal $I$ generated by $x-y$, for $x,y\in X$ is a coideal and $C/I$ is Quillen connected.
\end{Lemma}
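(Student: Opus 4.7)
The plan has two parts: first verify that $I$ is a coideal, which gives $C/I$ the structure of a quotient bialgebra, and then show that $C/I$ is color connected for a single-color coloring, which by the convention of the paper is exactly Quillen connectedness.

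For the coideal property, I would check it on the generators. For $x,y \in X$, viewed as grouplike elements, $\eps(x-y) = 1-1 = 0$ and
\[
\Delta(x-y) = x \otimes x - y \otimes y = x \otimes (x-y) + (x-y) \otimes y \in C \boxtimes I + I \boxtimes C.
\]
Since $\Delta$ is an algebra map and $I \boxtimes C + C \boxtimes I$ is closed under left and right multiplication by elements of $C \otimes C$, for any $a(x-y)b \in I$ we get $\Delta(a(x-y)b) = \Delta(a)\Delta(x-y)\Delta(b) \in I \boxtimes C + C \boxtimes I$. The counit $\eps$ vanishes on $I$ since it is an algebra map and vanishes on the generators, so $I$ is a coideal and $C/I$ inherits a bialgebra structure.

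For Quillen connectedness, let $\pi\colon C \to C/I$ be the quotient map. All elements $i_X(x)$ for $x \in X$ are identified in $C/I$ to a single grouplike element $[e] := [i_X(x)]$, providing a one-color coaugmentation $R \to C/I$. Thus $C/I$ is color coaugmented by the singleton $\{[e]\}$. I would then prove $\pi(F^Q_r C) \subseteq F^Q_r(C/I)$ by induction on $r$. The base case holds because $\pi(i_X(C[X])) = R\cdot [e] = F^Q_0(C/I)$, and for the inductive step the key observation is that the auxiliary maps $\tilde\lambda = (i_X\eps_X \otimes \id)\Delta$ and $\tilde\rho = (\id \otimes i_X\eps_X)\Delta$ on $C$ descend through $\pi$ to the analogous one-color versions on $C/I$, because every grouplike in the image of $i_X$ becomes $[e]$ after quotienting. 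Hence the reduced diagonals intertwine,
\[
\bar\Delta_{C/I} \circ \pi = (\pi \otimes \pi) \circ \bar\Delta_C,
\]
so if $c \in F^Q_r C$ satisfies $\bar\Delta_C(c) \in F^Q_{r-1} C \boxtimes F^Q_{r-1} C$, then, using the monotonicity of $\boxtimes$ in its arguments together with the inductive hypothesis, $\bar\Delta_{C/I}([c]) \in F^Q_{r-1}(C/I) \boxtimes F^Q_{r-1}(C/I)$, giving $[c] \in F^Q_r(C/I)$.

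Since $C = \bigcup_r F^Q_r C$ by color connectedness, $C/I = \pi(C) = \bigcup_r F^Q_r(C/I)$, so $C/I$ is color connected with a single color, i.e., Quillen connected. The only subtle point I expect is the careful bookkeeping of the descent of $\tilde\lambda$ and $\tilde\rho$ through the quotient; once one notes that $\pi \circ i_X$ is constant with value $[e]$ on $X$, the required intertwining of reduced diagonals is immediate and the induction runs without further obstacle.
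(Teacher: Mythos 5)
Your proposal is correct and follows essentially the same route as the paper: the same generator computation $\Delta(x-y)=x\ot(x-y)+(x-y)\ot y$, $\eps(x-y)=0$ for the coideal property, and the observation that the quotient is single-colored with the induced filtration remaining exhaustive. The paper simply states the latter as $F^Q_r(C/I)=F^Q_rC/\big(I\cap F^Q_rC\big)$ being exhaustive, whereas you spell out the induction and the intertwining of reduced diagonals that justifies it; no substantive difference.
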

\begin{proof}
Indeed, $\eps(x-y)=1-1=0$ and $\Delta(x-y)=x\ot x - y\ot y = x\ot (x-y)+(x-y)\ot y$, so $I$ is a coideal. $C/I$ is single colored and coaugmented as $C[X]/I=R$.
The induced filtration $F^Q_r( C/I)=F^Q_rC/\big(I\cap F^Q_rC\big)$ is still exhaustive.
\end{proof}

The following construction is key for many examples. In~particular for constructing bialgebras from posets or quivers.

For a set $X$ let $X^\times=\amalg_{n\in \N_0}X^{\times n}$
be the free monoid on $X$. The unit $1$ is represented by the empty product.
If $M$ is a monoid then $M^\times$ additionally carries a componentwise mono\-idal structure:
$(a_1\kdk a_n)(b_1\kdk b_n)=(a_1b_1\kdk a_nb_n)$.
Consider $\Free{M^\times}$ as the
 {\em free unital algebra} on $M^\times$. If $M$ is decomposition finite,
this has a coalgebra structure induced by that of $C[M]$, whose comultiplication is given by $\Delta(1)=1\ot 1$ and $\D((a_1\kdk a_n))=\sum \big(a_1\Sp\kdk a_n\Sp\big) \ot \big(a_1\Spp\kdk a_n\Spp\big)$, where $\D(a_i)=\sum a_i\Sp\ot a_i\Spp$ is the coproduct of~$C[M]$. The counit is
 $\eps((a_1\kdk a_n))=\eps(a_1)\cdots \eps(a_n)$.
These structures make $\Free{M^\times}$ into a unital counital bialgebra with counit $\eps$ and unit $1$, which we will denote by $FB[M]$.

\begin{Proposition}
If $M$ is a monoid colored by $X$, then $M^\times$ is colored by $X^\times$. In~this case the underlying algebra structure of $FB[M]$ is graded by the monoid $X^\times$ and
the underlying coalgebra of $FB[M]$ is a $X^\times$ colored, and Proposition~$\ref{prop:catcolor}$ applies.
\end{Proposition}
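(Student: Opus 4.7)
The plan is to work componentwise throughout, carefully distinguishing the two monoid structures on $M^\times$: the concatenation product that underlies the algebra of $FB[M]$, and the componentwise partial product that endows $M^\times$ with the structure of an $X^\times$-colored monoid. These have to be kept separate because $X^\times$ appears in two different roles, once as colors (a set) and once as the monoid that grades the algebra.

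First I would verify the colored monoid claim. Define source and target maps $s,t\colon M^\times\to X^\times$ componentwise: $s(a_1,\dots,a_n)=(s(a_1),\dots,s(a_n))$ and similarly for $t$, with units given by tuples of units of $M$. The componentwise product $(a_1,\dots,a_n)(b_1,\dots,b_n)=(a_1b_1,\dots,a_nb_n)$ is defined exactly when every $a_ib_i$ exists in $M$, i.e.\ when $t(a_i)=s(b_i)$ for all $i$, which is precisely the condition that the source of the right factor equals the target of the left in $X^\times$. Associativity and the unit axioms reduce componentwise to those of $M$, and tuples of different lengths sit in disjoint $X^{\times n}$, so they are never composable, consistent with non-matching colors.

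Second, to get the grading of the algebra structure of $FB[M]$, observe that $s$ and $t$ are simultaneously monoid homomorphisms $(M^\times,\text{concat})\to(X^\times,\text{concat})$, since the source (resp.\ target) of a concatenation of tuples is the concatenation of the sources (resp.\ targets). This yields an $X^\times\times X^\times$ bi-grading on $\Free{M^\times}$ that is compatible with the concatenation product of $FB[M]$, so $FB[M]$ is graded by the monoid $X^\times$ as an algebra. By Lemma~\ref{lem:grading} this bi-grading is the same data as a $C[X^\times]$ bi-comodule structure. For the coproduct $\Delta(a_1,\dots,a_n)=\sum(a_1\Sp,\dots,a_n\Sp)\otimes(a_1\Spp,\dots,a_n\Spp)$, each Sweedler summand arises from componentwise decompositions $a_i=a_i\Sp\cdot a_i\Spp$ in $M$, and so $t(a_i\Sp)=s(a_i\Spp)$ for every $i$; the coproduct thus factors through the cotensor product, showing that the underlying coalgebra is $X^\times$-colored.

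Finally, to invoke Proposition~\ref{prop:catcolor} I would check that $M^\times$ inherits the relevant hypotheses from $M$: decomposition finiteness of $M$ propagates componentwise since a decomposition of a tuple is a tuple of decompositions of its entries; the invertibles in $M^\times$ are exactly tuples of invertibles in $M$, so ``only identities are invertible'' transfers from $M$ to $M^\times$; and a proper degree function on $M$ extends to a proper degree function on $M^\times$ by summing degrees, which remains proper because a sum of non-negative integers is zero iff each summand is. With these in place, Proposition~\ref{prop:catcolor} applies to $M^\times$, giving color coaugmentation, coloring by grouplikes, and color connectedness of $FB[M]$ under the corresponding hypotheses on $M$. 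The only real obstacle is bookkeeping: making sure the two monoid operations on $M^\times$ are not conflated, and tracking that both the algebra and coalgebra structures remain compatible with the same source/target maps.
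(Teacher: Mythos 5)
Your proposal is correct and follows the same route as the paper: the paper's proof simply defines the coloring componentwise, $s(a_1,\dots,a_n)=(s(a_1),\dots,s(a_n))$ and likewise for $t$, and declares the remaining computations straightforward. You have merely written out those straightforward checks (the separation of the concatenation product from the componentwise partial product, the factoring of $\Delta$ through the cotensor product, and the componentwise transfer of the hypotheses of Proposition~\ref{prop:catcolor}) in full.
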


\begin{proof}
The coloring is given by $s(a_1\kdk a_n)=(s(a_1)\kdk s(a_n))$ and $t(a_1\kdk a_n)=
(t(a_1),\allowbreak\ldots, t(a_n))$. The computations
are then straightforward.
\end{proof}

In this construction the original composition is turned into a coproduct. Then a free product is added to make a bialgebra.
If $M$ is interpreted as a category $\C$, this is the categorical coalgebra for the free monoidal category $\C^\ot$.
The case were there already is a monoidal structure, which is not necessarily free,
is treated in Section~\ref{catpar}.

\section[Path-like co- and bialgebras, convolution inverses and~antipodes]
{Path-like co- and bialgebras, convolution inverses \\and~antipodes}

\subsection{QT-filtration and convolution inverses}
\label{takeuchipar}

In order to define antipodes, or more generally convolution inverses,
it is useful to regard filtrations, as in good cases one can recursively build up the antipode from its value on the degree~$0$ part.
To this end, we put forward the definition of a QT-filtration (Quillen--Takeuchi)
that generalizes the classical graded, coalgebra, coradical, Quillen \cite{Quillen} filtrations and their colored conilpotent versions.
This notion is what is needed to prove a general version of Takeuchi's Lemma \cite{Takeuchi}, for the existence of convolution inverses.
\begin{Definition}
	A {\em QT-filtration} of a coalgebra $C$ is a filtration of $C$ by $R$-submodules $\FQT_p(C)$, $p\geq 0$ such that:
\begin{enumerate}\itemsep=0pt
\item $\FQT_0C=S$ is a subcoalgebra.
\item $\Delta\big(\FQT_pC \big)\subset \FQT_{p-1}C \boxtimes C + C\boxtimes \FQT_{p-1}C $.	
\end{enumerate}
We call a QT-filtration a {\em QT-sequence} if the filtration is exhaustive, i.e., $C = \bigcup_{i=0}^\infty F_iC$ and call the QT-sequence {\em split},
if $\FQT_0C$ is a direct summand, that is $0\to \FQT_0C\to C\to \FQT_0C/C\to 0$ is split.

A coaugmented coalgebra is \em{QT-connected} if is has a QT-sequence whose degree $0$ part is ${\rm Re}$.
\end{Definition}

\begin{Remark}
Note that a QT-filtration is defined by specifying a base, that is a subcoalgebra~$S$ and setting:
 $\FQT_0C=S$ and $\FQT_rC = \big\{x\,|\,\Delta(x) \in C \boxtimes \FQT_0C + \FQT_{p-1}C \boxtimes \FQT_{p-1}C + \FQT_0C \boxtimes C\big\}$.	
Quillen's original filtration~\cite{Quillen} for a coaugmented coalgebra is obtained from $S=\FQT_0={\rm Re}$. In~the category of $R$-coalgebras, every coalgebra filtration gives rise to a QT-filtration with $S$ being the degree $0$ part.

The filtration associated with the grading of an $\mathbb{N}$-graded coalgebra \big($\FQT_r := \{x\,|\, \deg x \leq r\}$\big) is a QT-filtration based on $\FQT_0$, because $\FQT_i \subset \FQT_r$ if $i \leq r$. $\FQT_i \boxtimes \FQT_j \subset \FQT_r \boxtimes \FQT_q$ if $i \leq r$ and $j \leq q$.
Given a $\Z$ graded filtration, with $C_0$ a subcoalgebra, and with $\Delta(F_p)\subset F_{p-1}\boxtimes C + C\boxtimes F_{p-1}$, $p>0$ and
$\Delta(F_p)\subset F_{p+1}\boxtimes C + C\boxtimes F_{p+1}$, $p< 0$,
setting $\FQT_p=F_p+F_{-p}$, $p \geq 0$ produces a QT-filtration.
\end{Remark}

\begin{Theorem}\label{mainthm}
 Consider a coalgebra $C$ with counit $\eps$ and an algebra $A$ with unit $\eta$.
Given a QT-sequence $\big\{\FQT_i\big\}^{\infty}_{i=0}$ on $C$, such that ${\rm Ext}^1_R\big(C/\FQT_0C,A\big)=0$,
for any element $f$ of $\Hom(C, A)$, $f$~is $\star$-invertible if and only if
the restriction $f|_{\FQT_{0}C}$ is $\star$-invertible in $\Hom\big(\FQT_{0}C, A\big)$.
\end{Theorem}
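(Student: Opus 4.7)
The plan is to handle the two implications separately, with the converse doing essentially all of the work. The forward direction is immediate: since $\FQT_0 C$ is a subcoalgebra by the definition of a QT-filtration, restriction is a well-defined unital algebra homomorphism from $\Hom(C,A)$ to $\Hom(\FQT_0 C, A)$ (this is Lemma~\ref{restrictionlem} applied to $S = \FQT_0 C$, together with the fact that $(f\star g)|_{S}=f|_S\star g|_S$), and unital algebra homomorphisms carry $\star$-invertible elements to $\star$-invertible elements.

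For the converse, suppose $g_0 := (f|_{\FQT_0 C})^{\star -1}$ exists in $\Hom(\FQT_0 C, A)$. Applying $\Hom_R(-,A)$ to the short exact sequence $0 \to \FQT_0 C \to C \to C/\FQT_0 C \to 0$, the hypothesis ${\rm Ext}^1_R(C/\FQT_0 C, A) = 0$ forces the $R$-module restriction $\Hom_R(C,A) \to \Hom_R(\FQT_0 C, A)$ to be surjective, so $g_0$ lifts to some $R$-linear $\bar g_0 \in \Hom(C,A)$. Setting $h := \eta\eps - f \star \bar g_0$, the subcoalgebra property of $\FQT_0 C$ together with $\bar g_0|_{\FQT_0 C} = g_0$ gives $h|_{\FQT_0 C} = 0$. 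The idea is then to invert $f \star \bar g_0 = \eta\eps - h$ by the geometric series $u := \sum_{k\ge 0} h^{\star k}$ (with the usual telescoping showing it is a two-sided $\star$-inverse of $\eta\eps-h$), so that $\bar g_0 \star u$ is a right $\star$-inverse of $f$; the parallel construction starting from $\bar g_0 \star f$ produces a left $\star$-inverse, and left and right inverses coincide in any associative algebra.

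The main obstacle is showing that the series defining $u$ is pointwise finite, i.e., that for each $x \in C$ one has $h^{\star k}(x) = 0$ for all sufficiently large $k$. Since $h^{\star k}(x) = m^{(k-1)} \circ h^{\otimes k} \circ \Delta^{(k)}(x)$ and $h$ annihilates $\FQT_0 C$, it suffices to prove the combinatorial claim: for $x \in \FQT_p C$ and $k \geq 2^p$,
\[
\Delta^{(k)}(x) \in K_k := \sum_{i=1}^{k} C^{\boxtimes(i-1)} \boxtimes \FQT_0 C \boxtimes C^{\boxtimes(k-i)}.
\]
I would prove this by induction on $p$, using the coassociative splitting $\Delta^{(k)} = \bigl(\Delta^{(k/2)} \otimes \Delta^{(k/2)}\bigr) \circ \Delta$. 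Writing $\Delta(x) = A + B$ with $A \in \FQT_{p-1}C \boxtimes C$ and $B \in C \boxtimes \FQT_{p-1}C$ (available from the QT-filtration axiom), the inductive hypothesis at stage $p-1$ with $k/2 = 2^{p-1}$ applies to the ``lowered'' tensor factor on each side---the left factor in $A$ and the right factor in $B$---placing $\Delta^{(k/2)}$ of each into $K_{k/2}$; tensoring up lands both $(\Delta^{(k/2)} \otimes \Delta^{(k/2)})(A)$ and $(\Delta^{(k/2)} \otimes \Delta^{(k/2)})(B)$ inside $K_k$. The delicacy is that the naive bound $k \geq p+1$ does not suffice: a single application of $\Delta$ lowers only one of the two output factors while the other ranges over all of $C$, so a purely linear iteration leaves one side uncontrolled at every step; the binary split is precisely what lets both halves of the induction close simultaneously.
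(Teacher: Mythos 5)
Your proposal is correct, and its overall architecture (restriction for the forward direction; lifting the inverse via the Ext hypothesis; reducing to $f|_{\FQT_0C}=\eta\eps|_{\FQT_0C}$; inverting by a locally finite geometric series; combining left and right inverses in the unital convolution monoid) is the same as the paper's. The genuine difference is in the one step that carries all the weight, the local finiteness of the series, and there your version is actually the more careful one. The paper asserts that for $x\in \FQT_NC$ every summand of $\Delta^{[N+1]}(x)$ already contains a factor in $\FQT_0C$, i.e., a linear bound in the filtration degree. That bound holds for graded or coradical-type filtrations, but it is not a consequence of the QT axiom $\Delta\big(\FQT_pC\big)\subset \FQT_{p-1}C\boxtimes C+C\boxtimes \FQT_{p-1}C$, which controls only one tensor factor per application of $\Delta$ and leaves the other completely uncontrolled: for the path coalgebra of a long linear quiver with the (perfectly legitimate) QT-filtration $\FQT_pC=\{\text{paths of length}\le 2^p-1\}$, a path $x$ of length $3$ lies in $\FQT_2C$ yet $(f-\eta\eps)^{\star 3}(x)$ can be nonzero. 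Your binary-splitting induction, equivalently the observation that $V_{p-1}\star V_{p-1}\subset V_p$ for $V_p=\{g: g|_{\FQT_pC}=0\}$, yields the bound $k\ge 2^p$, which is exactly what the axiom supports and is all that local finiteness requires; you are also right that this is the reason a purely linear iteration cannot close. Two small polishing remarks: for arbitrary $k\ge 2^p$ you should split $k=\lceil k/2\rceil+\lfloor k/2\rfloor$ (both parts $\ge 2^{p-1}$) rather than literally $k/2=2^{p-1}$, since you need vanishing for \emph{all} large $k$, not just $k=2^p$; and the base case $p=0$, $k\ge1$ uses that $\FQT_0C$ is a subcoalgebra so that $\Delta^{(k)}\big(\FQT_0C\big)\subset \FQT_0C^{\boxtimes k}$. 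Neither affects the correctness of the argument.
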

\begin{proof}
First assume that $f$ is convolution invertible in $\Hom(C,A)$, and let $g$ be $\star$-inverse of $f$. By restriction to $\FQT_0C$,
see Lemma \ref{restrictionlem},
it follows that $(\eta \circ \epsilon)|_{\FQT_{0}C} = (f \star g)|_{\FQT_{0}C} = f|_{\FQT_{0}C} \star g|_{\FQT_{0}C}$ and
$(\eta \circ \epsilon)|_{\FQT_0C}$ is the unit of the restricted convolution algebra $\Hom\big(\FQT_{0}C, A\big)$. Thus $g|_{\FQT_0C}$ is a right $\star$-inverse
of $f|_{\FQT_0}$. By symmetry, it is also a left $\star$-inverse and thus the $\star$-inverse in the restricted algebra.
	
Conversely, assume that $f|_{\FQT_0C}$ is convolution invertible in $\Hom\big(\FQT_{0}C, A\big)$. Thus, there is an element
$g$ of $\Hom\big(\FQT_{0}C, A\big)$ such that $f\star g=\eta\circ\epsilon=g\star f$ on $\FQT_{0}C$.
We can extend the $R$-linear map $g$ to $C$ as the restriction morphisms $\Hom(C,A)\to \Hom\big(\FQT_0C,A\big)$
is surjective due to the assumption that $\mathop{\rm Ext}\big(C/\FQT_0C,A\big)=0$.
Because the convolution algebra $(\Hom(C, A), \star, \eta \circ \epsilon)$ is a~unital monoid, if we show that
both $f \star g$ and $g \star f$ have a~$\star$-inverse this will imply $f$ has a~$\star$-inverse.
Thus, replacing $f$ with $f\star g$, we may assume $f|_{\FQT_0C} = \eta \circ \epsilon|_{\FQT_0C}$
that is $\eta \circ \epsilon |_{\FQT_0C} - f|_{\FQT_0C}= 0$.
Since the filtration is exhaustive, for every $x \in C$, there exists a natural number $N$
such that $x \in F_N$ and $(f - \eta \circ \epsilon)^{\star (N)}(x) = 0$.
Indeed, after using the coassociativity for the diagonal to decrease the filtration degree,
every summand from $\Delta^{[N+1]}(x) = \sum_{x} x_{(1)}x_{(2)}x_{(3)}\cdots x_{(N+1)}$ has to contain a factor in $F_0C$
according to the definition of the QT-filtration, and hence $\eta \circ \epsilon - f$ annihilates each term.
So, the infinite sum $h=\sum^{\infty}_{i=0} (\eta \circ \epsilon - f)^{\star i} $ is locally finite and
yields a~well defined map, where we set $(\eta \circ \epsilon - f)^{\star 0} = \eta \circ \epsilon$.
It is easy to verify that $h$ is $R$-linear, so $h \in \Hom(C, A)$. That $h$ is the inverse of $f$ follows by computation:
$f \star h = (f - \eta \circ \epsilon) \star h + \eta \circ \epsilon \star h = (f - \eta \circ \epsilon) \star h +
h = - \sum^{\infty}_{i=1} (\eta \circ \epsilon - f)^{\star i} + \sum^{\infty}_{i=0} (\eta \circ \epsilon - f)^{\star i}
= (\eta \circ \epsilon - f)^{\star 0} = \eta \circ \epsilon$. The fact that $h \star f = \eta \circ \epsilon$
can be checked in the same way. Therefore, we conclude $h$ is $\star$-inverse of $f$.
\end{proof}

Note that if $\FQT_0C$ is a direct summand, i.e., the sequence $0\to \FQT_0C\to C\to C/\FQT_0C\to 0$ splits, the Ext group vanishes, and the first condition is automatically met.

We say that a bialgebra has a QT-sequence, if the coalgebra has a QT-sequence. In~this situation, Theorem \ref{mainthm} yields the generalization of the results of Quillen and Takeuchi, and we recover many of the existence theorems for antipodes as special cases.

\begin{Theorem}\label{qtsplitthm}
A bialgebra with a QT-sequence and ${\rm Ext}^1_R\big(C/F_0^QC,C\big)=0$ is Hopf, i.e., has an antipode, if and only if $\id|_{\FQT_0C}$ has a $\star$ inverse.
\end{Theorem}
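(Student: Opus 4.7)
The plan is to apply Theorem \ref{mainthm} directly, with both the coalgebra and the algebra being the bialgebra $B$ itself, and with $f=\id_B \in \Hom(B,B)$. Recall that the antipode of a bialgebra, by definition, is precisely the two-sided $\star$-inverse of $\id_B$ in the convolution algebra $\Hom(B,B)$, where the left tensor factor uses the coalgebra structure of $B$ and the right tensor factor uses the algebra structure of $B$. Thus the existence of an antipode is equivalent to the $\star$-invertibility of $\id_B$.

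First I would invoke Theorem \ref{mainthm} with $C=A=B$: the hypothesis ${\rm Ext}^1_R\bigl(C/\FQT_0C,C\bigr)=0$ is exactly what is assumed, so the theorem yields that $\id_B$ is $\star$-invertible in $\Hom(B,B)$ if and only if its restriction $\id|_{\FQT_0C}$ is $\star$-invertible in $\Hom(\FQT_0C, B)$. Combining this equivalence with the reformulation of Hopfness in the previous paragraph gives the ``if and only if'' claim.

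One subtle point worth flagging is that Theorem \ref{mainthm} produces a two-sided convolution inverse for $\id_B$, but for a bialgebra the two-sided convolution inverse of $\id$ is automatically an algebra anti-homomorphism and a coalgebra anti-homomorphism, and hence a genuine antipode; this is a standard argument using the uniqueness of convolution inverses in the monoid $\Hom(B,B)$ and does not require additional hypotheses. The only nontrivial input needed beyond Theorem \ref{mainthm} is therefore this standard fact, which I would cite from \cite{Susan} or \cite{Cartier} rather than reprove.

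I do not expect any real obstacle: the theorem is essentially a repackaging of Theorem \ref{mainthm} in the case $f=\id_B$. The ``main content'' of the result lies in Theorem \ref{mainthm}, and here one just has to observe that the existence of an antipode is a statement about the $\star$-invertibility of a single distinguished element of the convolution algebra, to which that theorem applies verbatim.
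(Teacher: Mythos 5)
Your proposal is correct and is exactly the argument the paper intends: Theorem \ref{qtsplitthm} is stated as an immediate consequence of Theorem \ref{mainthm} applied with $C=A=B$ and $f=\id_B$, using the fact (recorded earlier in the paper) that an antipode is by definition a $\star$-inverse of $\id_B$ in $\Hom(B,B)$. The only superfluous step is your ``subtle point'': the antipode axiom $S\star\id=\id\star S=\eta\circ\eps$ is literally the convolution-inverse condition, so no appeal to the anti-homomorphism property is needed.
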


\begin{Corollary}\label{QTantipodecor}A bialgebra with a split QT-sequence is Hopf if and only if $\id_{F_0^QC}$ has a convolution inverse. In~particular, if $C$ is a split QT-connected bialgebra, then it is Hopf.
\end{Corollary}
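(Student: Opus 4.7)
The corollary has two claims, and both should follow essentially by citing Theorem~\ref{qtsplitthm} together with a few easy observations; there is no hard computation, so the plan is mostly about lining up the hypotheses correctly.

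First, for the ``if and only if'' statement, I would invoke Theorem~\ref{qtsplitthm} directly. The only thing that needs checking is that the Ext hypothesis $\mathop{\rm Ext}^1_R(C/\FQT_0C, C) = 0$ is automatically satisfied when the QT-sequence is split. But this is precisely the content of the remark following Theorem~\ref{mainthm}: a split short exact sequence $0 \to \FQT_0C \to C \to C/\FQT_0C \to 0$ is in particular a split extension of $R$-modules, so any $R$-module map $\FQT_0C \to C$ extends to $C$, which is equivalent to the vanishing of $\mathop{\rm Ext}^1_R(C/\FQT_0C, C)$. Hence the hypotheses of Theorem~\ref{qtsplitthm} are met, and the biconditional follows.

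For the ``in particular'' statement, suppose $C$ is split QT-connected. By definition of QT-connectedness, $\FQT_0C = Re$ where $e$ is the coaugmentation element. The point is that on this one-dimensional subcoalgebra the identity is forced to coincide with the unit of the convolution algebra: both $\id|_{Re}$ and $(\eta\circ\eps)|_{Re}$ send $e \mapsto e = 1_C$, so $\id|_{Re} = (\eta\circ\eps)|_{Re}$. Since $(\eta\circ\eps)|_{Re}$ is by Lemma~\ref{restrictionlem} the unit of the restricted convolution algebra $\Hom(Re, C)$, it is trivially $\star$-invertible (with itself as $\star$-inverse). Applying the first part of the corollary then gives that $C$ is Hopf.

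The main potential obstacle is a subtle one of bookkeeping rather than substance: one must make sure that the splitting of the QT-sequence as $R$-modules is actually what is needed to get the Ext vanishing (as opposed to splitting as coalgebras or as bialgebras, which is a much stronger condition). Reading the definition of ``split QT-sequence'' given just before Theorem~\ref{mainthm}, this is an $R$-module splitting, which is exactly what the Ext argument requires. Once that is nailed down, both claims are immediate consequences of the results already established.
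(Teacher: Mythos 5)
Your proof is correct and follows the same route as the paper: the biconditional is Theorem~\ref{qtsplitthm} plus the observation (already noted after Theorem~\ref{mainthm}) that a split $R$-module sequence kills the Ext obstruction, and the QT-connected case reduces to noting that $\id|_{Re}$ coincides with the convolution unit on $Re$. The paper phrases the second step as ``$S(e)=e$ is the required antipode on $F_0^QC$,'' which is the same observation.
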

\begin{proof}The first part directly follows from Theorem \ref{qtsplitthm}. If $C$ is QT-connected then $S(e)=e$ is the required antipode on $F_0^QC$.
\end{proof}

\begin{Proposition}\label{prop:coloredconil}
The Quillen filtration $\FQ C$, cf.\ \eqref{eq:QF}, for a color coaugmented coalgebra colored by $X$ is a QT-filtration and a QT-sequence if $C$ is connected.
For a $\bD$-flat color coaugmented coalgebra colored by $X$ the conilpotent filtration $\FN C$, cf.\ \eqref{eq:conilfilt},
is a QT-filtration with $\FQT_0C=C[X]$ which is exhaustive if $\bD$ is conilpotent.
\end{Proposition}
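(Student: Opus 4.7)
The plan is to verify the two defining axioms of a QT-filtration for $\FQ C$ and $\FN C$ in turn. In both cases the base level is $C[X]$, which is immediately a subcoalgebra: each $x\in X$ is grouplike, so $\Delta(C[X])\subset C[X]\boxtimes C[X]$, and $C[X]$ inherits its counit from $\eps|_{C[X]}=\eps_X$. It remains to verify the inclusion $\Delta(F_rC)\subset F_{r-1}C\boxtimes C+C\boxtimes F_{r-1}C$ and exhaustivity under the stated hypotheses.

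For the Quillen filtration $\FQ C$, I will use the decomposition $C=C[X]\oplus \overline{C}$ coming from the color coaugmentation, together with the identity $\Delta=\tilde\lambda+\tilde\rho+\bar\Delta$. Given $c\in F^Q_rC$, write $c=\pi_X(c)+\bar c$. The grouplike part contributes $\Delta(\pi_X(c))\in C[X]\boxtimes C[X]\subset F^Q_0C\boxtimes C$, since $C[X]$ is setlike. For the reduced part, Theorem \ref{thm:colorstructure} gives $\tilde\lambda(\bar c)\in C[X]\boxtimes C\subset F^Q_{r-1}C\boxtimes C$ and dually $\tilde\rho(\bar c)\in C\boxtimes F^Q_{r-1}C$, while the defining condition of $F^Q_rC$ places $\bar\Delta(\bar c)\in F^Q_{r-1}C\boxtimes F^Q_{r-1}C$, which sits in either summand. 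Summing, we obtain the QT axiom. Exhaustiveness when $C$ is color connected is immediate since ``color connected'' was defined precisely as $\FQ C$ being exhaustive.

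For the conilpotent filtration $\FN C$ in the $\bar\Delta$-flat case, I follow the same template. For $c\in \FN_rC$, again split $c=\pi_X(c)+\bar c$ where $\bar c\in\overline{C}$ satisfies $\bar\Delta^{[r+1]}(\bar c)=0$. The grouplike part is handled as before, and the summands $\tilde\lambda(\bar c), \tilde\rho(\bar c)$ land in $\FN_0C\boxtimes C$ and $C\boxtimes \FN_0C$ respectively. The crux is to show $\bar\Delta(\bar c)\in \FN_{r-1}C\boxtimes C$. By coassociativity of $\bar\Delta$ (established in the lemma immediately preceding \eqref{eq:conilfilt}), one has
\[
0=\bar\Delta^{[r+1]}(\bar c)=\bigl(\bar\Delta^{[r]}\otimes \id\bigr)\bar\Delta(\bar c),
\]
so that $\bar\Delta(\bar c)\in\ker\bigl(\bar\Delta^{[r]}\otimes\id\bigr)$. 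By the $\bar\Delta$-flatness hypothesis, this kernel equals $\ker(\bar\Delta^{[r]})\otimes C$, so $\bar\Delta(\bar c)$ admits a presentation with left tensorands killed by $\bar\Delta^{[r]}$. Since Theorem \ref{thm:colorstructure} additionally guarantees $\bar\Delta(\bar c)\in\overline{C}\boxtimes\overline{C}$, these left tensorands may be chosen in $\overline{C}$, hence in $\{x\in\overline{C}\mid \bar\Delta^{[r]}(x)=0\}\subset \FN_{r-1}C$. This yields $\bar\Delta(\bar c)\in \FN_{r-1}C\boxtimes C$ as required. The exhaustivity claim is essentially tautological: $\bar\Delta$-conilpotence of $C$ means by definition that every element is annihilated by some iterate of $\bar\Delta$, which is precisely the statement that $\bigcup_r\FN_rC=C$.

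The main technical obstacle is the passage from the abstract kernel identity $\ker(\bar\Delta^{[r]}\otimes\id)=\ker(\bar\Delta^{[r]})\otimes C$ furnished by $\bar\Delta$-flatness to a concrete decomposition of $\bar\Delta(\bar c)$ whose left tensorands lie in $\FN_{r-1}C$; here one must respect the internal product $\boxtimes$ and simultaneously exploit that $\bar\Delta(\bar c)$ belongs to $\overline{C}\boxtimes\overline{C}$ to avoid spurious grouplike contributions that would not lie in $\ker(\bar\Delta^{[r]})$. Once this bookkeeping is handled carefully, both filtrations satisfy the axioms and the exhaustivity statements follow directly from the respective definitions.
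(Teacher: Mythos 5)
Your proof is correct and follows essentially the same route as the paper: both parts rest on the splitting $\Delta=\tilde\lambda+\tilde\rho+\bD$ from Theorem~\ref{thm:colorstructure} together with, for the conilpotent filtration, coassociativity of $\bD$ and the $\bD$-flatness identity $\ker\big(\bD^{[r]}\ot\id\big)=\ker\big(\bD^{[r]}\big)\ot C$ to place $\bD(\bar c)$ in $\FN_{r-1}C\boxtimes C$. The $\boxtimes$-versus-$\ot$ bookkeeping you flag is handled at the same (terse) level in the paper, so no gap relative to it.
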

\begin{proof}
The first part follows from the fact that in this situation, $\bD=\bigoplus_{g,h \in X\times X}\D_{g,h}$ and thus $\FQT_rC\supset F^Q_r(C)$
with $\FQT_0C=C[X]=\bigoplus_{g\in \SGL(C)}Rg$.
 For the second part:
decompose $c\in F_r(C)$ as $c_0+\bar c\in C= C[X]\oplus \bar C$ with $\bar c$ in $\ker\big(\bD^{[r]}\big)$, then by Theorem~\ref{thm:colorstructure}
$\Delta(c)=c_0\ot c_0+ i_X\lambda(\bar c)\ot \bar c+ c \ot i_X\rho(\bar c)+\bD(\bar c)$ with $\bD(\bar c)\in \ker\big(\bD^{[r-1]}\big)\ot {\rm id}$. Thus, by definition of $\bD$-flatness
$\Delta(c)\in F_0C\boxtimes C\oplus C\boxtimes F_0C \oplus F_{r-1}\boxtimes C$.
\end{proof}

\begin{Corollary}
A bialgebra whose coalgebra is colored by $X$ and color connected, or
whose coalgebra is a $\bD$-flat color coaugmented coalgebra with colors $X$ has an antipode if and only if every generator $x\in C[X]$
has a algebra inverse $x^{-1}$.
\end{Corollary}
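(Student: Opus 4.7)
The plan is to reduce this to Corollary \ref{QTantipodecor}. First I would invoke Proposition \ref{prop:coloredconil} to extract a QT-sequence with base $C[X]$. In the color connected case, the Quillen filtration $F^Q C$ is a QT-filtration, is exhaustive by hypothesis, and has $F^Q_0 C = C[X]$. In the $\bD$-flat color coaugmented case (with the conilpotence needed to make the filtration exhaustive), the conilpotent filtration $F^N C$ gives a QT-sequence with $F^N_0 C = C[X]$. So in either case we obtain a QT-sequence whose degree-$0$ part is exactly $C[X]$.

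Next I would verify splitness. Being color coaugmented gives a coalgebra decomposition $C = i_X(C[X]) \oplus \overline{C}$, where $\overline{C} = \ker \eps_X$, so the short exact sequence $0 \to F^{QT}_0 C \to C \to C/F^{QT}_0 C \to 0$ splits as a sequence of $R$-modules. Hence the QT-sequence is split, and in particular $\mathop{\rm Ext}^1_R(C/F^{QT}_0 C, C) = 0$, so the hypotheses of Corollary \ref{QTantipodecor} are met. The bialgebra therefore has an antipode if and only if $\id|_{C[X]} \in \Hom(C[X], C)$ has a convolution inverse.

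Finally, since $C[X]$ is setlike with $\Delta(x) = x \otimes x$ and $\eps(x) = 1$ for each $x \in X$, any convolution inverse $S \colon C[X] \to C$ is determined on generators and must satisfy
\begin{equation*}
S(x) \cdot x \;=\; x \cdot S(x) \;=\; \eta \circ \eps(x) \;=\; 1_C
\end{equation*}
for every $x \in X$. Thus existence of such a convolution inverse is equivalent to every $x \in X$ possessing a two-sided algebra inverse in $C$. Conversely, setting $S(x) = x^{-1}$ on the generators and extending $R$-linearly yields a well defined element of $\Hom(C[X], C)$ that convolution-inverts $\id|_{C[X]}$.

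The only genuine subtlety I expect is bookkeeping the hypotheses in the two cases so that the QT-sequence is actually exhaustive (in the $\bD$-flat case this requires color conilpotence, which by Proposition \ref{prop:nilconnected} is a consequence of color connectedness for $\bD$-flat coalgebras). Once exhaustiveness and splitness are in place, the rest is a direct translation of Corollary \ref{QTantipodecor} through the explicit coalgebra structure on $C[X]$.
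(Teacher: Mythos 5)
Your argument is correct and is essentially the derivation the paper intends: Proposition~\ref{prop:coloredconil} supplies a QT-sequence based on $C[X]$, the color coaugmentation splits $0\to C[X]\to C\to C/C[X]\to 0$ so that Corollary~\ref{QTantipodecor} applies, and on the setlike base the convolution-inverse condition forces $S(x)x=xS(x)=1_C$, i.e., invertibility of each generator. You are also right to flag that the second clause implicitly requires color conilpotence for the filtration $\FN C$ to be exhaustive; that hypothesis is needed exactly as you state it.
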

The Proposition allows to give a recursive definition in both cases.

\subsection{Sg-flavored bi- and coalgebras}
\label{sg-flavoredpar}
 In this section, we introduce a type of coalgebra which has a QT-filtration based on a decomposition according to $(g,h)$-primitives. This
 generalizes path-coalgebras and colored coalgebras and subsumes the notions of
graded connected coalgebras and May's component coalgebras \cite{MayMoreConcise}.
This type of idea is also used in \cite{TaftWilson}.

\begin{Definition}
	Let $C$ be a $R$-coalgebra and $g$ and $h$ be semigrouplike elements.
	We define the $(g,h)$-reduced comultiplication to be $\overline{\Delta}_{g,h}(x) = \Delta(x) - x \otimes g - h \otimes x$.
	\end{Definition}

\begin{Lemma}
\label{lem:associator}
Pairs of $\D_{g,h}$s have the following associators:
\begin{gather*}
\big(\overline{\Delta}_{g,h} \otimes \Id\big)\overline{\Delta}_{g,h}(x) -	\big(\Id \otimes \overline{\Delta}_{g,h}\big)\overline{\Delta}_{g,h}(x)\\
\qquad {} =x^{(1)} \otimes (h - g) \otimes x^{(2)} - x \otimes (h - g) \otimes g
- h \otimes (h -g) \otimes x.
\end{gather*}
The $\overline{\Delta}_{g}:=\D_{g,g}$ are coassociative,
thus, the iterated reduced comultiplication $\overline{\Delta}_{g}^{[n]}(x)$ is well-defined.
 Furthermore, if $g$ is grouplike and $x \in \ker\epsilon$, then $(\epsilon \otimes \Id)\overline{\Delta}_{g} (x)= (\Id \otimes \epsilon)\overline{\Delta}_{g} (x)= 0$.
\end{Lemma}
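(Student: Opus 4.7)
The identity is a direct bilinear expansion driven by three ingredients: coassociativity of $\Delta$, the semigrouplike identities $\Delta(g)=g\otimes g$ and $\Delta(h)=h\otimes h$, and the counit axioms. As a preparatory computation, I would first evaluate $\overline{\Delta}_{g,h}$ on the two base points: $\overline{\Delta}_{g,h}(g)=g\otimes g-g\otimes g-h\otimes g=-h\otimes g$, and symmetrically $\overline{\Delta}_{g,h}(h)=-h\otimes g$.

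Next, working in Sweedler notation $\Delta(x)=x\Sp\otimes x\Spp$, I would apply $\overline{\Delta}_{g,h}\otimes\Id$ and $\Id\otimes\overline{\Delta}_{g,h}$ separately to each of the three summands of $\overline{\Delta}_{g,h}(x)=\Delta(x)-x\otimes g-h\otimes x$, substituting the values above whenever $\overline{\Delta}_{g,h}$ hits $g$ or $h$. Each side unfolds into seven tensor summands.

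Then I would subtract. The principal summands $(\Delta\otimes\Id)\Delta(x)$ and $(\Id\otimes\Delta)\Delta(x)$ cancel by coassociativity of $\Delta$. Three further pairs, of the forms $\pm\,h\otimes x\Sp\otimes x\Spp$, $\pm\,x\Sp\otimes x\Spp\otimes g$, and $\pm\,h\otimes x\otimes g$, cancel directly; the remaining six summands combine in three pairs, producing $x\Sp\otimes(h-g)\otimes x\Spp-x\otimes(h-g)\otimes g-h\otimes(h-g)\otimes x$ and matching the claim. Setting $g=h$ makes this right-hand side identically zero, which proves coassociativity of $\overline{\Delta}_g$; the iterates $\overline{\Delta}_g^{[n]}$ are then defined unambiguously by induction on either bracketing.

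For the final counit statement, $g$ being grouplike gives $\epsilon(g)=1$ and $x\in\ker\epsilon$ gives $\epsilon(x)=0$, so $(\epsilon\otimes\Id)\overline{\Delta}_g(x)=(\epsilon\otimes\Id)\Delta(x)-\epsilon(x)g-\epsilon(g)x=x-0-x=0$, using the left counit axiom $(\epsilon\otimes\Id)\Delta=\Id$; the right version is symmetric. No conceptual obstacle arises; the only real hazard is bookkeeping across the roughly fourteen tensor summands, which I would organize in an aligned computation to avoid sign errors.
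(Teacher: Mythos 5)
Your proposal is correct and follows essentially the same route as the paper: a direct Sweedler expansion of both iterated reduced coproducts (seven summands each), cancellation of the principal terms by coassociativity of $\Delta$ together with three directly matching pairs, and regrouping of the remaining six terms into the three $(h-g)$ differences; the preliminary evaluations $\overline{\Delta}_{g,h}(g)=\overline{\Delta}_{g,h}(h)=-h\otimes g$ and the counit computation are also exactly what is needed. No gaps.
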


\begin{proof}
The first claim follows from the calculation:
\begin{gather*}
\big(\overline{\Delta}_{g,h} \otimes \Id\big)\overline{\Delta}_{g,h}(x) -	\big(\Id \otimes \overline{\Delta}_{g,h}\big)\overline{\Delta}_{g,h}(x)
\\ \qquad
{}=\sum_x 	x^{(11)} \otimes x^{(12)} \otimes x^{(2)} - \sum_x
x^{(11)} \otimes x^{(21)} \otimes x^{(22)} 	- x^{(1)} \otimes g \otimes x^{(2)}
\\ \qquad\hphantom{=}
{}- h \otimes x^{(1)} \otimes x^{(2)} - x^{(1)} \otimes x^{(2)} \otimes g + x \otimes g \otimes g + h \otimes x \otimes g + h \otimes g \otimes x
\\ \qquad\hphantom{=}
{} -\big[ x^{(1)} \otimes x^{(2)} \otimes g - x^{(1)} \otimes h \otimes x^{(2)} + x \otimes h \otimes g -h \otimes x^{(1)} \otimes x^{(2)} + h \otimes x \otimes g
\\ \qquad\hphantom{=}
{}+ h \otimes h \otimes x\big]
=x^{(1)} \otimes (h - g) \otimes x^{(2)} - x \otimes (h - g) \otimes g - h \otimes (h -g) \otimes x.
\end{gather*}
The second claim is straightforward, e.g., $(\epsilon \otimes \Id)\overline{\Delta}_{g}(x) = x -\epsilon(x)g - \epsilon(g)x = 0$.
\end{proof}

Given a coalgebra $C$, we will inductively define $R$-modules each depending on two semi\-group\-like elements $g,h\in \SGL(C)$. These are simultaneously defined for all pairs of such elements and yield pieces of a filtration.
\begin{enumerate}\itemsep=0pt
\item For any two $g,h\in \SGL(C)\colon F_0^QC_{g,h} := Rg \cap Rh$ is the base $R$-module.

\item Inductively over all pairs:
$F_n^QC_{g,h} := \big\{x\,|\, \overline{\Delta}_{g,h}(x)=
\in \sum_{t \in \SGL(C)} F_{n-1}^QC_{g,t} \boxtimes F_{n-1}^QC_{t,h}\big\}$.
\end{enumerate}
In words, $F_n^QC_{g,h}$ is the subset in which every element $x$ has the property that if $\overline{\Delta}_{g,h}(x)= \sum x^{(1)} \otimes x^{(2)}$ then for each summand $x^{(1)} \otimes x^{(2)}$ there exists a semigrouplike element $t$ with $x^{(1)} \in F_{n-1}^QC_{g,t}$ and $x^{(2)} \in F_{n-1}^QC_{t,h}$. This is also a $R$-module as every $F_n^QC_{g_1,g_2}$ with $g_1,g_2 \in \SGL(C)$ is a $R$-module inductively.
The \textit{$(g, h)$-Quillen component} of $C$ is defined to be $\FQ C_{g,h} = \sum_{i \geq 0} \FQ_iC_{g,h}$.

\begin{Lemma} The $\FQ_iC_{g,h}$ yield filtrations of the $\FQ C_{g,h}$; that is
	$F_iC^Q_{g,h} \subset F_{i+1}C^Q_{g,h}$ for $i \geq 0$.
\end{Lemma}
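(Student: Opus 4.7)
The plan is to induct on $i$, running the induction simultaneously over all pairs $(g,h)$ of semigrouplike elements, and to handle the inductive step by a direct appeal to monotonicity of the filtration applied to each tensor factor.

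For the inductive step I will assume that $F_j^Q C_{g',h'} \subset F_{j+1}^Q C_{g',h'}$ holds for every $j < i$ and every pair $(g',h')$ in $\SGL(C) \times \SGL(C)$. If $x \in F_i^Q C_{g,h}$, then by the defining condition $\bar\Delta_{g,h}(x) \in \sum_t F_{i-1}^Q C_{g,t} \boxtimes F_{i-1}^Q C_{t,h}$. Applying the inductive hypothesis to each of the two tensor factors gives $\bar\Delta_{g,h}(x) \in \sum_t F_i^Q C_{g,t} \boxtimes F_i^Q C_{t,h}$, which is exactly what it means for $x$ to lie in $F_{i+1}^Q C_{g,h}$.

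For the base case $F_0^Q C_{g,h} \subset F_1^Q C_{g,h}$ I would take $x \in F_0^Q C_{g,h} = Rg \cap Rh$ and pick representations $x = rg = sh$ with $r,s \in R$. Using $x = sh$ together with $h$ being semigrouplike gives $\Delta(x) = s(h \otimes h) = h \otimes x$, so $\bar\Delta_{g,h}(x)$ collapses to $-x \otimes g$. The trick is then to switch back to the presentation $x = rg$ and rewrite $-x \otimes g = -r(g \otimes g) = -g \otimes rg = -g \otimes x$. Since $-g \in Rg = F_0^Q C_{g,g}$ and $x \in Rg \cap Rh = F_0^Q C_{g,h}$, the element $-g \otimes x$ lies in the $t = g$ summand of $\sum_t F_0^Q C_{g,t} \boxtimes F_0^Q C_{t,h}$, and therefore $x \in F_1^Q C_{g,h}$.

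The only genuine subtlety is in the base case: the module $F_0^Q C_{g,h}$ is an intersection of two $R$-lines, and one has to use both presentations $x = rg$ and $x = sh$ in tandem to shape $\bar\Delta_{g,h}(x)$ into a tensor whose factors carry the correct pair of indices $(g,t)$ and $(t,h)$. After this elementary manipulation, the recursive form of the definition of $F_i^Q C_{g,h}$ makes the inductive step completely automatic.
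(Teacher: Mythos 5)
Your proof is correct and follows essentially the same route as the paper's: the same simultaneous induction over all pairs, with the inductive step being the same monotonicity argument for $\boxtimes$. The only (immaterial) difference is in the base case, where you land in the $t=g$ summand via $-g\otimes x$ while the paper lands in the $t=h$ summand via $-x\otimes h$; your explicit remark that both presentations $x=rg$ and $x=sh$ must be used is in fact a point the paper glosses over.
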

\begin{proof}
	We use induction. Base case: to show that $Rg \cap Rh = \FQ_0C_{g,h} \subset \FQ_{1}C_{g,h}$ pick $x\in \FQ_0C_{g,h}$, say $x=rg$, then $\overline{\Delta}_{g,h}(x)=-x\ot h\in F_0C_{g,h}^Q\boxtimes F_0C_{h,h}^Q$.	
	By induction assume that $F_{n-1}C^Q_{g,h} \subset F_{n}C^Q_{g,h}$. Let $x \in \FQ_{n}C_{g,h}$. Then $ \overline{\Delta}_{g,h}(x) \in \sum_{t \in \SGL(C)} \FQ_{n-1}C_{g,t} \boxtimes F_{n-1}C^Q_{t,h} \subset \sum_{t \in \SGL(C)} F_{n}C^Q_{g,t} \boxtimes F_{n}C^Q_{t,h}$ by induction hypothesis, and thus $x \in F_{n+1}C^Q_{g,h}$ .
\end{proof}

\begin{Definition}
The filtrations of $C^Q_{g,h}$ yield a common filtration $\FBQ_rC := \sum_{g,h} F^Q_rC_{g,h}$~-- the bivariate Quillen filtration.
We say a coalgebra $C$ is \textit{sg-flavored} (sg stands for semigrouplike) if the bivariate Quillen filtration is exhaustive, that is $C = \sum_{(g,h), g,h\in \SGL(C)} C_{g,h}^Q$ and {\em split} if~$\FBQ_0C$ is a direct summand.
A bi- or Hopf-algebra is sg-flavored if the underlying coalgebra is sg-flavored.
\end{Definition}

\begin{Remark}
\label{linermk}

Note all semigrouplike elements of $C$ lie in $\FBQ_0C$ and all skew primitive elements~$C$ lie in $\FBQ_1C$. The degree zero component
$F_0C^Q_{g,h}$ may be non-empty, if $g,h$ are two distinct semigrouplike elements, it may happen that there exist $s, t \in R$ such that $sg = th$ so that $Rg \cap Rh \not= 0$.
This complication does not appear over a ground field $R=k$ or if the respective submodules are free over $\mathbb{Z}$, as the equation then implies that $g = h$.

$C^Q_{g,h}$ is never empty. This is clear for $g=h$. For $g\neq h$ $\Delta(g - h) = g \otimes (g - h) + (g - h) \otimes h$ and hence
	$\overline{\Delta}_{g,h}(g-h)=0\in \FBQ_0C_{g,t}\boxtimes \FBQ_0C_{t,h}$ for any $t$.
	Thus, if there is more than one semigrouplike element, the sum $C = \sum_{g,h} C_{g,h}^Q$ is never direct, as $g-h$ is in $\big(C^Q_{g,g}+C^Q_{h,h}\big)\cap C^Q_{g,h}$.
	
	In the case of a field $R=k$, $g-h$ generates the intersection and in fact over a field one can
	 make the sum direct by reducing $C_{g,h}$ by $k(g-h)$ following the line of arguments presented in~\cite{Susan}. Note that the corresponding splittings are not unique.
\end{Remark}

\begin{Lemma}
\label{lem:sgQT}
The $R$-modules $\FBQ_rC := \sum_{g,h} F_rC_{g,h}^Q$ for $r \geq 0$, form a QT-filtration, which is a QT-sequence if $C$ is sg-flavored.
\end{Lemma}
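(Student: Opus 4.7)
The plan is to verify the two axioms of a QT-filtration directly from the definitions, with the QT-sequence claim following immediately from the sg-flavored hypothesis. Both axioms should unravel without difficulty, so I do not anticipate a genuine obstacle; the main care needed is bookkeeping of the semigrouplike elements.

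First I would check that $\FBQ_0C = \sum_{g,h\in\SGL(C)} Rg\cap Rh$ is a subcoalgebra. A typical generator is an element $x_{g,h}\in Rg\cap Rh$; writing $x_{g,h}=rg$ for some $r\in R$ and using that $g$ is semigrouplike, one gets $\Delta(x_{g,h}) = r(g\otimes g) = x_{g,h}\otimes g$, which lies in $Rg \boxtimes Rg\subset \FBQ_0C\boxtimes\FBQ_0C$ (taking $g=h$ in the defining sum gives $Rg = Rg\cap Rg\subset \FBQ_0C$). Extending linearly over the finite sums forming $\FBQ_0C$ yields $\Delta(\FBQ_0C)\subset \FBQ_0C\boxtimes\FBQ_0C$, which verifies axiom~(1).

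Next, for axiom~(2), I would pick $x\in F_pC^Q_{g,h}$ with $p\geq 1$ and unwind the inductive definition as
\begin{gather*}
\Delta(x) \;=\; x\otimes g \;+\; h\otimes x \;+\; \overline{\Delta}_{g,h}(x),
\end{gather*}
where by construction $\overline{\Delta}_{g,h}(x) \in \sum_{t\in\SGL(C)} F_{p-1}C^Q_{g,t}\boxtimes F_{p-1}C^Q_{t,h} \subset \FBQ_{p-1}C\boxtimes \FBQ_{p-1}C$. Since $g,h\in\SGL(C)$ sit inside $\FBQ_0C\subset \FBQ_{p-1}C$, the term $x\otimes g$ lies in $C\boxtimes\FBQ_{p-1}C$ and the term $h\otimes x$ in $\FBQ_{p-1}C\boxtimes C$, while the reduced piece can be placed in either summand. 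Passing from this pointwise statement to the $R$-linear span $\FBQ_pC = \sum_{g,h} F_pC^Q_{g,h}$ is immediate, which gives $\Delta(\FBQ_pC)\subset \FBQ_{p-1}C\boxtimes C + C\boxtimes \FBQ_{p-1}C$.

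Finally, if $C$ is sg-flavored then by the definition just given, $C = \sum_{g,h} C^Q_{g,h} = \sum_{g,h}\sum_{r\geq 0} F_rC^Q_{g,h} = \bigcup_{r\geq 0}\FBQ_rC$, so the QT-filtration is exhaustive and therefore a QT-sequence. This completes the two-part claim.
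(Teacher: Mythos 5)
Your proof is correct and follows the same route as the paper's (which simply asserts that $\FBQ_0C$ consists of multiples of semigrouplikes and is hence a subcoalgebra, that condition (2) holds by construction, and that exhaustiveness is the definition of sg-flavored); you have merely written out the verifications the paper leaves implicit. The only point worth noting is that the nestedness $\FBQ_pC\subset\FBQ_{p+1}C$, needed for these modules to form a filtration at all, is supplied by the preceding lemma of the paper rather than re-derived here, which is consistent with how you use it.
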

\begin{proof}
By definition the elements in $\FBQ_0C$ are multiples of semigrouplike elements and hence $F_0C$ is a subcoalgebra.
Condition (2) for a QT-filtration is satisfied by construction. In~the sg-flavored case, the filtration is exhaustive by definition.
\end{proof}

{\sloppy
\begin{Theorem}\label{thm:sgflavor}
For an sg-flavored coalgebra with ${\rm Ext}^1_R\big(C/\FBQ_0C,A\big)=0$,
an element~$f$ of $\Hom(C, A),$ $f$ is $\star$-invertible if and only if
the restriction $f|_{\FQT_{0}C}$ is $\star$-invertible in $\Hom\big(\FBQ_{0}C, A\big)$.
	
In particular, a split sg-flavored bialgebra is a Hopf algebra if $\FBQ_0C$ is the quotient of a group algebra by a Hopf ideal.
\end{Theorem}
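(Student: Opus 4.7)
The plan is to reduce both statements to Theorem~\ref{mainthm} using Lemma~\ref{lem:sgQT}, which already does most of the heavy lifting by recognizing the bivariate Quillen filtration as a QT-filtration (and as a QT-sequence when $C$ is sg-flavored).

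For the first statement, I would simply observe that Lemma~\ref{lem:sgQT} produces a QT-sequence $\{\FBQ_rC\}_{r\geq 0}$ with $\FBQ_0C$ a subcoalgebra. The hypothesis $\mathop{\rm Ext}^1_R(C/\FBQ_0C,A)=0$ is precisely the Ext-vanishing condition required by Theorem~\ref{mainthm}, so I would apply that theorem verbatim with $\FQT_rC=\FBQ_rC$ to get the equivalence between $\star$-invertibility of $f$ in $\Hom(C,A)$ and $\star$-invertibility of $f|_{\FBQ_0C}$ in $\Hom(\FBQ_0C,A)$. No additional work is needed here beyond citing the previous results.

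For the second statement, set $A=C$ and $f=\id$. If the sg-flavored bialgebra is split, then the short exact sequence $0\to \FBQ_0C\to C\to C/\FBQ_0C\to 0$ of $R$-modules splits, so $\mathop{\rm Ext}^1_R(C/\FBQ_0C,C)=0$ automatically. By the first part (or equivalently by Corollary~\ref{QTantipodecor}), $C$ admits an antipode iff $\id|_{\FBQ_0C}$ has a $\star$-inverse in the convolution algebra $\Hom(\FBQ_0C,C)$. The assumption is that $\FBQ_0C\cong R[G]/I$ for a group $G$ and a Hopf ideal $I$; as the quotient of a Hopf algebra by a Hopf ideal is again a Hopf algebra, $\FBQ_0C$ has its own antipode $S_0$. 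Composing $S_0$ with the inclusion $\FBQ_0C\hookrightarrow C$ gives an element of $\Hom(\FBQ_0C,C)$ that is a two-sided $\star$-inverse of $\id|_{\FBQ_0C}$, because the computation $\id\star (\iota\circ S_0)=\iota\circ (\id_{\FBQ_0C}\star S_0)=\iota\circ \eta_{\FBQ_0C}\circ \eps|_{\FBQ_0C}=\eta_C\circ\eps|_{\FBQ_0C}$ takes place inside the subcoalgebra $\FBQ_0C$ (and symmetrically on the other side).

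I do not anticipate any real obstacle: the content of the theorem lies in the earlier definitions and in Lemma~\ref{lem:sgQT}, after which the proof is a direct application of Theorem~\ref{mainthm} together with the elementary observation that splitting kills $\mathop{\rm Ext}^1$. The only minor point requiring care is that the $\star$-inverse of $\id|_{\FBQ_0C}$ must be interpreted as a morphism into $C$ rather than into $\FBQ_0C$, but this is handled by postcomposing the antipode of the Hopf quotient $R[G]/I$ with the coalgebra inclusion into $C$.
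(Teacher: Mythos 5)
Your proposal is correct and follows essentially the same route as the paper: the first statement is exactly Lemma~\ref{lem:sgQT} combined with Theorem~\ref{mainthm}, and for the second statement the paper likewise notes that the antipode is forced to be $S(g)=g^{-1}$ on the semigrouplike generators and descends to $\FBQ_0C$ precisely because the quotient is by a Hopf ideal, which is your observation that $R[G]/I$ is Hopf with antipode $S_0$. Your explicit remark that $\iota\circ S_0$ must be taken as the $\star$-inverse in $\Hom\big(\FBQ_0C,C\big)$ is a correct and slightly more careful rendering of the same argument.
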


}

\begin{proof}
By Lemma \ref{lem:sgQT},
 Theorem \ref{qtsplitthm} applies and we need that $id$ has an antipode. Now on any semigrouplike
 there is only one possible value for the antipode:
 $S(g)=g^{-1}$, if it exists. Thus, $S$ being defined on $g$ is equivalent to $g$ being invertible.

If $g$ is grouplike, so is $g^{-1}$, so that $g^{-1}\in \FBQ_0C$. This follows from applying the counit constraints
to $1\ot 1=\D(1)=\D\big(gg^{-1}\big)=\sum g\big(g^{-1}\big)\Sp \ot g\big(g^{-1}\big)\Spp$.
 Thus for the second statement, the antipode is fixed on the generators and descends precisely if the quotient is by a~Hopf ideal.
\end{proof}

The following proposition explains the usual constructions of antipodes which pass through a connected quotient.

\begin{Proposition}
Let $B$ be a sg-flavored split bialgebra all of whose semigrouplikes are group\-like,
then the ideal $I$ spanned by
$g-h$ for $g,h\in\GLE(C)$ is a coideal and $B^{\rm red}=B/I$ is a Hopf algebra.
\end{Proposition}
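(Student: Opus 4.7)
The plan is to verify that $I$ is a bi-ideal, identify the quotient $B^{\rm red}=B/I$ as a split QT-connected bialgebra, and then invoke Corollary~\ref{QTantipodecor} to produce the antipode.

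For the coideal property it suffices to check the condition on the generating set $\{g-h : g,h \in \GLE(B)\}$: one has $\eps(g-h) = 1-1 = 0$, and
\begin{gather*}
\Delta(g-h) = g\otimes g - h\otimes h = (g-h)\otimes g + h\otimes (g-h)\in I\boxtimes B + B\boxtimes I.
\end{gather*}
Extending from the generators to the full two-sided ideal is immediate from the bialgebra compatibility of $\Delta$, since $I\boxtimes B + B\boxtimes I$ is a two-sided ideal in the algebra $B\otimes B$ and $\Delta$ is an algebra map. Hence $I$ is a bi-ideal and $B^{\rm red}$ inherits a bialgebra structure.

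Under the projection $\pi\colon B\to B^{\rm red}$, every element of $\GLE(B)$ collapses to the single class $\bar e := \overline{1_B}$, so $B^{\rm red}$ is canonically coaugmented by $R\to B^{\rm red}$, $1\mapsto \bar e$, and this yields a splitting $B^{\rm red} = R\bar e \oplus \ker(\bar\eps)$ as $R$-modules. Define $F_r := \pi(\FBQ_r B)$; then $F_0 = R\bar e$ because all semigrouplike generators of $\FBQ_0 B$ are identified. For $y\in \FBQ_r B_{g,h}$ one has $\Delta(y) = y\otimes g + h\otimes y + w$ with $w\in\sum_t \FBQ_{r-1}B_{g,t}\boxtimes \FBQ_{r-1}B_{t,h}$, and applying $\pi\otimes\pi$ identifies every grouplike label with $\bar e$, so $\Delta(\pi y)\in F_{r-1}\boxtimes B^{\rm red} + B^{\rm red}\boxtimes F_{r-1}$. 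Thus $\{F_r\}$ is a QT-filtration based at $R\bar e$; it is exhaustive because $\FBQ_\bullet B$ is exhaustive by the sg-flavored hypothesis, and the splitting above makes it split. Hence $B^{\rm red}$ is a split QT-connected bialgebra.

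Corollary~\ref{QTantipodecor} now yields an antipode on $B^{\rm red}$. Rather than an obstacle, the main conceptual point is to recognise that quotienting by $I$ collapses the semigrouplike-indexed decomposition of $\overline{\Delta}_{g,h}$ which defines the bivariate Quillen filtration into the ordinary reduced coproduct based at the single grouplike $\bar e$; once this is observed, QT-connectedness of $B^{\rm red}$ follows essentially by construction, and the existence of the antipode is delivered by the general theorem.
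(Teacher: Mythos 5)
Your proof is correct and follows essentially the same route as the paper: verify the coideal property on the generators $g-h$, observe that the bivariate Quillen filtration descends to a split exhaustive QT-filtration with degree-zero part $R\bar e$, and conclude via Corollary~\ref{QTantipodecor}. The only (harmless) variation is that you work with the image filtration $\pi(\FBQ_r B)$ and check the QT-axioms directly, whereas the paper identifies the induced filtration on the quotient with its ordinary Quillen filtration; both deliver the same conclusion.
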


\begin{proof}
As before $\eps(g-h)=1-1=0$ and $\D(g-h)=g\ot (g-h)+(g-h)\ot h$.
This filtration is exhaustive, as it is induced by the quotient $\FBQ_r(C/I)=\FBQ_r(C)/\big(I\cap \FBQ_r\big)$. The bivariate Quillen filtration on the quotient is simply the Quillen filtration. The splitting passes to the quotient and as a Quillen connected bialgebra $C/I$ is Hopf.
\end{proof}

It is possible to truncate to only the grouplike and skew primitive elements.
\begin{Proposition}
\label{prop:flavorgen}
	If a bialgebra $B$ is generated by grouplike and skew primitive elements, then it is sg-flavored.
\end{Proposition}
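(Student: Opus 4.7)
The plan is to show that the bivariate Quillen filtration $\FBQ B$ is exhaustive. Any grouplike $g$ lies in $\FQ_0 B_{g,g}\subset\FBQ_0 B$, and any $(g,h)$-skew primitive $x$ satisfies $\overline{\Delta}_{g,h}(x)=0$ and hence $x\in\FQ_1 B_{g,h}\subset\FBQ_1 B$; thus every generator of $B$ belongs to $\FBQ B$. Since $\FBQ B=\bigcup_r \FBQ_r B$ is a sum of $R$-submodules, it is automatically closed under $R$-linear combinations, so it will suffice to verify that $\FBQ B$ is closed under the algebra multiplication of $B$; the conclusion then follows because every element of $B$ is an $R$-linear combination of monomials in the generators.

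For this closure I would prove by induction on $m+n$ that if $a\in\FQ_m B_{g_a,h_a}$ and $b\in\FQ_n B_{g_b,h_b}$ then $ab\in\FQ_{m+n}B_{g_ag_b,h_ah_b}$. The base case $m=n=0$ is immediate from $\FQ_0 B_{g,h}=Rg\cap Rh$ together with the fact that a product of grouplikes is grouplike. Two auxiliary ingredients feed the inductive step. First, left (resp.\ right) multiplication by a grouplike $p$ sends $\FQ_r B_{G,H}$ into $\FQ_r B_{pG,pH}$ (resp.\ $\FQ_r B_{Gp,Hp}$); this is proved by a short subsidiary induction on $r$ using the identity $\Delta(py)=(p\otimes p)\Delta(y)$. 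Second, decompose $\Delta(a)=a\otimes g_a+h_a\otimes a+A$ with $A=\overline{\Delta}_{g_a,h_a}(a)\in\sum_s\FQ_{m-1}B_{g_a,s}\boxtimes\FQ_{m-1}B_{s,h_a}$, and symmetrically $\Delta(b)=b\otimes g_b+h_b\otimes b+B$. Expanding $\Delta(ab)=\Delta(a)\Delta(b)$ and subtracting $ab\otimes g_ag_b+h_ah_b\otimes ab$ displays $\overline{\Delta}_{g_ag_b,h_ah_b}(ab)$ as the double remainder $A\cdot B$, the two pure cross-terms $ah_b\otimes g_ab$ and $h_ab\otimes ag_b$, and mixed cross-terms pairing a principal piece of one factor with the remainder of the other. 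The double-remainder contribution is direct: a pure tensor $(a^{(1)}_{s_a}b^{(1)}_{s_b})\otimes(a^{(2)}_{s_a}b^{(2)}_{s_b})$ lies, by the inductive hypothesis at level $(m-1,n-1)$, in $\FQ_{m+n-2}B_{g_ag_b,s_as_b}\boxtimes\FQ_{m+n-2}B_{s_as_b,h_ah_b}$, with intermediate semigrouplike $s_as_b$.

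The main obstacle is the pure cross-terms and the analogous mixed cross-terms: naively $ah_b\in\FQ_m B_{g_ah_b,h_ah_b}$ and $g_ab\in\FQ_n B_{g_ag_b,g_ah_b}$, so the left source $g_ah_b$ and the right target $g_ah_b$ do not coincide with the required $g_ag_b$ and $h_ah_b$ unless $g_a=h_a$ or $g_b=h_b$. To bridge this gap I would invoke the remark following the definition of sg-flavored, according to which $p-q\in\FQ_1 B_{p,q}$ for any two grouplikes $p,q$. This permits replacing a cross-term of source-target $(g_ah_b,h_ah_b)$ by one of source-target $(g_ag_b,h_ah_b)$ at the cost of a correction involving a difference of grouplike products, whose left factor $(g_ah_b-g_ag_b)$ itself lies in $\FQ_1 B_{g_ah_b,g_ag_b}$ and can be absorbed into lower filtration levels via further applications of the inductive hypothesis together with the grouplike multiplication fact from the first ingredient. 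The careful bookkeeping of these corrections, tracked simultaneously across all the cross-terms, is the technically delicate heart of the proof; once in place, closure of $\FBQ B$ under multiplication, combined with the fact that the generators already lie in $\FBQ B$, yields $B=\FBQ B$, so $B$ is sg-flavored.
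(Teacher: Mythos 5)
Your overall architecture is sound and in fact more ambitious than the paper's: you aim for full multiplicativity $\FQ_m B_{g_a,h_a}\cdot \FQ_n B_{g_b,h_b}\subset \FQ_{m+n}B_{g_ag_b,h_ah_b}$ by induction on $m+n$, whereas the paper only proves (Lemma~\ref{lem:skewfilter}) that left multiplication by a grouplike preserves the filtration level and left multiplication by a skew primitive raises it by one, and then inducts on the length of a word in the generators. The general statement you want is true and would give the Proposition. The problem is that the step you yourself flag as ``the technically delicate heart of the proof'' is exactly the step you do not carry out, and I do not believe the $g-h$ correction mechanism can be made to work: the term $ah_b\otimes g_ab$ with first factor in $\FQ_mB_{g_ah_b,h_ah_b}$ is not a small perturbation of something in $\FQ B_{g_ag_b,t}\boxtimes \FQ B_{t,h_ah_b}$, and adding multiples of $(g_ah_b-g_ag_b)$ does not change which component the factor $ah_b$ lives in. As written, the proof has a genuine gap at its acknowledged crux.

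The good news is that the obstacle is an artifact of tensor-factor ordering, i.e., of taking the formula $\overline{\Delta}_{g,h}(x)=\Delta(x)-x\otimes g-h\otimes x$ literally while simultaneously requiring the remainder to lie in $\sum_t \FQ B_{g,t}\boxtimes \FQ B_{t,h}$. In the convention the paper actually computes with (compare the displayed identity $\Delta(xx')-(gg'\otimes xx'+xx'\otimes hh')=xg'\otimes hx'+gx'\otimes xh'$ in its proof, and the formula for $\overline{\Delta}_{pg,qh}(xz)$ in Lemma~\ref{lem:skewfilter}), the principal parts are $\Delta(a)=g_a\otimes a+a\otimes h_a+A$, and the two pure cross-terms of $\Delta(a)\Delta(b)$ come out as $g_ab\otimes ah_b$ and $ag_b\otimes h_ab$. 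These concatenate with \emph{no} correction: $g_ab\otimes ah_b\in \FQ_nB_{g_ag_b,g_ah_b}\boxtimes \FQ_mB_{g_ah_b,h_ah_b}$ with intermediate grouplike $t=g_ah_b$, and $ag_b\otimes h_ab\in \FQ_mB_{g_ag_b,h_ag_b}\boxtimes \FQ_nB_{h_ag_b,h_ah_b}$ with $t=h_ag_b$; the mixed terms and $AB$ are handled by your inductive hypothesis exactly as you describe. So you should delete the correction machinery, fix the ordering of the factors, and check the colors of the cross-terms directly -- after which your induction closes and even yields a cleaner and more general statement than the paper's Lemma~\ref{lem:skewfilter}.
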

\begin{proof}
		Observe that if $g$, $h$ are grouplike elements and $x$ a skew primitive element, then $gh$ is grouplike and $gx$ is a skew primitive. Therefore,
	we focus on the products of skew primitives $x_n\cdots x_1$. If $x$ is a $(g, h)$-skew primitive and $x'$ is a $(g',h')$-skew primitive, it follows that
\begin{gather*}
\Delta(xx') - (gg' \otimes xx' + xx' \otimes hh') = xg' \otimes hx' + gx' \otimes xh',
\end{gather*}
	where $xg'$ is a $(gg',hg')$-skew primitive, $hx'$ is a $(hg',hh')$-skew primitive, $gx'$ is a $(gg',gh')$-skew primitive and $xh'$ is a $(gh',hh')$-skew primitive. Therefore,
\begin{gather*}
\Delta(xx') - (gg' \otimes xx' + xx' \otimes hh') \in \FBQ_1C_{gg',hg'}
 \boxtimes \FBQ_1C_{hg',hh'}
 \\ \qquad
 {}+ \FBQ_1C_{gg',gh'} \boxtimes \FBQ_1C_{gh',hh'}
 \end{gather*}
	and $xx' \in \FBQ_2C_{gg',hh'}$.
	We claim any word of skew primitive elements $x_n\cdots x_1$ belongs to a component. We proceed by induction with the above observation
providing the base case. Suppose $x_1\cdots x_{n-1}$ belongs to some component for every possible product of skew primitive elements.
Applying Lemma \ref{lem:skewfilter} to $x_n(x_{n-1}\cdots x_1)$ finishes that proof.
\end{proof}

\begin{Lemma}
\label{lem:skewfilter}
	Let $B$ be a bialgebra and $g$, $h$, $p$, $q$ be grouplikes and $x$ be $(p,q)$-skew primitive, then
	$pC_{g,h}^Q \subset C_{pg,ph}^Q$ and $xC_{g,h}^Q \subset C_{pg,qh}^Q$.
\end{Lemma}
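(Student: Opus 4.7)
The plan is a double induction: first prove the grouplike statement $pC_{g,h}^Q \subset C_{pg,ph}^Q$ at the filtration level (i.e.\ $y\in \FQ_nC_{g,h}$ implies $py\in \FQ_nC_{pg,ph}$), and then use this as a lemma to establish the skew-primitive statement, which increases the filtration level by one: $y\in \FQ_nC_{g,h}$ implies $xy\in \FQ_{n+1}C_{pg,qh}$. Both are inductions on $n$.

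For the first claim, the base case $n=0$ is immediate: if $y=rg=sh$, then $py=r(pg)=s(ph)\in R(pg)\cap R(ph)$. For the inductive step I would use that, since $p$ is grouplike, $\Delta(py)=(p\otimes p)\Delta(y)$. Writing $\Delta(y)=y\otimes h+g\otimes y+\sum_i a_i\otimes b_i$ with $a_i\in \FQ_{n-1}C_{g,t_i}$, $b_i\in \FQ_{n-1}C_{t_i,h}$, multiplication by $p\otimes p$ produces exactly the terms $py\otimes ph$, $pg\otimes py$, plus $\sum_i pa_i\otimes pb_i$, so $\bar\Delta_{pg,ph}(py)=\sum_i pa_i\otimes pb_i$ and the inductive hypothesis (applied to each factor, with the intermediate semigrouplike $t_i$ replaced by $pt_i$) finishes it.

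For the second claim, the base case amounts to the observation that if $y=rg$, then $\Delta(xy)=(x\otimes q+p\otimes x)(y\otimes g)=xy\otimes qg+py\otimes xg$ and $py\otimes xg=pg\otimes xy$ (both equal $rpg\otimes xg$), so $\bar\Delta_{pg,qh}(xy)=xy\otimes q(g-h)+(\text{cancelling pair})$, and the first term vanishes because the identity $y\otimes g=y\otimes h$ in $F_0^QC_{g,h}\boxtimes(\,\cdot\,)$ forces $xy\otimes q(g-h)=0$ after multiplying by $x\otimes q$ in $B\otimes B$. So $xy\in \FQ_1C_{pg,qh}$. For the inductive step, I expand $\Delta(xy)=\Delta(x)\Delta(y)$ using the same decomposition $\Delta(y)=y\otimes h+g\otimes y+w$ and subtract the required anchor terms $xy\otimes qh+pg\otimes xy$. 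The remainder breaks into four types of tensor factors: $xg\otimes qy$, $py\otimes xh$, $xa_i\otimes qb_i$, and $pa_i\otimes xb_i$. Each single factor is handled by one of: the base case of claim~2 (for $xg$, $xh$), claim~1 (for $py$, $qy$, $pa_i$, $qb_i$), or the inductive hypothesis of claim~2 (for $xa_i$, $xb_i$). In every case the middle index matches ($qg$, $ph$, $qt_i$, $pt_i$ respectively), and every summand sits in $\FQ_nC_{pg,s}\boxtimes \FQ_nC_{s,qh}$ for the appropriate $s$, giving $xy\in \FQ_{n+1}C_{pg,qh}$.

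The main obstacle is purely bookkeeping: checking that in the skew-primitive case all four families of mixed cross terms land in $\FQ$-components whose intermediate semigrouplikes actually match up so the $\boxtimes$ decomposition is valid. There is no deeper conceptual difficulty beyond keeping the grouplike shifts $(g,h)\mapsto (pg,ph)$ and $(g,h)\mapsto (pg,qh)$ straight and invoking the right previously-proved statement for each tensor slot.
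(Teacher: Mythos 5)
Your proposal is correct and follows essentially the same route as the paper: a double induction on the filtration level, first establishing $p\,\FQ_nC^Q_{g,h}\subset \FQ_nC^Q_{pg,ph}$ via $\Delta(py)=(p\otimes p)\Delta(y)$, and then expanding $\Delta(xy)=\Delta(x)\Delta(y)$ and sorting the cross terms into the same four families, each disposed of by the grouplike case or the induction hypothesis. The only (cosmetic) discrepancy is that you write $\Delta(x)=x\otimes q+p\otimes x$ whereas the paper's definition of $(p,q)$-skew primitive is $\Delta(x)=x\otimes p+q\otimes x$, which is what makes the anchors land in $C^Q_{pg,qh}$ as stated.
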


\begin{proof}
	In fact, $pF_{n}B_{g,h}^Q \subset F_{n}B_{pg,ph}^Q$ and $xF_{n}B_{g,h}^Q \subset F_{n+1}C_{pg,qh}^Q$. We prove these relations by induction.
	For the base case, we use the fact that the multiplication of two grouplike ele\-ments is grouplike and the multiplication of a grouplike element and a skew primitive ele\-ment is skew primitive. In~particular,
	we have $gx$ is a $(gp, gq)$-skew primitive ele\-ment.
	Now we assume $pF_{n-1}B_{g,h}^Q \subset F_{n-1}B_{pg,ph}^Q$
	Observe for $z \in F_nB^Q_{g,h}$ and $\overline{\Delta}_{g,h}(z)= \sum z_{(1)} \otimes z_{(2)} \in \sum_{t \in \SGL(C)} F_{n-1}B^Q_{g,t} \boxtimes F_{n-1}B^Q_{t,h}$.
	By algebraic manipulations, it follows
	$\overline{\Delta}_{pg,ph}(pz) = \sum pz_{(1)} \otimes pz_{(2)}$,
	where $\overline{\Delta}_{pg,ph}(pz) \in \sum_{t \in \SGL(C)} pF_{n-1}B^Q_{g,t} \boxtimes pF_{n-1}B^Q_{t,h}$.
	By induction hypothesis: $\overline{\Delta}_{pg,ph}(pz) \allowbreak\in \sum_{t \in \SGL(C)} F_{n-1}B^Q_{pg,pt} \boxtimes F_{n-1}B^Q_{pt,ph}$.
	Therefore, $pz \in F_nB^Q_{pg,ph}$ and $pF_{n}B_{g,h}^Q \subset F_{n}B_{pg,ph}^Q$.
	Now, we assume $xF_{n-1}B_{g,h}^Q \subset F_{n}C_{pg,qh}^Q$. Observe
\begin{gather*}
\overline{\Delta}_{pg,qh}(xz) = xg \otimes qz + \sum pz_{(1)} \otimes xz_{(2)} + \sum xz_{(1)} \otimes qz_{(2)} + pz \otimes xh,
\end{gather*}
	so
\begin{align*}
\overline{\Delta}_{pg,qh}(xz) \in {}&F_1B^Q_{pg,qg} \boxtimes qF_{n}B^Q_{g,h} +\sum_{t \in \SGL(C)} pF_{n-1}B^Q_{g,t} \boxtimes xF_{n-1}B^Q_{t,h}
\\
&+	\sum_{t \in \SGL(C)} xF_{n-1}B^Q_{g,t} \boxtimes qF_{n-1}B^Q_{t,h} +pF_{n}B^Q_{g,h} \boxtimes F_{1}B^Q_{ph,qh}.
\end{align*}
	Using the previous result for grouplike elements and that $M \subset M'$ and $N \subset N'$ as submodules, then $M \boxtimes N \subset M' \boxtimes N'$.
\begin{align*}
\overline{\Delta}_{pg,qh}(xz) \in {}&F_nB^Q_{pg,qg} \boxtimes F_{n}B^Q_{qg,qh} +\sum_{t \in \SGL(C)} F_{n}B^Q_{pg,pt} \boxtimes F_{n}B^Q_{xt,xh}
\\
& + \sum_{t \in \SGL(C)} F_{n}B^Q_{xg,xt} \boxtimes F_{n}B^Q_{qt,qh} +F_{n}B^Q_{pg,ph} \boxtimes F_{n}B^Q_{ph,qh}.
\end{align*}
	Hence, $xz \in F_{n+1}B^Q_{pg,qh}$.
\end{proof}

\begin{Proposition} \label{prop:bcoaction}
Let $B$ be a split sg-flavored bialgebra all of whose semigrouplikes are grouplike
and let $\pi\colon B\to B^{\rm red}$ be the projection.
The Brown map defined as
\begin{gather*}
\Delta_B\colon\quad B\to B\ot_R B^{\rm red}, \qquad \D_B(x)=\sum x^{(1)}\ot \pi\big(x^{(2)}\big)
\end{gather*}
is a Hopf coaction.
\end{Proposition}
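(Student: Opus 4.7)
The plan is to observe that the Brown map factors as $\Delta_B = (\id_B \ot \pi) \circ \Delta$, where by the preceding proposition $\pi \colon B \to B^{\rm red}$ is a surjective bialgebra morphism (in fact a Hopf morphism, since $B^{\rm red}$ carries an antipode). From this description, the three defining axioms of a Hopf coaction---coassociativity of the coaction, compatibility with the counit, and multiplicativity---should all reduce to the corresponding properties of $\Delta$ transported through $\pi$ on the second tensor factor.

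First I would verify coassociativity, $(\Delta_B \ot \id) \circ \Delta_B = (\id \ot \Delta^{\rm red}) \circ \Delta_B$. Expanding both sides in Sweedler notation and using that $\pi$ is a coalgebra map, so $\Delta^{\rm red} \circ \pi = (\pi \ot \pi) \circ \Delta$, both expressions reduce after one application of coassociativity of $\Delta$ to $\sum x^{(1)} \ot \pi(x^{(2)(1)}) \ot \pi(x^{(2)(2)})$. Next, the counit axiom $(\id \ot \eps^{\rm red}) \circ \Delta_B = \id_B$ is immediate from $\eps^{\rm red} \circ \pi = \eps$ combined with the counit axiom of $B$.

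Finally, to promote this comodule structure to a Hopf coaction I would check that $\Delta_B$ is multiplicative. Since $B$ is a bialgebra, $\Delta \colon B \to B \ot B$ is an algebra map; since $\pi$ is a morphism of algebras, so is $\id \ot \pi \colon B \ot B \to B \ot B^{\rm red}$; hence their composition $\Delta_B$ is an algebra map, and $B$ becomes a $B^{\rm red}$-comodule algebra as required.

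There is no real obstacle in the present proposition: the substantive content has been packaged into the previous one, which guarantees that $I$ is a biideal and that $B^{\rm red}$ is Hopf. Granted those facts, the current statement is a formal consequence of $\pi$ being a bialgebra morphism and of $\Delta_B$ being the pushforward of the comultiplication of $B$ along $\pi$ on the right factor.
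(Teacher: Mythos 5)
Your proposal is correct and follows essentially the same route as the paper: the paper's (very terse) proof consists precisely of your coassociativity check, namely that both iterations of the coaction equal $\sum x^{(1)}\ot \pi\big(x^{(2)}\big)\ot \pi\big(x^{(3)}\big)$, which is immediate once one writes $\Delta_B=(\id\ot\pi)\circ\Delta$ with $\pi$ a bialgebra quotient map. Your additional verification of the counit axiom and multiplicativity is left implicit in the paper but is handled exactly as you describe.
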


\begin{proof}
The result of both iterations is $\sum x^{(1)}\ot \pi\big(x^{(2)}\big)\ot \pi\big(x^{(3)}\big)$.
\end{proof}

\subsection{Pathlike coalgebras}\label{sec:pathlike}
In many applications, we need a notion which is stronger than being sg-flavored, yet is weaker than being QT-connected.

\begin{Definition}
	A sg-flavored coalgebra is said to be \textit{pathlike} if
	\begin{enumerate}\itemsep=0pt
		\item $\SGL(C)=\GLE(C)$.
		\item $\FBQ_0C=\Free{\GLE}=\bigoplus_{g \in \GLE(C)}Rg$ as a submodule.
	\end{enumerate}
We say a pathlike coalgebra is split, if $F_0C$ is a direct summand, i.e., the sequence of $R$ modules $0\to F_0C\to C\to C/F_0C\to 0$ is split.

 A bi- or Hopf-algebra is pathlike or split pathlike if the underlying coalgebra is.	
\end{Definition}

 At the moment, we will disregard semigrouplike elements that are not grouplike.
 Handling such elements is more tricky and they only appear in torsion, but they might be of interest in the situation with modified
 counits, \cite[Section~2.2]{HopfPart2} and
 for applications like \cite{KaufmannMedina}.

 \begin{Example}
As expected by the name, the path coalgebra of a quiver $Q$ is pathlike. Indeed, $\SGL=\GLE$ equals the set of vertices $V(Q)$. The base is given by the sum of the $Rv\cap Rw=\delta_{v,w}R_v$, thus $\FBQ C=\bigoplus_{v\in V}Rv$.
 Note that $C_1C^Q_{g,h}=R\vec{e}+R(g-h)$, if there is a directed edge $\vec{e}$ from~$g$ to $h$. It is split and $\FBQ_0C$ is free.
 \end{Example}
 The assumptions allow us to simplify and strengthen Theorem~\ref{thm:sgflavor}:
\begin{Theorem}\label{mainthm2}
Let $C$ be pathlike coalgebra and $A$ algebra with $\mathop{\rm Ext}\big(C/\FBQ_0C,A\big)=0$. An element $f \in \Hom(C,A)$ has an $\star$-inverse in the convolution algebra $\Hom(C,A)$ if and only if for every grouplike element $g$, $f(g)$ has an inverse as ring element in $A$.

In particular, in this situation, a character is $\star$-invertible if and only if it is grouplike inver\-tible.
	
Furthermore, a pathlike bialgebra $B$ is a Hopf algebra if and only if the set of grouplike elements form a group.
\end{Theorem}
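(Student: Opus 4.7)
The plan is to leverage Theorem~\ref{mainthm} applied to the bivariate Quillen filtration. By Lemma~\ref{lem:sgQT}, for an sg-flavored coalgebra the $\FBQ_rC$ form a QT-sequence, and by the pathlike assumption $\SGL(C)=\GLE(C)$ together with $\FBQ_0C = \bigoplus_{g\in\GLE(C)}Rg$. The hypothesis $\mathop{\rm Ext}^1_R(C/\FBQ_0C,A)=0$ is exactly what Theorem~\ref{mainthm} needs, so that theorem reduces $\star$-invertibility of $f$ in $\Hom(C,A)$ to $\star$-invertibility of the restriction $f|_{\FBQ_0C}$ in $\Hom(\FBQ_0C,A)$.

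The main computational step is then to analyze convolution on the base. Since $\FBQ_0C$ is free as an $R$-module on $\GLE(C)$ and each generator $g$ is grouplike, we have $\Delta(g)=g\ot g$ and $\eps(g)=1$, so for any $h\in\Hom(\FBQ_0C,A)$ the convolution satisfies $(f\star h)(g)=f(g)h(g)$, and the unit $\eta\circ\eps$ sends $g$ to $1_A$. Consequently $h$ is a two-sided $\star$-inverse of $f|_{\FBQ_0C}$ if and only if $f(g)h(g)=1_A=h(g)f(g)$ for every grouplike $g$. Because the $g$'s form an $R$-basis, such an $h$ exists precisely when each $f(g)$ is invertible in $A$, in which case one takes $h(g):=f(g)^{-1}$ and extends $R$-linearly. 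This proves the first assertion, and the character statement is then immediate from the definition of grouplike invertibility (together with the previously observed converse that any $\star$-invertible character is grouplike invertible).

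For the bialgebra statement, I would apply the criterion to $f=\id_B\in\Hom(B,B)$: then $B$ is Hopf if and only if $\id_B$ is $\star$-invertible, which by the first part holds if and only if every grouplike $g\in\GLE(B)$ is invertible as a ring element of $B$. It remains to verify that such invertibility for every grouplike is equivalent to $\GLE(B)$ being a group. Products and the unit of grouplikes are grouplike, so it suffices to see that the two-sided ring inverse $g^{-1}$ of a grouplike $g$ is itself grouplike. For this, apply $\Delta$ and $\eps$ to $gg^{-1}=1$: from $1\ot 1 = \Delta(g)\Delta(g^{-1}) = (g\ot g)\Delta(g^{-1})$ and the counit constraints one extracts $\Delta(g^{-1})=g^{-1}\ot g^{-1}$ and $\eps(g^{-1})=1$, exactly as in the proof of Theorem~\ref{thm:sgflavor}. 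The converse is trivial: if $\GLE(B)$ is a group then each grouplike admits a ring inverse in $B$.

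The main obstacle is largely bookkeeping around the splitting and the role of the Ext hypothesis: we need the restriction map $\Hom(C,A)\to\Hom(\FBQ_0C,A)$ to be surjective so as to extend the pointwise inverse $h$ back to $C$, and this is precisely what the Ext vanishing (automatic in the split case, where $\FBQ_0C$ is a direct summand) supplies via Theorem~\ref{mainthm}. Once this reduction is in place, nothing more than the elementary identification of convolution-invertibility on a setlike coalgebra with pointwise invertibility in the target algebra is required.
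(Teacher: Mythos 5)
Your proposal is correct and follows essentially the same route as the paper: reduce to the base $\FBQ_0C$ of the bivariate Quillen filtration via the QT-sequence machinery (the paper cites Theorem~\ref{thm:sgflavor}, which is itself this application of Theorem~\ref{mainthm} through Lemma~\ref{lem:sgQT}), then identify convolution on the setlike base with pointwise multiplication in $A$, and for the Hopf statement note that the only candidate antipode on a grouplike is its ring inverse. Your explicit check that the ring inverse of a grouplike is again grouplike is a small but welcome bit of extra care that the paper delegates to the proof of Theorem~\ref{thm:sgflavor}.
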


\begin{proof}
{\sloppy
By Theorem \ref{thm:sgflavor}, $f$ having a $\star$-inverse is equivalent to
 the restriction of $f$ on $\bigoplus_{g \in \GLE(C)}Rg$ considered as an element in $\Hom\big(\bigoplus_{g \in \GLE(C)}Rg,A\big)$ having a convolution inverse.
Say that for each $g\in \GLE(C)$, $f(g)\in A$ has an inverse then setting $h(g)=f(g)^{-1}$ defines a~convolution inverse on $\FBQ_0C$,
 as $(f\star h)(g)=f(g)h(g) = 1_A=\eta_A(\eps_C(g))$ --symmetrically for $h\star g$. On the other hand, if $f$ has a convolution inverse $f^{\star -1}$, then the extension of
 $h(g) =f(g)^{-1}$ as above also yields a convolution inverse and these must agree.

 }

	Using Corollary \ref{QTantipodecor}, and the theorem above, having an antipode on $B$ is equivalent to having an antipode on $F_0^QC=\bigoplus_{g\in \GLE} Rg$.
The only possible antipode on grouplike elements is \mbox{$S(g)=g^{-1}$} if the inverses exist. Hence the existence is equivalent to the existence of inverses.
\end{proof}

The following proposition will cover the targeted examples.
\begin{Proposition}
\label{prop:colorpath}
 A color connected colored coalgebra $C$ is a pathlike coalgebra. In~particular,
a $\bD$-flat color conilpotent colored coalgebra is a pathlike coalgebra.
\end{Proposition}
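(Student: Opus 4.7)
\emph{Proof plan.}

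The ``in particular'' clause follows at once from Proposition~\ref{prop:nilconnected}, which says that a $\bD$-flat color conilpotent colored coalgebra is color connected. So I may assume $C$ is color connected and colored by $X$, and I must verify the three conditions defining pathlike: an exhaustive bivariate Quillen filtration, $\SGL(C)=\GLE(C)$, and $\FBQ_0C=\bigoplus_{g\in\GLE(C)}Rg$.

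For the first condition, my strategy is to compare $\FBQ_\bullet C$ with the ordinary colored Quillen filtration $\FQ_\bullet C$ (exhaustive by hypothesis) and prove $\FQ_nC\subseteq\FBQ_nC$ by induction on $n$. The base case $\FQ_0C=i_X(C[X])=\bigoplus_{x\in X}Ri_X(x)\subseteq\FBQ_0C$ is automatic because each $i_X(x)$ is grouplike (the image of a grouplike under the coalgebra map $i_X$). For the inductive step, take $c\in\FQ_nC$, decompose $c=\sum c_{x,y}$ along the color bigrading, and apply Theorem~\ref{thm:colorstructure} to each $c_{x,y}\in\bar C_{x,y}$: this yields $\Delta(c_{x,y})=i_X(x)\otimes c_{x,y}+c_{x,y}\otimes i_X(y)+w$ with $w\in\bar C\boxtimes\bar C$, and combined with $\bD(c)\in\FQ_{n-1}C\boxtimes\FQ_{n-1}C$ and the inductive hypothesis forces $w\in\FBQ_{n-1}C\boxtimes\FBQ_{n-1}C$. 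Comparing with $\bar\Delta_{g,h}(c)=\Delta(c)-c\otimes g-h\otimes c$, this places $c_{x,y}\in F_nC^Q_{i_X(y),i_X(x)}\subseteq\FBQ_nC$ (the index swap being forced by the skew-primitive convention), and summing yields $c\in\FBQ_nC$; hence $C$ is sg-flavored.

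For the remaining two conditions I identify $\SGL(C)=\GLE(C)=\{i_X(x):x\in X\}$. One inclusion is the base case above. For the converse, if $g$ is semigrouplike then $\eps_X(g)$ is semigrouplike in the setlike coalgebra $C[X]$, which in the absence of nontrivial idempotents of $R$ pins $\eps_X(g)=x$ for a unique $x$; writing $g=i_X(x)+\bar g$ with $\bar g\in\bar C_{x,x}$, the equation $\Delta(g)=g\otimes g$ collapses to $\bD(\bar g)=\bar g\otimes\bar g$, and iterating while invoking the color conilpotence implied by color connectedness forces $\bar g=0$. Then $\FBQ_0C=\sum_{x,y}Ri_X(x)\cap Ri_X(y)=\bigoplus_{x\in X}Ri_X(x)$, by the $R$-linear independence of the $i_X(x)$ witnessed by the retraction $\eps_X\circ i_X=\id$. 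The main obstacle is precisely this semigrouplike identification over a general commutative ring, since a nontrivial orthogonal idempotent decomposition of $R$ produces extra semigrouplikes $\sum r_xi_X(x)$ that need not be grouplike; the argument goes through cleanly under the mild assumption (satisfied in all paradigmatic examples) that $R$ has no nontrivial idempotents.
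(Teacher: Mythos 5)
Your filtration-comparison argument for exhaustiveness (inducting $\FQ_nC\subseteq\FBQ_nC$ via Theorem~\ref{thm:colorstructure}) is exactly the content the paper delegates to Proposition~\ref{prop:coloredconil}, whose proof notes that $\bD$ restricted to $\overline{C}_{x,y}$ coincides with $\overline{\Delta}_{x,y}$ up to the index convention; so that part, and the reduction of the ``in particular'' clause to Proposition~\ref{prop:nilconnected}, match the paper. Where you genuinely diverge is in handling the semigrouplikes. The paper invokes its standing convention that an unspecified coloring is by $X=\SGL(C)$; color coaugmentation then makes $C[\SGL(C)]=\Free{\SGL(C)}$ a free direct summand, so each $Rg$ with $g\in\SGL(C)$ is free, the identity $(1-\eps(g))g=0$ forces $\eps(g)=1$ (hence $\SGL(C)=\GLE(C)$), and the $R$-linear independence of the $g$'s gives $\FBQ_0C=\bigoplus_{g}Rg$ at once. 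This disposes of your idempotent worry without any hypothesis on $R$: a semigrouplike such as $e_1x+e_2y$ would be $R$-linearly dependent on $x$ and $y$, contradicting the freeness of $\Free{\SGL(C)}$, so such elements are excluded by the hypotheses rather than needing to be ruled out. Your alternative route -- pushing $g$ into $C[X]$ via $\eps_X$ and killing $\bar g$ by conilpotence -- proves the stronger statement $\SGL(C)=\{i_X(x)\}$, but only under the added assumption that $R$ has no nontrivial idempotents, and it also quietly uses that $\bar g\otimes\bar g=0$ implies $\bar g=0$, which requires care over a general commutative ring. So your proposal establishes the proposition only under an extra hypothesis that the paper's reading of ``colored'' renders unnecessary; adopting the convention $X=\SGL(C)$ lets you delete that entire discussion and replace it with the two-line freeness argument above.
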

\begin{proof}
Let $C$ be colored by $X=\SGL(C)$. In~view of Proposition \ref{prop:coloredconil}, we need to check if the sg-flavored coalgebra is pathlike.
 Being color coaugmented means that all
semigrouplike elements are grouplike elements, since $(1-\eps(g))g=0$, and the submodule $Rg$ is free if and only if $\eps(g)=1$.
By definition $\FBQ_0(C)=C[\GLE(C)]=\bigoplus_{g\in \GLE(C)} Rg $.
\end{proof}

\begin{Proposition}
\label{prop:opprop}
The coopposite of an sg-flavored, respectively pathlike, coalgebra is also sg-fla\-vored, respectively pathlike.
\end{Proposition}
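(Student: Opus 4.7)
The plan is to observe that passing to the coopposite only swaps the roles of the two semigrouplike indices $(g,h)$ in the bivariate Quillen filtration, and since both sg-flavoredness and pathlikeness are symmetric in these indices, neither property is disturbed.

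First, I would note that the sets $\SGL(C) = \SGL(C^{\rm cop})$ and $\GLE(C) = \GLE(C^{\rm cop})$, since the condition $\Delta(g) = g\otimes g$ is invariant under the flip $\tau$, and the counit is unchanged. This immediately handles condition (1) in the definition of pathlike and shows that the collection of indices $(g,h)$ indexing the bivariate Quillen components is the same for $C$ and $C^{\rm cop}$.

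Next, I would establish the key identity
\begin{gather*}
\bar\Delta^{\rm cop}_{g,h}(x) \;=\; \tau\bigl(\Delta(x)\bigr) - x\otimes g - h\otimes x \;=\; \tau\bigl(\bar\Delta_{h,g}(x)\bigr),
\end{gather*}
which shows that $\bar\Delta^{\rm cop}_{g,h}$ and $\bar\Delta_{h,g}$ agree up to the flip. Combined with the evident compatibility $\tau(M\boxtimes N) = N\boxtimes M$, this is the only structural fact required.

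Then I would prove by induction on $n$ that
\begin{gather*}
F^Q_n C^{\rm cop}_{g,h} \;=\; F^Q_n C_{h,g}
\end{gather*}
for every pair $(g,h)$ of semigrouplike elements. The base case $n=0$ is immediate, as $Rg\cap Rh = Rh\cap Rg$. For the inductive step, $x\in F^Q_n C^{\rm cop}_{g,h}$ means $\tau(\bar\Delta_{h,g}(x))\in\sum_t F^Q_{n-1} C^{\rm cop}_{g,t}\boxtimes F^Q_{n-1} C^{\rm cop}_{t,h}$; applying $\tau$ and the inductive hypothesis rewrites this as $\bar\Delta_{h,g}(x)\in\sum_t F^Q_{n-1} C_{h,t}\boxtimes F^Q_{n-1} C_{t,g}$, i.e.\ $x\in F^Q_n C_{h,g}$. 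Summing over all pairs gives $\FBQ_r C^{\rm cop} = \FBQ_r C$ as submodules of the common underlying module, so the bivariate Quillen filtration is exhaustive for $C^{\rm cop}$ precisely when it is for $C$, and $\FBQ_0 C^{\rm cop} = \FBQ_0 C$. This proves the sg-flavored claim; the pathlike claim then follows at once from the equality of $\SGL$ and $\GLE$ and the equality of the degree-zero pieces. The only mild obstacle is bookkeeping with $\tau$ and the internal product $\boxtimes$, but the swap rule $\tau(M\boxtimes N)=N\boxtimes M$ makes the induction routine.
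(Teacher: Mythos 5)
Your proof is correct and follows the same route as the paper, which simply observes that $\SGL(C^{\rm cop})=\SGL(C)$, that the definition of the bivariate Quillen filtration is symmetric under the flip, and that the remaining pathlike conditions only concern the unchanged $R$-module structure. You have merely made the symmetry explicit via the identity $\bar\Delta^{\rm cop}_{g,h}=\tau\circ\bar\Delta_{h,g}$ and the induction $F^Q_n C^{\rm cop}_{g,h}=F^Q_n C_{h,g}$, which is a welcome elaboration of the paper's terser argument.
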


\begin{proof}
First, $\SGL(C^{\rm cop}) = \SGL(C)$.
Now, the definition of the filtration is symmetric and thus it is also exhaustive for $C^{\rm op}$. The further conditions for pathlike then only concern the $R$-module structure, which remains unchanged.
\end{proof}

\begin{Remark}
This means that there is a Drinfel'd double for sg-flavored or pathlike Hopf algebras, see, e.g., \cite[Section~IX.4]{Kassel}.
\end{Remark}

\begin{Corollary}
	The antipode of a split sg-flavored Hopf algebra $H$ is bijective.
\end{Corollary}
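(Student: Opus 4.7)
The plan is to show that the coopposite $H^{\rm cop}$ is again a Hopf algebra and that its antipode serves as a two-sided compositional inverse of $S$ on the underlying $R$-module.

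First I would invoke Proposition \ref{prop:opprop} to conclude that $H^{\rm cop}$ is a split sg-flavored coalgebra. Because $H^{\rm cop}$ has the same underlying algebra as $H$ and the same set of semigrouplikes, and because $H$ being Hopf forces every semigrouplike $g$ to be invertible in the algebra (with $S(g)=g^{-1}$), the map $g\mapsto g^{-1}$ on $\FBQ_0 H^{\rm cop}=\FBQ_0 H$ is a $\star_{\rm cop}$-inverse of $\id|_{\FBQ_0 H^{\rm cop}}$. Corollary \ref{QTantipodecor} applied to $H^{\rm cop}$ then extends this to a full antipode $\bar S$ of $H^{\rm cop}$.

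Next I would identify $\bar S$ with the compositional inverse of $S$ via convolution uniqueness. The defining identities for $\bar S$ read $\sum \bar S(x^{(2)})x^{(1)}=\eps(x)1=\sum x^{(2)}\bar S(x^{(1)})$; postcomposing with the anti-multiplicative unit-preserving map $S$ yields
\begin{gather*}
(S\circ\bar S)\star S=\eta\eps=S\star(S\circ\bar S)
\end{gather*}
in the convolution monoid $(\Hom(H,H),\star,\eta\eps)$. Since the Hopf axiom for $S$ already exhibits $\id_H$ as the unique two-sided $\star$-inverse of $S$, one concludes $S\circ\bar S=\id_H$. The symmetric argument, postcomposing the Hopf identities for $S$ with the anti-multiplicative map $\bar S$, produces $(\bar S\circ S)\star_{\rm cop}\bar S=\eta\eps=\bar S\star_{\rm cop}(\bar S\circ S)$, so uniqueness of the $\star_{\rm cop}$-inverse of $\bar S$ in the convolution monoid for $H^{\rm cop}$ forces $\bar S\circ S=\id_H$.

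The main obstacle sits in the first step, namely in checking that all hypotheses of Corollary \ref{QTantipodecor} genuinely transfer to $H^{\rm cop}$; this is precisely the content of Proposition \ref{prop:opprop}, since the splitting is an $R$-module condition insensitive to reversing the coproduct and the bivariate Quillen filtration is symmetric in its two grouplike indices. Once $\bar S$ is produced, the inversion identities $S\circ\bar S=\bar S\circ S=\id_H$ fall out formally from anti-multiplicativity of $S$ and $\bar S$ together with uniqueness of convolution inverses, so no further technical difficulty is expected.
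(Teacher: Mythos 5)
Your proposal is correct and follows essentially the same route as the paper: use Proposition \ref{prop:opprop} to see that $H^{\rm cop}$ is again split sg-flavored, note that the restriction of $S$ to $\FBQ_0 H^{\rm cop}=\FBQ_0 H$ (i.e., $g\mapsto g^{-1}$ on semigrouplikes) provides the required convolution inverse there so that $H^{\rm cop}$ acquires an antipode $\bar S$, and then deduce $S\circ\bar S=\bar S\circ S=\id_H$ from anti-multiplicativity and uniqueness of convolution inverses. The only cosmetic differences are that the paper invokes Theorem \ref{thm:sgflavor} where you invoke Corollary \ref{QTantipodecor}, and for the identity $\bar S\circ S=\id_H$ the paper substitutes $S(h)$ into the defining identities of $\bar S$ whereas you apply $\bar S$ to the Hopf identities of $S$; both are valid.
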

\begin{proof}
	 Using Proposition \ref{prop:opprop} and Theorem \ref{thm:sgflavor}, it follows that $H^{\rm cop}$ is also a Hopf algebra.
	 	 Indeed, $\id|_{\FBQ_0}$ is generated by $\SGL(C)$, so that the antipode on the generators exists and is fixed as $S(g)=g^{-1}$. When restricted to $\FBQ_0 H^{\rm cop}=\FBQ_0 H$, $S$ provides a convolution inverse for $\Delta^{\rm op}|_{\FBQ_0 H^{\rm cop}}=\Delta|_{\FBQ_0 H}$ and hence extends to $H^{\rm cop}$. Let $T$ be the antipode for $H^{\rm cop}$ and $S$ the antipode for $H$.
		
		 Then for $h \in H$, we have $h_{(2)}T(h_{(1)}) = \epsilon(h)1_H = T(h_{(2)})h_{(1)}$ (notice the difference from the normal axiom of an antipode). Applying $S$ and using the fact $S$ is an algebra antihomomorphism, it follows that
	 $(S \circ T)(h_{(1)})S(h_{(2)}) = \epsilon(h)1_H = S(h_{(1)})(S\circ T)(h_{(2)})$.
	 We see $(S \circ T)$ is both a left and a right convolution inverse to $S$. Thus, $S \circ T = {\rm Id}_H$ since the convolution inverse is unique. Replacing $h$ by $S(h)$ in $h_{(2)}T(h_{(1)}) = \epsilon(h)1_H = T(h_{(2)})h_{(1)}$, it follows that
	 $S(h_{(1)})(T \circ S)(h_{(2)})= \epsilon(h)1_H = (T\circ S)(h_{(1)})S(h_{(2)})$
	 Thus, $T \circ S = {\rm Id}_H$, and we conclude that $S$ is a bijection.
\end{proof}

A subclass of examples related to the topology of loop spaces~\cite{MayMoreConcise} is of the following special diagonal form.

\begin{Definition}
	A coaugmented coalgebra $C$ over $R$ which has a decomposition into Quillen connected subcoalgebras,
$C=\bigoplus_{g\in \GLE(C)} C_g$ with $C_g=Rg \oplus\overline{C_g}$, where $Rg$ is free
is called {\em looplike}.
It is a {\em component coalgebra} in case it is graded and the degree $0$ part is $C[X]$.
	 \end{Definition}

The name looplike stems from the fact a path coalgebra for a category that is totally disconnected has such a form. Disconnected means that for any two
	objects $X\neq Y$, $\Hom(X,Y)=\varnothing$ and hence all paths of composable morphisms are loops, viz.\ they have the same source and target.
		In the particular case of a quiver, the vertex set $V$ is the set of grouplike elements. In~the arrow set of $Q$ there is no arrow between any distinct vertices, i.e., $Q(x, y) = \varnothing$ if $x \neq y \in V$. Note that counterintuitively, even for a looplike coalgebra, $\FQT_{g,h}$ is non-zero as $g - h$ is a $(g, h)$-skew primitive, as mentioned above, see Remark~\ref{linermk}.

\begin{Remark}This notion of component coalgebra is motivated by $H_*(X, R)$ and $H_*(\Omega X, R)$,
where $X$ is a based space. 	If both modules are $R$-flat, the first homology ring is a component coalgebra and the second ring is a
 component Hopf algebra. Moreover, the condition for the second ring to be connected in the coalgebra sense is equivalent
 to $X$ being connected in the topology sense; cf.~\cite{MayMoreConcise}.
\end{Remark}

\begin{Proposition}A looplike coalgebra is a pathlike coalgebra.
\end{Proposition}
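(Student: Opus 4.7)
The plan is to verify the three defining conditions of pathlike in turn, exploiting the direct sum decomposition $C=\bigoplus_{g\in\GLE(C)}C_g$ and the Quillen connectedness of each summand.

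First, to see that $C$ is sg-flavored, I will compare the Quillen filtration on each $C_g$ with the bivariate Quillen filtration on $C$. The claim is $\FQ_rC_g\subset \FQ_rC_{g,g}$, proved by induction on $r$. The base case $\FQ_0C_g=Rg\subset Rg\cap Rg=\FQ_0C_{g,g}$ is immediate. For the inductive step, if $x\in \FQ_rC_g$, then the reduced diagonal on $C_g$ coincides with $\overline{\Delta}_{g,g}$ restricted to $C_g$, so $\overline{\Delta}_{g,g}(x)\in \FQ_{r-1}C_g\boxtimes \FQ_{r-1}C_g\subset \FQ_{r-1}C_{g,g}\boxtimes \FQ_{r-1}C_{g,g}$, which sits inside $\sum_{t\in\SGL(C)}\FQ_{r-1}C_{g,t}\boxtimes \FQ_{r-1}C_{t,g}$. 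Quillen connectedness of each $C_g$ then gives $C=\bigoplus_g C_g\subset\sum_g C^Q_{g,g}\subset \FBQ C$, so the bivariate filtration is exhaustive.

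Second, to prove $\SGL(C)=\GLE(C)$, let $h=\sum_g h_g$ with $h_g\in C_g$ be a non-zero semigrouplike. Comparing $\Delta(h)=h\otimes h$ via the decomposition $C\otimes C=\bigoplus_{g_1,g_2}C_{g_1}\otimes C_{g_2}$ forces $\Delta(h_g)=h_g\otimes h_g$ for each $g$ and $h_{g_1}\otimes h_{g_2}=0$ for $g_1\ne g_2$. Since $h\ne 0$, a single $h_g$ is non-zero and equals $h$, so $h$ is semigrouplike inside the Quillen connected subcoalgebra $C_g$. Writing $h=rg+\bar c$ with $\bar c\in\overline{C_g}$ and using the freeness of $Rg$ together with the direct comparison of $\Delta(h)$ and $h\otimes h$ across the splitting $C_g\otimes C_g=Rg\otimes Rg\oplus(\text{mixed})\oplus \overline{C_g}\otimes\overline{C_g}$, one concludes $\bar c=0$ and $r$ idempotent with $\eps(h)=r$; under the standing assumption that $R$ has only trivial idempotents, $r=1$ and $h=g\in\GLE(C)$.

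Third, I identify $\FBQ_0C=\sum_{g,h\in\GLE(C)}(Rg\cap Rh)$ as a submodule of $C$. For $g\ne h$ the submodules $Rg$ and $Rh$ lie in distinct direct summands $C_g$ and $C_h$, so $Rg\cap Rh=0$, while $Rg\cap Rg=Rg$; hence $\FBQ_0C=\bigoplus_{g\in\GLE(C)}Rg$, as required.

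The main obstacle is the second step: one has to match the direct sum decomposition of $C$ with the induced decomposition of $C\otimes C$ to localize any semigrouplike into a single summand, and then handle the possibility of spurious semigrouplikes of the form $rg$ with $r$ a non-trivial idempotent of $R$. Once these points are addressed, the remaining verifications are routine bookkeeping with the Quillen filtration and the direct sum structure.
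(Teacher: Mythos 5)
Your argument follows essentially the same route as the paper's: both proofs place each Quillen connected summand $C_g$ inside the diagonal component $C^Q_{g,g}$ of the bivariate Quillen filtration (using that $\bD_{g,g}$ restricted to $C_g$ is the reduced comultiplication of the coaugmented coalgebra $C_g$), conclude that the $C_g$ exhaust $C$ so the coalgebra is sg-flavored, and read off $\FBQ_0C=\bigoplus_{g}Rg$ from the direct sum decomposition. You are more explicit than the paper about the condition $\SGL(C)=\GLE(C)$, which the paper dispatches in a single clause; the ``standing assumption'' that $R$ has only trivial idempotents is not actually stated anywhere in the paper, but some such hypothesis is genuinely needed to exclude semigrouplikes of the form $rg$ with $r$ a nontrivial idempotent, so flagging it is a point in your favor rather than a defect.
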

\begin{proof}
By definition $\bD_g(c)=\D(c)-g\ot c-c\ot g$ which when restricted to $C_g$
coincides with the comultiplication on the coaugmented algebra $C_g=Rg\oplus \overline{C_g}$.
The latter has $\eps_g(g)=1$, $\eta_g(1)=g$ and $\eta_g\eps_g=\phi_g$ as the projection to the first factor.
 Now as $\D(C_g)\subset C_g\bot C_g$ by assumption and hence $\bD_g\colon C_g\to C_g\bot C_{g,g}$ and $C_g\subset \FQ C_g$,
 since $C_g$ is Quillen connected. The counit is compatible by Lemma \ref{lem:associator}; viz.\ $\eps|_{C_{g}\subset \FQ C_{g,g}}=\eps_g$.
Thus, we can identify $C_g$ as lying inside the diagonal part $\FQT C_{g,g}$ of the bivariate Quillen filtration.
Hence the bivariate Quillen filtration is exhaustive as the $C_g$ already exhaust $C$. The degree $0$ part is $C[X]$ by definition
and there are no semigrouplike elements as $\eps(g)\neq 0$ for semigrouplikes.
\end{proof}

\section{Renormalization, quotients and localization}\label{renompar}
The obstruction to having a Hopf algebra structure on a pathlike bialgebra is the invertibility of the grouplike elements.
There are basically two approaches to remedy this perceived deficiency. One is adding formal inverses, which is possible by universal constructions.
But, as at the end of the day, for renormalization \`a la Connes--Kreimer, one actually only needs convolution
inverses for characters, there is another option. This is to restrict the target algebras or to place restrictions on the characters -- for instance that
 the target algebra is commutative, or that the characters restricted to grouplike elements have special properties. In~this case there are universal quotients that the characters factor through.
The commutativity assumption is natural. Namely, if the target algebra is not commutative, then the convolution of two characters need not be a character.
We first briefly explain the setup to give the motivation for these constructions.

\subsection{Recollections on renormalization via characters}
\label{sec:renom}
A renormalization scheme in the Connes--Kreimer formalization of the BPHZ renormalization~\cite{CK1}, is defined on the convolution algebra of a bialgebra with a Rota--Baxter (RB) algebra, see also \cite{Kurush, KurushKreimer, Guo, MarcolliNi}. Readers not familiar with RB algebras may consult Appendix \ref{rbapp}.

A Feynman rule is a character in the convolution algebra $\phi\in \Hom_{R\text{-}{\rm alg}}(B,A)$. The renormalization of $\phi$ based on a Rota--Baxter (RB)-operator is a pair of characters
$\phi_{\pm}\in \Hom_{R\text{-}{\rm alg}}(B,A_\pm)$
such that $\phi=\phi_-^{-1}\star \phi_+$, see Appendix~\ref{rbapp} for the definition of the subalgebras~$A_\pm$.
The character~$\phi_+$ is then the renormalized Feynman rule.
It was shown in \cite{CK1,CK2} that there is a unique such decomposition for the Connes--Kreimer Hopf algebra of graphs and a commutative RB algebra, if $\phi$ is the exponential of an
infinitesimal character. The solution is then given by a recursive formula. This was generalized in \cite{EGK2} for $B$ any connected bialgebra and a character that is the exponential of an infinitesimal character. The following analysis provides the background for a~generalization of these results to the case of a bialgebra with a QT-sequence.
 If $B$ is a Hopf algebra, then the inverse is given by $\phi_-^{-1}=\phi_-\circ S$, see Proposition \ref{convolutionprop}, but if $A$ or the character has special properties, then the full assumption of $B$ being Hopf is not necessary.

\subsection[Quotients and quantum deformations for grouplike central and invertible characters]{Quotients and quantum deformations for grouplike central \\and invertible characters}\label{sec:quot}
We have already seen that taking the somewhat drastic quotient by $F_0^Q(C)$ makes an sg-flavored coalgebra connected.
There are, however, intermediary quotients, which make characters with special properties invertible.
These are also motivated by the natural examples \cite{CK,CK1,CK2,KurushKreimer,MarcolliNi}. We use the notation $1=1_B$.

\begin{Proposition}\label{commprop}
The ideal $J=([B,B])$ spanned by the commutators is a coideal. And,
if $A$ is commutative, then any character $\phi\in \Hom_{R\text{-}{\rm alg}}(B,A)$, factors through $B^{\rm com}=B/([B,B])$.
\end{Proposition}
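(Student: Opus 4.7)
The plan is to establish the two parts separately: first the factorization statement, which is essentially immediate from the definition of a character, and then the coideal claim, which requires a short Sweedler computation on the generators followed by an extension argument.

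For the factorization, suppose $A$ is commutative and $\phi\in\Hom_{R\text{-}{\rm alg}}(B,A)$. For any $b,b'\in B$ one has $\phi([b,b'])=\phi(b)\phi(b')-\phi(b')\phi(b)=0$ by commutativity of $A$. Since $\ker\phi$ is a two-sided ideal of $B$ containing every commutator, it contains the ideal $J=([B,B])$ they generate, and so $\phi$ descends uniquely to $\bar\phi\in\Hom_{R\text{-}{\rm alg}}(B/J,A)$.

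For the coideal claim, I would first check the counit condition on generators: $\eps([b,b'])=\eps(b)\eps(b')-\eps(b')\eps(b)=0$ since $R$ is commutative. For the comultiplication condition on generators, using Sweedler notation together with the elementary identity $u\otimes v-u'\otimes v'=(u-u')\otimes v+u'\otimes(v-v')$, compute
\begin{align*}
\Delta(bb'-b'b) &= \sum b_{(1)}b'_{(1)}\ot b_{(2)}b'_{(2)} - \sum b'_{(1)}b_{(1)}\ot b'_{(2)}b_{(2)} \\
&= \sum [b_{(1)},b'_{(1)}]\ot b'_{(2)}b_{(2)} + \sum b_{(1)}b'_{(1)}\ot [b_{(2)},b'_{(2)}],
\end{align*}
which lies in $J\boxtimes B+B\boxtimes J$.

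To pass from generators to the whole ideal, I would exploit that $\Delta$ and $\eps$ are algebra maps and that $J$ is a two-sided ideal of $B$. A typical element of $J$ is a finite sum of expressions $x[b,b']y$ with $x,y\in B$. Then $\Delta(x[b,b']y)=\Delta(x)\,\Delta([b,b'])\,\Delta(y)$, and since $J\boxtimes B+B\boxtimes J$ is stable under left and right multiplication by $B\boxtimes B$ (here using that $J$ is two-sided), one obtains $\Delta(x[b,b']y)\in J\boxtimes B+B\boxtimes J$. Similarly $\eps(x[b,b']y)=\eps(x)\eps([b,b'])\eps(y)=0$. Hence $J$ is a coideal and $B^{\mathrm{com}}=B/J$ inherits a bialgebra structure through which every character to a commutative algebra factors.

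The only place requiring any care is the Sweedler manipulation above, and even there the computation is purely formal; the rest is bookkeeping using that $\Delta$ and $\eps$ respect both the algebra structure and the two-sided ideal $J$. There is no real obstacle.
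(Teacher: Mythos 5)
Your proof is correct and follows essentially the same route as the paper: a Sweedler splitting of $\Delta(bb'-b'b)$ into a term with a commutator in the left factor plus one with a commutator in the right factor, the vanishing of $\eps$ on commutators, and the observation that a character into a commutative target kills $[B,B]$ and hence the ideal it generates. Your extra step verifying that $J\boxtimes B + B\boxtimes J$ is stable under multiplication by $\Delta(x)$ and $\Delta(y)$, so that the coideal condition extends from the generators $[b,b']$ to all of $J=([B,B])$, is a point the paper leaves implicit, and it is handled correctly.
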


\begin{proof}$J$ is a coideal: $\Delta(ab-ba)=\sum a_{(1)}b_{(1)}\ot a_{(2)}b_{(2)}-b_{(1)}a_{(1)}\ot b_{(2)}\ot a_{(1)}
=(a_{(1)}b_{(1)}-b_{(1)}a_{(1)})\ot a_{(2)}b_{(2)}+ b_{(1)}a_{(1)} (a_{(2)}b_{(2)}-b_{(2)}a_{(2)}) $ and $\eps(ab-ba)=0$.
Since $\phi\colon I\subset \ker(\phi)$, the statement follows.
\end{proof}

\begin{Proposition}\label{normalizedprop}
A grouplike normalized character factors through the quotient $B^{\rm red}=B/I_N$, where $I_N$ is the ideal spanned by $1-g$ for $g\in \GLE(B)$.
\end{Proposition}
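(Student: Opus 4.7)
The plan is to establish two things: first, that $I_N$ is actually a biideal (in particular a coideal), so that $B^{\rm red}=B/I_N$ inherits a well-defined bialgebra structure; and second, that a grouplike normalized $\phi$ automatically annihilates $I_N$, whence the factorization through $B^{\rm red}$ as algebra morphisms, and hence as characters.

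For the first step, since $I_N$ is defined as a two-sided ideal, I only need to verify the coideal conditions $\eps(I_N)=0$ and $\Delta(I_N)\subset I_N\otimes B + B\otimes I_N$ on generators, and then extend using the algebra-homomorphism properties of $\Delta$ and $\eps$. On a generator $1-g$ with $g\in\GLE(B)$, I have $\eps(1-g)=1-\eps(g)=0$ because $g$ is grouplike. For the coproduct, I compute
\begin{equation*}
\Delta(1-g)=1\ot 1-g\ot g= 1\ot(1-g)+(1-g)\ot g\in B\ot I_N + I_N\ot B.
\end{equation*}
To promote this to an arbitrary element $\sum_i a_i(1-g_i)b_i\in I_N$, I note that $\Delta$ is multiplicative, and $I_N\ot B + B\ot I_N$ is stable under left and right multiplication by $\Delta(a)\Delta(b)$ since $I_N$ is a two-sided ideal. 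Similarly $\eps$ multiplicative gives $\eps(a(1-g)b)=\eps(a)\cdot 0\cdot \eps(b)=0$. Thus $I_N$ is a biideal and $B^{\rm red}$ is a bialgebra with the induced structure.

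For the second step, suppose $\phi\in\Hom_{R\text{-}{\rm alg}}(B,A)$ is grouplike normalized, i.e., $\phi(g)=1_A$ for every $g\in\GLE(B)$; in particular $\phi(1_B)=1_A$. Then for each generator $1-g$ of $I_N$,
\begin{equation*}
\phi(1-g)=\phi(1_B)-\phi(g)=1_A-1_A=0,
\end{equation*}
so every generator lies in $\ker(\phi)$. Because $\phi$ is an algebra homomorphism, $\ker(\phi)$ is a two-sided ideal of $B$, and hence contains the ideal generated by these elements, i.e., $I_N\subset\ker(\phi)$. Consequently $\phi$ descends to a unique $R$-algebra map $\bar\phi\colon B^{\rm red}\to A$ with $\phi=\bar\phi\circ\pi$, where $\pi\colon B\to B^{\rm red}$ is the projection. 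Since $\pi$ is a bialgebra map and $\phi$ is a character, $\bar\phi$ is itself a character of $B^{\rm red}$.

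No step should pose serious difficulty: the only piece that requires a small care is confirming that the coideal property on generators extends to the full ideal, which is standard once one uses that $\Delta$ and $\eps$ are algebra maps and that $I_N\ot B + B\ot I_N$ is a $B\ot B$-subbimodule of $B\ot B$.
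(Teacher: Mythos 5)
Your proof is correct and follows essentially the same route as the paper: verify the coideal conditions on the generators $1-g$ and observe that a grouplike normalized character kills them, so that $I_N\subset\ker(\phi)$. Your decomposition $\Delta(1-g)=1\ot(1-g)+(1-g)\ot g$ is the right one (the paper's displayed factorization $(1-g)\ot(1+g)$ does not literally expand to $1\ot1-g\ot g$), and your extra care in extending the coideal property from generators to the full two-sided ideal is a welcome, if standard, addition.
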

\begin{proof} We need to check that $I_N$ is a coideal. Indeed, $\Delta(1-g)=1\ot 1-g\ot g=(1-g)\ot (1+g)$ and $\eps(1-g)=0$, since $g$ is grouplike.
Furthermore if $\phi$ is normalized the $\phi(1-g)=0$ so that $I\subset \ker(\phi)$.
\end{proof}
{\bf NB:} $R(g-h)\subset I$ since $g-h=(1-h)-(1-g)=g-h$. This is the line which keeps the sum in a pathlike bialgebra from being direct, see Remark~\ref{linermk}.

There is a further quotient that is of interest which was studied in \cite{HopfPart1}.
Consider the ideal~$I_C$ generated by $(ag-ga)$ for $a\in B$, $g\in \GLE(B)$.
\begin{Proposition}
$I_C$ is a coideal and the bialgebra $B/I_C$ and any grouplike central character factors through $B/I_C$.
\end{Proposition}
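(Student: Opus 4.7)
The plan is to verify three items in sequence: that $I_C$ is a coideal (which, combined with its being a two-sided ideal by construction, automatically gives $B/I_C$ a bialgebra structure), and that any grouplike central character $\phi$ annihilates $I_C$, thereby factoring uniquely through the quotient. The argument is parallel to the proofs of Propositions \ref{commprop} and \ref{normalizedprop}, the only novelty being the centrality restriction used to kill $\phi(ag-ga)$.

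For the coideal property I would first verify it on the generators. Since $g$ is grouplike, $\eps(ag-ga) = \eps(a)\eps(g) - \eps(g)\eps(a) = 0$. For the comultiplication I compute
\begin{gather*}
\Delta(ag-ga) = \sum a_{(1)}g \otimes a_{(2)}g - \sum g a_{(1)} \otimes g a_{(2)} \\
= \sum (a_{(1)}g - g a_{(1)}) \otimes a_{(2)}g + \sum g a_{(1)} \otimes (a_{(2)}g - g a_{(2)}),
\end{gather*}
which lies visibly in $I_C \boxtimes B + B \boxtimes I_C$.

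To pass from generators to the whole ideal, I would use that $\Delta$ is an algebra map: a general element $b(ag-ga)c \in I_C$ satisfies $\Delta(b(ag-ga)c) = \Delta(b)\Delta(ag-ga)\Delta(c)$, and since $I_C$ is two-sided, left and right multiplication by $\Delta(b)$ and $\Delta(c)$ preserve $I_C \boxtimes B + B \boxtimes I_C$. Similarly $\eps(b(ag-ga)c) = \eps(b)\eps(ag-ga)\eps(c) = 0$. Thus $I_C$ is both an ideal and a coideal, so the quotient $B/I_C$ inherits a bialgebra structure by the standard universal property.

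For the factorization of a grouplike central character $\phi \in \Hom_{R\text{-}{\rm alg}}(B,A)$, observe that on each generator, $\phi(ag-ga) = \phi(a)\phi(g) - \phi(g)\phi(a) = 0$, using precisely that $\phi(g) \in Z(A)$. Extending through the algebra homomorphism property, $\phi$ vanishes on all of $I_C$, so $I_C \subset \ker(\phi)$ and $\phi$ factors uniquely through $B/I_C$. There is no real obstacle here; the only point requiring care is the generator-to-ideal step for the coideal condition, which is handled uniformly by the multiplicativity of $\Delta$ and $\eps$ together with the two-sided ideal property.
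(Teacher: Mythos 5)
Your proof is correct and follows essentially the same route as the paper, which simply refers back to the commutator computation of Proposition~\ref{commprop}: the telescoping decomposition of $\Delta(ag-ga)$ using $\Delta(g)=g\otimes g$, the vanishing of $\eps$ on generators, and the observation that centrality of $\phi(g)$ kills $\phi(ag-ga)$. Your explicit generator-to-ideal step via multiplicativity of $\Delta$ and $\eps$ is a standard detail the paper leaves implicit.
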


\begin{proof}
The fact that $I_C$ is a coideal follows as in Proposition \ref{commprop}. For any grouplike central character, $\phi(ag-ga)=0$, so $\ker(\phi)\supset I_C$.
\end{proof}

For a grouplike invertible central character, there is a universal construction which can be viewed as a quantum deformation and gives rise to a Brown type coaction.
Consider the algebra $B_\bq=B\big[q_g,q_g^{-1}\big]/K\colon g\in \GLE(B), g\neq 1$, that is the ring of Laurent polynomials with coefficients in the possibly noncommutative $B$ modulo the ideal $K$ generated by $q_gq_h=q_{gh}$, which is well defined as the grouplikes form a monoid.
The polynomial rings $B_q=B[q_g]/K$, $q \in \GLE(B)$, $g\neq 1$ is a subbialgebra. The bialgebra $B_q$ can be viewed as a multi-parameter quantum deformation.

Note the $q_g$ lie in the center of $B_\bq$. Endow $B_\bq$ with the bialgebra structure, where the $q_g$,~$q_g^{-1}$ are grouplike, then $K$ is a coideal.
Consider the ideal $I$ of the Laurent series generated by $q_g-g$ which descends to $B_\bq$. In~the quotient $B_\bq/I$, the image of the grouplike elements
lies in the center.

\begin{Proposition}
$I$ is a coideal for $B_\bq$ and similarly for its subbialgebra $B_q$.

{\sloppy
Any grouplike central character can be lifted to $B_q$ and factors through the quotient \mbox{$B_q/I\simeq B/I_C$}.

}

Any grouplike invertible central character $\phi$ can be lifted to a character $\hat\phi\colon B_\bq\to A$. Furthermore,
 $\hat\phi$ factors through the quotient $B_\bq/I$ as $\hat\phi=\bar\phi\circ \pi$ and
 embedding $B\to B_\bq\to B_\bq/I\colon \bar\phi(\pi(b))=\phi(b)$.

As $q_g\to 1\colon B_\bq\to B$ and $B_q/I\to B/I_N$.
\end{Proposition}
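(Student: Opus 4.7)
The plan is to dispatch the four assertions in turn; each reduces to a short verification once the right identity is exhibited, and the main subtlety lies in accommodating the noncommutativity of $B$. For the coideal claim, I would verify it on the generators $q_g - g$: since both $q_g$ and $g$ are grouplike in $B_\bq$,
\begin{gather*}
\Delta(q_g - g) = q_g \otimes q_g - g \otimes g = q_g \otimes (q_g - g) + (q_g - g) \otimes g \in B_\bq \boxtimes I + I \boxtimes B_\bq,
\end{gather*}
and $\epsilon(q_g - g) = 1 - 1 = 0$. Hence $I$ is a coideal in $B_\bq$, and the identical computation works verbatim in $B_q$.

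For the second assertion, given a grouplike central character $\phi \colon B \to A$, I would extend it to $\hat\phi \colon B_q \to A$ by declaring $\hat\phi(q_g) = \phi(g)$. Because each $\phi(g)$ lies in $Z(A)$, the image $\hat\phi(q_g)$ commutes with every element of $\hat\phi(B)$, matching the fact that $q_g$ is central in $B_q$; the relation $q_g q_h = q_{gh}$ is respected because $\phi(g)\phi(h) = \phi(gh)$. Thus $\hat\phi$ is a well-defined algebra map with $\hat\phi(q_g - g) = 0$, so it descends to $\bar\phi \colon B_q/I \to A$. For the isomorphism $B_q/I \simeq B/I_C$ I would exhibit mutually inverse maps: the assignment $q_g \mapsto g$, $b \mapsto b$ defines a map $B_q \to B/I_C$ because in $B/I_C$ every grouplike element is central, so the relations $q_g b = b q_g$ of $B_q$ are honored; this map kills $q_g - g$ and descends to $B_q/I \to B/I_C$. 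Conversely, the inclusion $B \to B_q$ followed by the projection to $B_q/I$ kills $[b,g]$ for grouplike $g$, because $g \equiv q_g$ modulo $I$ is central, and hence factors through $B/I_C \to B_q/I$; the two constructions are patently mutual inverses.

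When $\phi$ is moreover grouplike invertible so each $\phi(g) \in Z(A)^\times$, I would extend further to $\hat\phi \colon B_\bq \to A$ by setting $\hat\phi(q_g^{-1}) = \phi(g)^{-1}$, which preserves centrality and the multiplicative relations. The same argument gives $\hat\phi(q_g - g) = 0$, hence a factorization $\hat\phi = \bar\phi \circ \pi$ through $B_\bq/I$; the inclusion $B \to B_\bq$ followed by $\pi$ and then $\bar\phi$ recovers $\phi$ since $\hat\phi|_B = \phi$ by construction. Finally, sending each $q_g$ and $q_g^{-1}$ to $1$ defines a bialgebra map $B_\bq \to B$ (as $1$ is grouplike), and it carries $q_g - g$ to $1 - g \in I_N$; hence $I$ lands inside $I_N$, giving the well-defined induced map $B_q/I \to B/I_N$.

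The main obstacle is ensuring that $\hat\phi$ is genuinely a ring homomorphism when $B$ is noncommutative; this is precisely why the centrality hypothesis on $\phi(g)$ is indispensable, for otherwise $q_g \mapsto \phi(g)$ would fail to respect the defining centrality relations $q_g b = b q_g$ inside $B_\bq$. Everything else is a bookkeeping check that proceeds smoothly from these identities.
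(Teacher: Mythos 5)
Your proposal is correct and follows essentially the same route as the paper: check the coideal property on the generators $q_g-g$, lift via $\hat\phi(q_g)=\phi(g)$ (and $\hat\phi\big(q_g^{-1}\big)=\phi(g)^{-1}$ in the invertible case), identify $B_q/I\simeq B/I_C$ by noting that passing to the quotient makes exactly the grouplikes central, and observe that $q_g\mapsto 1$ sends $q_g-g$ to $1-g\in I_N$. Your coproduct decomposition $\Delta(q_g-g)=q_g\otimes(q_g-g)+(q_g-g)\otimes g$ is in fact the more careful version of the paper's one-line formula, and your explicit mutually inverse maps for $B_q/I\simeq B/I_C$ merely spell out what the paper leaves implicit.
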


\begin{proof}
By definition $\eps(q_g-g)=1-1=0$ and $\Delta(q_g-g)=q_g\ot q_g-g\ot g=(q_g-g)\ot (q_g+g)$, so $I$ is a coideal in all cases.
The lift is given by $\hat\phi(q_g)=\phi(g)$ and consequentially for $B_\bq$ $\hat\phi\big(q_g^{-1}\big)=\phi(g)^{-1}$. The other statements follow from $\phi(g)=\hat\phi(q_g)$.
Note that the image of $g$ in $B_\bq/I$ lies in the center, since $q_g$ does, which shows the isomorphism $B_q/I\simeq B/I_C$ as exactly these grouplike
elements have been made central.

The statement about the limit of $B_\bq$ is clear for $B_q/I$; notice that as $q_g\to 1$ all the elements $g\sim 1$ which is the quotient by $I_N$.

The last statement is straightforward.
\end{proof}

\begin{Proposition}
Let $B$ be split pathlike
then $\B_\bq/I\simeq B^{\rm red}\otimes_R R\big[q_g,q_g^{-1}\big]/K$
and similarly for the polynomial subrings.
\end{Proposition}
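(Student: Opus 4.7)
The plan is to identify $B_\bq/I$ with $B^{\rm red}\otimes_R R^{gp}$ (writing $R^{gp}=R[q_g,q_g^{-1}]/K$ for brevity) by exhibiting $B_\bq/I$ as a free $R^{gp}$-algebra whose augmentation quotient recovers $B^{\rm red}$. First I would unpack the given presentation to observe that $B_\bq=B\otimes_R R^{gp}$ as $R$-algebras, with $R^{gp}$ sitting centrally. Since in $B_\bq/I$ one has $q_g=g$ and $q_g$ is central, every grouplike $g\in\GLE(B)$ becomes a central invertible element of $B_\bq/I$, with inverse $q_g^{-1}$. Consequently $R^{gp}$ embeds centrally in $B_\bq/I$, which becomes an $R^{gp}$-algebra.

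Next I would compute the scalar extension of this $R^{gp}$-algebra along the augmentation $R^{gp}\to R$, $q_g\mapsto 1$: this amounts to quotienting $B_\bq/I$ by the further relations $q_g-1$, equivalently $g-1$, i.e.\ by $I_N$. Thus
\begin{equation*}
(B_\bq/I)\otimes_{R^{gp}}R \;=\; B_\bq\big/\bigl(I+(q_g-1)\bigr)\;=\;B/I_N\;=\;B^{\rm red}.
\end{equation*}
So what remains is to show that the canonical evaluation map
\begin{equation*}
\Phi\colon B^{\rm red}\otimes_R R^{gp}\;\longrightarrow\;B_\bq/I,\qquad [b]\otimes q_g\longmapsto \widetilde{b}\cdot g,
\end{equation*}
where $\widetilde{b}$ is a canonical lift, is a well-defined algebra isomorphism. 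This reduces the proposition to the assertion that $B_\bq/I$ is \emph{free} as an $R^{gp}$-module with basis obtained from such a lift of $B^{\rm red}$.

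To produce that free basis I would invoke the split pathlike hypothesis: the decomposition $B=F_0B\oplus B'=\bigoplus_{g\in\GLE(B)}Rg\,\oplus\, B'$ as $R$-modules, together with Theorem~\ref{thm:colorstructure} and the color-decomposition $B_{g,h}$ available for pathlike coalgebras, gives each element of $B$ a $(g,h)$-type. Inside $B_\bq/I$, where grouplikes are central and invertible, this allows one to rewrite any element of the form $\sum b_i\otimes q_{g_i}$ as $\sum b_i\cdot g_i$ and then, using the source-grouplike of $b_i\in B_{g,h}$, as $\widetilde{b_i}\cdot (\text{grouplike factor})$ with $\widetilde{b_i}$ a representative in $B'$. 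One then checks that this rewriting respects both the algebra structure (because grouplikes are central in $B_\bq/I$, the multiplication factors through the tensor product on the nose) and descends to a well-defined map from $B^{\rm red}\otimes R^{gp}$ (because any two lifts of $[b]\in B^{\rm red}$ differ by an element of $I_N$, which, after inserting the central grouplike factor $g$, lies in the relations already present in $B_\bq/I$).

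The main obstacle is the last verification: making sure the choice of lift $\widetilde{b}$ produced by the split is compatible across products, which is where the pathlike color structure $B=\bigoplus_{g,h} B_{g,h}$ and the centrality of grouplikes in $B_\bq/I$ must be played against each other. Once this is done, the surjectivity of $\Phi$ is clear (the image contains $B$ and all $q_g$, which generate $B_\bq/I$), and injectivity follows by comparing $R^{gp}$-ranks via the augmentation computation of Step~2, which identifies $\Phi\otimes_{R^{gp}}R$ with the identity on $B^{\rm red}$. The assertion for the polynomial subrings $B_q/I$ is proved identically, replacing $R^{gp}$ by its polynomial subring $R[q_g]/K$ and using that the grouplikes need not be inverted to recover $B^{\rm red}\otimes R[q_g]/K$.
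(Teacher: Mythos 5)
There is a genuine gap at the crux of your argument: the map $\Phi\colon B^{\rm red}\otimes_R R\big[q_g,q_g^{-1}\big]/K\to B_\bq/I$, $[b]\otimes q_g\mapsto \widetilde{b}\cdot g$, is not well defined, and the justification you give is false. You claim that two lifts of $[b]$ differ by an element of $I_N=(1-h\colon h\in \GLE(B))$ which, ``after inserting the central grouplike factor $g$,'' lies in the relations of $B_\bq/I$. But $I_N$ does not die in $B_\bq/I$: the relation there is $h=q_h$, so $1-h=1-q_h$ survives as a nonzero central element of the Laurent subring. Your own Step~2 already shows this, since one must impose the \emph{further} relations $q_g-1$ to reach $B^{\rm red}$. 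Concretely, $1$ and $h$ are both lifts of the unit class of $B^{\rm red}$, yet $1\cdot q_g=q_g$ while $h\cdot q_g=q_{hg}$, and these differ in $B_\bq/I$ whenever $q_h\neq 1$ in $R\big[q_g,q_g^{-1}\big]/K$; more generally $(1-h)c\,q_g\equiv c\,(q_g-q_{hg})\not\equiv 0$ modulo $I$ and $K$, because $h=q_h$ is central there. The injectivity argument ``by comparing $R\big[q_g,q_g^{-1}\big]/K$-ranks via the augmentation'' is also insufficient: a surjection of modules over the Laurent ring that becomes an isomorphism after base change along $q_g\mapsto 1$ need not be injective. So neither well-definedness nor bijectivity is secured.

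The repair --- and what the paper's one-line proof (``collecting the factors of $q_g$, respectively $q_g^{-1}$, on the right, as they are central'') actually encodes --- is to let the $q$-variable absorb the grouplike content of $b$ rather than discard it into the class $[b]$. Using the multiplicative grouplike degree $\gamma$ furnished by the color decomposition (so that $\gamma(g)=g$ for $g\in\GLE(B)$ and $\gamma(bb')=\gamma(b)\gamma(b')$), one sends $b\ot q_w\mapsto [b]\ot q_{\gamma(b)}q_w$; this kills $I$ because $g\ot 1$ and $1\ot q_g$ have the same image $1\ot q_g$. The inverse $[b]\ot q_w\mapsto b\, q_{\gamma(b)}^{-1}q_w$ kills $I_N$ precisely because of the compensating twist by $q_{\gamma(b)}^{-1}$: the discrepancy $(1-h)c$ is sent to $\big(1\ot 1-h\ot q_h^{-1}\big)\big(c\ot q_{\gamma(c)}^{-1}q_w\big)$, whose first factor lies in $I$. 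Your instinct to invoke the split pathlike structure and the components $B_{g,h}$ is the right one, but it must be used to define this multiplicative degree (noting, as in Remark~\ref{linermk}, that the bivariate components do not form a direct sum, so one really needs the colored grading of Section~\ref{sec:colored}), not to choose set-theoretic lifts of classes in $B^{\rm red}$.
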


\begin{proof}The correspondence is given by collecting the factors of $q_g$, respectively $q_g^{-1}$ on the right, as they are central.
\end{proof}

This quantum deformation also lets us make the coaction of Brown of Proposition~\ref{prop:bcoaction} more commutative and gives a nice interpretation in terms of polynomials and Laurent series.
\begin{Definition}
The central quotient Hopf coaction for a split sg-flavored bialgebra is given by
\begin{gather*}
\D_B\colon\quad B_\bq/I\to B_{\bq}/I\ot B^{\rm red}, \qquad
\D_B(x)=\sum x_{(1)}\ot \pi(x_{(2)}),
\end{gather*}
where now both factors are Hopf algebras and $\pi$ simply sets the factors $q_g=1$ on the left to $1$.
\end{Definition}

\begin{Remark}
For the polynomial version in the operad case, the isomorphism in the
 proposition was realized as the quotient by the ideal $(|-q)$ in \cite[Section~2.6]{HopfPart1}, see also Section~\ref{operadpar}.

\end{Remark}

\subsection{Adding formal inverses}
In general, one can formally invert the grouplike elements, this is a universal construction called the group completion if one is speaking of a monoid or the localization
if one is inverting a~multiplicative subset. In~the current $R$-linear context this becomes a subalgebra.

Given a subalgebra $S$ of an $R$ algebra $A$, the localization at $S$, $S^{-1}A$ is an object, together with a morphism $j\colon A\to S^{-1}A$ with the universal property that
if $f\colon A \to A'$ is an algebra homomorphism such that for all $s\in S$, $f(s)$ is invertible, then $f$ factors as $g\circ j$, with $g\colon S^{-1}A\allowbreak\to A'$.
There is an abstract construction in the general case, but it is not effective if there are no other conditions.
It can be realized as a quotient of the free algebra of $R$-linear alternating words in $A$ and $S$. Assuming that $A$ unital, set
\begin{gather*}
T_R(A\ot S^{\rm op})= R\oplus A\ot S^{\rm op} \oplus A\ot S^{\rm op} \ot A\ot S^{\rm op}\oplus \cdots,
\end{gather*}
which is an algebra by concatenation. The unit is $1_R$, where we identify $R\ot_R M=M$ for any $R$-module $M$.
Then $AS^{-1}=T_R(A\ot S^{\rm op})/I$, where $I$ is the ideal spanned by be following relations:
\begin{enumerate}\itemsep=0pt
{\samepage\item[$1)$] $1\ot \bar 1=1_R$, $\bar 1\ot 1=1_R$ which makes $1\otimes \bar 1$ a unit,
\item[$2)$] $s\ot \bar s=1\ot \bar 1$, which inverts the elements of $s$,

}
\item[$3)$] $a\ot \bar 1\ot a'\ot \bar s=aa'\ot \bar s$,
\item[$4)$] $a\ot \bar s\ot 1\ot \bar s'=a\ot \bar s\bar s'$,
\end{enumerate}
where we write $\bar s$ to indicate the factors of $S^{\rm op}$. In~this notation $\overline{s_1s_2}=\bar s_2\bar s_1$ by the definition of the opposite multiplication.
We let $\eps(a\ot \bar s):=\eps(a)\eps(s)^{-1}$.

If $A$ is also a bialgebra, $S$ is also semigrouplike, and $\eps(S)\subset R^\times$ then we define the comultiplication:
\begin{gather*}
\Delta(a\ot \bar s)=\sum \big(a\Sp\ot \bar s\big)\ot \big(a\Spp\ot \bar s\big).
\end{gather*}

This fixes the comultiplication on the free algebra via the bialgebra equation.
Defining the counit as $\eps(a\ot \bar s)=\eps(a)\eps(s)^{-1}$, defines the counit on the free algebra.

\begin{Proposition}
$I$ is a coideal and hence $AS^{-1}$ is a bialgebra. Similarly one can define a left localization $S^{-1}A$ by using $T_R(S^{\rm op}\ot A)$.
\end{Proposition}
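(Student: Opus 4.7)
The plan is to verify that $I$ is a coideal of $T:=T_R(A\ot S^{\rm op})$; since $I$ is already a two-sided ideal by construction, the quotient $AS^{-1}=T/I$ will then automatically inherit a bialgebra structure from $T$. Concretely, one needs to establish both $\eps(I)=0$ and $\Delta(I)\subset I\ot T + T\ot I$.

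The key reduction is that the $R$-submodule $J:=\{x\in T\mid \eps(x)=0 \text{ and } \Delta(x)\in I\ot T + T\ot I\}$ is itself a two-sided ideal of $T$. Indeed, for $x\in J$ and $y\in T$ one has $\eps(xy)=\eps(x)\eps(y)=0$, and $\Delta(xy)=\Delta(x)\Delta(y)\in(I\ot T+T\ot I)(T\ot T)\subset I\ot T + T\ot I$, using that $I$ itself is a two-sided ideal; the left-multiplication case is symmetric. Hence it suffices to prove that each of the four families of generators of $I$ lies in $J$.

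For family 1, $\eps(1\ot\bar 1 - 1_R)=0$, and since $1$ and $\bar 1$ are semigrouplike, $\Delta(1\ot\bar 1-1_R)=(1\ot\bar 1)\ot(1\ot\bar 1) - 1_R\ot 1_R$ telescopes as
\begin{equation*}
(1\ot\bar 1-1_R)\ot(1\ot\bar 1)+1_R\ot(1\ot\bar 1-1_R) \in I\ot T+T\ot I.
\end{equation*}
Family 2 is handled identically using semigrouplikeness of $s$: $\Delta(s\ot\bar s-1\ot\bar 1)$ telescopes as $(s\ot\bar s-1\ot\bar 1)\ot(s\ot\bar s)+(1\ot\bar 1)\ot(s\ot\bar s-1\ot\bar 1)$, while $\eps(s\ot\bar s)=\eps(s)\eps(s)^{-1}=\eps(1\ot\bar 1)$. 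For families 3 and 4, multiplicativity of $\Delta$ on concatenated words yields
\begin{equation*}
\Delta(a\ot\bar 1\ot a'\ot\bar s)=\sum(a\Sp\ot\bar 1\ot a'\Sp\ot\bar s)\ot(a\Spp\ot\bar 1\ot a'\Spp\ot\bar s),
\end{equation*}
and each Sweedler summand differs from the corresponding summand of $\Delta(aa'\ot\bar s)=\sum(a\Sp a'\Sp\ot\bar s)\ot(a\Spp a'\Spp\ot\bar s)$ by a generator of family 3 tensored against an element of $T$; the analogue for family 4 is identical. Multiplicativity of $\eps$ together with the defining formula $\eps(a\ot\bar s)=\eps(a)\eps(s)^{-1}$ forces the counit values to agree on both sides of each generator.

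Finally, the ``similarly'' claim for the left localization $S^{-1}A=T_R(S^{\rm op}\ot A)/I'$ follows by the identical argument applied to alternating words starting with an $S^{\rm op}$-letter, equipped with the mirror-image comultiplication $\Delta(\bar s\ot a)=\sum(\bar s\ot a\Sp)\ot(\bar s\ot a\Spp)$ and mirror-image relations. I expect the only fiddly step to be the bookkeeping for families 3 and 4 when their generators are inserted into long alternating words, but this reduces cleanly to the two-letter calculation above, since $\Delta$ is multiplicative and each relation acts locally inside the word.
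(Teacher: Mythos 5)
Your proof is correct and follows essentially the same route as the paper: a direct check that each of the four families of generators of $I$ satisfies $\eps=0$ and $\Delta\in I\ot T+T\ot I$, with telescoping for the semigrouplike relations and multiplicativity of $\Delta$ for relations 3 and 4. Your explicit reduction to generators via the ideal $J$ is a useful piece of bookkeeping that the paper leaves implicit, and your telescoping identity $g\ot g-h\ot h=(g-h)\ot g+h\ot(g-h)$ is the correct form of what the paper writes somewhat loosely as $(g-h)\ot(g+h)$.
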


\begin{proof} This is a straightforward calculation:
\begin{enumerate}\itemsep=0pt
\item $\Delta(1\ot \bar 1-1_R)= 1\ot \bar 1\ot 1\ot \bar 1-1_R\ot 1_R=(1\ot \bar 1-1_R)\ot (1\ot \bar 1+ 1_R) \in I\ot A$.
\item $\Delta(s\ot \bar s-1_R)=(s\ot \bar s)\ot (s\ot \bar s)-1_R=(s\ot \bar s+1_R)\ot (s\ot \bar s-1_R)\in A\ot I$.
\item $\Delta(a\ot \bar 1\ot a'\ot \bar s-aa'\ot \bar s)= \sum a_{(1)}\ot \bar 1 \ot \sum a'_{(1)}\ot \bar s \ot a_{(2)}\ot \bar 1\ot a'_{(2)} \ot \bar s- \sum a_{(1)}a'_{(1)}\ot \bar s\ot a_{(2)}a'_{(2)}\ot \bar s \equiv 0 \mod (I\ot A+A\ot I)$.
\item $\Delta(a\ot \bar s\ot 1\ot \bar s'-a\ot \bar s\bar s')=\sum a_{(1)}\ot \bar s\ot 1\ot \bar s' \ot a_{(2)}\ot \bar s\ot 1\ot \bar s'-a_{(1)}\ot \bar s \bar s'\ot a_{2}\ot \bar s\bar s'\equiv 0 \mod (I\ot A+A\ot I). $
\end{enumerate}
For the counit, it is a straightforward check that $\eps(I)\equiv 0$.
\end{proof}

\subsection{Calculus of fractions} There is a more concise version of this construction if the Ore condition and cancellability are
met. This gives a calculus of fraction, similar to the calculus of roofs \cite{GelfandManin}.
The right Ore condition states that
\begin{gather*}
\forall\ a\in A,\ s\in S,\quad \exists\ a'\in A,\ s'\in S\colon\quad as=s'a'.
\end{gather*}
One also needs cancellability, or $S$-regularity. That is if $as=0$ or $sa=0$ then $a=0$. This means that $sa=sb$ implies $a=b$ as does $as=bs$.

\begin{Proposition}
If these conditions are met any element of $AS^{-1}$ can be written as $as^{-1}$. Furthermore, there is an injective algebra homomorphisms $j\colon A\to AS^{-1}$,
which is universal in the sense that any morphisms $f\colon A\to B$ such that $f(s)$ is invertible has a unique lift to $g\colon AS^{-1}\to B$ such that $f=gj$.
Finally, the left and right fractions become isomorphic.
\end{Proposition}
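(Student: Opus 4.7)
My plan is to prove the four assertions in sequence: the normal form $a\bar{s}$ for elements of $AS^{-1}$, the injectivity of $j$, the universal property of $j$, and finally the coincidence of the left localization $S^{-1}A$ with $AS^{-1}$. The guiding principle is that once a workable normal form and an explicit fraction equivalence are in place, the remaining statements follow by standard manipulations mirroring the classical construction of a ring of fractions.

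For the normal form, I would induct on the alternating length $n$ of a representative $a_1\otimes\bar{s}_1\otimes\cdots\otimes a_n\otimes\bar{s}_n\in T_R(A\otimes S^{\mathrm{op}})$, the length-one case being trivial. The key reduction collapses two adjacent denominators: in the product $\bar{s}_i\otimes a_{i+1}\otimes\bar{s}_{i+1}$, apply the Ore condition to the pair $(a_{i+1},s_i)$ to obtain $b\in A$ and $t\in S$ with $a_{i+1}t=s_i b$. Working modulo the defining relations of $I$ (in particular relations (2) and (3)), this identifies $\bar{s}_i\otimes a_{i+1}$ with $b\otimes\bar{t}$, after which relation (4) merges the adjacent $\bar{t}\otimes 1\otimes\bar{s}_{i+1}$ into $\overline{s_{i+1}t}$, lowering the length by one. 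Iteration produces the required $a\bar{s}$.

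Next I would articulate the equivalence of such representatives: $a_1\bar{s}_1\sim a_2\bar{s}_2$ in $AS^{-1}$ if and only if there exist $c,d\in A$ with $s_1c=s_2d\in S$ and $a_1c=a_2d$. The forward direction is an immediate consequence of relations (2) and (3). The converse is what I expect to be the main obstacle: one has to walk through each generator of $I$ and check that every induced identification between normal forms factors through a common-denominator equation, invoking cancellability of $S$ to pass from equalities of the form $ax=a'x$, $xs=xs'$, etc., back to the originals. With the equivalence in hand, injectivity of $j\colon a\mapsto a\otimes\bar{1}$ reduces to the statement that $at=0$ for some $t\in S$ implies $a=0$, which is exactly $S$-regularity.

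The universal property is then formal. Given any $f\colon A\to B$ with $f(S)\subset B^{\times}$, the equation $f=g\circ j$ together with $g$ being an algebra homomorphism forces $g(a\bar{s})=f(a)f(s)^{-1}$; well-definedness on the equivalence classes is precisely the statement that $f$ respects the common-denominator equivalence (using invertibility of $f(S)$), and multiplicativity of $g$ is the image under $f$ of the Ore-based fraction combination inside $AS^{-1}$. Finally, the isomorphism between the right and left localizations follows from universality: the symmetric construction of $S^{-1}A$ satisfies the same universal factorization property for algebra maps sending $S$ into $B^{\times}$, since invertibility is a two-sided condition. The unique comparison maps in both directions, extracted from the respective universal properties, are mutually inverse, so $AS^{-1}\simeq S^{-1}A$.
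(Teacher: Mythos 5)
The paper does not actually prove this proposition --- it simply cites Artin's course notes --- so there is no ``paper proof'' to match; what you have written is a reconstruction of the standard Ore localization argument, and its overall architecture (normal form by collapsing adjacent denominators, common-denominator equivalence, injectivity from $S$-regularity, formal universal property, left--right comparison by uniqueness of the universal object) is the right one. Two remarks. First, you quietly use the Ore condition in the form ``given $a\in A$, $s\in S$ there exist $t\in S$, $b\in A$ with $at=sb$'' (i.e.\ $aS\cap sA\neq\varnothing$), which is indeed what is needed to rewrite $\bar s\otimes a$ as $b\otimes\bar t$; the paper's displayed version ($as=s'a'$) is not literally that condition, so it is worth flagging that you are using the standard right Ore condition rather than the formula as printed.

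Second, and more substantively: your plan for the converse direction of the fraction equivalence --- ``walk through each generator of $I$ and check that every induced identification between normal forms factors through a common-denominator equation'' --- is not adequate as stated. $I$ is a two-sided ideal, so a general element of $I$ is a sum of products $x\, r\, y$ with $r$ a generator and $x,y$ arbitrary words; the assertion $a_1\otimes\bar s_1-a_2\otimes\bar s_2\in I$ cannot be analyzed one generator at a time, and transitivity of your proposed relation itself requires another application of Ore (to produce common denominators). The standard repair is the usual detour: construct the ring of right fractions directly as equivalence classes of pairs $(a,s)$ under the common-denominator relation, verify (using Ore and cancellability) that the operations are well defined and that the result is an algebra satisfying the universal property, and then invoke uniqueness of universal objects to identify it with $T_R(A\otimes S^{\mathrm{op}})/I$. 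With that substitution your argument is complete; the remaining steps (injectivity, universality, and the isomorphism $AS^{-1}\simeq S^{-1}A$ via the two-sided nature of invertibility) are handled correctly and are indeed formal.
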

\begin{proof}
See \cite{Artin}.
\end{proof}

\begin{Remark}
 For a free non-commutative algebra $R\{x,y\}$ and $S=\langle y\rangle$ the condition is not met.
 For the Weyl algebra $R\{x,y\}/(xy-yx=1)$ the condition is met \cite{Artin}.

 In similar spirit, if the product is the tensor product in a monoidal category, the Ore condition is usually not met, as $X\ot Y$ is rarely $Y\otimes X$. In~the case of interest, where one needs to invert the identities $\id_X$, one would need a $Y$ and a morphism $g$ such that for a given morphism $f\ot \id_X=\id_Y\ot g$. In~the commutative case this always holds. In~particular, it does hold in the symmetric monoidal setting when using the coinvariants $\Biso$, see Section~\ref{channelpar}.

A further route of exploration is to use higher homotopy commutativity, i.e., an $E_1$ for the Ore condition.
 \end{Remark}

For a grouplike central character of a pathlike coalgebra, one does not need full commutativity of $B$ for the Ore condition, but only that of grouplikes.
 In this case, one can invert the ideal in $B/I_C$, where the Ore condition holds. Equivalently one can work with $B_q$ and the quotient by~$I$.
\begin{Lemma}
\label {Laurentlem}
In the special case of $B_q$ the multiplicative set $S$ spanned by the $q_g$ is Ore, since these elements are central, and $S$ is cancellable.
The localization $B_qS^{-1}$ is the Laurent-series ring $B_\bq$.
\end{Lemma}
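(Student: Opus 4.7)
The plan is to verify the Ore condition, establish cancellability of $S$, and then identify $B_qS^{-1}$ with $B_\bq$ via universal properties.

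First, the Ore condition is immediate: for $a\in B_q$ and $s\in S$, centrality of $s$ gives $as = sa$, so the Ore equation is satisfied with $s'=s$ and $a'=a$. For cancellability, I would use that $B_q$ is obtained by adjoining commuting, $B$-central variables $q_g$ modulo only the monoid relations $q_gq_h=q_{gh}$. Centrality forces $q_{gh}=q_{hg}$, so the subalgebra generated by the $q_g$ is a copy of the commutative monoid algebra $R[\overline{M}]$, where $\overline{M}$ is the abelianization of the grouplike monoid of $B$. Then one has a $B$-linear, $\overline{M}$-graded decomposition $B_q\cong B\otimes_R R[\overline{M}]$ as $R$-modules, with multiplication by any $q_g$ acting as the $R$-linear shift of the grading by $g$, hence injective. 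Products of $q_g$'s inherit the same property, so each $s\in S$ is a non-zero-divisor on both sides.

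For the identification $B_qS^{-1}\simeq B_\bq$, I would argue by universal properties. Giving an $R$-algebra map out of $B_\bq$ is the same as giving an $R$-algebra map $\phi\colon B\to C$ together with a family of central units $c_g\in C$ satisfying $c_gc_h=c_{gh}$. By the universal property of $B_q$, this is equivalent to specifying an $R$-algebra map $B_q\to C$ sending each $q_g$ to an invertible element, which in turn is precisely the data of an $R$-algebra map $B_qS^{-1}\to C$ by the universal property of the Ore localization. Hence $B_\bq$ and $B_qS^{-1}$ represent the same functor and are canonically isomorphic. Concretely, the inclusion $B_q\hookrightarrow B_\bq$ sends every $q_g$ to a unit and therefore extends uniquely to $\psi\colon B_qS^{-1}\to B_\bq$, while the assignment $b\mapsto b$, $q_g\mapsto q_g$, $q_g^{-1}\mapsto q_g^{-1}$ (interpreted as the formal inverse in the localization) defines a two-sided inverse $B_\bq\to B_qS^{-1}$.

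The main obstacle is the structural input $B_q\cong B\otimes_R R[\overline{M}]$ used for cancellability: one must check that the imposed relations in $B_q$ (centrality and the monoid law) introduce no further identifications. Once this is in place, cancellability is automatic, and the identification with $B_\bq$ reduces to comparison of universal properties.
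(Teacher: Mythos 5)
Your proposal is correct and follows essentially the same route as the paper: the Ore condition from centrality, cancellability from the fact that $B_q$ is free as a $B$-module on the monomials in the $q_g$ (the paper phrases this as ``a polynomial vanishes if and only if all of its coefficients do''), and the identification $B_qS^{-1}\simeq B_\bq$ via the universal property of the localization. Your explicit flagging of the structural input $B_q\cong B\otimes_R R[\overline{M}]$ (that the centrality and monoid relations introduce no further collapse) is a more careful statement of the same point the paper takes for granted.
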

\begin{proof}
The cancellability is clear, since a polynomial vanishes if and only if all of its coefficients do.
The isomorphisms is given by $\bar q_g\to q_g^{-1}$ which exists by the universal property.
\end{proof}

\begin{Remark}[Abelian case, Grothendieck construction]
In the fully commutative case, this concretely boils down to the Grothendieck construction and localization.
Given a monoid $(M,\cdot)$, the Grothendieck construction gives the group completion: $K(M)=M\times M/{\sim}$ where $(m,n)\sim(m',n')$ if $m\cdot n'=m'\cdot n$.
There is an injection $M\to K(M)$ given by $m\mapsto (m,1)$, with inverses given by $(0,m)$.
More generally, if $R$ is a commutative ring and $S$ a multiplicatively closed subset, the localization $S^{-1}$ is given by $S^{-1}R=R\times S/{\sim}$
where $(r,s)\sim (r',s')$ if there is a~$t\in S$ such that $t(rs'-r's)=0$.
Similar constructions can be found in~\cite{Schmitt}.
\end{Remark}

\begin{Remark}[braided/crossed case]
The Ore condition is clearly met when $S$ is central as exploited above. It is more lax, though. It for instance allows for a crossed product types and essentially formalizes bicrossed products. These appear naturally for isomorphisms \cite{LodayCrossed}, \cite[Section~6.2]{feynman}.
For the example of the algebra of morphisms of a category, the Ore condition is guaranteed if there is a commutative diagram for every pair $(a,s)$:
\begin{equation*}
\xymatrix{
X\ar[r]^{s}\ar[d]_{a'}&Y\ar[d]^{a}\\
X'\ar[r]_{s'}&Y'.
}
\end{equation*}
\end{Remark}

\subsection{Application to pathlike bialgebras}
The following results follow in a straightforward fashion from the previous ones.
\begin{Theorem}
\label{thm:mainpathlike}
 Let $B$ be a pathlike bialgebra:
\begin{enumerate}\itemsep=0pt
\item[$1.$] Every grouplike invertible character has a $\star$-inverse and vice-versa.
\item[$2.$] The quotient bialgebra $B/I_N$ of Proposition $\ref{normalizedprop}$ is connected and hence Hopf. In~particular, every grouplike normalized character $\phi\in \Hom(B,A)$ when factored through to $\bar\phi\in \Hom(B/I_N,A)$ has an inverse computed by $\bar \phi^{-1}=\bar\phi\circ S$.
\item[$3.$] The bialgebra $B_\bq/I$ is Hopf and the $\star$ inverse of a grouplike central character $\phi\!\in\! \Hom(B,A)$ when lifted and factored through $\overline{\hat\phi}\in \Hom(B_\bq/I,A)$ has an inverse computed by $\overline{\hat\phi}^{-1}=\overline{\hat\phi}\circ S$.
\item[$4.$] Let $S=\GLE(B)$, then the left localization $BS^{-1}$ is a Hopf algebra. Moreover if $S$ is satisfies the Ore condition and is cancelable, there is an injection $B\to BS^{-1}$ and $B_q/I=BS^{-1}/J$, where $J$ is the ideal generated by $bg-gb$ with $b\in B$, $g\in \GLE(B)$.
\end{enumerate}
\end{Theorem}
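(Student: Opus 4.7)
The plan is to handle the four parts in order by reducing each to either Theorem~\ref{mainthm2} applied to an auxiliary pathlike bialgebra in which all grouplikes are invertible, or to Corollary~\ref{QTantipodecor} applied to a Quillen connected quotient. Throughout I shall tacitly assume the pathlike bialgebra $B$ is split, which guarantees the Ext vanishing needed to invoke Theorem~\ref{mainthm2}.

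Part~(1) is an immediate specialization of Theorem~\ref{mainthm2}: since $B$ is pathlike, $\FBQ_0 B = \bigoplus_{g \in \GLE(B)} Rg$, so $\star$-invertibility of a character $\phi$ is equivalent to $\phi(g)$ being invertible in $A$ for every $g \in \GLE(B)$, which is exactly grouplike invertibility. For part~(2), Proposition~\ref{normalizedprop} ensures $I_N$ is a coideal, so $B/I_N$ is a bialgebra. All grouplikes collapse to $1$ in the quotient, so the image of the bivariate Quillen filtration is the ordinary Quillen filtration with $F_0 = R$; exhaustiveness is preserved, so $B/I_N$ is Quillen connected and Corollary~\ref{QTantipodecor} supplies the antipode. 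A grouplike normalized $\phi$ kills $1-g$ for all grouplike $g$, hence $I_N \subset \ker\phi$, and $\phi = \bar\phi\circ\pi$ with $\bar\phi^{-1} = \bar\phi\circ S$ by Proposition~\ref{convolutionprop}.

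Part~(3) is analogous for $B_\bq/I$: in this quotient each $g$ is identified with the central element $q_g$, which has inverse $q_g^{-1}$, so every grouplike of $B_\bq/I$ is central and invertible. After verifying that the bivariate Quillen filtration descends and remains exhaustive, $B_\bq/I$ is pathlike, and Theorem~\ref{mainthm2} makes it Hopf. A grouplike central $\phi$ lifts to $\hat\phi$ on $B_\bq$ via $\hat\phi(q_g) := \phi(g)$, factors through $B_\bq/I$ since $q_g - g$ maps to zero, and Proposition~\ref{convolutionprop} supplies the antipodal inversion formula. Part~(4) is similar: the localization $BS^{-1}$ is built precisely so that every $g \in \GLE(B)$ becomes invertible, and the bialgebra formulas in the preceding proposition endow it with a pathlike structure to which Theorem~\ref{mainthm2} applies. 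Under Ore together with $S$-cancellability, the calculus of fractions proposition provides the injection $B \hookrightarrow BS^{-1}$, and the identification $B_q/I = BS^{-1}/J$ follows by comparing universal properties, both objects universally receiving $B$ with the grouplikes made central and invertible.

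The main obstacle I anticipate is verifying in parts~(2)--(4) that the quotient or localization actually remains pathlike, i.e., that the bivariate Quillen filtration pushes forward to an exhaustive filtration whose degree-zero part is freely spanned by the grouplikes of the new bialgebra. In the quotient cases this amounts to checking that $\pi(\FBQ_r B)$ exhausts the image and that $\pi(\FBQ_0 B)$ is freely generated by the surviving grouplikes, which follows from the explicit form of $I_N$ respectively $I$. For the localization the argument is more delicate and is where the Ore condition earns its keep, since without it one cannot control how filtration pieces interact with formal denominators nor ensure that $B \to BS^{-1}$ is filtered injective.
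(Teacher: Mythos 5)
Your proposal is correct and follows essentially the same route as the paper, which likewise disposes of parts (1)--(2) by Theorem~\ref{mainthm2}, of part (3) via the sg-flavored/central-grouplike machinery (the paper cites Lemma~\ref{Laurentlem} and Proposition~\ref{prop:flavorgen} where you verify exhaustiveness of the descended bivariate Quillen filtration directly), and treats part (4) as straightforward from the localization's universal property. Your explicit handling of the splitness/Ext hypothesis needed to invoke Theorem~\ref{mainthm2} is, if anything, more careful than the paper's own one-line proof.
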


\begin{proof}
The first part is in Theorem \ref{mainthm2}. The second part is an application of the same theo\-rem. The third part then follows from Lemma \ref{Laurentlem} and Proposition \ref{prop:flavorgen}.
The last part is straightforward.
\end{proof}

\section{Colored bialgebras from Feynman categories}
\label{catpar}

Deconcatenation is a way to turn a category or colored monoid into a coalgebra, if certain conditions are met.
One way to obtain a bialgebra from a coalgebra $C$ is to consider the free algebra which is the tensor algebra $TC$ and
extend $\D$ using the bialgebra equation. More generally, if the colored monoid has a product structure, that is actually
a monoidal structure for the category, one can ask if the bialgebra equation holds for the product $\mu=\otimes$
 and the deconcatenation coproduct $\D$. The answer is that it does for Feynman categories, see \cite{feynman},
 that are suitably decomposition finite, so that $\D$ is well defined.
The axioms for a Feynman category roughly say that the objects
are a free $\ot$-monoid on basic objects and that the morphisms are a~free monoid on the so-called basic morphisms, which have $\ot$ indecomposables as targets.
There are two cases: one with symmetries, i.e., a symmetric monoidal structure, and one without.
The latter is simpler as the former necessitates to pass to equivalence classes, which also makes the product commutative.
Concretely, we will consider the bialgebras $B(\F)$ for non-symmetric and $\Biso(\F)$ for symmetric Feynman categories.
With some conditions, both cases are color coaugmented and hence pathlike by Proposition \ref{prop:colorpath} and hence Theorem \ref{thm:mainpathlike} applies.

The grouplike elements correspond to the objects and are represented by their identities $\id_X$ in the absence of isomorphisms and by their classes for $\Biso$.
Note these are usually not $\ot$-invertible. For this they would have to lie in $\operatorname{Pic}(\F)$,
which is not usually the case. It is, however, a natural assumption, that their images under characters are invertible as this happens in concrete applications.

\subsection{Feynman categories and gradings}

\begin{Notation}
 $\operatorname{Iso}(\F)$ is the subcategory of all objects of $\F$ with their isomorphisms.
 For a~category $\V^\ot$ will denote the free (symmetric) monoidal category. This is essentially the category of words (with or without symmetries) in $\V$, cf.\ \cite[Section~2.4]{matrix} for an overview and \cite{JoyalStreet, MacLane} for details. It comes equipped with a functor $j\colon \V\to \C^\ot$ and the universal property that any functor $F\colon \V\to\C$ to a (symmetric) monoidal category lifts to $F^{\ot}\colon \V^{\ot}\to \C$, viz.\ $F=F^{\ot} j$.

The comma category $(F\downarrow G)$ for two functors $F\colon \mathcal{D} \to \C$ and $G\colon \E\to \C$
has as objects $(X,Y,\phi)$ with $X\in \mathcal{D}$, $Y\in \C$, and $\phi\in \Hom_\C(F(X)\to G(Y))$.
The morphisms are given by commutative diagrams.
That is a morphism from $(X,Y,\phi)\to (X',Y,\phi')$ is given by a pair $f\colon X\to X'$, $g\colon Y\to Y'$, such that $G(g) \phi=\phi'F(f)$.
If $F,G$ are clear, we will write $(\mathcal{D}\downarrow \E)$ for the comma category.
The category $\operatorname{Iso}(\operatorname{ar}(F))$, viz.\ the underlying groupoid of the arrow category, has the morphisms of $\F$ as objects while the morphisms of the category are given by pairs of isomorphisms $(\s,\s')$ which map $f\mapsto \s^{-1}f\s'$ when the compositions are defined.

A slice category is the comma category
$(\C \downarrow X)$, where $(X,\id_x)$ is a viewed as a subcategory with one object.
\end{Notation}

\begin{Definition}
A Feynman category is a triple $(\V,\F,\imath)$, where $\V$ is groupoid, $\F$ is
a (symmetric) monoidal category and $\imath\colon\V\to \F$ is a functor, such that
\begin{enumerate}\itemsep=0pt
\item The free (symmetric) monoidal category $\V^\ot$ is equivalent to $\operatorname{Iso}(\F)$ via $\imath^\ot$.
\item The groupoid of morphisms $\operatorname{Iso}(\operatorname{ar}(\F))$ is equivalent to the free (symmetric) monoidal category on the morphisms from objects from $\V^\ot$ to $\V$. In~formulas $\operatorname{Iso}(\operatorname{ar}(\F))\simeq $ \mbox{$\operatorname{Iso}(\V^\ot\downarrow \V)^\ot$}.
\item All slice categories $\F$ are essentially small.
\end{enumerate}

A Feynman category is {\em strict} if its monoidal structure is strict.
\end{Definition}

The objects of $\V$, sometimes called vertices, are the basic objects in the following sense.
The first condition says that every object of $\F$ decomposes as $X\simeq \imath(*_1)\odo \imath(*_n)$ essentially uniquely, where this means up to isomorphisms on the indecomposables and in the symmetric case up to permutations.

 Dropping the $\imath$, the second condition means that any morphism $\phi\colon Y\to X$ of $\F$ decomposes into indecomposables as well $\phi=\phi_1\odo \phi_n$, where each of the $\phi_i\colon Y_i\to *_i$ is a basic morphism, with $X\simeq *_1\odo *_n$ and
$Y\simeq Y_1\odo Y_n$. Again this is essentially unique, that is up to isomorphisms on the $\phi_i$ and permutations in the symmetric case. The third condition is technical and used for the existence of certain colimits.

In general there is a natural grading given by $|X|=$ word length. That is if $X\simeq *_1\odo *_n$ then $|X|=n$. This is well defined due to axiom (i).
This give a grading on the morphisms as $|\phi|=|X|-|Y|$. All the algebras/coalgebra structures are graded by $|\cdot |$.

 \begin{Definition} A {\em degree function} on a Feynman category $\FF$ is a map $\text{deg}\colon \Mor(\F)\to \N_0$, such that:
$\text{deg}(\phi\circ\psi)=\text{deg}(\phi)+\text{deg}(\psi)$ and
 $\text{deg}(\phi\ot\psi)=\text{deg}(\phi)+\text{deg}(\psi)$, and
every morphism is generated under composition and monoidal product by those of degree $0$ and $1$.

 For a {\em weak degree function}, the first condition is relaxed to $\text{deg}(\phi\circ\psi)\geq \text{deg}(\phi)+\text{deg}(\psi)$.
In addition, a (weak) degree function is called {\em proper} if
 $\text{deg}(\phi)=0 $ if and only if $\phi$ is an isomorphism.
\end{Definition}

In practice, Feynman categories come with a presentation. This is a set of generators of the basic morphisms together with relations among them, cf.\ \cite[Section~5]{feynman}.
The generators are a set $P(\F)\subset (\F\downarrow \V)$ of non-invertible elements
such that any morphism in $(\F\downarrow \V)$ up to isomorphism can be written as $\hat \phi_n \hat \phi_{n-1}\cdots \hat \phi_2\phi_1$, where $\phi_i\in P(\F)$ and we use the short hand notation
 $\hat\phi=\id_{*_1}\odo \id_{*_{k-1}} \ot\phi\ot \id _{*_{k+1}}\odo \id_{*_n}$ with $\phi$ in the $k$-th position and the factors of $\id_{*_i}$ are identities of some of the basic objects.
 Note that generators $\phi_i\colon X_i\to \ast$ can have sources with $|X_i|\geq 0$.
 Commonly the sources can be restricted to $|X_i|\in \{0,1,2\}$ or even $\{1,2\}$. The relations are given by commutative diagrams in the $\hat \phi_k$.
 We call $\FF$ {\em primitively generated}, if it has a set of generators which are
 essentially irreducible under composition.

 We call a presentation {\em effective} if for each $\phi\in (\V\downarrow \F)$ there is a maximal number $\max{\phi}$ of generators in any decomposition as above.
Defining $\max{\phi_0\ot\phi_1}:=\max{\phi_1}+\max{\phi_2}$ extends $\max{{-}}$ to all morphisms.
The following is straightforward:
\begin{Lemma}
For a Feynman category with an effective presentation $\max{{-}}$ is a weak degree function.
A presentation in which the relations are homogenous in the number of generators is effective and $\max{{-}}$ is a degree function.
\end{Lemma}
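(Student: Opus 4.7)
The plan is to verify each clause of the (weak) degree function definition by reducing to the basic morphism decomposition guaranteed by axiom (ii) of a Feynman category and then counting generators via the given presentation. The monoidal equation $\max{\phi_0 \otimes \phi_1} = \max{\phi_0} + \max{\phi_1}$ is precisely the formula used to extend $\max{{-}}$ from $(\F \downarrow \V)$ to all of $\F$, so in both statements only the compositional behaviour requires real proof.

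For the first statement, I would take $\Phi\colon X \to Y$ and $\Psi\colon Y \to Z$ and apply axiom (ii) to write $\Psi = \Psi_1 \otimes \cdots \otimes \Psi_n$ with $\Psi_i\colon Y_i \to *_i$, $*_i \in \V$, so that $Y = Y_1 \otimes \cdots \otimes Y_n$. Applying axiom (ii) to $\Phi$ gives $\Phi = \phi_1 \otimes \cdots \otimes \phi_m$ with $\phi_j\colon X_j \to v_j$, $v_j \in \V$; its target decomposition $Y = v_1 \otimes \cdots \otimes v_m$ refines $Y = Y_1 \otimes \cdots \otimes Y_n$ (using axiom (i), and up to permutation in the symmetric case). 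Regrouping the $\phi_j$ along the $Y_i$ yields $\Phi = \tilde\Phi_1 \otimes \cdots \otimes \tilde\Phi_n$ with $\tilde\Phi_i\colon X_i \to Y_i$, and the interchange law gives $\Psi \circ \Phi = \bigotimes_i(\Psi_i \circ \tilde\Phi_i)$, with each factor again in $(\F \downarrow \V)$. For each $i$, a maximal decomposition of $\Psi_i$ concatenated with chosen maximal decompositions of the basic components of $\tilde\Phi_i$, linearized by bifunctoriality of $\otimes$ into a single composition of tensor-extended generators $\hat\phi_N \circ \cdots \circ \hat\phi_1$, is an admissible decomposition of $\Psi_i \circ \tilde\Phi_i$ using exactly $\max{\Psi_i} + \max{\tilde\Phi_i}$ generators. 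Hence $\max{\Psi_i \circ \tilde\Phi_i} \geq \max{\Psi_i} + \max{\tilde\Phi_i}$, and summing over $i$ yields $\max{\Psi \circ \Phi} \geq \max{\Psi} + \max{\Phi}$. Generation by degree $0$ and $1$ morphisms is immediate from the existence of the presentation itself, since every morphism is assembled by tensoring and composing elements of $P(\F)$ (each of $\max$-value $1$) and isomorphisms (each of $\max$-value $0$).

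For the second statement, suppose the relations are homogeneous in the number of generators. Then any two expressions of the same basic morphism as a composition of generators are connected by finitely many applications of relations preserving generator count, so they share a common length. This common length is automatically the bound demanded by the definition of effective, so the presentation is effective and $\max{\phi}$ is a well-defined nonnegative integer for every morphism $\phi$. To upgrade the weak inequality to an equality, note that the decomposition of $\Psi \circ \Phi$ constructed in the previous paragraph has $\max{\Psi} + \max{\Phi}$ generators, while under homogeneity every decomposition of $\Psi \circ \Phi$ has length exactly $\max{\Psi \circ \Phi}$; comparing the two gives $\max{\Psi \circ \Phi} = \max{\Psi} + \max{\Phi}$, as required.

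The main obstacle is the bookkeeping in the composition step: one must repartition $\Phi$ so that its basic factors align with the indecomposable targets appearing in $\Psi$, interleave the constituent generators via bifunctoriality to produce a decomposition of the prescribed form $\hat\phi_N \circ \cdots \circ \hat\phi_1$, and check that the ``essentially unique'' slack of axioms (i) and (ii) (isomorphism corrections and, in the symmetric case, permutations of tensor factors) does not change generator counts. All of this is routine once the Feynman category axioms are invoked in the right order, but keeping the indexing under control is what the proof has to work against.
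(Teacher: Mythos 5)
The paper offers no proof of this lemma (it is introduced as ``straightforward''), and your argument supplies exactly the intended details: superadditivity of $\max{{-}}$ under composition by realigning the basic factors of $\Phi$ with the source decomposition of $\Psi$ and concatenating maximal decompositions, and effectiveness plus exact additivity from the invariance of generator count under homogeneous relations. One small imprecision: an element of $P(\F)$ need not have $\max$-value $1$ a priori, since a generator could itself admit a longer decomposition into other generators, so the generation-by-degree-$0$-and-$1$ clause is not quite ``immediate'' as you state it. It does follow, though: take a maximal decomposition $\hat\phi_n\cdots\hat\phi_1$ of an arbitrary morphism $\phi$ with $n=\max{\phi}$; superadditivity gives $n=\max{\phi}\geq\sum_i\max{\phi_i}\geq n$, forcing $\max{\phi_i}=1$ for every factor (and in the homogeneous case this is automatic, since all decompositions of a generator then have length $1$). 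With that adjustment the proof is complete.
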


\begin{Remark}
These considerations also work in an enriched setting, that is if the morphisms themselves are objects of a symmetric monoidal category, e.g., $R$-modules. We will not consider the details of this situation more deeply here and refer to \cite{feynmanrep}.
\end{Remark}

\subsection{Colored bialgebras from Feynman categories}

We recall \cite[Theorem 1.20]{HopfPart2}:
\begin{Theorem}
If a locally finite strict monoidal category $\F$ is part of a Feynman non-symmetric category $\FF$, then the algebra structure of $\ot$ and coalgebra structures of deconcatenation give a~bialgebra structure on $B(\F)$.
\end{Theorem}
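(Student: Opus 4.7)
I want to verify the two bialgebra compatibility axioms: multiplicativity of the deconcatenation coproduct $\Delta$ with respect to the product $\mu = \otimes$, and compatibility of the unit $\id_{\mathbf{1}}$ with the counit $\varepsilon$. Since both structures are $R$-linear extensions, it suffices to test on basis elements. Local finiteness of $\F$ guarantees $\Delta(\phi) = \sum_{\phi_0 \cdot \phi_1 = \phi} \phi_0 \otimes \phi_1$ is a finite sum, so both sides of the bialgebra equation are well-defined. The substantive identity to verify is
\begin{equation*}
\Delta(\phi \otimes \psi) = \sum (\phi_{(1)} \otimes \psi_{(1)}) \otimes (\phi_{(2)} \otimes \psi_{(2)})
\end{equation*}
for any $\phi, \psi \in \Mor(\F)$, with Sweedler notation for $\Delta(\phi)$ and $\Delta(\psi)$.

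\textbf{The bijection of factorizations.} The right-hand side enumerates pairs of factorizations $\phi = \phi_{(1)} \cdot \phi_{(2)}$ and $\psi = \psi_{(1)} \cdot \psi_{(2)}$, and by the interchange law in the strict monoidal category each such pair yields a factorization $\phi \otimes \psi = (\phi_{(1)} \otimes \psi_{(1)}) \cdot (\phi_{(2)} \otimes \psi_{(2)})$. The left-hand side enumerates all factorizations $\phi \otimes \psi = f \cdot g$ through an intermediate object $Z$, so the crux is to show that every such factorization arises uniquely from a pair of factorizations as above. For this I would invoke Feynman axioms (i) and (ii): by axiom (i), $Z$ decomposes essentially uniquely as an ordered monoidal product of indecomposables $\ast_1 \otimes \cdots \otimes \ast_\ell$ (the ordering matters because we are non-symmetric), and $Y_1 \otimes Y_2$ likewise splits as an ordered product in which the indecomposables of $Y_1$ precede those of $Y_2$. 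Axiom (ii) then forces the basic-morphism decomposition of $g : Z \to Y_1 \otimes Y_2$ to consist of morphisms targeting these indecomposables in order, so the basic components partition into those targeting $Y_1$ and those targeting $Y_2$. This partition forces $Z = Z_1 \otimes Z_2$ and $g = g_1 \otimes g_2$. Applying the same analysis to $f : X_1 \otimes X_2 \to Z_1 \otimes Z_2$ gives $f = f_1 \otimes f_2$, and the interchange law then identifies $\phi = f_1 \cdot g_1$ and $\psi = f_2 \cdot g_2$, producing the required pair of factorizations.

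\textbf{Unit, counit, and main obstacle.} Unit and counit compatibility is routine: the monoidal unit $\mathbf{1}$, viewed as the empty word in $\V^\otimes$, admits only the trivial decomposition, hence $\Delta(\id_{\mathbf{1}}) = \id_{\mathbf{1}} \otimes \id_{\mathbf{1}}$ and $\varepsilon(\id_{\mathbf{1}}) = 1$; multiplicativity of $\varepsilon$ follows from $\id_{X \otimes Y} = \id_X \otimes \id_Y$ together with $\varepsilon$ vanishing on non-identities (so $\phi \otimes \psi$ is an identity iff both factors are). The genuine obstacle is the uniqueness half of the bijection in the previous paragraph: without the non-symmetric hypothesis, the basic-morphism decomposition of $\phi \otimes \psi$ could be permuted, mixing basic components coming from $\phi$ with those coming from $\psi$ and destroying the clean split $Z = Z_1 \otimes Z_2$. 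This is precisely why the theorem is phrased for non-symmetric Feynman categories, and why the symmetric analogue (Theorem~\ref{thm:isofey}) requires passing to $\Iso(\F)$-coinvariants.
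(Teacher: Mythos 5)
The paper does not actually prove this statement~--- it is recalled verbatim from \cite[Theorem~1.20]{HopfPart2}~--- so there is no in-paper proof to compare against; your argument reconstructs the standard one used there. The substance is right: the interchange law gives the map from pairs of factorizations to factorizations of $\phi\otimes\psi$, and axioms (i)--(ii) of a non-symmetric Feynman category (free ordered words of basic objects, essentially unique decomposition into basic morphisms, no permutations to shuffle the blocks) give the inverse. One small elision worth noting: after you split $f=f_1\otimes f_2$ over $Z=Z_1\otimes Z_2$, the source of $f_1$ is a priori only some prefix $W_1$ of the word $X_1\otimes X_2$, not necessarily $X_1$ itself; that $W_1=X_1$ follows by comparing the two tensor decompositions of $\phi\otimes\psi$ over the basic factors of $Y_1\otimes Y_2$ and invoking the essential uniqueness of axiom (ii) once more, which is the same mechanism you already use and closes the gap.
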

This bialgebra is neither commutative nor cocommutative in general.
If $\F^{\rm op}$ satisfies the condition of the Theorem, we define $B(\F):=B(\F^{\rm op})^{\rm cop}$.

\begin{Theorem}
\label{thm:mainns}
With the assumptions as in the theorem above, $B$ is colored by its objects and color coaugmented.
If~$\FF$ additionally has a proper degree function, then it graded. If furthermore
 there are no isomorphisms except for the identities, it is color nilpotent and color connected.
 It~is then pathlike and Theorem~$\ref{thm:mainpathlike}$ applies.
\end{Theorem}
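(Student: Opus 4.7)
The plan is to verify each clause in turn by assembling the machinery developed earlier in the paper, essentially reducing everything to the properties of the categorical coalgebra $C[\Mor(\F)]$ with the extra monoidal structure supplied by $\otimes$.

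First, for the coloring: let $X = \Obj(\F)$ and define the bi-comodule structure on $B(\F)$ by $\lambda(\phi) = s(\phi)\otimes \phi$ and $\rho(\phi) = \phi \otimes t(\phi)$, extended linearly. The composition axioms of the category guarantee that $\Delta(\phi) = \sum_{\phi_0\cdot\phi_1=\phi}\phi_0\otimes\phi_1$ only involves pairs with $t(\phi_0) = s(\phi_1)$, so $\Delta$ factors through the cotensor product. Because Feynman category axiom (i) identifies $\Obj(\F)$ with the free monoid $\V^\otimes$ and $s,t$ respect $\otimes$ on morphisms, the coloring is compatible with $\mu = \otimes$. For color coaugmentation, set $i_X(x) = \id_x$ and define $\eps_X\colon B \to C[X]$ by $\eps_X(\id_x) = x$ and $\eps_X(\phi) = 0$ for non-identity $\phi$. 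Expanding $\Delta(\phi) = \id_{s(\phi)}\otimes \phi + \phi \otimes \id_{t(\phi)} + \text{(non-identity terms)}$ immediately shows that $(\eps_X\otimes\id)\Delta = \lambda$ and $(\id\otimes\eps_X)\Delta = \rho$, and that $\eps_X\circ i_X = \id_{C[X]}$.

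Next, a proper degree function $\deg$ on $\F$ yields a grading $B = \bigoplus_n B_n$ with $B_n$ spanned by morphisms of degree $n$. Additivity $\deg(\phi\circ\psi) = \deg(\phi)+\deg(\psi)$ and $\deg(\phi\otimes\psi) = \deg(\phi)+\deg(\psi)$ translate directly into compatibility with both $\Delta$ and $\mu$, so $B$ becomes a graded bialgebra.

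Now assume in addition that the only isomorphisms are identities. By properness, degree-zero morphisms are precisely the identities, so $B_0 = \Free{X} = C[X]$, i.e., $B$ is color reduced. Corollary~\ref{cor:gradedcolornil} then gives color conilpotence. Since $B$ is a free $R$-module (being the free module on $\Mor(\F)$), Lemma~\ref{lem:bdflat} ensures $\bar\Delta$-flatness, and Proposition~\ref{prop:nilconnected} upgrades color conilpotence to color connectedness. Finally, Proposition~\ref{prop:colorpath} says every color connected colored coalgebra is pathlike, and hence Theorem~\ref{thm:mainpathlike} applies.

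The main obstacle is really just the bookkeeping in the first step: verifying that the coloring by $\Obj(\F)$ is genuinely compatible with the monoidal algebra structure, and in particular that the factorization of $\Delta$ through the cotensor product is preserved under $\otimes$. This rests on the fact that in a Feynman category the objects form a free $\otimes$-monoid and $s,t$ are monoidal, so once this is unpacked the remainder of the argument is a direct invocation of results already established.
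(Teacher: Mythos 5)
Your proposal is correct and follows essentially the same route as the paper: the paper's proof simply cites Proposition~\ref{prop:catcolor} for the coloring and coaugmentation, notes the grading is clear, and then invokes Propositions~\ref{prop:catcolor} and~\ref{prop:colorpath} (which internally use Corollary~\ref{cor:gradedcolornil}, Lemma~\ref{lem:bdflat} via freeness of the underlying module, and Proposition~\ref{prop:nilconnected}) for the remaining assertions — exactly the chain you spell out. Your version merely unpacks the construction of $\lambda$, $\rho$, $\eps_X$, $i_X$ that is already contained in the proof of Proposition~\ref{prop:catcolor}.
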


\begin{proof} That the coalgebra is colored and color coaugmented
follows from Lemma \ref{prop:catcolor}. The grading is clear.
The assertions in the case that there are no isomorphisms except the identities follow from Propositions \ref{prop:catcolor} and \ref{prop:colorpath}.
\end{proof}

 We now treat the case with isomorphisms. This will yield commutative but generally noncocommutative bialgebras. In~order to define the coproduct, some modifications are needed to avoid an over-counting of decompositions due to
the isomorphisms given by the permutations. Otherwise, the coproduct will not be coassociative, see \cite[Example 2.11]{feynman}.

\label{channelpar}

\begin{Definition}[\cite{HopfPart2}] A weak decomposition of a morphism $\phi$ is
a pair of morphisms $(\phi_0,\phi_1)$ for which there exist isomorphisms $\sigma$, $\sigma'$, $\sigma''$ such that
$\phi=\sigma\circ \phi_1\circ \sigma' \circ \phi_0 \circ \sigma''$. We introduce an equivalence relation which says that $(\phi_0,\phi_1)\sim (\psi_0,\psi_1)$ if they are weak decompositions of the same morphism.
An equivalence class of weak decompositions will be called a decomposition channel which is denoted by $[(\phi_0,\phi_1)]$.
$\F$ is called essentially decomposition finite, if the following sum is finite:
\begin{equation}
\label{Deltaiso}
 \Delta^{\rm iso}([\phi])=\sum_{[(\phi_0,\phi_1)]}[\phi_0] \otimes [\phi_1],
\end{equation}
where the sum is over a complete system of representatives for the decomposition channels for a fixed representative $\phi$.
\end{Definition}
Note that parallel to \eqref{eq:monoidal} this is the monoidal version of the coproduct. The categorical version is given by $\Delta^{\rm op}$.

If the category $\F$ is symmetric monoidal, there are extra symmetries permuting objects and morphisms, and the behavior of these automorphism groups is not bialgebraic. Therefore one has to take coinvariants under the action of the automorphism groups of the objects acting on the morphisms. Because the commutators and associators are isomorphisms,
the coinvariants are associative, commutative monoids. To take the coinvariants,
consider the $R$-module $R[\Mor(\F)]$ with $f\sim g$ if there are isomorphisms $\s$, $\s'$, s.t.\ $f=\s'f\s^{-1}$.
Note the product descends since if $\phi\sim \phi'$ and $\psi\sim \psi'$ then $\phi\ot \psi\sim \phi'\ot \psi'$.
We denote the quotient by $R[\Mor(\F)]^{\rm iso}=R[\Mor(\F)]/{\sim}$.

The following is contained in~\cite[Theorem 2.1.5]{feynman}:
\begin{Theorem}
\label{thm:isofey}
If a channel decomposition finite monoidal category $\F$ is part of a Feynman category $\FF$, then the algebra structure defined by $\ot$ together with the coalgebra structure given by \eqref{Deltaiso} define a
bialgebra structure on $R[\Mor(\F)]^{\rm iso}$.
The counit is given by $\eps([\id_X])=1$ and $\eps([\phi])=0$ if $[\phi]\neq [\id_{s(\phi)}]$.
\end{Theorem}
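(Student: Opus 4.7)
The plan is to verify the bialgebra axioms for $B^{\rm iso} := R[\Mor(\F)]^{\rm iso}$ in sequence: (i) well-definedness of $\Delta^{\rm iso}$ on the coinvariants, (ii) coassociativity, (iii) the counit property of $\eps$, (iv) the multiplicative compatibility $\Delta^{\rm iso}([\phi]\ot[\psi]) = \Delta^{\rm iso}([\phi]) \cdot \Delta^{\rm iso}([\psi])$, and (v) that $\eps$ is an algebra homomorphism. The structural input from the Feynman category axioms will enter decisively in step (iv); (i)--(iii) and (v) are essentially formal.

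For (i), given $\phi' \sim \phi$, say $\phi' = \tau' \phi \tau^{-1}$ with $\tau,\tau'$ isomorphisms, any weak decomposition $(\phi_0,\phi_1)$ of $\phi$ transports to a weak decomposition of $\phi'$ by absorbing $\tau,\tau'$ into the outer isomorphisms appearing in the definition of weak decomposition. This yields a bijection on decomposition channels, so $\Delta^{\rm iso}([\phi])$ is representative-independent; channel decomposition finiteness guarantees the sum is finite, so $\Delta^{\rm iso}$ is a well-defined $R$-linear map. Coassociativity in (ii) amounts to the statement that both $(\Delta^{\rm iso} \ot \id)\Delta^{\rm iso}$ and $(\id \ot \Delta^{\rm iso})\Delta^{\rm iso}$ are indexed by equivalence classes of three-fold weak decompositions $\phi \sim \sigma_0 \phi_2 \sigma_1 \phi_1 \sigma_2 \phi_0 \sigma_3$; the decomposition-channel equivalence was designed precisely to absorb the isomorphisms at the two joining points, so both sums yield the same triples. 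For (iii), the surviving terms in $(\eps \ot \id)\Delta^{\rm iso}([\phi])$ are those with $[\phi_0] = [\id_{s(\phi)}]$; the weak-decomposition identity $\phi = \sigma\phi_1\sigma' \id \sigma''$ then forces $[\phi_1] = [\phi]$, and symmetrically on the right.

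The heart of the proof is (iv), for which I invoke the second Feynman axiom $\Iso(\operatorname{ar}(\F)) \simeq \Iso(\V^\ot \downarrow \V)^\ot$. This axiom says that every morphism decomposes into basic morphisms essentially uniquely, i.e., up to isomorphisms on the basic factors and (in the symmetric case) up to permutations. Consequently, any weak decomposition $(\chi_0,\chi_1)$ of $\phi \ot \psi$ is equivalent to one of the form $(\phi_0 \ot \psi_0,\, \phi_1 \ot \psi_1)$ where $(\phi_0,\phi_1)$ and $(\psi_0,\psi_1)$ are weak decompositions of $\phi$ and $\psi$, respectively. This sets up a correspondence between channels of $\phi \ot \psi$ and ordered pairs of channels of $\phi$ and $\psi$; passing to the coinvariants $R[\Mor(\F)]^{\rm iso}$ kills exactly the residual ambiguity stemming from the symmetric structure and the automorphism groups of objects in $\V$, promoting this correspondence to a bijection at the level of classes. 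The main obstacle I expect is this bookkeeping --- namely, confirming that the coinvariant quotient precisely compensates for the overcounting that would otherwise arise from symmetry isomorphisms intertwining the two tensor factors. It is at this point that the finiteness of automorphism groups and the axioms on $\Iso(\F) \simeq \V^\ot$ must be combined carefully.

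Finally (v) is immediate: by the first Feynman axiom $\id_{X \ot Y} = \id_X \ot \id_Y$ up to coherence, and a class $[\phi \ot \psi]$ equals $[\id]$ precisely when each tensor factor is an identity class, giving $\eps([\phi] \ot [\psi]) = \eps([\phi])\eps([\psi])$ and $\eps([\id]) = 1$. Combined with (i)--(iv), this completes the verification that $B^{\rm iso}$ is a bialgebra; commutativity of the product follows from the symmetric structure, while cocommutativity generally fails because $\Delta^{\rm iso}$ still tracks an ordered decomposition.
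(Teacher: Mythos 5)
The first thing to note is that the paper does not prove this statement at all: it is quoted verbatim as ``contained in \cite[Theorem~2.1.5]{feynman}'', so there is no in-text argument to compare yours against. Your outline is therefore being measured against the proof in that reference, and as a skeleton it identifies the right ingredients: well-definedness and the product on coinvariants are formal, the counit computation is as you say, and the multiplicativity of $\Delta^{\rm iso}$ is indeed where the hereditary axiom $\Iso(\operatorname{ar}(\F))\simeq\Iso(\V^\ot\downarrow\V)^\ot$ enters, by splitting any weak decomposition of $\phi\ot\psi$ along the decomposition of the target into basic objects.

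The genuine gap is that in both (ii) and (iv) you establish (or assert) a correspondence of \emph{index sets} but never control \emph{multiplicities}, and that is where the entire content of the theorem lives. For coassociativity, it is not enough that both iterates of $\Delta^{\rm iso}$ are ``indexed by three-fold weak decompositions'': one must show that the number of pairs (channel of $\phi$, channel of $\phi_0$) hitting a given three-fold channel equals the number of pairs (channel of $\phi$, channel of $\phi_1$) hitting it. This is exactly the point where the naive coproduct fails (the paper points to \cite[Example~2.11]{feynman} for a counterexample), and resolving it requires an orbit--stabilizer count using the finiteness of the automorphism groups, not just the design intent of the channel equivalence. Similarly, in (iv) the map from ordered pairs of channels of $\phi$ and $\psi$ to channels of $\phi\ot\psi$ must be shown to be a \emph{bijection}; injectivity is genuinely at risk when, say, $\phi=\psi$ and the symmetry interchanging the two tensor factors conjugates $(\phi_0\ot\psi_0,\phi_1\ot\psi_1)$ into $(\psi_0\ot\phi_0,\psi_1\ot\phi_1)$, threatening to collapse two ordered pairs onto one channel and produce a coefficient mismatch between $\Delta^{\rm iso}([\phi]\cdot[\psi])$ and $\Delta^{\rm iso}([\phi])\cdot\Delta^{\rm iso}([\psi])$. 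You flag this as ``the main obstacle I expect'' and assert that passing to coinvariants ``kills exactly the residual ambiguity'', but naming the obstacle is not the same as removing it: the precise bookkeeping of how channels are formed relative to a fixed representative, and how the automorphisms of source, target, and the morphism itself act, is the actual proof, and it is absent here.
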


If $\F^{\rm op}$ satisfies the conditions of the Theorem, we define $\Biso(\F):=\Biso(\F^{\rm op})^{\rm cop}$.

Note that without changing the coalgebra, we can assume that $\F$ is strict and skeletal.

\begin{Lemma}
\label{lem:isocolor}
In the situation of the theorem above,
the semigrouplike elements are the classes of the identities $[\id_X]$ and they are grouplike.
\end{Lemma}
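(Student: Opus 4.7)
The plan is to establish both directions of the characterization: each $[\id_X]$ is grouplike, and conversely every nonzero semigrouplike element of $\Biso(\F)$ has this form.

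For the forward direction, $\eps([\id_X])=1$ is immediate from Theorem \ref{thm:isofey}. For the coproduct, the key observation is that any weak decomposition $\id_X=\sigma\circ\phi_1\circ\sigma'\circ\phi_0\circ\sigma''$ forces both $\phi_0$ and $\phi_1$ to be isomorphisms. This uses the structural equivalences $\operatorname{Iso}(\F)\simeq \V^\otimes$ and $\operatorname{Iso}(\operatorname{ar}(\F))\simeq \operatorname{Iso}(\V^\otimes\downarrow\V)^\otimes$ (Feynman axioms~(i) and~(ii)): isomorphisms preserve word length, and the canonical factorization of morphisms into basic morphisms with targets in $\V$ shows that any factorization of the iso $\id_X$ passes only through iso basic morphisms. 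Passing to the quotient $R[\Mor(\F)]^{\rm iso}$, every iso collapses to the class of an identity, so all weak decompositions of $\id_X$ lie in the single channel $[(\id_X,\id_X)]$, yielding $\Delta^{\rm iso}([\id_X])=[\id_X]\otimes[\id_X]$.

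For the converse, let $c=\sum_i r_i[\phi_i]$ be a nonzero semigrouplike element, expressed in the basis of iso-classes of morphisms with the $[\phi_i]$ distinct and $r_i\neq 0$. The trivial weak decomposition $(\id_{s(\phi_i)},\phi_i)$ always contributes a term $[\id_{s(\phi_i)}]\otimes[\phi_i]$ to $\Delta^{\rm iso}([\phi_i])$. Matching against $c\otimes c=\sum_{j,k}r_jr_k[\phi_j]\otimes[\phi_k]$, the second tensor factor forces $[\phi_i]=[\phi_k]$ for some $k$, while the first factor gives $[\id_{s(\phi_i)}]=[\phi_j]$ for some $j$. Thus every $\phi_i$ appearing in $c$ is iso-equivalent to an identity, so $[\phi_i]=[\id_{X_i}]$ for some object $X_i$. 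Rewriting $c=\sum_X r_X[\id_X]$ and using the first part, the equation $\Delta^{\rm iso}(c)=c\otimes c$ reduces to $r_Xr_Y=\delta_{X,Y}r_X$, i.e., the $r_X$ are mutually orthogonal idempotents in $R$. Under the usual assumption that $R$ has only trivial idempotents (e.g., $R$ a field), exactly one $r_X$ equals $1$ and the remaining vanish, so $c=[\id_X]$. The value $\eps([\id_X])=1$ then makes $c$ grouplike, completing the proof.

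The main obstacle is the structural claim used in the first part, that any factorization of $\id_X$ in a Feynman category consists entirely of isomorphisms. This is a non-trivial consequence of the axioms and deserves a brief auxiliary lemma, tracking word lengths along the canonical basic-morphism factorization and using that non-iso basic morphisms genuinely change word length in the intended direction. The idempotent analysis in the second part is then elementary, modulo the expected caveat on the ground ring $R$.
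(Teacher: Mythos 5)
Your overall strategy is reasonable and your conclusion matches the lemma, but the key structural step in the forward direction is justified incorrectly. You claim that any weak decomposition $\id_X=\sigma\circ\phi_1\circ\sigma'\circ\phi_0\circ\sigma''$ forces $\phi_0$, $\phi_1$ to be isomorphisms because ``non-iso basic morphisms genuinely change word length.'' That auxiliary lemma is false in general: a basic morphism $Y\to \ast$ with $|Y|=1$ need not change word length~-- for instance the elements of $\bar\O(1)$ in the operadic Feynman categories, or the loop contractions $\circ_{ss'}\colon *_S\to *_{S\setminus\{s,s'\}}$ in $\GG^{\rm ctd}$, which preserve $|\cdot|$ but are not invertible. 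The hypothesis that actually does the work is (channel) decomposition finiteness: from $\phi_1\circ(\sigma'\circ\phi_0\circ\sigma''\circ\sigma)=\id$ one sees that $\phi_1$ is right invertible and $\phi_0$ is left invertible, and the earlier lemma that in a decomposition finite colored unital monoid any one-sided invertible element is invertible then forces both factors to be isomorphisms, so the only channel is $[(\id_X,\id_X)]$. This is exactly the route the paper takes; note that a proper degree function is an \emph{additional} hypothesis in Theorem~\ref{thm:mainiso} and is not available in the bare setting of Lemma~\ref{lem:isocolor}, so no word-length or degree argument can substitute here.

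There is also a non sequitur in your converse. Matching the coefficient of $[\id_{s(\phi_i)}]\otimes[\phi_i]$ in $\Delta^{\rm iso}(c)=c\otimes c$ shows only that the class $[\id_{s(\phi_i)}]$ occurs among the $[\phi_j]$ with nonzero coefficient; it does not show that $[\phi_i]$ itself is an identity class, which is what you assert before launching the idempotent analysis. The paper avoids this by testing basis elements directly: for $[\phi]\neq[\id_{s(\phi)}]$ the two trivial channels contribute the distinct summands $[\id_{s(\phi)}]\otimes[\phi]$ and $[\phi]\otimes[\id_{t(\phi)}]$ to $\Delta^{\rm iso}([\phi])$, which cannot coincide with the single term $[\phi]\otimes[\phi]$, so no such basis element is semigrouplike. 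Your observation that over a ring with nontrivial idempotents there exist additional semigrouplike $R$-linear combinations of identity classes is a genuine and worthwhile caveat that the paper's proof glosses over, but it does not repair either gap above.
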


\begin{proof}
Consider the coproduct of any $\phi$ for which $[\phi]\neq [\id_{s(\phi)}]=[\id_{t(\phi)}]$ has at least two distinct terms
\begin{gather*}
\Diso([\phi])=[\id_{s(\phi)}]\ot [\phi]+[\phi]\ot [\id_{t(\phi)}] +\cdots.
\end{gather*}
These are the only two terms precisely if $[\phi]$ is skew-primitive.
No such $[\phi]$ is semigrouplike, but,
if $[\phi]=[\id_X]$, then these two decomposition channels coincide and $\D([\id_X])=[\id_X]\ot [\id_X]$
as there are no left or right invertible elements
by Lemma \ref{prop:catcolor}.
Since $\eps([\phi])=1$, these are grouplike.
\end{proof}

Using $\Delta^{\rm iso}$ thus turns essentially indecomposable morphisms into skew primitives.

\begin{Remark}
For a proper (weak) degree function the degree $0$ part is free on the grouplikes, which are the classes $[\id_x]$.
The degree $1$ part is generated by the isomorphisms classes of essentially indecomposable elements, whose classes are skew-primitive. If $\F$ is primitively generated, then the classes of the generators are in degree $1$.
\end{Remark}

\begin{Theorem}
\label{thm:mainiso}
If $\F$ satisfies the assumptions of Theorem $\ref{thm:isofey}$,
the bialgebra $\Biso(\F)$ is colored, color coaugmented, color conilpotent and color connected and hence pathlike and
Theorem $\ref{thm:mainpathlike}$ applies.

 If $\FF$ additionally has a proper degree function, then it is graded, color nilpotent and color connected.
Thus, it is pathlike and Theorem $\ref{thm:mainpathlike}$ applies.
\end{Theorem}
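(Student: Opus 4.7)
My plan is to mirror the proof of Theorem~\ref{thm:mainns} in the symmetric setting, but working throughout with iso classes, and then feed the result into the pipeline
\[
\text{coloured} \Rightarrow \text{conilpotent} \Rightarrow \text{connected} \Rightarrow \text{pathlike} \Rightarrow \text{Theorem \ref{thm:mainpathlike}}.
\]
First, I would identify the coloring. By Lemma~\ref{lem:isocolor}, $\SGL(\Biso(\F)) = \GLE(\Biso(\F)) = \{[\id_X]\}$ as $X$ ranges over a skeleton of $\Obj(\F)$, so I take this set as my set of colors $X$. The $C[X]$-bicomodule structure is given by $\lambda([\phi]) = [\id_{s(\phi)}]\otimes [\phi]$ and $\rho([\phi]) = [\phi]\otimes [\id_{t(\phi)}]$, which is well-defined on iso classes because isomorphisms respect source and target classes. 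Compatibility with $\Delta^{\rm iso}$ factoring through the cotensor product is then immediate from the observation that every decomposition channel $[(\phi_0,\phi_1)]$ satisfies $t(\phi_0)\cong s(\phi_1)$. The coaugmentation $i_X\colon [X]\mapsto [\id_X]$ splits the natural projection $\eps_X$, making $\Biso(\F)$ color coaugmented.

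Next I would establish color conilpotence. In the graded case, a proper degree function forces the degree-$0$ part to consist exactly of isomorphism classes, which in the skeletal reduction are the $[\id_X]$; hence $\Biso(\F)_0 = C[X]$ and the bialgebra is color reduced, so Corollary~\ref{cor:gradedcolornil} gives conilpotence outright. In general I would argue by induction using the Feynman-category axioms: every non-identity class $[\phi]$ admits only finitely many decomposition channels, and each $\bD^{\rm iso}$-iteration strips off at least one essentially indecomposable factor (whose class is skew-primitive by the remark following Lemma~\ref{lem:isocolor}, hence annihilated on the next application of $\bD^{\rm iso}$), so $\bD^{[N]}([\phi]) = 0$ for $N$ sufficiently large.

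To then deduce color connectedness and pathlikeness, note that $\Biso(\F) = R[\Mor(\F)]^{\rm iso}$ is by construction free as an $R$-module on its basis of iso classes, so Lemma~\ref{lem:bdflat} makes it $\bD$-flat. Combining $\bD$-flatness with conilpotence, Proposition~\ref{prop:nilconnected} yields color connectedness, and Proposition~\ref{prop:colorpath} upgrades this to pathlike. Theorem~\ref{thm:mainpathlike} then applies verbatim, giving the invertibility criterion for characters, the connected Hopf quotient $\Biso(\F)/I_N$, the quantum deformation $\Biso(\F)_\bq/I$, and the Ore/localization statements whenever $\GLE(\Biso(\F))$ satisfies the Ore condition.

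The main obstacle is the non-graded conilpotence step, since I cannot rely on a degree to shrink under $\bD^{\rm iso}$. The natural replacement is the weak degree function $\max{{-}}$ coming from a presentation of $\FF$, which strictly decreases under composition into generators; the essentially-decomposition-finite hypothesis then ensures the $\bD^{\rm iso}$-iteration terminates on each class. Packaging this so that one does not secretly need the stronger effective-presentation or proper-degree hypothesis is the delicate point, and is where Lemma~\ref{lem:isocolor}'s translation of essentially indecomposable morphisms into skew-primitives does the real work.
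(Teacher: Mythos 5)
Your proposal follows the same route as the paper's proof almost step for step: the colors are the classes $[\id_X]$ (grouplike by Lemma~\ref{lem:isocolor}), the coactions are $\lambda([\phi])=[\id_{s(\phi)}]\ot[\phi]$ and $\rho([\phi])=[\phi]\ot[\id_{t(\phi)}]$ with compatibility coming from $t(\phi_0)\simeq s(\phi_1)$ for any decomposition channel, the coaugmentation comes from the splitting of the free module on iso classes of morphisms, and in the graded case the chain is exactly the paper's: the proper degree function descends to classes, all isomorphisms lie in the classes of identities so $\Biso$ is color reduced, Corollary~\ref{cor:gradedcolornil} gives conilpotence, freeness plus Lemma~\ref{lem:bdflat} gives $\bD$-flatness, and Propositions~\ref{prop:nilconnected} and~\ref{prop:colorpath} finish.

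The one place you go beyond the paper is the attempted induction for conilpotence in the ungraded first statement, and that argument is not sound as written: channel decomposition finiteness bounds the number of two-fold decompositions of a fixed class, but it does not bound the depth of iterated decompositions, so ``each iteration strips off an essentially indecomposable factor'' does not by itself force $\bD^{[N]}([\phi])=0$. Nothing in the hypotheses of Theorem~\ref{thm:isofey} alone excludes, say, a non-identity class occurring in its own decomposition channels, in which case the iterated reduced coproduct never vanishes; your proposed fix via the weak degree function $\max{{-}}$ requires an effective presentation, which is an extra hypothesis. You correctly flag this as the delicate point. It is worth noting that the paper's own proof does not supply an ungraded argument either~-- its conilpotence step is carried entirely by the proper degree function~-- so your write-up matches what the paper actually establishes; the conilpotence and connectedness claims should be read as resting on the grading (or an equivalent properness) hypothesis rather than on channel decomposition finiteness alone.
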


 \begin{proof}

 The colors are the isomorphism classes of objects which are grouplikes by Lemma \ref{lem:isocolor}.
 It is freely generated as a module by the isomorphism classes of morphisms, so the grouplike elements split off as a direct summand $C[R]=\bigoplus_{x\in R}[\id_{x}]$, where $R$ is a set of isomorphism classes of objects which can be identified with the set of objects of the skeleton of $\F$.

The left and right coactions are given by $\lambda([\phi])=[\id_{s(\phi)}]\ot [\phi]$ and $\rho([\phi])=[\phi]\ot [\id_{t(\phi)}]$. This is well
defined as can easily be checked. By definition of a composition channel $s(\phi_1)\simeq t(\phi_0)$,
so that the decomposition is colored. The coaugmentation comes from the splitting.

Since isomorphisms have degree $0$, a proper degree function descends to the isomorphism classes: $\operatorname{deg}([\phi]):=\operatorname{deg}(\phi)$.
For the same reason the coproduct \eqref{Deltaiso} is graded. As all the isomorphisms are in the classes of the identities,
$\Biso$ is color reduced and hence conilpotent by Corollary \ref{cor:gradedcolornil}.
As $\Biso$ is a free $R$-module, Lemma \ref{lem:bdflat} applies and it is connected.
Proposition~\ref{prop:colorpath} guarantees that $\Biso$ is pathlike in this case.
\end{proof}

 \begin{Remark} In these cases the localization and the Laurent series will give Hopf algebras.
The~$q$ deformations will have parameters given by the isomorphism classes of basic objects. The~$[\id_x]$ are also the
classes that will be formally inverted. This can be thought of as extending~$\F$ to a~localized category $S^{-1}\F$
such that the $x$ live in the Picard subcategory $\operatorname{Pic}\big(S^{-1}\F\big)$.
 \end{Remark}

\begin{Remark}
The characters need not be graded, but what is common is that they map the additive degree to a multiplicative one as in $n\mapsto q^n$, where $q$ can be of degree $0$, which can also be achieved by shifting degrees or inverting elements in the target. One typical such $q$ would be~$\frac{1}{2\pi {\rm i}}$, see \cite{KreimerYeats}, or factors of $\zeta^{\mathfrak m}(2)$ in the framework of Brown, see \cite{BrownICM}.

\end{Remark}

\subsection{Details for the main examples}
We will give the details for three types of examples, which cover the main applications.

\subsubsection{Set based/simplicial examples}
The category $\FinSet$ is a Feynman category. There is only one basic object up to isomorphism, which is a one element set.
The basic degree function $|X|$ coincides with the cardinality. This is not a proper degree function, but it is proper when restricted to the Feynman subcategory of finite sets and surjections ${\rm FS}$. Similarly, $-|\phi|$ is a proper degree function for the Feynman subcategory of finite sets and injections ${\rm FI}$, cf.\ \cite{HopfPart2}.
The category $\FinSet_>$ of ordered finite sets with order preserving maps is a non-symmetric Feynman category, cf.~\cite{matrix}. As before, $|\phi|$ is a~degree function which is proper when restricted to the non-symmetric Feynman subcategory~${\rm OS}$ of ordered finite sets and surjections, and $-|\phi|$ is a~degree function for ${\rm OI}$, that is ordered sets and injections. The skeleton of $\FinSet_>$ is the augmented simplicial category $\Delta_+$ and the subcategories of surjections and injections restrict as $\Delta_+^{\rm surj}$ and $\Delta_+^{\rm inj}$. For a skeletal $\V$ there is only one basic object $[0]=\{0\}$. Its powers can be identifies with $[0]^{0}=\varnothing=:[-1]$ and $[0]^{\ot n}=[n-1]=\{0\kdk n-1\}$ -- using the traditional simplicial notation.

\begin{Proposition}
The bialgebras $B\big(\Delta_+^{\rm surj}\big)$, $B\big(\Delta_+^{\rm inj}\big)$,
$\Biso({\rm FI})$, $\Biso({\rm FS})$ as well as their coopposites are all colored connected pathlike bialgebras. The colors can be identified with $\N_0$.
The only grouplike elements are the $\id_{[n]}$ which are monoidally generated by $\id_\unit$ and $\id_{[0]}$, respectively their isomorphism classes.

In the case of surjection the $\sigma_i^n\colon [n+1]\to [n]$ are $([n+1],[n])$ skew primitive
$\mu=\sigma^1_1{:}$ $[1]\to [0]$, respectively its class, is a skew primitive generator. In~the case of injection the $\delta_i^n\colon [n-1]\to [n]$ are $([n-1],[n])$-skew primitive and
$\delta_1^0\colon \varnothing \to [0]$, respectively its class, is a~skew-primitive generator.

Any character $\chi$ for which $\chi_{\id_{[0]}}$ is invertible is invertible.
Normalized characters pass through the quotient $B^{\rm red}$. A character is normalized if and only if $\chi_{\id_{[0]}}=1$.

In the localized version $S(\mu)=-\id_{[1]}^{-1}\mu \id_{[0]}^{-1}$, as $\id_{[1]}^{-1}=\id_{[0]}^{-2}$ and in Laurent series
$S(\mu)=-q^{-3}\mu$.

\end{Proposition}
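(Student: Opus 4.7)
The plan is to verify that each of the listed categories satisfies the hypotheses of Theorem~\ref{thm:mainns} (for the non-symmetric cases $B\big(\Delta_+^{\rm surj}\big)$ and $B\big(\Delta_+^{\rm inj}\big)$) or Theorem~\ref{thm:mainiso} (for the symmetric cases $\Biso({\rm FI})$ and $\Biso({\rm FS})$), and then to read off the explicit features of generators, grouplikes, and the antipode. First I would check that $|\phi|$ restricted to $\Delta_+^{\rm surj}$ and to ${\rm FS}$ and $-|\phi|$ restricted to $\Delta_+^{\rm inj}$ and to ${\rm FI}$ are proper degree functions: in each of these subcategories the only invertible morphisms are the identities, since any surjection or injection that is itself invertible must be a bijection, and in the ordered skeleton the only bijections are identities. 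Combined with Proposition~\ref{prop:catcolor} (for the non-symmetric case, where the colored monoid has a proper degree function and only identities as invertibles) and with the conclusion of Theorem~\ref{thm:mainiso} (which gives color connectedness directly from color reducedness plus properness), this establishes that all four bialgebras are color connected pathlike. The set of colors in each case is the set of isomorphism classes of objects, which is $\N_0$ via $[n-1]\leftrightarrow n$.

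Next I would identify the grouplikes and skew primitives concretely. Since isomorphisms are trivial, Lemma~\ref{lem:isocolor} and its non-symmetric analogue give $\GLE=\{\id_{[n]}\colon n\geq -1\}$, and the monoidal identity $[n]=[0]^{\otimes(n+1)}$ yields $\id_{[n]}=\id_{[0]}^{\otimes(n+1)}$, with $\id_{\unit}=\id_{[-1]}=1_B$. For the degeneracies $\sigma_i^n\colon[n+1]\to[n]$ in $\Delta_+^{\rm surj}$, any nontrivial deconcatenation would require a factorization through an object of cardinality strictly between those of source and target, which does not exist since the degree drops by exactly one; hence $\sigma_i^n$ is essentially indecomposable, so $\Delta(\sigma_i^n)=\id_{[n+1]}\otimes\sigma_i^n+\sigma_i^n\otimes\id_{[n]}$, making it $([n+1],[n])$-skew primitive. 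The same argument applies to $\delta_i^n$ in $\Delta_+^{\rm inj}$. That $\mu=\sigma_1^1$ generates all $\sigma_i^n$ under $\circ$ and $\otimes$ (and symmetrically $\delta_1^0$ generates all face maps) is the standard simplicial presentation, transported through the Feynman category generators.

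Parts (5)--(6) on characters are then direct applications of Theorem~\ref{thm:mainpathlike}. A character $\chi$ is multiplicative, so $\chi(\id_{[n]})=\chi(\id_{[0]})^{n+1}$; hence invertibility on $\id_{[0]}$ forces grouplike invertibility, and by Theorem~\ref{thm:mainpathlike}(1) we obtain a $\star$-inverse. For the normalized case, the ideal $I_N$ of Proposition~\ref{normalizedprop} is generated by the elements $1-\id_{[n]}$, and these are all multiples (in the convolution sense) of $1-\id_{[0]}$ since $1-\id_{[0]}^{\otimes(n+1)}=(1-\id_{[0]})\cdot\sum_{k=0}^{n}\id_{[0]}^{\otimes k}$; thus $\chi$ factors through $B^{\rm red}$ precisely when $\chi(\id_{[0]})=1$, giving the normalized criterion and Theorem~\ref{thm:mainpathlike}(2).

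Finally, for the explicit antipode formula in the localization $BS^{-1}$, I would apply $\id\star S=\eta\circ\eps$ to the skew primitive $\mu$:
\begin{equation*}
0=\eps(\mu)\cdot 1=\mu\cdot S(\id_{[0]})+\id_{[1]}\cdot S(\mu),
\end{equation*}
hence $S(\mu)=-\id_{[1]}^{-1}\mu\id_{[0]}^{-1}$ after solving and using $S(\id_{[n]})=\id_{[n]}^{-1}$. Using $\id_{[1]}=\id_{[0]}^{\otimes 2}$ and sending $\id_{[0]}\mapsto q$ (which is central in $B_\bq$) gives $\id_{[1]}^{-1}\mapsto q^{-2}$ and $\id_{[0]}^{-1}\mapsto q^{-1}$, so $S(\mu)=-q^{-3}\mu$ in the Laurent-series presentation $B_\bq/I$. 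The main obstacle I anticipate is the bookkeeping required to show that $\mu$ (respectively $\delta_1^0$) genuinely generates the subbialgebra of skew primitives under $\circ$ and $\otimes$ in a way compatible with the Feynman presentation, rather than just the underlying simplicial presentation; this amounts to verifying that the standard cosimplicial identities are exactly the relations in an effective presentation, so that the conilpotent filtration from Proposition~\ref{prop:flavorgen} reaches all of $B$.
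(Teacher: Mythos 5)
Your proposal is correct and follows essentially the same route as the paper: the paper's own proof simply cites Theorems~\ref{thm:mainns} and~\ref{thm:mainiso} together with Proposition~\ref{prop:opprop}, notes that $\id_{[n]}$ is a monoidal power of $\id_{[0]}$ so that $\chi(\id_{[n]})$ is the corresponding power of $\chi(\id_{[0]})$, and derives the antipode formula from the definitions — you have just spelled out the details (indecomposability of $\sigma_i^n$ by the degree count, the geometric-series factorization of $1-\id_{[0]}^{\otimes(n+1)}$, and the convolution identity $\id\star S=\eta\circ\eps$ applied to the skew primitive $\mu$). Your exponent $\chi(\id_{[n]})=\chi(\id_{[0]})^{n+1}$ is the one consistent with the convention $[0]^{\otimes n}=[n-1]$ and with $\id_{[1]}^{-1}=\id_{[0]}^{-2}$ in the statement.
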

\begin{proof}
The first set of statements follows from Theorem \ref{thm:mainns} and Theorem \ref{thm:mainiso} together with Proposition \ref{prop:opprop}. The claims about generation are straightforward. As $\id_{[n]}=\id_{[0]}^{\ot n-1}$, for any character $\chi(\id_{[n]})=\chi(\id_{[0]})^{n-1}$ which proves the statements about the characters. The last statement follows from the definitions.
\end{proof}

\begin{Remark}
By Proposition \ref{prop:opprop} the coopposite bialgebras are pathlike, and
Joyal duality, cf.~\cite{joyal}, takes on the following form: $B\big(\D_+^{\rm surj}\big)^{\rm cop}=B\big(\big(\D_+^{\rm surj}\big)^{\rm op}\big)=B\big(\D^{\rm inj}_{*,*}\big)$, where $\D^{\rm inj}_{*.*}\subset \Delta_{*,*}$ is the subcategory of the double base point preserving injections.
It is this bialgebra that is used by Goncharov and Brown, cf. \cite[Section~3.6.5]{HopfPart2} for details. The condition that $\chi_{\id_[0]}=1$ which under Joyal duality reads $\chi(\id_{[1]})=1$ is implemented as $I(0,\varnothing,1)=1$ in \cite{Gont}, cf. \cite[equation~(1.5)]{HopfPart1}.
Any such character is group normalized.

Goncharov's symbols $\hat I(a_0:a_1\kdk a_n:a_{n+1})$ can be understood as formal characters for a~decoration of
 $\D^{\rm inj}_{*,*}$, see, e.g., \cite[Section~1.1.5]{HopfPart1}.
 The equation that
$\hat I(a:\varnothing:b)=0$ then means that these characters factor through the connected quotient given by the coideal $\id_\unit-\id_{[1]}$ in~$\Delta_{*.*}$
 Not insisting on this relation, one can actually assign any invertible value to $\hat I(a:\varnothing:b)$, which is then captured by
the Laurent series quotient and Brown's coaction.
 In fact, the symbols of Goncharov are characters with values in a commutative ring, so that one can reduce to $B^{\rm com}$.

Contrarily to this, the coproduct of Baues, which is also a decoration,
is not commutative. The normalization condition in this case is
given by insisting that the space be simply connected implemented by collapsing the 1-skeleton,
 see the discussion in \cite{HopfPart1}.

\end{Remark}
\begin{Remark}
Localizing $\id_{[0]}$, we obtain a Hopf algebra with an invertible antipode. This means that one can construct the Drinfel'd double,
which is then a sort of bicrossed product of surjections and double base point preserving injections.
This should correspond to the Reedy category on structure on the simplicial category. The precise analysis will appear in future work.
\end{Remark}

\subsection{(Co)operadic examples}
\label{operadpar}
The second type of Feynman category is an enrichment ${\rm FS}_\O$ of ${\rm FS}$ or $\D_{+\,\O}^{\rm surj}$ of $\D_+^{\rm surj}$ as defined in \cite{feynmanrep,feynman}. Note that we can enrich in any monidal category $\E$. For the current purposes this category should be $\Set$ or $R$-mod, where $R=\Hom(\unit_\E,\unit_\E)$ is commutative.

Switching to the notation $n$ for the set $\{1\kdk n\}$
and setting $\Hom_{{\rm FS}_{\O}}(n,1)=\O(n)$ the second axiom of a Feynman category yields that $\Hom_{{\rm FS}_\O}(n,m)=\O^{\rm nc}(n,m)$, where up to isomorphism:
\begin{gather*}
 \O^{\rm nc}(n,k)=\bigoplus_{\substack{(n_1, \dots,n_k)\\ \sum_{i=1}^k n_i =n}}
\O(n_1)\odo \O(n_k).
\end{gather*}
Here and in the following $\bigoplus$ stands for colimit and we will use $\otimes$ for the monoidal product in $\E$. If the $\O(n)$ are sets, then these are simply $\amalg$ and $\times$.
The basic compositions $\O^{\rm nc}(n,k)\ot \O^{\rm nc}(k,1)$ give maps
\begin{gather*}
\gamma_{k,n_1\kdk,n_k}\colon\ \O(k)\ot \O(n_1)\odo \O(n_k)\to \O\Big(\sum n_i=n\Big).
\end{gather*}
 In the non-symmetric case, this is the data for a non-symmetric operad and in the symmetric case, the automorphisms $\SS_n$
 act on the objects yielding a compatible action by pre- and post-composition
on the $\O^{\rm nc}(n,k)$ which is compatible with maps $\g$ and the data is that of a symmetric operad.
Vice-versa, specifying a non-symmetric or a symmetric operad fixes an
enrichment under the condition that $\O(1)$ splits as $\unit \oplus \bar \O(1)$, where $\unit$
is the component of $\id_1$ and $\bar \O(1)$ does not contain any invertible elements, \cite[Proposition 3.20]{feynmanrep}.
An operad is called reduced, if $\bar \O(1)$ vanishes.
The natural grading is $|\O^{\rm nc}(n,k)|=n-k$.

We will assume that $\O(0)=\varnothing$; if it is not, the considerations of \cite[Section~2.11]{HopfPart2} apply.

\begin{Proposition}
For a non-sigma operad $\O$, with split $\O(1)$ as above, $\D^{\rm surj}_{+\, \O}$ is decomposition finite if and only if
 $\O(1)$ is decomposition finite. In~this case, $B=B\big(\D^{\rm surj}_{+\, \O}\big)$ is a bialgebra.
 The basic grading induces a grading for $B$.

The elements $\id_n=\id_1^{\ot n}$ are the grouplike elements and there are no other semigrouplike elements.
$B(\D^{\rm surj}_{+\, \O})$ is a colored bialgebra, and it is color nilpotent precisely if $C[\O(1)]$ is. In~this case, it is color connected and pathlike if it is $\bD$-flat, e.g., if $\O$ is set valued.

A normalized character factors through $B^{\rm red}$ in which all objects are identified. The deformation $B_q$ has one deformation parameter $q=q_{\id_1}$
and a character $\chi$ is grouplike invertible if and only if $\chi(\id_1)$ is.
\end{Proposition}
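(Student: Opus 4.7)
The plan is to verify each clause in turn, leveraging the Feynman category machinery above and isolating the role of $\O(1)$, which is the only place where the arity grading fails to control the depth of decompositions. First I would unfold the structure of morphisms in $\D^{\rm surj}_{+,\O}$: a morphism $\phi\colon n\to k$ is an order preserving surjection with fibers of sizes $(n_1,\dots,n_k)$ together with operations $o_i\in \O(n_i)$. A decomposition $\phi=\phi_1\cdot\phi_0$ through an intermediate $m$ amounts to a finite choice of factoring $n\to m\to k$ plus operadic decompositions $o_i=\gamma(o';o'_1,\dots,o'_j)$. Slots of arity $\geq 2$ admit only finitely many operadic factorings by the grading $|o|=n-k$; the arity-one slots instead allow insertions of $\O(1)$-composites and are the unique obstruction to finiteness. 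This yields the equivalence with decomposition finiteness of $\O(1)$, and the bialgebra and grading statements then follow from the cited theorem together with additivity of $|o|=n-k$ under composition and $\otimes$.

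Next I would identify the (semi)grouplikes. In the non-symmetric setting there are no non-identity isomorphisms, so the deconcatenation coproduct of any non-identity $\phi$ always contains the linearly independent terms $\id_{s(\phi)}\otimes\phi+\phi\otimes\id_{t(\phi)}$, which together with the counit constraints preclude $\Delta(\phi)=\phi\otimes\phi$ unless $\phi$ is itself an identity; hence $\SGL(B)=\GLE(B)=\{\id_n\colon n\geq 0\}$ with $\id_n=\id_1^{\otimes n}$. Theorem~\ref{thm:mainns} then provides the colored coaugmented structure with $\FBQ_0 B=\bigoplus_n R\id_n$, so pathlikeness reduces to color connectedness. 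The reduced diagonal $\bD$ strictly decreases operadic depth at every arity-$\geq 2$ slot, so iterating $\bD$ terminates everywhere except at the arity-one slots, where it reproduces the reduced diagonal on $C[\O(1)]$; color nilpotence of $B$ is therefore equivalent to that of $C[\O(1)]$. Given $\bD$-flatness, automatic by Lemma~\ref{lem:bdflat} when $\O$ is set valued since $B$ is then free, Proposition~\ref{prop:nilconnected} upgrades nilpotence to connectedness and Proposition~\ref{prop:colorpath} gives pathlikeness, so Theorem~\ref{thm:mainpathlike} applies.

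The character assertions are then short consequences. Proposition~\ref{normalizedprop} already gives that normalized characters factor through $B/I_N$; since $\id_n=\id_1^{\otimes n}$, the condition $\chi(g)=1$ on all grouplikes collapses to $\chi(\id_1)=1$, and $B^{\rm red}$ identifies all $\id_n$ with $1$. In the deformation $B_\bq$ the monoidal relation $q_gq_h=q_{gh}$ combined with $\id_n=\id_1^{\otimes n}$ forces $q_{\id_n}=q_{\id_1}^n$, leaving a single free parameter $q=q_{\id_1}$; and $\chi(\id_n)=\chi(\id_1)^n$ shows grouplike invertibility is equivalent to invertibility of $\chi(\id_1)\in A$, which Theorem~\ref{mainthm2} identifies with $\star$-invertibility. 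The main obstacle throughout is the isolated treatment of $\O(1)$, the unique operadic component where arity fails to bound decomposition depth, so every finiteness and nilpotence claim must be routed through a separate hypothesis on $C[\O(1)]$ rather than on the full operad $\O$.
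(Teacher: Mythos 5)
Your argument follows the paper's proof essentially verbatim, only in more detail: the paper likewise reduces decomposition finiteness and color nilpotence to $\O(1)$ via the natural degree $|o|=n-k$ (deferring the fiber-finiteness details to the cited reference), invokes Theorem~\ref{thm:mainns} for the colored coaugmented bialgebra structure, and treats the character and deformation statements as immediate consequences of $\id_n=\id_1^{\ot n}$. Your explicit identification of the grouplikes and the routing through Lemma~\ref{lem:bdflat} and Propositions~\ref{prop:nilconnected} and~\ref{prop:colorpath} is exactly the machinery the paper's proof relies on.
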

\begin{proof}
The first assertion follows from the fact that the natural degree allows to reduce the question of
decomposition finiteness and
nilpotence to~$\O(1)$, cf.~\cite{HopfPart1} for more details.

The next statement follows from Theorem \ref{thm:mainns}.
The statement of being color nilpotent follows directly from the fact that the degree function is a grading for the coalgebra structure.
The final statements are then straightforward.
\end{proof}
Analogously one can prove:

\begin{Theorem}
For a symmetric operad $\O$ with split $\O(1)$ as above ${\rm FS}_{\O}$ is channel decomposition finite, if and only if $\O(1)$ is. In~this case, $\Biso({\rm FS}_\O)$ is a bialgebra. The basic grading gives a degree function.
The elements $[\id_n]=[\id_1]^{\ot n}$ are the grouplike elements and there are no other semigrouplike elements and
 $B({\rm FS}_\O))$ is a colored bialgebra. It is color nilpotent precisely if its restriction to the isomorphism classes of $\O(1)$ is.
 In this case, it is color connected and pathlike if is $\bD$-flat, which is the case if $\O$ is set valued.

A normalized character factors through $B^{\rm red}$ in which all objects are identified.
 The deformation $B_q$ has one deformation parameter $q=q_{[\id_1]}$ and a character $\chi$ is grouplike invertible if and only if $\chi([\id_1)]$ is.
 \end{Theorem}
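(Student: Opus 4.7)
The plan is to mirror the proof of the previous proposition for non-sigma operads, substituting the symmetric-case tools (decomposition channels and isomorphism classes) wherever the non-symmetric proof used plain decompositions. First, I would establish the channel decomposition finiteness equivalence. The basic grading $|\phi|=|s(\phi)|-|t(\phi)|$ on ${\rm FS}_\O$ is additive under composition, so any decomposition channel $[(\phi_0,\phi_1)]$ of a morphism $\phi$ is a pair of fixed degrees. Channels of positive degree $\phi_i$ involve composing and tensoring the operations $\O(n)$ together with nontrivial surjections; because $\O(0)=\varnothing$ and $\O(1)$ splits with $\bar\O(1)$ containing no invertibles, only finitely many channel types of positive degree can arise in any fixed decomposition, so finiteness at all degrees reduces to finiteness at degree $0$, which is governed by $\O(1)$. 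This gives the stated equivalence, and once it holds Theorem~\ref{thm:isofey} produces the bialgebra $\Biso({\rm FS}_\O)$.

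Next I would identify the structural data. Because isomorphisms have degree $0$, the grading descends to isomorphism classes and gives a grading on the coalgebra; the basic grading therefore furnishes a degree function on the bialgebra. By Lemma~\ref{lem:isocolor} the semigrouplikes in $\Biso$ are exactly the classes of identities, hence are grouplike, and by the word decomposition axiom $X\simeq 1^{\ot n}$ so $[\id_X]=[\id_1]^{\ot n}$. This shows $\Biso({\rm FS}_\O)$ is colored by its grouplikes, and the coaugmentation is given by the direct summand $C[\GLE]$ from the free module of isomorphism classes; thus Theorem~\ref{thm:mainiso} applies.

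For the nilpotence claims, I would again use that the grading reduces decomposition channels of any fixed total degree to finitely many channels with factors drawn from $\O(1)$-classes plus higher-arity pieces; iterating the reduced diagonal $\bD$ strictly reduces the $\O(1)$-content in each slot, so color nilpotence of $\Biso({\rm FS}_\O)$ is equivalent to color nilpotence of the sub-coalgebra on the isomorphism classes of $\O(1)$. Given color nilpotence, Lemma~\ref{lem:bdflat} gives $\bD$-flatness whenever $\Biso$ is $R$-flat (in particular when $\O$ is set-valued, so $\Biso$ is free), and Proposition~\ref{prop:nilconnected} then yields color connectedness; Proposition~\ref{prop:colorpath} finally upgrades this to pathlike.

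The remaining two assertions are direct applications. Normalized characters factor through $B^{\rm red}=\Biso/I_N$ by Proposition~\ref{normalizedprop}; since the grouplikes are all tensor powers of the single class $[\id_1]$, the ideal $I_N$ is generated by $1-[\id_1]$ alone and $B^{\rm red}$ identifies all $[\id_n]$ with $1$. For $B_q$, the monoid $\GLE(\Biso)$ is freely generated by $[\id_1]$, so only one deformation parameter $q=q_{[\id_1]}$ is needed, and a character $\chi$ satisfies $\chi([\id_n])=\chi([\id_1])^n$ so grouplike invertibility reduces to invertibility of $\chi([\id_1])$. The main obstacle is handling channel counting in the symmetric setting: verifying that taking $\SS_n$-coinvariants does not create new grouplike or semigrouplike classes beyond $[\id_n]$, and that $\bD$-flatness survives the quotient in the non-set-valued case; both are taken care of by the free $R$-module structure on isomorphism classes together with Lemma~\ref{lem:isocolor}.
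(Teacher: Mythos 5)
Your proposal is correct and follows essentially the same route as the paper: the paper proves this theorem "analogously" to the preceding non-sigma proposition, namely by using the natural degree to reduce channel decomposition finiteness and color nilpotence to $\O(1)$, invoking Theorem~\ref{thm:isofey} and Theorem~\ref{thm:mainiso} (with Lemma~\ref{lem:isocolor} for the grouplikes), and treating the character statements as direct consequences of $[\id_n]=[\id_1]^{\ot n}$. Your additional detail on why $I_N$ is generated by $1-[\id_1]$ alone and on $\bD$-flatness in the set-valued case is consistent with the paper's appeal to Lemma~\ref{lem:bdflat} and Propositions~\ref{prop:nilconnected} and~\ref{prop:colorpath}.
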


\begin{Example}[{CK-tree bialgebras}]
Concretely consider the operad of rooted trees or the non-sigma operad of planar planted trees, where $\O(n)$ are the trees with $n$ leaves and the composition is given by gluing on leaves. Then the coproduct is the familiar Connes--Kreimer type coproduct \cite{CK,foissyCR1,del}:
\begin{equation}
\label{eq:CKcoprod}
\D([\t])=\sum_{\t_0\subset \t}[\t_0] \ot [\t\setminus \t_0],\qquad \text{respectively}\quad
\D(\t)=\sum_{\t_0\subset \t}\t_0 \ot \t\setminus \t_0,
\end{equation}
where $\t_0$ is a rooted subtree --~stump~-- including all its tails and $\t\setminus \t_0$ removes all the vertices and half edges of $\t_0$, viz,\ the set of cut-off branches, see~\cite{HopfPart1} for details. A~summand is depicted in Figure~\ref{treecoprodfig}.
The classes are the~$\SS_n$ coinvariants which can be thought of as unlabeled. In~the symmetric case the r.h.s.\ is symmetric, that is a commutative product.

\begin{figure}[h] \centering
 \includegraphics[width=.7\textwidth]{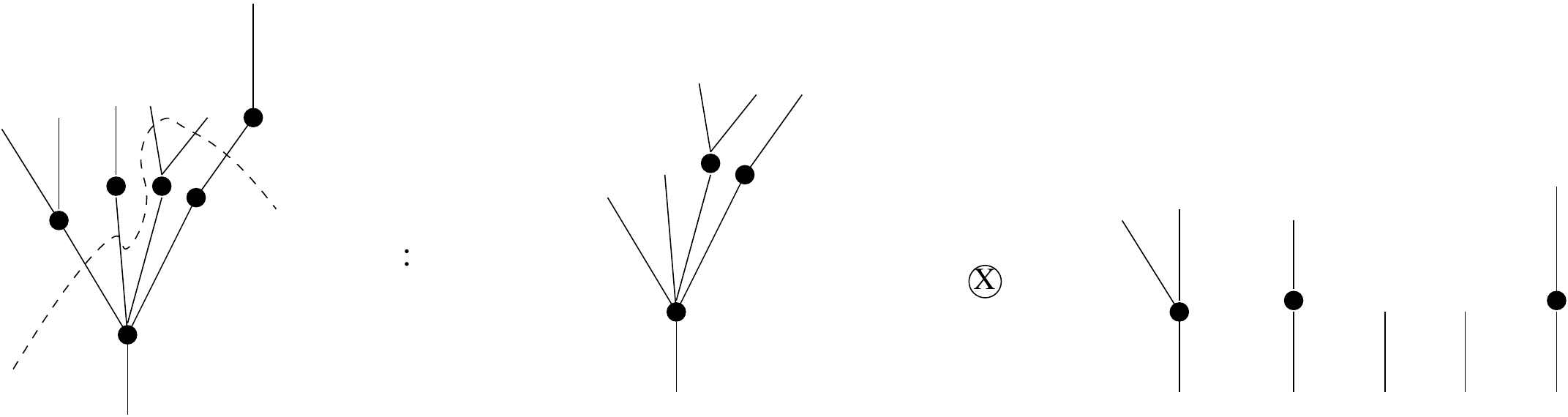}
 \caption{A summand of the coproduct for (planar) trees with leaves and roots. In~the planar case, the r.h.s.\ is either in the order depicted and in the non-planar it is a symmetric product.} \label{treecoprodfig}
\end{figure}

This Feynman category is primitively generated by the $\t_n$ which are the trees with $n$ leaves and one vertex. The relations are homogeneous amounting to associativity.
The corresponding degree function $\max{{-}}$ is proper and is equal to the number of vertices. Thus the bialgebras are graded and color connected.

 In this language a particular property of the CK-Hopf algebra becomes apparent. It is a~pathlike coalgebra whose grouplike elements are generated by $e=\id_\unit$ and $|=\id_{ 1}$ and it has a unique $\id_{{1}}$ primitive $|\hskip -4pt \bullet$ which generates a split line $R\,|\hskip -4pt \bullet$.
This yields the special properties explored in \cite{MoerdijkKreimer} and its generalizations, \cite[Example~2.50]{HopfPart1}, where, via coloring, many of these are introduced as additional direct components, so that the bialgebras are still pathlike.

In the localized Hopf algebra, $S(|\hskip -4pt \bullet)=-\id_{ 1}^{-1}\ot |\hskip -6pt \bullet\ot \id_{ 1}^{-1}$ and hence in the Laurent series $S(|\hskip -4pt \bullet)=- \,|\hskip -6pt \bullet q^{-2}$. More generally, for the skew primitive $\t_n$,
$S(\t_n)=-\id_{ 1}^{-1}\ot\t_n\ot \id_{ n}^{-1}$, respectively, $S(\t_n)=-q^{-(n+1)}\t_n$.
 The Brown coaction is given by setting the $q$ terms on the right in $\D$ to $1$.

This explains the fact that the infinitesimal structure can be given by a $q$-shifted comultiplication and a residue, see \cite[Corollary 2.2.5]{HopfPart1}.
 This generalizes Brown's infinitesimal structure to the level of operads.

\end{Example}

\subsection{Feynman categories of graphs}
There is a basic Feynman category of graphs called $\GG$, see \cite[Section~2.1]{feynman}.
We review salient features here. In~the appendix there is a purely set-theoretical version, which is of independent interest and
makes things even more combinatorial and hence is
conducive to pro\-gramming.\looseness=1

The category $\V$ has $S$-labeled corollas $*_S$ as objects with $\operatorname{Aut}(*_S)\simeq \operatorname{Aut}(S)$. This means
that a general object is a forest or aggregate of such corollas.
The connected version $\GG^{\rm ctd}$ has morphisms generated by simple edge contractions:
$\scirct\colon *_S\amalg *_T\to *_{S\setminus {\s} \amalg T\setminus\{t\}}$ and simple loop contractions
$\circ_{ss'}\colon *_{S}\to *_{S\setminus \{s,s'\}}$. These satisfy quadratic relations and are crossed with respect to the isomorphisms.
For any $\phi$ and any isomorphism $\sigma$ there are unique $\phi'$ and $\sigma'$
such that $\phi\circ \sigma =\sigma'\circ \phi'$.
Each morphism has an underlying graph $\gh(\phi)$ which is the source corolla, but with one edge for each operation $\scirct$ or $\circ_{st}$. The edges are
 glued from the half edges $s$ and $t$.
It~is defined in a way such that $\gh(\phi\circ \sigma)=\gh(\phi')$, in the notation above,
and its automorphisms are induced from those of the source, see \cite{DDecDennis,feynman}.
The isomorphism class of this graph gives the isomorphism class of $\phi$:
$[\gh(\phi)]=[\phi]$.

$\GG^{\rm ctd}\subset \GG$ is a sub-Feynman category whose basic morphisms have a connected ghost graph.
A morphism $\phi$ in $\GG^{\rm ctd}$ is basic if and only if $\gh(\phi)$ is connected;
see Figure \ref{basicmorphismsfig}.

\begin{figure}[h]\centering
\includegraphics[width=.7\textwidth]{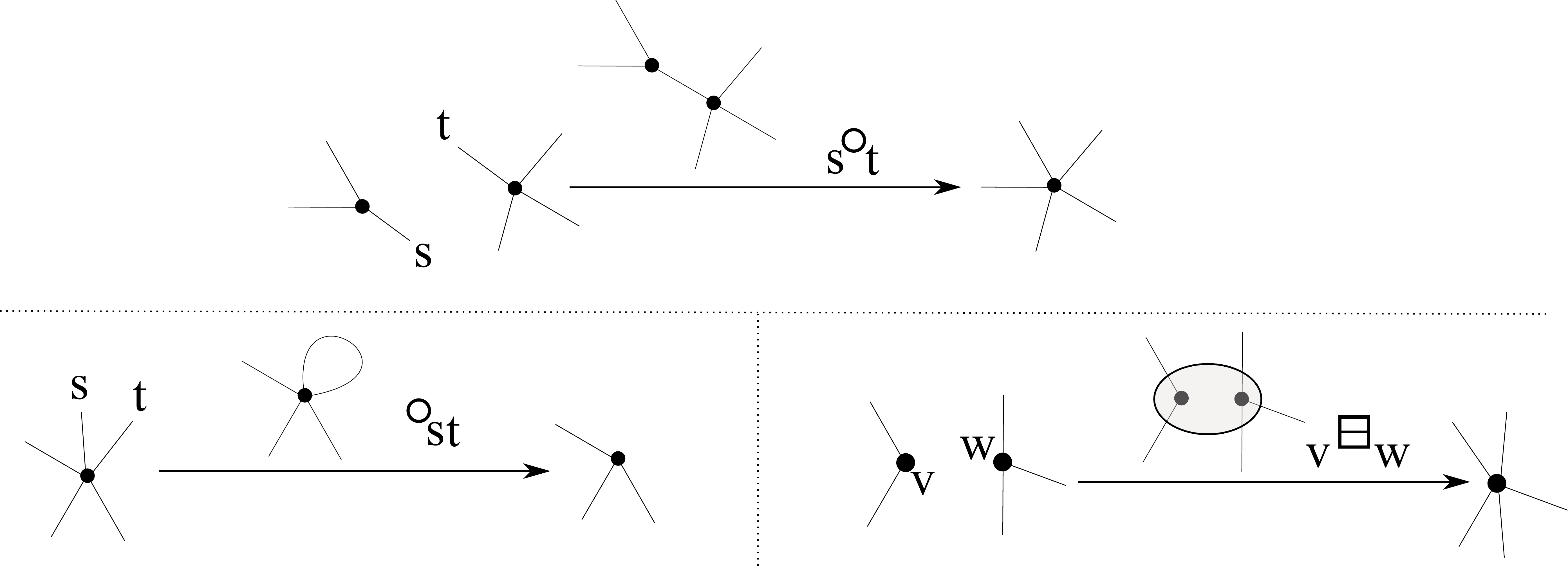}
\caption{The basic morphisms: Edge contraction loop contraction and merger, whose ghost graph is not connected. The shading is extra data not captured by the ghost graph.}\label{basicmorphismsfig}
\end{figure}

For the non-connected version, there is one more generator
$\mge{S}{T}\colon *_S\amalg *_T\to *_{S\amalg T}$. In~this case, there is a non-homogeneous relation
$\scirct=\circ_{s,t}\mge{S}{T}$ which allows one to replace the edge contractions and only retain loop contractions and mergers as generators.
The basic morphisms now need not to have connected underlying graph anymore.

Each corolla $*_S$ is isomorphic to $*_{ {|S|}}$ --$n$ is again short hand for the set $\{1\kdk n\}$~-- and hence $[\id_{*_S}]=[\id_{{|S|}}]$.
Furthermore fixing isomorphisms $\sigma\colon S\to {S}$, with $\sigma(s)=1$, $\sigma(s')=2$ and $\sigma'\colon T\to {T}$ with $\sigma'(t)=1$,
$[\scirct]=[\ccirc{0}{1}]$, $[\circ_{s,s'}]=[\circ_{1,2}]$ and
$[\mge{S}{T}]=\mge{{|S|}\,}{\, {|T|}}$.
Thus there is only one isomorphism class per basic morphism of generating type.

Beside the natural grading $|-|$, which is $1$ for an edge contraction or a merger and $0$ for an isomorphism or a loop contraction,
there is an additional degree given by
$\operatorname{deg}(*_S)=|S|$ which is additively continued $\operatorname{deg}(*_S\amalg *_T)=|S|+|T|$, to count the total number of legs in the forest.
This yields $\operatorname{deg}(\phi):=\frac{1}{2}(\operatorname{deg}(s(\phi))-\operatorname{deg}(t(\phi))$, which is integer, since both loop and edge contractions reduce the set of legs by an even number, while isomorphisms and mergers keep this number fixed. The degree of an isomorphism or a merger is hence $0$ and that of a simple loop or edge contraction is $1$. In~this way $\operatorname{deg}(\phi)=|E(\gh(\phi))|$.
The sum $wt(\phi)=\operatorname{deg}(\phi)+|\phi|$ is $1$ for the mergers and loop contractions and $2$ for the edge contractions.
This is also the degree $\max{{-}}$ for the presentation in terms of the $\scirct$, $\circ_{ss'}$ and $\mge{v}{w}$ and hence is a proper degree function.

\begin{Remark}[loop number and Euler characteristic]
In the graphical interpretation for a~basic morphism
$wt(\phi)=|E(\gh(\phi))|+b_0(\phi)$, where $b_0$ is the number of components of $\gh(\phi)$.
This is related to the Euler characteristic as follows $\chi(\gh(\phi))=wt(\phi)-|\phi|$,
where $\chi(\gh(\phi))=b_0(\gh(\phi))-b_1(\gh(\phi))=|V(\gh(\phi))|-|E(\gh(\phi))|$ and $b_1$ is the loop number, aka.\ first Betti number.\looseness=1
\end{Remark}

The coproduct in $\Biso$ is given by \cite[Section~3.6]{HopfPart2}.
\begin{gather}
\label{eq:graphcoprod}
\Delta{[\phi}]=\Delta[\gh(\phi)]=\sum_{\g\subset \gh(\phi)} [\g]\ot [\gh(\phi)/\g].
\end{gather}
This is a version of the core Hopf algebra \cite{Kreimercore}, but with legs and at a bialgebra level. See Figure~\ref{graphcoprodfig} for an example of a summand.

\begin{figure}[h]\centering
\includegraphics[width=.6\textwidth]{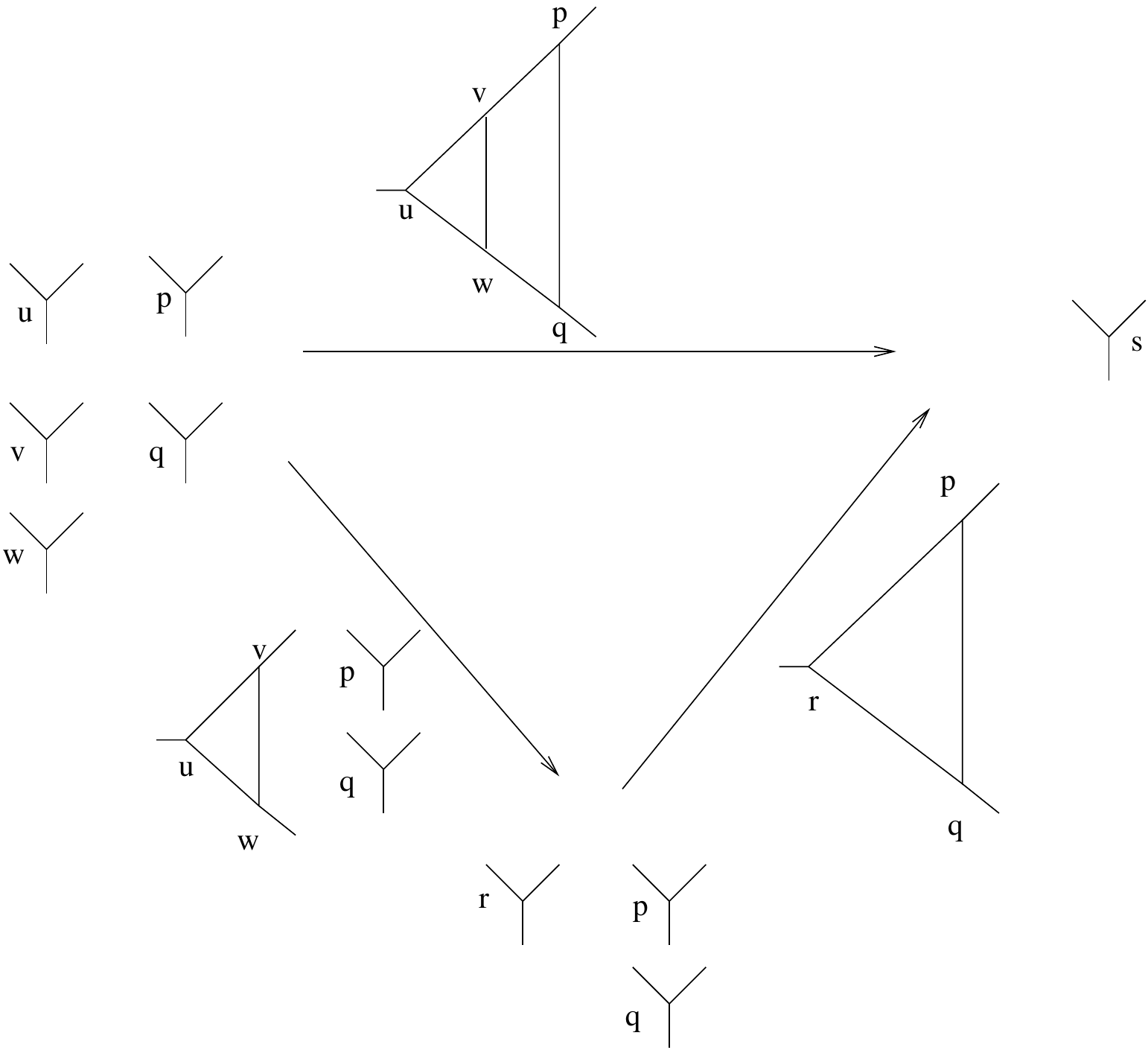}
\caption{A summand of the coproduct \protect{\eqref{eq:graphcoprod}} in terms of morphisms of aggregates. Note in the actual morphisms all flags and vertices are labeled. In~the figure only the vertices are tracked to avoid clutter. For the coproduct only the isomorphism classes of the morphism are involved.}\label{graphcoprodfig}
\end{figure}

\begin{Theorem}
Both $\Biso(\GG^{\rm ctd})$ and $\Biso(\GG)$ are color connected and hence pathlike. They are graded by $deg,| -|$ and $wt$, with $wt$ being a proper degree function on $\GG$ and $deg$ being proper on~$\GG^{\rm ctd}$.
For $\Biso(\GG^{\rm ctd})$ the bivariate Quillen index is given by $deg$ and for $\Biso(\GG)$ the bivariate Quillen index is given by $wt$.

More precisely, the grouplike elements are $[\id_\unit]$ and $[\id_{X}]$ which are generated by the classes $[\id_{*_S}]=[\id_{ {|S|}}]$. In~the connected case, the skew primitive elements are generated by the $[\scirct]$ and the $[\circ_{s,s'}]$ while in the connected case these
are generated by the $[\circ_{s,s'}]$ and the $[\mge{S}{T}]$ via products with grouplikes.

The antipodes for the skew primitives in the localization and the single parameter Laurent polynomials are given as follows:
{\samepage\begin{gather*}
S(\mge{n}{m})=-[\id_n]^{-1} [\id_m]^{-1}[\id_{n+m}]^{-1}\mge{n}{m}
S(\mge{n}{m})=-q^{-2(n+m)}\mge{n}{m},
\\
S([\scirct])=[\id_{|S|}]^{-1} [\id_{|T|} ]^{-1}[\id_{|S|+|T|-2}]^{-1}[\scirct]
S[\scirct]=-q^{-2(|S|+|T|+1)}\scirct,
\\
S([\circ_{s,s'}])=[\id_{|S|}]^{-1} [\id_{|T|}^{-1}] [\id_{|S|+|T|-2}]^{-1}[\circ_{s,s'}]
S[\scirct]=-q^{-2(|S|+|T|+1)}[\circ_{s,s'}].
\end{gather*}}

The connected Hopf quotient identifies all the objects with $\unit$ and hence all grouplikes $\id_X$ with $1=\id_\unit$.
The Brown coaction again sets the $q$ variables on the right to $1$, which halves the $q$ degrees in the above formulae.
\end{Theorem}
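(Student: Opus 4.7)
The plan is to deduce every claim from the general machinery of Section~\ref{catpar}, specifically Theorem~\ref{thm:mainiso} and Theorem~\ref{thm:mainpathlike}, combined with the explicit combinatorics of the generators of $\GG$ and $\GG^{\rm ctd}$. First I would verify the hypotheses of Theorem~\ref{thm:isofey}: the sum~\eqref{Deltaiso} reduces via~\eqref{eq:graphcoprod} to a sum over subgraphs $\g\subset\gh(\phi)$, and a finite graph has only finitely many subgraphs, so both $\GG^{\rm ctd}$ and $\GG$ are channel decomposition finite. Next I would show that $\wt=\deg+|{-}|$ is a proper degree function on $\GG$: it assigns $1$ to each of $\circ_{s,s'}$, $\mge{S}{T}$, while $\scirct$ becomes a composite of two $\wt=1$ generators via the relation below, and isomorphisms have both $\deg=0$ and $|\phi|=0$; on $\GG^{\rm ctd}$ mergers are absent and $\deg$ itself is proper, counting edges of $\gh(\phi)$. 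With these in hand, Theorem~\ref{thm:mainiso} supplies colored, color coaugmented, color connected, hence pathlike bialgebras, making Theorem~\ref{thm:mainpathlike} directly applicable.

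Next, I would identify grouplikes and skew primitives. Lemma~\ref{lem:isocolor} says the grouplikes are exactly the classes $[\id_X]$; Feynman axiom~(i) decomposes $X\simeq *_{S_1}\odo *_{S_n}$ with $*_S\simeq *_{|S|}$, so on the level of classes $[\id_X]=[\id_{|S_1|}]\cdots[\id_{|S_n|}]$, and the grouplikes are generated as an algebra by $[\id_\unit]$ and the $[\id_n]=[\id_{*_n}]$. The discussion after Lemma~\ref{lem:isocolor}, together with \eqref{isocoprodeq} collapsing for essentially indecomposable $\phi$, identifies the skew-primitive classes with classes of essentially indecomposable morphisms. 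In $\GG^{\rm ctd}$ such a morphism has a single-edge ghost graph, so its class is either $[\scirct]$ or $[\circ_{s,s'}]$. In $\GG$ the non-homogeneous relation $\scirct=\circ_{s,t}\mge{S}{T}$ factors every edge contraction as a merger composed with a loop contraction, so edge contractions are no longer essentially indecomposable and the surviving generators, modulo multiplication by grouplikes (Lemma~\ref{lem:skewfilter}), are $[\circ_{s,s'}]$ and $[\mge{S}{T}]$. The bivariate Quillen index $\FBQ$ then counts the number of essentially indecomposable factors, which is $\deg$ on $\GG^{\rm ctd}$ and $\wt$ on $\GG$.

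For the antipode formulas, I would apply the defining skew-primitive identity $\Delta[\phi]=[\id_{s(\phi)}]\otimes[\phi]+[\phi]\otimes[\id_{t(\phi)}]$ together with the antipode axiom $\mu\circ(S\otimes\id)\circ\Delta=\eta\eps$ in the localized Hopf algebra of Theorem~\ref{thm:mainpathlike} to obtain $S[\phi]=-[\id_{s(\phi)}]^{-1}[\phi][\id_{t(\phi)}]^{-1}$. Commutativity of $\Biso$ (from the symmetric monoidal structure) then allows the grouplike factors to be collected on the left. Substituting the cardinalities of source and target for each of $\mge{n}{m}$, $\scirct$ and $\circ_{s,s'}$ yields the stated expressions, and in the single-parameter Laurent model $[\id_n]\mapsto q^n$ these become the displayed $q$-powers. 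The connected Hopf quotient identifying every $[\id_X]$ with $1$ is the special case of the quotient by $I_N$ of Proposition~\ref{normalizedprop}, and setting $q\to 1$ on the right tensor factor of $\Delta$ is the Hopf coaction of Proposition~\ref{prop:bcoaction}, so both remaining statements are immediate.

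The main obstacle is the shift from $\deg$ to $\wt$ when passing from $\GG^{\rm ctd}$ to $\GG$: the relation $\scirct=\circ_{s,t}\mge{S}{T}$ is non-homogeneous in $\deg$, so the edge count ceases to be a valid degree function on $\GG$, forcing one to use the presentation-based weight $\wt$ and to recognize that edge contractions are no longer essentially indecomposable. Once this conceptual step is in place, every remaining verification reduces to direct invocation of previously established results.
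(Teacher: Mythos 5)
Your proposal is correct and takes the same route as the paper, whose entire proof reads ``the first assertions follow from Theorem~\ref{thm:mainiso} and the rest is straightforward from the definitions''; you have simply made the straightforward part explicit (finiteness via counting subgraphs of the ghost graph, the generator-by-generator values of $\deg$, $|-|$ and $wt$, the identification of grouplikes and essentially indecomposable skew primitives, and $S[\phi]=-[\id_{s(\phi)}]^{-1}[\phi][\id_{t(\phi)}]^{-1}$ followed by commutativity of $\Biso$). One small correction to your closing paragraph: the relation $\scirct=\circ_{s,t}\mge{S}{T}$ is actually \emph{homogeneous} in $\deg$, since $\deg(\mge{S}{T})=0$ and both sides have ghost graphs with exactly one edge; what fails for $\deg$ on $\GG$ is not additivity but \emph{properness}, because mergers are degree-zero non-isomorphisms. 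This slip does not affect your argument, as you verify properness of $wt$ on $\GG$ directly from its values on the generators; note also that the printed antipode formulas for $\scirct$ and $\circ_{s,s'}$ contain sign and index typos (e.g., the loop-contraction line should involve only $|S|$), so your derivation from source and target cardinalities is the reliable version.
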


\begin{proof}
The first assertions follows from Theorem \ref{thm:mainiso} and the rest is straightforward from the definitions.
\end{proof}

\begin{Remark}
Besides the core Hopf algebra of all graphs, one can restrict and decorate as is done in \cite{brown,Kreimercore}.
For instance Figure \ref{graphcoprodfig} only contains 3-valent vertices, which corresponds to $\phi^3$ theory.
It is also possible to retract the legs, cf.\ \cite{HopfPart1,HopfPart2}.
\end{Remark}

\section{Conclusion and outlook}
\label{sec:conclusion}

\subsection{Conclusion}

We have shown that pathlike coalgebras in addition to quiver and incidence coalgebras capture the two types of bialgebras
stemming from Feynman categories, such as those of Baues, Goncharov and those of Connes--Kreimer. This realization provides several outcomes.

The structure of path-like coalgebras exactly answers the question of what the harbinger of the antipode in the various constructions
is. It precisely identifies the grouplike elements as the obstruction to the existence of the antipode.
For characters, their values on theses elements determines their $\star$-invertibility.
This also explains the appearance of quotients. These correspond to restrictions on the characters. The most common,
which says that all grouplike elements have character value 1, leads to the connected quotient. Other restrictions, like grouplike invertible
or grouplike central, yield different
quotients and explain the universality of the quantum deformation, which in turn explain the naturally of Brown's coaction.

This sets the stage for further work.
\subsection{Outlook}
\label{sec:drin}
Our analysis is based on using filtrations to reduce to an initial part, where the antipode or convolution inverse is known.
The notion of pathlike coalgebras codifies this situation if the initial part is setlike.
Section \ref{isopar}, however, can be taken as a clue that there could be some other structures,
which do not necessitate only grouplike elements, but allows for more information about the isomorphisms.
The following example of the Drinfel'd double leads the way to a new research in this direction.
Physically this corresponds to a (finite) gauge group. In~the infinite case, one has to take the precautions spelled out in Appendix \ref{sec:duals}.

 In particular, we present a double categorical interpretation in which the antipode basically switches the horizontal and vertical
 composition. The appearance of double categories is natural in view of
 \cite{DDecDennis,feynmanrep}. It opens the way to a different version of connectedness,
 whose lowest degree is of Drinfel'd double type rather than setlike, but keeps a link between the two approaches.
 We~will consider the generalization of the bialgebra and Hopf structures along these lines in \cite{KYM2}.
 There is a general construction, see \cite[Section~IX.4]{Kassel} of a Drinfel'd double for a finite dimensional Hopf algebra with invertible antipode; we will concentrate on the case $k[G]$. This is an interesting Hopf algebra whose algebra part comes from a category.

 \begin{Definition}
 For a finite group $G$ the {\it Drinfel'd double} $\dkg$ is the
quasi-triangular quasi-Hopf algebra whose
underlying {\em vector space has} the basis $\elt{g}{x}$ with $x,g\in G$
$\dkg=\bigoplus k \gx$ with bialgebra structure and antipode given by
\begin{gather*}
\gx\hy=\delta_{g,xhx^{-1}}\elt{g}{xy}, \qquad
\Delta(\gx)=\sum_{g_1 g_2=g} \elt{g_1}{x}\otimes\elt{g_2}{x}\,,\qquad
S(\gx)=\elt{x^{-1}g^{-1}x}{x^{-1}}\!.
\end{gather*}
The unit is $\sum_{g\in G}\elt{g}{e}$ and the counit is defined by $\eps(\elt{e}{x})=1$ and $\eps(\gx)=0$ if $g\neq e$.
\end{Definition}

 The algebra structure is the algebra of morphisms of the so-called loop groupoid, $\Lambda \underline{G}$. This is the category which has the object set $G$ and the
morphism set $\gx\in \Hom(g,x^{-1}gx)$, see, e.g.,~\cite{drin,Willerton}.
It is hence a colored monoid with colors $G$ with $s(\gx)=g$ and $t\big(\gx=x^{-1}gx\big)$.

 The coalgebra structure is not the dual, but the categorical coalgebra for the groupoid $\amalg_{x\in G}\underline{G}$ which has $X=G$ as objects and has
morphisms $\gx\colon x\to x$. Accordingly, the coalgebra is simply colored for the comodule structures for the set-like coalgebra $k[G]\colon \lambda(\gx)=x\ot \gx$, $\rho(\gx)=\gx\ot x$.

 The notation $\gx$ goes back to \cite{DPR}, where these are partition functions on a torus for a~field theory with finite gauge group. The two indices stand for the monodromies around the fundamental cycles, cf. \cite{orb,drin}.

\begin{Lemma}
The dual of $\dkg$ is a Hopf algebra, whose coproduct
is the coalgebra structure coming from the morphisms of a category. In~particular,
let $\delta_{\gx}$ be the dual basis then
\begin{gather*}
\d_{\gx}\d_{\hy}=\d_{x,y}\elt{gh}{x},\qquad
\Delta(\d_{\gx})=\sum_{y}\d_{\elt{g}{y}}\ot \d_{\elt{y^{-1}gy}{y^{-1}x}},\qquad
S(\d_{\gx})=\d_{\elt{xg^{-1}x^{-1}}{x^{-1}}}.
\end{gather*}
\end{Lemma}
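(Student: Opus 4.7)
The plan is to exploit that $\dkg$ is finite-dimensional over $k$ (with basis $\{\gx\}_{g,x\in G}$, hence dimension $|G|^2$), so that its linear dual $\dkg^*$ automatically inherits a Hopf algebra structure whose product, coproduct, unit, counit, and antipode are the transposes of the corresponding maps on $\dkg$. Once this general principle is invoked, the three displayed formulas become short computations in the dual basis $\{\d_{\gx}\}$.

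First I would verify the product formula. Pairing $\d_{\gx}\cdot\d_{\hy}$ against $\kz$ and using the given coproduct
\begin{gather*}
\Delta(\kz) = \sum_{k_1 k_2 = k} \elt{k_1}{z} \otimes \elt{k_2}{z}
\end{gather*}
shows $(\d_{\gx}\otimes\d_{\hy})(\Delta(\kz))$ is non-zero exactly when $x=y=z$ and $gh=k$, giving $\d_{\gx}\d_{\hy}=\delta_{x,y}\,\d_{\elt{gh}{x}}$. Next I would dualise the multiplication $\elt{h_1}{y_1}\elt{h_2}{y_2}=\delta_{h_1,\,y_1 h_2 y_1^{-1}}\elt{h_1}{y_1 y_2}$: demanding that $\d_{\gx}$ evaluates to $1$ on the product forces $h_1=g$, $y_1 y_2=x$, and $h_2=y_1^{-1} g y_1$, so writing $y:=y_1$ yields exactly $\Delta(\d_{\gx})=\sum_{y}\d_{\elt{g}{y}}\otimes\d_{\elt{y^{-1}gy}{y^{-1}x}}$. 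The antipode formula then drops out of $S_{\dkg^*}(f)=f\circ S_{\dkg}$, evaluated on the basis via the given $S(\hy)=\elt{y^{-1}h^{-1}y}{y^{-1}}$.

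To identify this coalgebra as coming from a small category, I would match $\d_{\gx}$ with the morphism $\gx\colon g\to x^{-1}gx$ of the loop groupoid $\Lambda\underline{G}$ recalled immediately before the Lemma. Under the monoidal composition $\phi_0\cdot\phi_1:=\phi_1\circ\phi_0$ of this groupoid, a direct check of source/target conditions ($t(\elt{g}{y})=y^{-1}gy=s(\elt{y^{-1}gy}{y^{-1}x})$ and $t(\elt{y^{-1}gy}{y^{-1}x})=x^{-1}gx=t(\gx)$) together with the composition of holonomies $y\cdot y^{-1}x=x$ gives
\begin{gather*}
\elt{g}{y}\cdot\elt{y^{-1}gy}{y^{-1}x}=\gx,
\end{gather*}
so the formula for $\Delta(\d_{\gx})$ is precisely the deconcatenation coproduct $\Delta(\phi)=\sum_{\phi_0\cdot\phi_1=\phi}\phi_0\otimes\phi_1$ on the categorical coalgebra $C[\Mor(\Lambda\underline{G})]$. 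This is well-defined because $G$ finite makes $\Lambda\underline{G}$ decomposition finite, and the decompositions $(\elt{g}{y},\elt{y^{-1}gy}{y^{-1}x})$ of $\gx$ are in bijection with $y\in G$, matching the summation.

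The main obstacle is purely bookkeeping: one must be disciplined about the conjugation $h\mapsto y^{-1}hy$ and the composition convention so that the indices appearing from dualising the product line up with those produced by deconcatenation in $\Lambda\underline{G}$. Once this identification is fixed, the Hopf property of $\dkg^*$ is automatic from finite-dimensionality, and its categorical origin is transparent.
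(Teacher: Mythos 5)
Your proposal is correct and takes essentially the same route as the paper, whose entire proof reads ``Straightforward computation using the dual basis $\delta_{\gx}$'': you have simply carried out that dual-basis computation and made explicit the identification of the dual coproduct with deconcatenation in the loop groupoid $\Lambda\underline{G}$. One small remark: if you actually execute the antipode step $S^*(\d_{\gx})=\d_{\gx}\circ S$ you obtain $\d_{\elt{x^{-1}g^{-1}x}{x^{-1}}}$ (which is the formula satisfying the antipode axiom against the displayed coproduct), so the conjugation $xg^{-1}x^{-1}$ in the Lemma's stated antipode appears to be a typo rather than a flaw in your argument.
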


\begin{proof}
Straightforward computation using the dual basis $\delta_{\gx}$.
 \end{proof}

The product structure uses the underlying monoid structure of the objects/colors. This is not quite a monoidal structure on the
category $\Lambda \underline{G}$. The definition of the antipode uses that the objects (as colors) are invertible. But note, that $\gx\ot \hy$ would need to be a morphism from $gh$ to $x^{-1}gxy^{-1}hy$ and if $x=y$ indeed
$\elt{gh}{x}$ is such a morphism. In~general, if $G$ is not Abelian no such morphism needs to exist.
The correct notion is that the
Drinfel'd double is a~subcategory of a natural double category structure on $\operatorname{Iso}(\C)$, see \cite[Appendix C]{feynmanrep}, where in this particular case $\C=\underline{G}$.
This is commensurate with the dictum: ``A monoidal category is a two category with one object''. The composability of horizontal 2-cells is then exactly the condition that $x=y$ in the above calculation. In~particular, the symbols $\gx$
are two-morphisms with the following horizontal and vertical composition:
\begin{equation*}
\xymatrix{
\ast\ar[r]^g\ar[d]_x \ar@{}[dr]|{\Downarrow \gx}&\ast\ar[d]^x\\
\ast\ar[r]_{xgx^{-1}}&\ast
}
\raisebox{-.6cm}{,}
\xymatrix{
\ast\ar[r]^g\ar[d]_x \ar@{}[dr]|{\Downarrow \gx} &\ast\ar[d]^{x}\\
\ast\ar[r]^{xgx^{-1}}\ar[d]_y \ar@{}[dr]|{\Downarrow \atop \elt{xgx^{-1}}{y}} &\ast\ar[d]^{y}\\
\ast\ar[r]_{xygy^{-1}x^{-1}}&\ast
}
\raisebox{-.6cm}{=}
\xymatrix{
\ast\ar[r]^g\ar[d]_{xy} \ar@{}[dr]|{\Downarrow \elt{g}{xy}}&\ast\ar[d]^{xy}\\
\ast\ar[r]_{xyg(xy)^{-1}}&\ast
}
\raisebox{-.6cm}{,}
\xymatrix{
\ast\ar[r]^g\ar[d]_x \ar@{}[dr]|{\Downarrow \gx}&\ast\ar[d]^x\ar[r]^h\ar@{}[dr]|{\Downarrow \elt{k}{x}}&\ast\ar[d]_x\\
\ast\ar[r]_{xgx^{-1}}&\ast\ar[r]_{hxx^{-1}}&\ast
}
\raisebox{-.6cm}{=}
\xymatrix{
\ast\ar[r]^{gh}\ar[d]_x \ar@{}[dr]|{\Downarrow \gx}&\ast\ar[d]^x\\
\ast\ar[r]_{xghx^{-1}}&\ast
}
\end{equation*}
The algebra structure is given by the categorical algebra for the vertical composition, viz.\ $\circ_h$,
as the colored multiplication with $s_v(\gx)=t_v(\gx)=x$:
\begin{gather*}
 \gx\cdot\hy=\delta_{s(\gx)=t(\hy)}\gx\circ_v\hy
\end{gather*}
and $\D$ will be given as the dual of the
horizontal composition with
$s_h(\gx)=g,t_h(\gx)=xgx^{-1}$
\begin{gather*}
 \D(\gx)=\sum_{(\hy,\kz)\colon \hy\circ_h\kz=\gx}\hy\ot\kz\,,
\end{gather*}
where we dropped the dual notation $\delta$.

 Another direction of further research is the connection to the cubical structure of the coproduct, whose analysis in this context appeared
in \cite[Section~4.5]{HopfPart1}. Cubical complexes more generally appear for cubical Feynman categories, \cite{Ddec}.
These are intimately related to Cutkosky rules \cite{Kreimercut} and the work \cite{KreimerYeats}.
To illustrate the cubical structure, we recall, cf.~\cite[equation~(4.29)]{HopfPart1}, that simplicial objects have a coproduct whose summands are
\begin{equation}
\label{simpcoprodeq}
x_{(0,i_1,\dots,i_{k-1},n)} \times
x_{(0,1,\dots,i_1)} x_{(i_1,i_1+1,\dots,i_2)} \cdots
x_{(i_{k-1},i_{k-1}+1,\dots,n)}.
\end{equation}
This has a cubical structure, which can be understood in terms of the Serre diagonal:
\begin{gather*}
\delta\colon\
 P=[0,1]^{n-1}\xrightarrow{\;\;\cong\;\;}
\!\!\!\! \!\!\!\! \!\!
\bigcup_{K\cup L=\{1,\dots,n-1\}}
\!\!\!\! \!\!\!\! \!\!
\partial_K^-[0,1]^{n-1} \times
\partial_L^+[0,1]^{n-1} \xrightarrow{\;\;\subset\;\;}
P\times P.
\end{gather*}
The term given in \eqref{simpcoprodeq} is identified with the subset $L=\{i_1,\dots,i_{k-1}\}$. In~view of \cite{KreimerYeats}
one has an additional interpretation. The right hand side
can be seen as cuts (or stops) in the path from~$0$ to~$n$ on the quiver $[n]=0\to 1\to \dots \to n$ and these can be viewed as cuts of a linear graph.
This is in line with the simplicial structure of iterated integrals.

Furthermore, there is a symmetry in the Serre diagonal of the cube given by interchan\-ging~$0$ and $1$ in the standard presentation. This switches the coproduct to its opposite. These two observations should generalize to the categorical setting, and when applied to graphs may explain some of the results of \cite{KreimerYeats}.
A last aspect is functoriality of the construction, see \cite[Section~1.7]{HopfPart2}, which is likely related the structure of cointeracting coalgebras \cite{Foissytalk,KreimerYeats}
in the present context.

Furthermore, we identified the skew primitive elements (as indecomposables) as being of central importance.
This parallels the role of primitives in the usual connected case. In~particular,
 there should be $B_+$ operators associated to them. These are indeed explicitly present and key elements for $E_2$-operads and hence
 for Gerstenhaber brackets and Deligne's conjecture~\cite{KZhang}. Moreover, there is a connection between these
 elements and the terms of the master equations~\cite{KMZ} and \cite[Section~7.5]{feynman}.
The general structure of $B_+$ operators will be taken up in~\cite{Bplus},
where we will also make the connection to Hochschild complexes.

\appendix
\section{Duals, actions and colored algebras}
 \label{sec:duals}
 \subsection{Duals}

The dual of an $R$ module $M^*$ is the $R$-module $\Hom_R(M,R)$. If $M$ is graded, $M=\bigoplus_{x\in X}M_x$ then
the graded dual is $\check{M}=\bigoplus_{x\in X} M_x^*$.

The dual of a coalgebra $C$ is the algebra $C^*:= \Hom(C,R)$ with product $f g(c)=(f\ot g)(\D c)$.
 If $C$ has a counit $\eps_C$ as a coalgebra then $\eps_C\in C^*$ is a unit for the product by definition.

 The dual of a {\em finite} dimensional algebra $A^*=\Hom_R(A,R)$ is a coalgebra with the comultiplication $\Delta(f)(a\ot b):=f(ab)$.
 In general, $\Delta^*$ is a morphism $A^*\to (A\ot A)^*$ and there is an injection $A^*\ot A^*\to ( A \ot A)^*$, which is an isomorphism in the case of a finite dimensional algebra.
Therefore, one restricts to the so-called {\em finite dual} $A^\circ$ which is the maximal subspace of $A^*$ such that $\Delta(A^\circ)\subset A^\circ\ot A^\circ$.
For a Dedekind ring, this is \cite{ChenNichols}:
\begin{gather*}
A^\circ=\{f\in A^*\,|\, \operatorname{Ker}(f) \mbox{ contains an ideal $I$ for which $A/I$ is finitely generated}\}.
\end{gather*}

The Hopf-dual of a Hopf algebra over a field is defined to be $H^\circ$ with the induced structures.
For a graded algebra $A=\bigoplus_{x\in X}A_x$,
we can consider the graded finite dual $\check A^\circ=\bigoplus_{x\in X}X_x^\circ$.

\subsection{Colored coalgebras and algebras}

If $C[X]$ is the setlike coalgebra on $X$,
$C[X]^*$ has multiplication $fg(x)=f(x)g(x)$ and the constant function $u(x)\equiv 1$ is the unit. In~particular, for the dual basis $\delta_x \delta_y=\delta_{x,y}\delta_x$
and $u=\sum_{x\in X}\delta_x$. This is well defined
as the application to any $c\in C[X]$ yields only a finite sum.
Define the algebra $A[X]$ as the $R$ algebra on the monoid $X$ defined by $xy=\delta_{x,y}y$.
This is a~subalgebra of the dual algebra $A[X]\subset C^*[X]$ using the identification $x\leftrightarrow \delta_x$.
Generators are the functions with finite support on $X$.
$A[X]$ is not unital unless
$X$ is finite, but it is almost unital in the sense that $u=\sum_x x\in C[X]^*$ acts as a unit, and when $u$ is restricted to $A[X]$,
it acts locally as a finite sum.

If $X$ is finite, this yields a bialgebra structure on $\Free{X}$.
This is a Hopf algebra if and only if $X$ only has one element, as the putative antipode must satisfy $S(x)=x^{-1}$, but the $x$ act as projectors.

\begin{Lemma}
If $R$ is a Dedekind ring
then
$A[X]^\circ=\finite{A[X]}=C[X]$ under the identification $x\leftrightarrow \delta_x$.
The coproduct is given by decomposition:
\begin{gather*}
\Delta(\delta_x)=\sum_{(x_1,x_2)\colon x_1x_2=x} \delta_{x_1}\ot\delta_{x_2}.
\end{gather*}
\end{Lemma}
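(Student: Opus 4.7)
The plan is to proceed in four steps: classify the ideals of $A[X]$, determine which quotients are finitely generated over $R$, apply the Dedekind-ring characterization of the finite dual, and finally compute the coproduct.

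First I would describe all two-sided ideals of $A[X]$. Using the orthogonal idempotent relations $xy=\delta_{x,y}x$, one checks that any element $u=\sum c_y y\in I$ satisfies $xu=c_x x\in I$ for each $x\in X$. Setting $J_x=\{r\in R\mid rx\in I\}$, each $J_x$ is an ideal of $R$, and the displayed computation shows that every element of $I$ lies in $\bigoplus_x J_x x$ while the reverse inclusion is immediate since each $u\in A[X]$ has finite support. Hence the ideals of $A[X]$ are exactly those of the form $I_{(J_x)}=\bigoplus_{x\in X}J_x x$, and
\begin{equation*}
A[X]/I_{(J_x)}\;\cong\;\bigoplus_{x\in X} R/J_x.
\end{equation*}

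Next I would determine when this quotient is finitely generated as an $R$-module. Since $R$ is Dedekind, hence Noetherian, each summand $R/J_x$ is already finitely generated, so $A[X]/I_{(J_x)}$ is finitely generated over $R$ if and only if $R/J_x=0$—that is, $J_x=R$—for all but finitely many $x$. This is the key combinatorial fact that will force finite support of dual elements.

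Then I would apply the Dedekind-ring characterization $A^{\circ}=\{f\in A^{*}\mid \ker f\supset I,\ A/I\text{ f.g.\ over }R\}$ recalled in the excerpt. Given $f\in A[X]^{\circ}$, pick such an $I=I_{(J_x)}$; the previous step says $J_x=R$ for cofinitely many $x$, and on such $x$ the condition $f(J_x\cdot x)=0$ forces $f(x)=0$. Hence $f$ has finite support, and so $f\in\bigoplus_{x}R\delta_x=\finite{A[X]}$. The converse is easy: for $f$ supported on a finite set $S$, the ideal $I=\bigoplus_{x\notin S} Rx \;\oplus\; \bigoplus_{x\in S}\mathrm{Ann}(f(x))\,x$ lies in $\ker f$ and makes $A[X]/I$ a finite direct sum of cyclic $R$-modules. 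Identifying $\delta_x\leftrightarrow x$ then gives $A[X]^{\circ}=C[X]$ as $R$-modules.

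Finally I would verify the coproduct formula directly from $\Delta(f)(a\otimes b)=f(ab)$. For the dual basis element $\delta_x$ evaluated on $y\otimes z$ one obtains $\delta_x(yz)=\delta_x(\delta_{y,z}z)=\delta_{x,y}\delta_{x,z}$, so as an element of $A[X]^{\circ}\otimes A[X]^{\circ}$ we get $\Delta(\delta_x)=\delta_x\otimes\delta_x$. Under the identification with $C[X]$ this is the setlike coproduct $x\mapsto x\otimes x$, which is precisely $\sum_{(x_1,x_2):x_1x_2=x}\delta_{x_1}\otimes\delta_{x_2}$ because in $A[X]$ a product $x_1x_2$ equals $x$ only for $x_1=x_2=x$. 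The one delicate step is the second, where one must use that $R$ Noetherian ensures that every finitely generated $R$-summand structure of $A[X]/I$ has only finitely many nonzero components; without this, duals of infinite setlike coalgebras would acquire spurious continuous functionals and the identification $A[X]^{\circ}=C[X]$ would fail.
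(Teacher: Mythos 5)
Your argument is correct, but it runs along a different track than the paper's. You invoke the kernel-ideal form of the Chen--Nichols criterion (the one quoted just before the lemma), which forces you first to classify all ideals of $A[X]$ as $\bigoplus_x J_x x$ and then to observe that $A[X]/I\cong\bigoplus_x R/J_x$ is finitely generated exactly when cofinitely many $J_x$ equal $R$; finite support of $f$ then drops out. The paper instead uses the module form of the same criterion, namely that $f\in A[X]^\circ$ iff $A[X]f$ is finitely generated, and gets there in one line: $(xf)(y)=f(yx)=\delta_{x,y}f(x)$ shows $xf=f(x)\delta_x$, so $A[X]f$ sits inside the span of the $\delta_x$ and is finitely generated precisely when almost all $f(x)$ vanish. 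Your route is longer but self-contained relative to the characterization actually displayed in the appendix, and the ideal classification $I=\bigoplus_x J_x x$ is a reusable structural fact; the paper's route is slicker but silently switches to an equivalent formulation of the finite dual. One small quibble: in your second step the appeal to Noetherianity is not what does the work --- each $R/J_x$ is cyclic, hence finitely generated over any ring, and the real point is simply that finitely many generators of $\bigoplus_x R/J_x$ have finite total support, forcing all but finitely many summands to vanish. The coproduct computation at the end agrees with the paper's (both reduce to $\Delta(\delta_x)=\delta_x\otimes\delta_x$, which matches the displayed decomposition sum since $x_1x_2=x$ only for $x_1=x_2=x$).
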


\begin{proof}
By \cite{ChenNichols} $f\in A[X]^\circ$ if and only if $A[X]f$ is finitely generated.
Since $xf(y)=f(yx)=\delta_{x,y}f(x)$, we see that $f(x)\delta_x\in A[X]f$. As the $\delta_x$ are a basis, $A[X]f$
being finitely generated is equivalent to almost all $f(x)=0$.
The coproduct on $\finite{A[X]}$
is given by $\D(\delta_x)=\delta_x\ot \delta_x$ and the identification is again by $x\leftrightarrow \delta_x$.
\end{proof}

Linearizing the construction above yields the following definition whose unital version is equivalent to a category enriched in $R$-Mod.
\begin{Definition}
A colored algebra is double graded $R$ module $A=\bigoplus_{(x,y) \in X\times X} A_{x,y}$ with $R$-bilinear
associative multiplications $A_{x,z}\otimes A_{z,y}\to A_{x,y}$.
It is unital if there are elements $\id_x\in A_{x,x}$ which are left and right units under the multiplication.
It is split unital if $A_{x,x}={\rm Rid}_x\oplus \bar A_{x,x}$.
\end{Definition}

For a colored monoid the associated colored algebra is given by $A_{x,y} =\Free{M_{x,y}}$ with the induced multiplication.
The unital structure is the induced structure.

Suppose $M$ is graded, $M=\bigoplus_{x\in X}M_x$, and hence has a right coaction by $C[X]$.
Specializing the general formula for an algebra action induced from a right coalgebra coaction, $f\cdot m =(\id\ot f) (\rho(m))$, the coaction corresponding to
a grading in turn defines an action of $A[X]$ in which the $\delta_x$ act
 by a complete set of orthogonal projectors $\pi_x\colon M\to M_x$, that is $\delta_xm=\pi_x(m)$.
The operation $u=\sum_{x\in X}\delta_x$ acts as unit.
This sum is finite when acting on any $m\in M$ as $M$ is the direct sum of its graded components.

\begin{Lemma}
A colored algebra is an algebra in the category of $A[x]$-bimodules.
\end{Lemma}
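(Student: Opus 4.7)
The plan is to translate the $X\times X$-grading on $A$ into an $A[X]$-bimodule structure and then verify that the multiplications $A_{x,z}\otimes A_{z,y}\to A_{x,y}$ precisely encode a morphism out of the bimodule tensor product $A\otimes_{A[X]}A$.

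First I would define the bimodule structure: for $a\in A_{x,y}$, set $\delta_{x'}\cdot a := \delta_{x,x'}\,a$ and $a\cdot \delta_{y'} := \delta_{y,y'}\,a$, which is exactly the action induced (by the recipe $f\cdot m=(\id\otimes f)\rho(m)$ and its left analogue) from the two gradings of $A$ by $X$ on the left and right, using Lemma~\ref{lem:grading}. Because $A[X]$ is only locally unital, I would check that $u=\sum_x\delta_x$ acts locally as a unit on each homogeneous component, so each $a\in A$ is indeed in the essential part of the bimodule. Left and right actions commute by the bigrading, giving a genuine $A[X]$-bimodule.

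Next I would identify $A\otimes_{A[X]}A$. By the coequalizer defining the bimodule tensor product, $a\otimes b\sim a\delta_z\otimes b = a\otimes\delta_z b$ identifies tensors exactly when the right-color of $a$ matches the left-color of $b$. Hence $A\otimes_{A[X]}A=\bigoplus_{x,z,y}A_{x,z}\otimes_R A_{z,y}$, and the family of multiplications $A_{x,z}\otimes_R A_{z,y}\to A_{x,y}$ assembles into a single bimodule map $\mu\colon A\otimes_{A[X]}A\to A$. Associativity of $\mu$ is immediate from the componentwise associativity, and if $A$ is (split) unital the assignment $\delta_x\mapsto \id_x$ defines the unit map $\eta\colon A[X]\to A$ compatible with the bimodule structure.

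The main potential obstacle is the lack of a global unit in $A[X]$ when $X$ is infinite, so one has to be careful that all formulas and coequalizers make sense with a locally unital base algebra; this is handled by the fact that each $a\in A$ lies in a single graded summand $A_{x,y}$, so only the two projectors $\delta_x$ and $\delta_y$ act non-trivially on it and all sums involved are finite. The remaining verifications are straightforward bookkeeping on homogeneous components.
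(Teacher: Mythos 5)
Your proposal is correct and takes essentially the same route as the paper, whose one-line proof says exactly that the multiplication respects the outer grading while $\otimes_{A[X]}$ forces the inner gradings to coincide. You have merely spelled out the details (the projector actions $\delta_{x'}\cdot a=\delta_{x,x'}a$, the coequalizer identification of $A\otimes_{A[X]}A$, and the local-unitality caveat, which the paper handles in the surrounding discussion), all of which are accurate.
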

\begin{proof}
The multiplication respects the outer grading and taking the product $\ot_{A[X]}$ amounts
to having the inner gradings coincide.
\end{proof}

\begin{Remark}
Note that if one defines a colored algebra as an algebra in $A[x]$-bimodules then one does not obtain finiteness.
Indeed $C^*[X]$ is an $A[X]$ bialgebra. To remedy this one can postulate that $u$ acts locally finitely.
The question that remains is where the finiteness assumptions should be kept and where they could be relaxed.
\end{Remark}

\section{Rota--Baxter operators and algebras}
\label{rbapp}
\begin{Definition}
A Rota--Baxter (RB) operator of weight $\lambda$ on an associative $R$-algebra $A$ a is a linear operator $T\colon A\to A$ that satisfies the equation
\begin{equation}
\label{RBeq}
T(X)T(Y)=T(T(X)(Y))+T(XT(Y))+\lambda T(XY).
\end{equation}
A Rota--Baxter (RB) algebra is a unital associative algebra with a choice of Rota--Baxter ope\-rator.
\end{Definition}

Note that if $T$ is an RB-operator, so is $(1-T)$, furthermore \eqref{RBeq} says that ${\rm Im}(T)$, and similarly ${\rm Im}(1-T)$, are subalgebras of $A$, which will be called
$A_+$ and $A_-$. As $(1-T)+T=1$, we see that $A=A_++A_-$.

{\bf NB:} This sum does not need to be direct.

The pair $(A_+,A_-)$ is called the {\em Birkhoff decomposition} of $A$ defined by $T$.

\begin{Remark}[scaling]
Scaling the RB operator by $\mu$, we see that $\mu T$ is a RB algebra of weight~$\lambda\mu$, so that if $\lambda\in R^*$ is invertible, we can always scale to have $\lambda=0$ or $\lambda = -1$.
\end{Remark}

\begin{Example}
The standard example are Laurent series $k((x))=k[[x]]\big[x^{-1}\big]$, with $T$ being the projection onto the pole part, is an RB-algebra with $\lambda=-1$.
\end{Example}

A generalization of this uses projectors, which are operators with $T^2=1$. In~this case, $1-T$ is also a projector and the two projectors are orthogonal $T(1-T)=0$.
For a projector $T$ on an algebra $A$, set $(1-T)A=A_+$ and $T(A)=A_-$; then $A=A_+\oplus A_-$.

\begin{Proposition}[projectors and subalgebras]
Given a projector $T$, if $A_+$ and $A_-$ are subalgebras, then $T$ is an RB-operator with weight $\lambda=-1$.
Furthermore, $T(1)$ is an idempotent, $T(1)^2=T(1)$,
and multiplication by $T(1)$ is a projector onto a subspace of $A_-$.

 In particular, if $T(1)=0$, then $A_+$ is a unital subalgebra, and,
if $T(1)$ is regular, then $T(1)=1$ and $A_-$ is a unital subalgebra.
\end{Proposition}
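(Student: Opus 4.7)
The plan is to exploit the direct-sum decomposition $A = A_+ \oplus A_-$ afforded by the projector $T$, with $A_+ = \ker T$ and $A_- = \im T = \{Z\in A : T(Z)=Z\}$, so that $T$ acts as $0$ on $A_+$ and as the identity on $A_-$. Both summands being subalgebras is the only algebraic input needed.

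For the Rota--Baxter relation with $\lambda = -1$, I would decompose $X = X_+ + X_-$ and $Y = Y_+ + Y_-$ with $X_\pm,Y_\pm \in A_\pm$ and expand each of $T(XY)$, $T(T(X)Y)$, $T(XT(Y))$. The subalgebra property gives $T(X_+ Y_+)=0$ and $T(X_- Y_-) = X_- Y_-$, while the cross terms $T(X_+ Y_-)$ and $T(X_- Y_+)$ appear with matching signs in $T(T(X)Y) + T(XT(Y)) - T(XY)$ and cancel, leaving $X_- Y_- = T(X)T(Y)$. This is exactly \eqref{RBeq} with weight $-1$.

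Specializing this identity to $X = Y = 1$ yields $T(1)^2 = T(T(1)) + T(T(1)) - T(1) = T(1)$ using $T^2 = T$, so $T(1)$ is idempotent. Since $T(1) \in A_-$, it is an idempotent in the subalgebra $A_-$, and the restriction of left multiplication by $T(1)$ to $A_-$ is a map $L_{T(1)}\colon A_- \to A_-$, $X \mapsto T(1)X$, satisfying $L_{T(1)}^2 = L_{T(1)^2} = L_{T(1)}$; hence a projector on $A_-$ whose image $T(1) A_-$ is a subspace of $A_-$.

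For the two corollaries I would write $1 = (1-T)(1) + T(1) = 1_+ + 1_-$ with $1_\pm \in A_\pm$. If $T(1) = 0$ then $1 = 1_+ \in A_+$, so the subalgebra $A_+$ contains the unit of $A$ and is unital. If $T(1)$ is regular, cancelling on the left in $T(1)\bigl(T(1)-1\bigr) = 0$ forces $T(1) = 1 \in A_-$, making $A_-$ unital. The only non-bookkeeping subtlety I foresee is the precise reading of ``projector onto a subspace of $A_-$'': one should interpret it as the restricted operator $L_{T(1)}|_{A_-}$, because from the RB identity and $T(T(1)X_+)=0$ one only obtains $T(1) A_+ \subset A_+$, not that this product vanishes, so the operator $L_{T(1)}$ on the full algebra $A$ need not take values in $A_-$ without additional hypotheses.
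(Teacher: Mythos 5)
Your proof is correct, and for the Rota--Baxter identity it is essentially the paper's argument in a different packaging: the paper deduces the relation from the two vanishing statements $(1-T)[T(X)T(Y)]=0$ (since $A_-$ is a subalgebra) and $T[(1-T)(X)(1-T)(Y)]=0$ (since $A_+$ is a subalgebra), whereas you expand $X=X_++X_-$, $Y=Y_++Y_-$ and cancel cross terms; the two computations are term-for-term equivalent. The idempotency of $T(1)$ and the two ``in particular'' statements are handled the same way in both.

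The one place where you genuinely diverge is the clause about multiplication by $T(1)$, and there your cautious reading is the right one. The paper's proof asserts that setting $Y=1$ yields $T(1)(1-T)(X)=0$, i.e., $T(1)A_+=0$, which would make left multiplication by $T(1)$ a projector with image inside $A_-$ on all of $A$. But the substitutions only give $T(X)T(1)=T(XT(1))$ and $T(1)T(Y)=T(T(1)Y)$, from which, as you observe, one only gets $T(1)A_+\subseteq A_+$ (equivalently from $1_-a_+=a_+-1_+a_+$). The stronger claim actually fails: in $A=k^3$ with componentwise product, take $A_+=k^2\times\{0\}$ and $A_-=k\cdot(0,1,1)$; these are complementary subalgebras, $T(1)=(0,1,1)$, and $T(1)\cdot(0,1,0)=(0,1,0)$ is a nonzero element of $A_+$, so the image of $L_{T(1)}$ is not contained in $A_-$. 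Your resolution --- restricting $L_{T(1)}$ to $A_-$, where it is idempotent with image $T(1)A_-\subseteq A_-$ --- is the correct way to read (and prove) that part of the statement.
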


\begin{proof}
The conditions means that $(1-T)$ and $T$ are the projectors onto $A=A_+\oplus A_-$.
Since~$A_-$ is a subalgebra, we have that
\begin{equation}
\label{A-eq}
0=(1-T)[T(X)T(Y)]=T(X)T(Y)-T(T(X)T(Y)).
\end{equation}
Since $A_+$ we have that
\begin{gather}
\label{A+eq}
0=T[(1\!-\!T)(X)(1\!-\!T)(Y))]=T(XY)\!-\!T(T(X)Y)\!-\!T(XT(Y)) + T(T(X)T(Y)).
\end{gather}
Plugging \eqref{A-eq} into \eqref{A+eq}, we obtain \eqref{RBeq} for $\lambda=-1$.
Plugging in $X=Y=1$ and using that $T^2(1)=T(1)$ we obtain that $T(1)^2=T(1)$ and hence multiplication by $T(1)$ is a projector.
Plugging in $Y=1$ we obtain $T(1)(1-T)(X)=0$.
\end{proof}

 If $A$ is regular, e.g., a domain, then there are only two possibilities $T(1)=0$ or $T(1)=1$. Thus after possibly switching $T$ and $1-T$,
 we have that $A=A_+\oplus A_-$ with $A_+$ a unital subalgebra and $A_-$ a non-unital subalgebra.
Vice-versa an RB-operator for $\lambda=-1$ comes from a subdirect sum.

\begin{Definition}[\cite{Atkinson}]
Two subalgebras $(A_-,A_+)$ form a subdirect sum of $A$ if
there exists a subalgebra $C\in A\times B$ which satisfies that for
any $a\in A$ there is a unique representation $a=a_-+a_+$, with $(a_-,a_+)\in C$.
\end{Definition}

Such a decomposition defines $T(a)=a_-$ which satisfies \eqref{RBeq} for $\lambda=-1$. Vice-versa given an RB $T$ with $\lambda=-1$, setting $A_-=T(A),A_+=(1-T)(A)$, $C=\{(T(a),(1-T)(a))\colon a\in A\}\subset A_-\times A_+$ gives the subdirect sum.

\begin{Theorem}[\cite{Atkinson}]
The set of RB-operators $T$ with weight $\lambda=-1$ is in bijection with the subdirect sum decomposition of $A$ under the above correspondence.
\end{Theorem}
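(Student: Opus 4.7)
The plan is to set up the two maps explicitly, verify that each lands in the claimed target, and then check that the two compositions are identities.

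First I would build the forward map: given a Rota--Baxter operator $T$ of weight $-1$, set $A_-:=T(A)$, $A_+:=(1-T)(A)$, and
\[
C:=\{(T(a),(1-T)(a))\,|\,a\in A\}\subset A_-\times A_+.
\]
The fact that $A_\pm$ are subalgebras is already recorded in the excerpt. To see that $C$ is a subalgebra of $A_-\times A_+$, I would multiply two generators $(T(a),(1-T)(a))$ and $(T(b),(1-T)(b))$ componentwise and massage the two entries using \eqref{RBeq} with $\lambda=-1$, which in the rearranged form reads $T(a)T(b)=T(T(a)b+aT(b)-ab)$; this exhibits the first coordinate as $T(c)$ for $c=T(a)b+aT(b)-ab$, and a dual identity for $(1-T)$ (obtained from the fact that $1-T$ is also an RB operator of weight $-1$) exhibits the second coordinate as $(1-T)(c)$ for the \emph{same} $c$. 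Uniqueness of $a=a_-+a_+$ with $(a_-,a_+)\in C$ is then the statement that if $(T(a),(1-T)(a))=(T(b),(1-T)(b))$ then $a=b$, which is immediate since $T(a)+(1-T)(a)=a$. This shows $(A_-,A_+,C)$ is a subdirect sum.

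Next I would construct the backward map: given a subdirect sum realized by $C\subset A_-\times A_+$, define $T(a):=a_-$ where $a=a_-+a_+$ is the unique decomposition with $(a_-,a_+)\in C$. Linearity of $T$ follows from uniqueness (the set of decompositions of $\alpha a+\beta b$ contains $\alpha a_-+\beta b_-$ and $\alpha a_++\beta b_+$, and is a singleton). For the Rota--Baxter identity, I would take $a,b\in A$ with decompositions $a=a_-+a_+$, $b=b_-+b_+$, and use that $C$ is a subalgebra, so $(a_-b_-,a_+b_+)\in C$, which means
\[
T(a_-b_-+a_+b_+)=a_-b_-=T(a)T(b).
\]
Expanding $a_-b_-+a_+b_+=ab-a_-b_+-a_+b_-$ and applying $T$ produces $T(a)T(b)=T(ab)-T(a_-b_+)-T(a_+b_-)$. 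Replacing $a_\pm$ by $T(a)$ and $(1-T)(a)$ and simplifying yields precisely the RB identity for weight $\lambda=-1$.

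Finally I would check that the two constructions are mutually inverse. Starting from $T$, forming $(A_-,A_+,C)$, and recovering $T'$ from the subdirect sum gives $T'(a)=T(a)$ by construction. Starting from $(A_-,A_+,C)$, defining $T$, and then forming $T(A)$, $(1-T)(A)$ and the graph gives back $A_-$, $A_+$, and $C$ on the nose, since $T$ is the projector dictated by the subdirect decomposition. The main obstacle is the verification in the backward direction that the formula $T(a):=a_-$ really satisfies the full Rota--Baxter identity; the computation is forced, but one must carefully exploit \emph{both} the subalgebra condition on $C$ and the direct decomposition $A=A_-+A_+$ to rewrite the cross-terms. Everything else is bookkeeping.
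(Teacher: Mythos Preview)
The paper does not prove this theorem; it is quoted from Atkinson without argument, so there is no in-paper proof to compare against. Your plan is structurally the right one, but it contains a concrete sign error that breaks both directions.

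In the forward direction you claim that with $c = T(a)b + aT(b) - ab$ one has both $T(c) = T(a)T(b)$ and $(1-T)(c) = (1-T)(a)(1-T)(b)$. The first equality is exactly the RB identity and is correct. The second is false: computing directly,
\[
(1-T)(c) = c - T(c) = T(a)b + aT(b) - ab - T(a)T(b),
\]
whereas
\[
(1-T)(a)(1-T)(b) = ab - T(a)b - aT(b) + T(a)T(b),
\]
which is the \emph{negative} of $(1-T)(c)$. Equivalently, the RB identity for $S=1-T$ gives $S(a)S(b)=S(c')$ with $c'=S(a)b+aS(b)-ab=-c$, not $c$. So the product $(T(a)T(b),(1-T)(a)(1-T)(b))$ equals $(T(c),-(1-T)(c))$, and showing that this lies in $C$ would require $T(T(a)T(b))=T(a)T(b)$, which is not an obvious consequence of the RB identity.

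The same sign issue propagates into your backward direction. From $(a_-b_-,a_+b_+)\in C$ you correctly obtain $T(ab-a_-b_+-a_+b_-)=T(a)T(b)$, but expanding $a_-b_+ + a_+b_- = T(a)b+aT(b)-2T(a)T(b)$ and applying $T$ gives
\[
T(a)T(b)=T(ab)-T(T(a)b)-T(aT(b))+2T\bigl(T(a)T(b)\bigr),
\]
which is \emph{not} the RB identity: the extra term $2T(T(a)T(b))$ does not cancel without further argument. So the ``simplifying'' step you flag as routine is precisely where the argument fails.

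A clean fix (and what underlies Atkinson's argument) is to work with $\tilde C=\{(T(a),\,T(a)-a):a\in A\}$ instead: the RB identity for $\lambda=-1$ is literally the statement that $\tilde C$ is a subalgebra of $A\times A$, since $(T(a),T(a)-a)\cdot(T(b),T(b)-b)=(T(c),T(c)-c)$ with the very same $c$. The subdirect-sum formulation then matches up once one tracks the sign in the second component (equivalently, uses $a=a_- - a_+$ rather than $a_-+a_+$). You should redo both computations with this correction.
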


\section{Set version of graphs}
\label{graphapp}
We give a purely set based, non-graphical version here. A reference for the graphical version is \cite[Section~2.1, Appendix A]{feynman}.
The basic objects will be $\V=\operatorname{Iso}(\FinSet)$.
 This fixes the $\Iso(\V^\ot)$ to have as objects finite tuples of sets $(S_1\kdk S_n)$, where the empty set is allowed both as a tuple and in the
 entries. The morphisms are componentwise or inner isomorphisms
 together with outer permutations, which act as follows:
A permutation $p\in \SS_n$ acts by $p(S_1\kdk S_n)=(S_{p(1)}\kdk S_{p(n)})$.
The inner isomorphisms $\sigma\in \operatorname{Aut}(S_i)$ act as $(\id\kdk \id, \sigma,\id\kdk \id)$. The monoidal product is the joining of tuples.
 In graphical form, these are aggregates, that is a disjoint unions of corollas.
 The outer permutations permute the corollas and the inner isomorphisms the flags of the individual corollas.

There are two versions of the category: the connected and the non-connected version.
 To write the relations in a concise fashion, we
abuse notation and write $\phi$ for $(\id \kdk \id,\phi,\id\kdk \id)$.

For the connected version the morphisms are generated under the monoidal product and concatenation by the isomorphisms and the following morphisms, which called simple edge contractions and simple loop contractions:
\begin{gather*}
\scirct\colon\ (S,T) \to (S\setminus {\s} \amalg T\setminus\{t\}), \qquad
\circ_{ss'}\colon\ (S)\to (S\setminus \{s,s'\}).
\end{gather*}
These satisfy the following relations.
\begin{enumerate}\itemsep=0pt
\item The basic morphisms are equivariant with respect to isomorphisms in the sense that:
 For $\sigma \in \operatorname{Aut}(S)$, $\sigma'\in \operatorname{Aut}(T)$:
\begin{gather*}
\ccirc{\sigma(s)}{\sigma'(t)}(\sigma,\sigma')=\sigma\amalg \sigma'|_{S\setminus \{s\}\amalg T\setminus\{t\}}\scirct, \qquad
\circ_{\sigma(s),\sigma(s')}\sigma=\sigma|_{S\setminus \{s,s'\}}\circ_{s,s'}\!.
\end{gather*}
For an outer permutation:
\begin{gather*}
p\,\circ_{s,s'}=\circ_{s,s'}p, \qquad
p'\ccirc{p(s)}{p(t)}=\scirct p,
\end{gather*}
where $p'$ is the image of the permutation in the embedding $\SS_{n-1}\to \SS_n$ given by the induced block permutation.
\item Loop contractions commute:
 \begin{gather*}
\circ_{s_1,s_1'}\circ_{s_2,s_2'}=\circ_{s_2,s_2'}\circ_{s_1,s_1'}\!.
\end{gather*}
\item Loop and edge contractions commute:
\begin{gather*}
\ccirc{s_1}{t_1} \circ_{s_2s_2'}= \circ_{s_2s_2'}\ccirc{s_1}{t_1}.
\end{gather*}

\item Edge contractions commute if the ``edges'' do not form a cycle.
 For $s_1\in S_1$, $t_1\in T_1$, $s_2\in S_2$, $t_2\in T_2$ and $|\{S_1,T_1,S_2,T_2\}|\geq 3$
\begin{gather*}
\ccirc{s_1}{t_1} \ccirc{s_{2}}{t_2} = \ccirc{s_1}{t_1} \ccirc{s_{2}}{t_2}.
\end{gather*}

\item Edge contractions commute even if they form a cycle, only that after one edge contraction, the second edge contraction is a loop contraction.
For $s_1, s_2\in S$, $t_1, t_2\in T$,
\begin{gather*}
\circ_{s_2,t_2}\ccirc{s_1}{t_1}=\circ_{s_1,t_1}\ccirc{s_2}{t_2}.
\end{gather*}
\end{enumerate}

In the non-connected case, there is an additional generating morphism called
simple merger:
\begin{gather*}
 \mge{S}{T}\colon\ (S,T)\to (S\amalg T).
\end{gather*}

These satisfy the relations:
\begin{enumerate}\itemsep=0pt
\item Mergers are equivariant with respect to the internal action
\begin{gather*}
\mge{S}{T}(\sigma,\id)=(\sigma\amalg \id)\mge{S}{T}.
\end{gather*}
\item Mergers are equivariant with respect to the outer action.
On $(S_1\kdk S_n)$:
\begin{gather*}
\mge{S_{i+1}}{S_{i}} =\mge{S_{i}}{S_{i+1}}\tau_{i,i+1}\qquad \mbox{and}\qquad
p\mge{S_i}{S_{i+1}}=\mge{S_{p'(i)}}{S_{p'(i+1}} p',
\end{gather*}
where $p'$ is the image of the permutation $p$ under the embedding $\SS_{n-1}\to \SS_n$ given by the block permutation with block $(i,i+1)$.

\item Simple mergers commute
\begin{gather*}
\mge{S_1}{T_1}\mge{S_2}{T_2}=\mge{S_2}{T_2}\mge{S_1}{T_1}.
\end{gather*}
\item Mergers and edge/loop contractions on different sets commute.
\item If $s\in S$ and $t\in T$, then
\begin{gather*}
\scirct = \circ_{st}\mge{S}{T}.
\end{gather*}
\end{enumerate}

\subsection*{Acknowledgements}

This paper is the outgrowth of a long lasting conversations with Dirk Kreimer whom we would like to congratulate and profoundly thank for his interest and support. Much of the impetus for this work has come through interactions with him and his group, and it would not have come into existence without Dirk.
RK wishes to thank the Humboldt University, the KMPB, and the Humboldt Foundation for making these visits possible.

We furthermore would like to thank F.~Brown, K.~Yeats and C.~Berger for helpful discussions and the organizers of the
conference {\it Algebraic Structures in Perturbative Quantum Field Theory} at the IHES, in November 2020. The wonderful interactions that took place there led to part of the results of this paper. We also wish to thank the referees for their input which was very beneficial for the paper.

\pdfbookmark[1]{References}{ref}
\LastPageEnding

\end{document}